\title{Constructing group actions on quasi-trees and applications to
  mapping class groups}
\author{Mladen Bestvina, Ken Bromberg and Koji Fujiwara\thanks{The
    first two authors gratefully acknowledge the support by the National
    Science Foundation. 
The third author is supported in part by
Grant-in-Aid for Scientific Research (No. 19340013)}} 
\date{September 1, 2014}
\newtheorem{thm}{Theorem}[section]
\newtheorem{lemma}[thm]{Lemma}
\newtheorem{cor}[thm]{Corollary}
\newtheorem{prop}[thm]{Proposition}
{}
{}
\newtheorem{question}[thm]{Question}
\theoremstyle{remark}
\newtheorem{example}[thm]{Example}
\newtheorem{definition}[thm]{Definition}
\newtheorem{remark}[thm]{Remark}
\newtheorem{examples}[thm]{Examples}
\newtheorem*{definition*}{Definition}
\newtheorem*{remark*}{Remark}
\newcommand{\bY}{{\bf Y}}
\newcommand{\lc}{\xi}
\newcommand{\pd}{d^\pi}
\renewcommand{\xi}{{\theta}}
\newcommand{\xio}{{\Theta}}
\def\Gamma{G}
\def\cH{{\mathcal H}}
\def\cS{{\mathcal P}}
\def\cC{{\mathcal C}}
\def\cX{{\mathcal X}}
\def\cY{{\mathcal Y}}
\def\cP{{\mathcal P}}
\def\cG{{\mathcal G}}
\def\R{{\mathbb R}}
\def\Z{{\mathbb Z}}
\def\H{{\mathbb H}}
\def\diam{\operatorname{diam}}
\def\asdim{\operatorname{asdim}}
\def\C{{\cal C}}
\renewcommand{\>}{\rangle}
\begin{document}

\maketitle

\begin{abstract}
A quasi-tree is a geodesic metric space quasi-isometric to a tree.  We
give a general construction of many actions of groups on
quasi-trees. The groups we can handle include non-elementary
(relatively) hyperbolic groups,  $CAT(0)$ groups with rank 1 elements, mapping class
groups and $Out(F_n)$. As an application, we show that mapping class
groups act on finite products of $\delta$-hyperbolic spaces so that
orbit maps are quasi-isometric embeddings. We prove that mapping class
groups have finite asymptotic dimension.
\end{abstract}

\tableofcontents

\section{Introduction}
In this paper we define a new combinatorial complex which we call the
{\em projection complex}, and a closely related complex called the
{\it quasi-tree of metric spaces}. To motivate the construction
consider a discrete group $\Gamma$ of isometries of hyperbolic
$n$-space $\H^n$ and let $\gamma\in\Gamma$ be an element with an axis
$\ell\subset\H^n$. Denote by $\bY$ the set of all $\Gamma$-translates
of $\ell$, i.e. the set of axes of conjugates of $\gamma$. When
$A,B\in\bY$, $A\neq B$, denote by $\pi_A(B)\subset A$ the image of $B$
under the nearest point projection $\pi_A:\H^n\to A$. We call this set
the {\it projection} of $B$ to $A$ and we observe:
\begin{enumerate}[(P0)]
\item The diameter $\diam \pi_A(B)$ is uniformly bounded by $\xi\geq 0$,
  independently of $A,B\in\bY$.
\end{enumerate}
This is because a line in $\H^n$ will have a big projection to another
line only if the two lines have long segments with small Hausdorff
distance between them, since $\Gamma$ is discrete (an easy exercise).

When $B\neq A\neq C$ we define a pseudo-distance function (and abusing
the terminology, we frequently drop ``pseudo'')
$$d_A^\pi(B,C)=\diam (\pi_A(B)\cup\pi_A(C))$$
which is symmetric and satisfies the triangle inequality, but in
general we have $d_A^\pi(B,B)>0$. We
observe further, again since $\Gamma$ is discrete,
for a perhaps larger constant $\xi$:
\begin{enumerate}[(P1)]
\item For any triple
  $A,B,C\in\bY$ of distinct elements at most one of the three numbers
$$d_A^\pi(B,C),d_B^\pi(A,C),d_C^\pi(A,B)$$
is greater than $\xi$.
\item For any
  $A,B\in\bY$ the set
$$\{C\in\bY\mid d_C^\pi(A,B)>\xi\}$$
is finite.
\end{enumerate}

\begin{figure}
\centerline{\scalebox{0.6}{\input{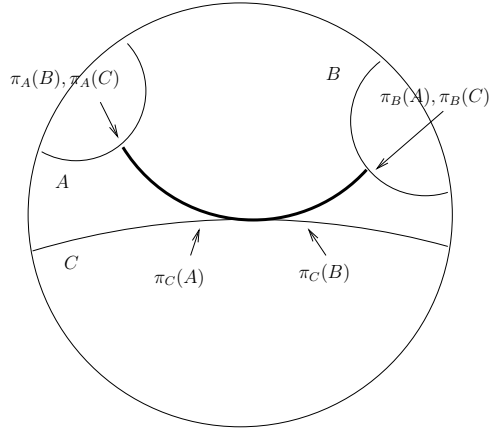}}}
\caption{Axiom (P1). The bold line is the shortest segment between $A$
  and $B$.  Note that $C$ and this segment stay close for a long time,
  therefore $d^\pi_C(A,B)$ is large, while $d^\pi_A(B,C)$ and
  $d^\pi_B(A,C)$ are small.}
\end{figure}

For an even more basic example where (P0)-(P2) hold with $\xi=0$
consider the Cayley tree of the free group $F_2=\<a,b\>$ and for $\bY$
take the $F_2$-orbit of the axis of $a$. We will discuss this example
in more detail in Section \ref{tree example}.

The main construction in this paper reverses this procedure. We start
with a collection of metric spaces $\bY$ and a collection of subsets
$\pi_A(B)\subset A$ for $A\neq B$ satisfying (P0)-(P2) and we
``reconstruct'' the ambient space.
Note that  in general the image of the nearest point projection 
 to $A$ of each point in 
$B$ may contain more than one point (such functions are 
called {\it coarse maps}).

\medskip
\noindent
{\bf Theorem A.}  {\it 
Suppose $\bY$ is a collection of geodesic metric spaces and for every
$A,B\in\bY$ with $A\neq B$ we are given a subset $\pi_A(B)\subset A$
such that (P0)-(P2) hold. Then
there is a geodesic metric space $\cC(\bY)$ that
  contains isometrically embedded, totally geodesic, pairwise disjoint
  copies of each $A\in\bY$ such that for all $A\neq B$ the nearest
  point projection of $B$ to $A$ in $\cC(\bY)$ is a uniformly bounded set uniformly
  close to $\pi_A(B)$.}
\medskip

The space $\cC(\bY)$ will be called a {\it quasi-tree of metric
  spaces}, for reasons explained below. Its construction will depend
on the choice of a sufficiently large parameter $K$, and it would be
more precise to denote the space by $\cC_K(\bY)$. If $K<K'$ there is a
natural Lipschitz map
$$\cC_K(\bY)\to \cC_{K'}(\bY)$$
which is in general not a quasi-isometry, and in fact unbounded sets
may map to bounded sets (see Section \ref{tree example} for an example).

In addition, many properties that hold uniformly for the spaces in $\bY$
carry over to $\cC(\bY)$. To state these results we first
recall some definitions.

A {\it quasi-tree} is a geodesic metric space quasi-isometric to a
tree. There is a characterization of quasi-trees due to Manning
\cite{manning}. A geodesic metric space $X$ satisfies the {\it bottleneck
  criterion} if there exists $\Delta\geq 0$ such that for any two
points $x,y\in X$ the midpoint $z$ of a geodesic between $x$ and $y$ satisfies the property such
that any path from $x$ to $y$ intersects the $\Delta$-ball centered at
$z$. Manning showed that this is equivalent to $X$ being a
quasi-tree. The constant $\Delta$ is called the {\it bottleneck
  constant}. 

The notion of {\it asymptotic dimension} was introduced by Gromov
\cite{gromov} as a large-scale analog of the covering dimension. A
metric space $X$ has asymptotic dimension $\asdim(X)\leq n$ if for
every $R>0$ there is a covering of $X$ by uniformly bounded sets such
that every metric $R$-ball intersects at most $n+1$ of the sets in the
cover. More generally, a collection of metric spaces has $\asdim$ at
most $n$ {\it uniformly} if for every $R$ there are covers of each space as above
whose elements are uniformly bounded over the whole collection.

\medskip
\noindent
{\bf Theorem B.} Let $\cC(\bY)$ be the quasi-tree of metric spaces
$\bY$ constructed in Theorem A.
{\it
\begin{enumerate}[(i)]
\item The construction is equivariant with respect to any group, $G$, that
  acts isometrically on the disjoint union of the spaces in $\bY$
  preserving projections, i.e.,
$d^\pi_{g(A)}(g(B),g(C))=d^\pi_A(B,C)$ for any $A,B,C \in  \bY$
and $g \in G$.
\item If each $X\in\bY$ is isometric to $\R$ then $\cC(\bY)$ is a quasi-tree;
  more generally, if all $X\in\bY$ are quasi-trees with a uniform
  bottleneck constant then $\cC(\bY)$ is a quasi-tree.
\item If each $X\in\bY$ is $\delta$-hyperbolic with
the same $\delta$, then $\cC(\bY)$ is
  hyperbolic.
\item If the collection $\bY$ has $\asdim\leq n$ uniformly, then
  $\asdim(\cC(\bY))\leq n+1$.
\item The quotient $\cC(\bY)/\bY$ obtained by collapsing the embedded
  copies of each $X\in\bY$ to a point is a quasi-tree.
\end{enumerate}}
\medskip

Note that (ii) in particular says that the space $\cC(\bY)$ obtained
from an orbit of axes in $\H^n$ as in the beginning of the
introduction is a quasi-tree and not (quasi-isometric to) $\H^n$.  The
space $\cC(\bY)/\bY$ is the {\it projection complex}
$\cP(\bY)=\cP_K(\bY)$, which depends on $K$. The main technical theorem in this paper is the
fact that $\cP(\bY)$ is a quasi-tree. We think of the quasi-tree of
metric spaces $\cC(\bY)$ as being
obtained from $\cP(\bY)$ by blowing up vertices to metric spaces, and
thus the terminology.

Theorem A and Theorem B are collections of theorems
proved mostly in Section \ref{s3}.

\medskip
\noindent
{\bf Guide to the reader:} Background, motivating examples, the main
results and applications are contained in Sections 1 and 2. In
particular the tree example in Section 2.5 will help the reader follow
the axiomatic approach in Sections 3 and 4. These
latter two sections are more technical. However, they start from a few
simple axioms and do not require any hyperbolic geometry or facts
about mapping class groups. In fact, Sections 3, 4.1 and 4.2 are
entirely self-contained with the exception of the use of Manning's
bottleneck property. The remaining subsections of Section 4 use some
basic facts about $\delta$-hyperbolic spaces and asymptotic
dimension. The theorems about the mapping class group are proved in
Section 5. If one prefers to skip Sections 3 and 4 one can read
Section 5 using the results of the earlier sections as a black box.

\medskip
\noindent
{\bf Acknowledgments.} The authors thank Martin Bridson,
Pierre-Emmanuel Caprace, Kasra Rafi, John H. Hubbard and Johanna Mangahas for helpful
discussions. We especially thank the referees for many useful comments
and for suggestions how to significantly improve the exposition.

\section{Applications of the construction}

In this section we present applications of our construction. The main
one we had in mind when we started this work is presented first. Some
of the other applications were worked out by others after the first
version of this paper was circulated.

Recall that a function
$f:\cX\to \cY$ between metric spaces is a {\it coarse embedding} if
there are constants $A,B$ and a function $\Phi:[0,\infty)\to
  [0,\infty)$ with $\Phi(t)\to\infty$ as $t\to\infty$ such that
$$\Phi(d_{\cX}(x,x'))\leq d_{\cY}(f(x),f(x'))\leq A~ d_\cX(x,x')+B$$
If we can take $\Phi(t)=A^{-1}t-B$,
$f$ is a {\it quasi-isometric embedding}, in other words,
$f$ gives a quasi-isometry between $\cX$ and its image by $f$
in $\cY$.

\subsection{Mapping class groups} 
Our main application in this paper is to the study of mapping class
groups. To apply our methods here we will use the notion of subsurface
projections of Masur-Minsky \cite{mm2} which has been a driving force
behind much of the recent development in the geometry of mapping class
groups. 

Let $\Sigma$ be a closed orientable surface, possibly with finitely
many punctures. The {\it mapping class group} $MCG(\Sigma)$ of
$\Sigma$ is the group of components of the orientation preserving
diffeomorphism group preserving the punctures. For simplicity we will
additionally assume that $\Sigma$ has a complete hyperbolic structure
of finite area in which the punctures correspond to cusps. The
standard reference in the subject is \cite{FM2}. 

To every isotopy class of $\pi_1$-injective non-peripheral subsurfaces
$Y\subset\Sigma$ we assign the 
{\it curve complex} $\cC(Y)$. To two such subsurfaces $Y,Z$ with
$\partial Y\cap\partial Z\neq\emptyset$ (this means that the
intersection is nonempty even after any isotopy) there is the
Masur-Minsky {\it
  subsurface projection} $\pi_Y(Z)\subset \cC(Y)$. We refer the reader
to Section 5.1, where these notions are reviewed. More generally, when
$\beta$ is a simple closed curve that cannot be isotoped to be
disjoint from $Y$, we have a projection $\pi_Y(\beta)\subset \cC(Y)$.  
The mapping class group $MCG(\Sigma)$ acts on the product $\prod_Y
\cC(Y)$ and we have an orbit map
$$\Psi:MCG(\Sigma)\to \prod_Y \cC(Y)$$
which is, as a {\it coarse map} (i.e. a point is 
mapped to a bounded set), more explicitly given by
$$\Psi(g)=(\pi_Y(g(\alpha)))_Y$$ 
where $\alpha$ is a finite {\it binding} collection of simple closed
curves (see Section 5.4).

The remarkable Masur-Minsky distance
formula (Theorem 6.12 in \cite{mm2}) says that the word norm $|g|$ of $g\in MCG(\Sigma)$
is {\it coarsely equal} (i.e., up to a multiplicative and additive error) to
$$\sum_Y \{\{d_{\cC(Y)}(\pi_Y(\alpha),\pi_Y(g(\alpha)))\}\}_M$$ where $M$ is
sufficiently large, and $\{\{x\}\}_M$ is defined as $x$ if $x>M$ and as
0 if $x\leq M$. 
In \cite{mm2} the distance formula is stated for the ``marking graph",
which is quasi-isometric to the mapping class group, see \cite[Section 7]{mm2}.

Morally, this formula says that $\Psi$ is a quasi-isometric
embedding. However, the product space is not a metric space (the
``cut-off'' distance is not a metric). More problematic, although we
now have much information about the individual curve complexes, this
embedding is in an infinite product which is difficult to work with.

In this paper, we use Theorem A to embed the mapping class group in a
{\it finite} product of {\em quasi-trees of curve complexes}. To do so
we show that essential subsurfaces can be grouped in finitely many
subcollections $\bY^1,\bY^2,\cdots,\bY^k$ so that the curve complexes
of subsurfaces in each $\bY^i$ satisfy (P0)-(P2) with respect to
subsurface projections, thus yielding the
quasi-tree of curve complexes $\cC(\bY^i)$. Everything can be done
equivariantly, so that we have an orbit map
$$MCG(\Sigma)\to\cC(\bY^1)\times
\cC(\bY^2)\times\cdots\times\cC(\bY^k).$$ In each $\cC(\bY^i)$ 
the distance is approximated by the Masur-Minsky formula restricted to
the summands in $\cC(\bY^i)$. Then the
Masur-Minsky formula can be interpreted as saying that the map of
$MCG(\Sigma)$ into the product of quasi-trees of curve complexes is a
quasi-isometric embedding. The choice of an orbit for the map is 
not important. As each factor is hyperbolic we have the
following theorem in Section \ref{section.mcg}:

\vskip .5cm
\noindent
{\bf Theorem C.} {\it $MCG(\Sigma)$ equivariantly quasi-isometrically
  embeds in a finite product of hyperbolic spaces.}
\vskip .5cm

The following result follows easily from the definition 
of {\it asymptotic cones} (see \cite{BDS,BDS2}).
\vskip .5cm
\noindent
{\bf Theorem.} [Behrstock-Drutu-Sapir] {\it Every asymptotic cone of
  $MCG(\Sigma)$ embeds by a
  bi-Lipschitz map in a finite product of $\R$-trees.}
\vskip .2cm
\noindent
In fact they prove more including some information on the geometry of the image of the embedding, but their theorem does not imply Theorem C.
 They use the notion of {\em tree-graded space} introduced in \cite{DS}.
\vskip .5cm

We now make a few comments on verifying the axioms (P0)-(P2) in this
setting, as this is the situation that crystallized the correct
axiomatic approach. Axiom (P0) was established by Masur-Minsky as part
of the subsurface projection setup and it follows easily from
definitions. Axiom (P1) was established by Behrstock \cite{jason} and
we refer to it, and to Axiom (P1) in general, as {\it Behrstock's
  inequality}. 
Axiom (P2) is a consequence of the Theorem 4.6 and Lemma 4.2 in \cite{mm2}.

A central idea in \cite{mm2} is the notion of a {\it hierarchy} and this is
used in the verification of the axioms by Masur-Minsky and
Behrstock. This is a powerful tool but it is complicated to define and
difficult to use. Leininger gave a very simple, hierarchy free proof
of (P1) (see \cite{johanna,johanna2}) and here we will show that (P2)
also has a direct, hierarchy free proof (see Lemma
\ref{finiteness}). Using this we can show that our map of the mapping
class group into the product of quasi-trees of curve complexes is a
{\it coarse embedding} without any of the results of \cite{mm2}. In
particular, we obtain a hierarchy free proof of the lower bound in the
Masur-Minsky formula. In fact, the proof of Theorem D below does not
depend on the results \cite{mm2} although the ideas of that paper are
certainly central to our proof.

In \cite{emr} Eskin-Masur-Rafi give a unified approach, using Theorem
C, to studying the large scale geometry
of Teichm\"uller space with either the
Teichm\"uller metric or the Weil-Petersson metric, or of the mapping
class group with the word metric.

It is a theorem of Bell-Fujiwara \cite{bell-fujiwara} that each curve
complex has finite asymptotic dimension. More recently, Richard Webb
\cite{webb} found explicit bounds on the asymptotic dimension of curve
complexes. His bound was improved to a linear bound by
Bestvina-Bromberg \cite{BB} by a different method.
Thus from Theorems B and C we obtain
the following theorem, which motivated this work
(see Section \ref{section.mcg}):

\medskip
\noindent
{\bf Theorem D.} {\it Let $\Sigma$ be a closed orientable surface,
possibly with punctures. Then $\asdim(MCG(\Sigma))<\infty$.
}
\medskip

As a consequence of the bounds on the asymptotic dimension of curve
complexes mentioned above, it follows that $\asdim(MCG(\Sigma))$ is
bounded by an exponential function in the complexity of the surface.

The {\it Coarse Baum-Connes conjecture} (for torsion free subgroups of
finite index) and therefore the {\it Novikov conjecture} follows
\cite{yu}, cf. \cite{roe}. Various other statements that imply the Novikov conjecture
were known earlier (see \cite{kida,ursula2,behrstock-minsky2}). We
note that Hume \cite{hume} improved this result and showed that
$MCG(\Sigma)$ has finite {\it Assouad-Nagata dimension} (meaning that in the
definition of asymptotic dimension the diameter of each set in the
cover is bounded by a linear function of $R$) and
quasi-isometrically embeds in a finite product of trees.

\medskip
\noindent
{\bf Theorem E.} [Theorem \ref{teichmuller}, Theorem \ref{weil}] {\it The Teichm\"uller space of $\Sigma$, with either
  the Teichm\"uller metric or the Weil-Petersson metric, has finite
  asymptotic dimension.
}
\medskip

Recall that the {\it translation length} $\tau(g)$ of an isometry
$g:X\to X$ is
$$\tau(g):=\lim_{k\to\infty} \frac{d_X(x,g^k(x))}k$$
The limit exists and is independent of $x\in X$.
We say the isometry is {\it hyperbolic} if 
$\tau(g) >0$.
When $X$ is a quasi-tree an isometry with unbounded orbits necessarily
has positive translation length, \cite{manning:qfa}.
The following theorem uses the observation that the
$MCG(\Sigma)$-orbit of a curve in a surface of even genus that
separates into subsurfaces of equal genus consists of pairwise
intersecting curves.

\medskip
\noindent
{\bf Theorem F.} {\it The mapping class groups in even
genus can act on quasi-trees with a Dehn twist having unbounded
orbits.} 
\medskip

See Theorem \ref{dehn hyperbolic}. In the case of odd genus one has to
pass to a subgroup of finite index. 
It follows that each Dehn twist has linear growth 
in the word length  in $MCG(\Sigma)$
(known by \cite{FLM}).
 Theorem F provides a sharp
contrast to a result of Bridson \cite{bridson}, who showed that in
semi-simple actions of mapping class groups (of genus $>2$) on
complete CAT(0) spaces Dehn twists are always elliptic.
A group action is {\it semi-simple} if each element
has either a bounded orbit or positive translation length. In the CAT(0)
case one gets a homomorphism from the centralizer of a Dehn twist to
$\R$ by looking at the action on the purported axis (identifying all
{\it parallel} axes to one); in our quasi-tree setting a similar
construction produces only a quasi-morphism on the centralizer.
We say that two (quasi-)geodesics are {\it parallel} if their
Hausdorff distance is finite.
  In
genus $>2$ the centralizer of a Dehn twist has no nontrivial
homomorphisms to $\R$, but does admit many quasi-morphisms.

By a {\it thickening} of a metric space $X$ we mean a quasi-isometric
embedding $X\to Y$. When $X$ is a graph with edges of length 1 and
$d\geq 1$, there is a particular thickening $X\to P_d(X)$ called the
{\it Rips complex} of $X$. The space $P_d(X)$ is a simplicial complex
with the same vertex set as $X$ and with simplices consisting of
finite collections of vertices with pairwise distance at most $d$.

\medskip
\noindent
{\bf Theorem G.} [Corollary \ref{Rips counter}] {\it There is an isometric action of a group on a graph $X$
which is a quasi-tree such that no equivariant thickening admits an
equivariant $CAT(0)$ metric. In particular, for no $d\geq 1$ does the Rips
complex $P_d(X)$ admit an equivariant $CAT(0)$ metric.}
\medskip

It is a long-standing open question whether every $\delta$-hyperbolic
group acts cocompactly and properly by isometries on a CAT(0)
space. One approach is to consider the Rips complex $P_d(X)$ for the
Cayley graph $X$ of the group and large $d$. Theorem G is not a
counterexample to this approach since our $X$ is not locally finite,
but it does point out difficulties. Note that in light of
\cite{MSW} the quasi-trees that arise in our
construction are necessarily locally infinite,
since otherwise we would be able to promote our group 
actions on quasi-trees to group actions on simplicial trees without
fixed points, which is not possible for certain groups. 

\subsection{Hyperbolic-like groups}\label{wpd-defn}

At the beginning of the introduction we indicated how a discrete group
of isometries of $\H^n$ that contains an element with an axis gives
rise to data satisfying our axioms, and thus the same group acts on
the quasi-tree of lines, which itself is a quasi-tree by Theorem B
(ii). 

The essential feature of this example is that the axis $\ell$ is {\it
  $B$-contracting} for some $B\geq 0$. This means that the nearest
point projection to $\ell$ of any metric ball disjoint from $\ell$ has
diameter bounded by $B$. See \cite{rank1}. More generally, one can
define the notion of $B$-contracting for any subset of a metric space using the nearest point projection to the subset. 

To state the theorem, assume that a group $\Gamma$ acts by isometries
on a geodesic metric space $X$, that $\gamma\in \Gamma$ acts
{\it hyperbolically} (i.e. any orbit map is a quasi-isometric embedding, or
equivalently the translation length of $\gamma$ is positive) and that
$\gamma$ is a WPD element \cite{bestvina-fujiwara}, that is, for all
$D>0$ and $x\in X$ there exists $M>0$ such that
$$\{g\in \Gamma\mid d(x,g(x))\leq D, d(f^M(x),gf^M(x))\leq D\}$$
is finite. We also say that two orbits are {\it parallel} if their
Hausdorff distance is finite.

\medskip
\noindent
{\bf Theorem H.}  {\it Let $\Gamma$ act on a geodesic metric space $X$
  such that $\gamma\in\Gamma$ is a hyperbolic WPD element with a
  $B$-contracting orbit. Then the
  collection of parallel classes of $\Gamma$-translates of a fixed
  $\gamma$-orbit (of a point) with nearest point projections satisfies (P0)-(P2)
  and thus $\Gamma$ acts on a quasi-tree. In addition, in this action
  $\gamma$ is a hyperbolic WPD element.} 

\medskip

In this form the theorem is proved in \cite{bc}.
We do not assume that $X$ is hyperbolic nor 
CAT(0). The main part of the
proof consists of verifying (P0)-(P2) and applying Theorem A in this
situation. The rest is included as Proposition \ref{wwpd}.
Dahmani-Guirardel-Osin \cite[Section 4.5]{DGO} prove a variation of Theorem H where $X$ is assumed to be hyperbolic and use it to
construct many examples of {\it hyperbolically embedded subgroups}
(see \cite{DGO} for the definition).

\begin{examples}\label{ex}
The following examples all satisfy Theorem H.
One considers the translates of an axis, or more generally
the orbit of a point,  of a hyperbolic 
WPD element.
\begin{enumerate}[(1)]
\item $\Gamma$ is a discrete group of isometries of $\H^n$ that
  contains an element $\gamma$ with an axis. $\gamma$ is WPD since 
the action of $\Gamma$ is properly discontinuous, and the axis is $B$-contracting since $\H^n$
is $\delta$-hyperbolic. 

\item $\Gamma$ is a group of isometries of a connected
  $\delta$-hyperbolic graph $X$ that contains a hyperbolic, therefore its (quasi-)axis is  $B$-contracting, 
  WPD element.
In particular, this
  construction applies to the curve complex and the mapping class
  group of a compact surface, where pseudo-Anosov elements
are hyperbolic and WPD \cite{bestvina-fujiwara}. This class of groups contains many
  groups with Kazhdan's property (T) and therefore every isometric
  action on a simplicial tree has a fixed point (cf. \cite{harpe-valette}).

\item $\Gamma$ is a discrete group of isometries
(i.e. the group action is metrically properly discontinuous) of a $CAT(0)$-space
  that contains a {\it rank 1 element} $\gamma$ in the sense of Ballmann. That is, $\gamma$ has
  an axis which is $B$-contracting for some $B\geq 0$. For example,
  pseudo-Anosov mapping classes are rank 1 elements in the action on
  the Weil-Petersson completion of Teichm\"uller space. In the
  cocompact setting this is equivalent to the more familiar condition
  that the axis does not bound a half-flat. Those elements 
are WPD although the action of the mapping class group
is  not properly discontinuous.  See \cite{rank1}. There are
  classifications of rank 1 elements in Coxeter groups
  \cite{caprace-fujiwara}, right angled Artin groups
  \cite{behrstock-charney} and cube complexes \cite{CS}.

\item 
  $\Gamma$ is the mapping class group, acting on Teichm\"uller space
  with Teichm\"uller metric, and $\gamma$ is a pseudo-Anosov mapping
  class. By 
  \cite{minsky-contraction} the axis of $\gamma$ is
  $B$-contracting. It is WPD since the action is properly discontinuous. 

\item $\Gamma=Out(F_n)$ acting on Culler-Vogtmann's Outer space $CV$
  \cite{CV}, equipped with the Lipschitz metric (which fails to be
  symmetric, see \cite{AKB}). The action is properly discontinuous. 
See \cite{vogtmann.survey}, \cite{vogtmann.problem},
\cite{vogtmann.icm} for more information on $Out(F_n)$ and Outer space.
An element $f$ of $Out(F_n)$ is {\it
    fully irreducible} if there are no conjugacy classes of proper
  free factors of $F_n$ which are $f$-periodic. Such elements have
  axes in $CV$, see \cite{bers}. In \cite{yael} Yael Algom-Kfir shows
  that there is $\nu>0$ such that the projection of any translate
  $\gamma(X_i)$ to any nonparallel $X_j$ is bounded by $\nu$, and she
  also shows that the axes are $B$-contracting for some $B$. Even though the
  metric is not symmetric, axioms hold. Axioms (P0)-(P1) are explicitly
  verified in \cite{yael} and Axiom (P2) follows quickly from the
  arguments in \cite{yael}, for details see \cite{bc}.

\end{enumerate}

\end{examples}

\begin{remark}\label{WPD2}
Suppose a group $G$ acts on a geodesic space $X$ with a hyperbolic 
element $g \in G$ with a $g$-orbit $\alpha$.
The {\em elementary
  closure}, $EC(g)$, of $g$ is the subgroup of elements $h \in G$ such
that $h(\alpha)$ is parallel to $\alpha$.
When $X$ is $\delta$-hyperbolic, $g\in G$ is a WPD element if and only if $EC(g)$ is virtually cyclic and 
for some (any) $x\in X$ there is $B>0$ such
that any $\phi\in G-EC(g)$ maps the orbit $\langle g \rangle x$ to a set whose
projection to $\langle g \rangle x$ has diameter $\leq B$. In this setting the orbit
is a quasi-geodesic and the projection is the nearest point
projection, coarsely defined (see the comment before Corollary
\ref{nearest_point2}). Thus WPD is equivalent to saying that
the set of translates of a $g$-orbit is ``discrete'' in the sense that
any two are either parallel or have bounded ``overlap'', with parallel
orbits coming from translating by elements in $EC(g)$.
\end{remark}

The work of Epstein-Fujiwara \cite{epstein-fujiwara} implies that
non-elementary (i.e. not virtually cyclic) hyperbolic groups have many unbounded actions on quasi-lines, i.e., geodesic spaces quasi-isometric to a line. Manning \cite{manning} gave a construction of
an action of a group $G$ on a quasi-tree starting with a
quasi-morphism $G\to\R$ (equivalently, an action of $G$ on a
quasi-line) but it is not clear when such actions are non-elementary
(i.e. have unbounded orbits and do not fix an end nor a pair of ends). A map $f:G \to \R$ is a 
{\it quasi-morphism} if there exists a constant $C$ such that
for all $g,h \in G$ 
$$|f(gh)-f(g)-f(h)| \le C.$$
Recently, it has been verified that the actions by Manning
are not elementary for certain cases using our work
\cite{martinez}.
We will also verify that the non elementary 
groups in Example \ref{ex}
have non elementary actions on quasi-trees (Corollary \ref{bushy}).

Recall that an isometric group action is {\it acylindrical} if for
every $D>0$ there exist $R,N>0$ such that $d(x,y)\geq R$ implies that
the set
$$\{g\in G\mid d(x,g(x))\leq D, d(y,g(y))\leq D\}$$ has cardinality at
most $N$.  
Osin develops a
theory of {\it acylindrically hyperbolic groups}: these are groups
that admit a non-elementary acylindrical isometric action on a
hyperbolic space. 

\medskip
\noindent
{\bf Theorem I.} {\it (Osin \cite{osin}) Let a group $\Gamma$, which is
not virtually cyclic, act on a $\delta$-hyperbolic metric space $X$ such that
$\gamma\in\Gamma$ is a hyperbolic WPD element. Then $\Gamma$ is an
acylindrically hyperbolic group. Thus all groups in Examples \ref{ex}
are acylindrically hyperbolic.}
\medskip

From the point of view of this paper, Osin considers a slightly
different projection distance $d_Y(x,z)$ (within uniformly bounded
distance of ours) which is better behaved, so that the action on
the resulting quasi-tree of metric spaces $\cC(\bY)$ constructed in
exactly the same way, but with each copy
$Y\in\bY$ {\it electrified} (i.e., any two 
points in $Y$ is joined by an edge of length 1), is acylindrical.

Caprace and Delzant pointed out the following curious
corollary. Recall that Burger-Mozes \cite{BM2} constructed an example of a simple group, which acts freely and cocompactly on the product of two
trees. Thus the quotient is a finite non-positively curved square
complex with finitely-presented, infinite simple fundamental group.

\begin{cor}[Caprace-Delzant]
Suppose $Z$ is a finite non-positively curved square complex with no
free edges whose fundamental group is simple. Then the universal cover
$\tilde Z$ is isometric to the product of two trees.
\end{cor}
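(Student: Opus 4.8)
The plan is to combine the structure theory of non-positively curved square complexes with Theorem H (together with Corollary \ref{bushy}) to constrain the action of $\pi_1(Z)$. Since $Z$ is a finite NPC square complex with no free edges, its universal cover $\tilde Z$ is a CAT(0) square complex on which $\Gamma = \pi_1(Z)$ acts freely and cocompactly. By the structure theory of CAT(0) cube complexes (Sageev, Caprace--Sageev), $\tilde Z$ decomposes as a product of irreducible factors, and each hyperplane of $\tilde Z$ gives rise to a wall and a corresponding action on a tree (the dual tree of that wall-class in a single factor). So it suffices to show that each irreducible factor of $\tilde Z$ is a tree, equivalently that $\Gamma$ does not act on any proper irreducible CAT(0) square complex of dimension $\geq 1$ with the required properties.

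First I would invoke the dichotomy for CAT(0) cube complexes: an irreducible finite-dimensional CAT(0) cube complex with a cocompact group action either is quasi-isometric to a line, or contains a rank-one isometry (this is the cube-complex rank-rigidity of Caprace--Sageev). If some irreducible factor $F$ is not a tree but is a genuine square complex, then either $F$ is a quasi-line — impossible, since a cocompact group acting on an irreducible square complex quasi-isometric to $\R$ would be virtually cyclic and the product would not carry a simple group — or $F$ carries a rank-one element $\gamma \in \Gamma$. Because the action on $\tilde Z$ is free and cocompact (hence metrically proper), $\gamma$ is a hyperbolic WPD element with a $B$-contracting axis, so by Example \ref{ex}(3) the hypotheses of Theorem H are satisfied. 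Theorem H then produces a non-elementary action of $\Gamma$ on a quasi-tree in which $\gamma$ is a hyperbolic WPD element; by \cite{MSW} (invoked in the discussion after Theorem G) a group with a non-elementary action on a quasi-tree that is, moreover, of the relevant type admits a non-elementary action on a simplicial tree, or more directly $\Gamma$ surjects onto a non-elementary subgroup of the mapping class group of the quasi-tree acting on its boundary, yielding a proper quotient or a non-trivial normal subgroup. Either way $\Gamma$ is not simple, a contradiction.

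The cleanest route, and the one I would actually write, avoids the detour through quasi-trees: a group acting properly cocompactly on an irreducible CAT(0) square complex which is not a tree has a hyperplane stabilizer that is an infinite proper subgroup, and in fact a wall of $\tilde Z$ that does not separate the complex into the product of the wall and $\R$ gives a non-trivial action of $\Gamma$ on the associated dual tree; cocompactness forces this tree to be a line or to have more than two ends, and in the latter case the kernel of the action is a non-trivial proper normal subgroup, contradicting simplicity. So every irreducible factor must act on its dual tree with a bounded or line-like quotient, which for a square complex forces the factor to be a tree itself. Since the factors are all trees, $\tilde Z$ is a product of trees; cocompactness and absence of free edges rule out a single-tree or trivial factor unless $Z$ itself degenerates, so generically $\tilde Z$ is the product of two trees, as claimed.

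The main obstacle is the bookkeeping in the last paragraph: one must argue carefully that an irreducible CAT(0) \emph{square} complex admitting a proper cocompact action by a simple (hence, in particular, having no finite or cyclic quotients and no proper normal subgroups) group is forced to be a tree, and not merely a quasi-tree. The subtlety is that rank-one behavior alone gives a quasi-tree action via Theorem H, but one wants the \emph{exact} splitting $\tilde Z = T_1 \times T_2$; this is where the hyperplane/dual-tree argument, combined with the fact that hyperplanes in an NPC square complex with no free edges are themselves connected trees and separate the complex, does the real work. I would also need to check the edge case of irreducible factors of dimension $0$ or a single factor, which the ``no free edges'' hypothesis and cocompactness handle but which must be stated explicitly.
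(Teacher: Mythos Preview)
Your overall strategy in the second paragraph---rank rigidity followed by Theorem H to produce a hyperbolic WPD element on a quasi-tree---is exactly the paper's approach. The genuine gap is the final step: you never successfully contradict simplicity.

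Both of your proposed contradictions fail. First, invoking \cite{MSW} to promote the quasi-tree action to a simplicial tree action does not work: the paper explicitly notes (in the paragraph after Theorem G) that the quasi-trees arising from this construction are necessarily \emph{locally infinite}, precisely because otherwise \cite{MSW} would give tree actions that are known not to exist for the groups in question. Second, and more seriously, your ``cleanest route'' via dual trees of hyperplanes is based on a false premise: a non-trivial action on a tree does \emph{not} contradict simplicity. The Burger--Mozes groups themselves are simple and act faithfully and non-trivially on each tree factor. So the sentence ``the kernel of the action is a non-trivial proper normal subgroup, contradicting simplicity'' is simply wrong---the kernel may well be trivial, and a faithful tree action yields no normal subgroup.

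What the paper actually does after obtaining the WPD element $\gamma$ is invoke the theorem of Dahmani--Guirardel--Osin \cite{DGO}: for a hyperbolic WPD element acting on a hyperbolic space, the normal closure of a sufficiently high power $\gamma^m$ is a free group. This normal closure is then a non-trivial proper normal subgroup (proper because $\pi_1(Z)$, being the fundamental group of a finite NPC square complex that is not a tree, is not free), contradicting simplicity. This is the missing ingredient, and none of your alternatives substitutes for it.

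A minor point: the paper uses the Ballmann--Brin Rank Rigidity Theorem directly, which already gives the dichotomy ``product of two trees or contains a rank 1 element'' for 2-dimensional CAT(0) complexes, so there is no need to first decompose into irreducible factors.
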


\begin{proof}
By the Ballmann-Brin Rank Rigidity Theorem \cite[Th C]{BaBr} (see also
\cite{CS}) the universal cover $\tilde Z$ is either the product of two
trees or the deck group contains a rank 1 element (there is a third
possibility in general that $\tilde Z$ is a Euclidean building, which
we can exclude since $Z$ is a square complex).  In the latter case,
using Theorem H, we see that $\pi_1(Z)$ acts on a quasi-tree and
contains a hyperbolic WPD element $\gamma$. $\pi_1(Z)$ is non
elementary since it is simple and torsion-free.  Then by the work of
Dahmani-Guirardel-Osin \cite{DGO} the normal closure of $\gamma^m$ is
a free group when $m>0$ is sufficiently large, so $\pi_1(Z)$ is not
simple. (The result of \cite[Section 5 and 6]{DGO} applies to a
hyperbolic WPD element $\gamma$ that acts on a hyperbolic space.)
\end{proof}

\subsection{Bounded cohomology}

As we said Manning \cite{manning} used bounded cohomology/quasi-morphisms to show that
many groups acted on quasi-trees. Conversely, the existence of actions
of a group on a quasi-tree (with a hyperbolic WPD element) has a
consequence that the second bounded cohomology (even with coefficients in
certain representations) is ``big''. One can use such actions to give
unified constructions of quasi-morphisms on various groups $G$, and even
quasi-cocycles with coefficients in unitary representations in
uniformly convex Banach spaces.  
The case of the regular representation on $\ell^2(G)$
is of particular importance (see \cite{monod.icm}).
We investigate this in \cite{uc}.

In fact, Theorem H can be regarded as a completion of Manning's program
\cite{manning} showing that all (known) groups with big second bounded
cohomology admit (many) interesting actions on quasi-trees.

By contrast, there are many groups that do not admit nontrivial (namely, orbits are unbounded)
actions on a quasi-tree. Recall \cite{manning:qfa} that a group $G$
satisfies QFA if every action on a quasi-tree has bounded
orbits. Equivalently (see e.g. \cite{manning}) every quasi-action on a
tree has bounded orbits. If $G$ is an irreducible lattice in a higher
rank semi-simple Lie group with finite center, it is expected that $G$
has QFA. For $SL_n(\Z)$, $n\geq 3$, this is a result of Manning
\cite{manning:qfa}.

Developing Theorem E further and using Theorem H, in \cite{scl} we
construct bounded cohomology classes that are unbounded on powers of a
Dehn twist. In fact, expanding on this idea we give a precise
characterization of mapping classes that have nonzero stable
commutator length.

\subsection{$Out(F_n)$}

There is a program to prove Theorem D and a version of Theorem C for
the outer automorphism group $Out(F_n)$ of a free group $F_n$ of rank
$n$. 
 There are (at least) two analogs of the curve complex, namely the
complex of free factors and the complex of free splittings. Both have
recently been shown to be hyperbolic, the former in \cite{b-feighn2}
and the latter in \cite{handel-mosher}. The analog of subsurface
projections was defined in \cite{b-feighn} and the end result is

\medskip
\noindent
{\bf Theorem J.} {\it \cite{b-feighn} $Out(F_n)$ acts isometrically on
  a finite product of hyperbolic spaces so that every element of
  exponential growth acts with positive translation length.}
\medskip

See \cite{sisto1}, \cite{sisto2}, \cite{sisto3}, \cite{delzant}
for further applications of the projection complex techniques. 

\subsection{The tree example} \label{tree example}

Let $F_2 = \langle a,b\rangle$ be the
free group on two generators. Embed its Cayley graph (tree) in $\R^2$ such
that the $a$ edges are horizontal and the $b$ edges are vertical. The
horizontal lines are the axes of $a$ and its conjugates and we let
$\bY$ be the set of horizontal lines. Note that if $X$ and $Y$ are in
$\bY$ then $\pi_Y(X)$, the nearest point projection of $X$ to $Y$, is
a single point, and we have (P0). Then $d_Y^\pi(X,Z)$ is the diameter of the union of
$\pi_Y(X)$ and $\pi_Y(Z)$ which is of course just the distance between
the two points.

\begin{figure}
\centerline{\scalebox{0.7}{\input{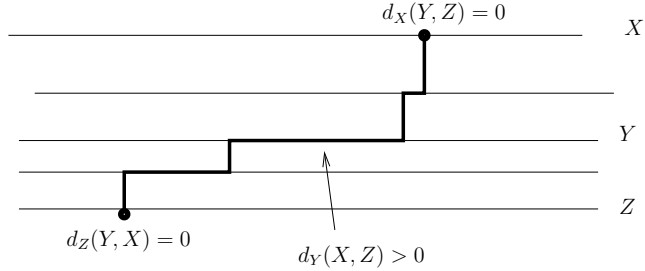}}}
\caption{Axiom (P1) for the set of horizontal 
lines in the Cayley tree. The bold line is the shortest segment 
between $X$ and $Z$}
\end{figure}

In this example it is quite easy to check that the axioms hold. We
first note that to calculate $d_Y^\pi(X,Z)$ we take the unique shortest segment  in the Cayley tree from $X$ to $Z$. If this segment
intersects $Y$ then the intersection will be a closed segment one
endpoint of which is $\pi_Y(X)$ and the other is $\pi_Y(Z)$. Then
$d_Y^\pi(X,Z)$ will be the length of the intersection. If the segment from
$X$ to $Z$ doesn't intersect $Y$ then we will have $\pi_Y(X) =
\pi_Y(Z)$ and $d_Y^\pi(X,Z) = 0$. Therefore if $d_Y^\pi(X,Z)>0$ then $d_X^\pi(Y,Z)
= d_Z^\pi(Y,Z) = 0$ which is exactly (P1) where $\xi=0$.
For (P2) we note that the elements of $\bY$
are all disjoint in the Cayley graph and therefore if the segment from
$X$ to $Z$ has length $D$ then there are at most $D/K$ elements $Y \in
\bY$ with $d_Y(X,Z) > K$ for any $K>0$.
(Notice that in the Cayley tree, $d_Y(X,Z) \ge 1$
if $d_Y(X,Z) > 0$.)

We now define the projection
complex $\cP_K(\bY)$ in this special case. Fix a constant 
$K>0$. The vertex set is
$\bY$. Two distinct vertices $X,Y$ are joined by an edge if and only
if for every $Z\in\bY$ with $X\neq Z\neq Y$ we have $d_Z^\pi(X,Y)\leq K$. 

We leave to the reader to show that for every $K>0$, $\cP_K(\bY)$ is connected (see Proposition \ref{connected}). 

To see that $\cP_K(\bY)$ is a quasi-tree we use Manning's bottleneck
criterion, which can be expressed in the following equivalent and more
convenient form: for each pair of vertices $X$ and $Y$ in $\cP_K(\bY)$
there is a path $\gamma$ joining $X$ and $Y$ in $\cP_K(\bY)$ such that any path from $X$ to
$Y$ passes within uniform distance of any vertex on $\gamma$. The
key to proving this is Proposition \ref{key} which can be paraphrased
to say that if $\{X_0,\dots, X_k\}$ is a path of vertices in
$\cP_K(\bY)$ such that each element is distance 3 or more from a
vertex $Z$ then the projection of the path to $Z$ has uniformly
bounded diameter.

In the special case that we are examining in this section it is
actually quite easy to prove an even stronger version of Proposition
\ref{key}. In this special case, if the path $\{X_0, \dots, X_k\}$ is
distance two or greater from $Z$ then $\pi_Z(X_0) = \pi_Z(X_k)$. To
prove this we take the shortest segment from $X_0$ to $Z$ in the tree and let $W
\in \bY$ be the line that contains the last horizontal sub-segment of
length $>K$ of
this segment before reaching $Z$. Such $W$ must exist since 
the distance between $X_0$ and $Z$ is at least 2. A simple inductive argument shows
that $W$ will be the line that contains the last horizontal
sub-segment of length $>K$ of the shortest
segment from $X_i$ to $Z$ for all $i =0, \dots, k$ and therefore
$\pi_Z(X_i) = \pi_Z(W)$ for all $i = 0, \dots, k$ by (P1) with $\theta=0$.

\begin{figure}
\centerline{\scalebox{0.7}{\input{tree.bottleneck.pstex_t}}}
\caption{ Proposition \ref{key} for the tree.}
\end{figure}

To finish the proof that $\cP_K(\bY)$ is a quasi-tree we examine the
sets, denoted by $\bY_K(X,Y)$, of vertices $Z \in \cP_K(\bY)$ with $d_Z(X,Y) \ge
K$. As mentioned above for each $Z \in \bY_K(X,Y)$ the shortest segment from
$X$ to $Y$ intersects $Z$. We then order the set by how these
intersections appear on the segment. With this ordering it is easy to
check that this set is a path, $\gamma$, from $X$ to $Y$ in $\cP_K(\bY)$. Proposition
\ref{key} we just discussed
implies that any path $\{X_0= X, X_1, \dots, X_k=Y\}$ must go within
distance one of every vertex $Z \in \bY_K(X,Z)$ for if not $\pi_Y(X) =
\pi_Y(Z)$ and $d_Z(X,Y) = 0 \not> K$, a contradiction. Hence,
Manning's bottleneck criterion holds for the path $\gamma$
with the constant 2 and $\cP_K(\bY)$ is a quasi-tree.

Notice that the element $a^nb^n$ is hyperbolic in $\cP_K(\bY)$ when
$K<n$ (cf. Lemma \ref{translation}) and it has bounded orbits when $K\geq n$. Thus for $K<K'$ the
natural map $\cP_K(\bY)\to\cP_{K'}(\bY)$ is Lipschitz but in general
it is not a quasi-isometry.

Also note that $\cP_K(\bY)$ is not locally finite; the infinite set of
horizontal lines that intersect a fixed vertical line are all
connected to each other in pairs.

\subsection{Plan of the paper}

We now briefly indicate the highlights of each section of the paper.

\medskip
\noindent
{\bf Section 3.} We define the projection complex starting from the
axioms. An important technicality is that we have to perturb the
initial pseudo-distance function $d^\pi$ by a bounded amount to a new
function $d$ in order to achieve a certain Monotonicity Property. The
main properties of this perturbed distance are listed in Theorem
\ref{main}. Perhaps the most important property is that the finite set
in axiom (P2) has a natural total order; this is motivated by the
Masur-Minsky hierarchy machinery.

Next, we focus on proving that the
projection complex is a quasi-tree (Theorem \ref{quasitree}). Roughly speaking, the proof
follows the argument for the tree example in section \ref{tree
  example}. There is a significant technical point here. When the
constant $\xi$ is positive, there is no reason that the projections of
the $X_i$ to $Z$ are all the same point, but instead they might be
slowly making progress along $Z$. In order to rule this possibility
out we introduce the notion of a {\it guard} and a closely related
notion of a {\it barrier}.  In order for the sequence $X_i$ to make
progress in $Z$, it first has to do so in a suitable guard. When it
looks like the given guard has been cleared, another one appears that
also must be cleared before any progress in $Z$ is made, etc. See
Lemma \ref{nested_guards} and Proposition \ref{key}. In the Cayley tree example, $W$ is a barrier. We also record an
upper and a
lower bound on the distance in the projection complex in the spirit of
the Masur-Minsky distance formula. See Proposition \ref{connected} and
Lemma \ref{masur-minsky0}. We end the section with the study of the
basic properties of the group action on the projection complex, 
including WPD (see Remark \ref{WPD free}).

\medskip
\noindent
{\bf Section 4.} We define the quasi-tree of metric spaces $\cC(\bY)$, which depends on a constant $K>0$, 
essentially by blowing up each vertex of the projection complex to the
associated metric space. In the Cayley tree example, we 
replace (blow up) the vertex for each horizontal line by the horizontal line itself.
To say it differently, we take the disjoint
union of the given collection of metric spaces and we attach edges
from every point of $\pi_Y(Z)$ to every point of $\pi_Z(Y)$ provided
that the projection distance $d_W(Y,Z)$ does not exceed some threshold
$K$ for all $W\neq Y,Z$. We then develop the basic geometry of
$\cC(\bY)$. We prove the distance formula, analogous to
Masur-Minsky's, in Theorem \ref{masur-minsky1}. We also show that the
nearest point projection of $Z$ to $Y$ in $\cC(\bY)$ coarsely agrees (i.e. in bounded Hausdorff distance) with the given
set $\pi_Y(Z)$. This will prove Theorem A. Technically, the proofs
consist of lifting the notions of guards and barriers from the
projection complex to $\cC(\bY)$.

We then proceed by to prove that various properties that hold for each
$Y\in\bY$ uniformly continue to hold for $\cC(\bY)$
in Section \ref{section.hyperbolic}. This includes
hyperbolicity, being a quasi-tree, having bounded asymptotic
dimension, and quasi-convexity. In particular, this will prove Theorem B. For example, in the Cayley tree example, $\cC(\bY)$
is a quasi-tree. 
Lastly in Section \ref{section.wwpd}, 
 for the purposes of \cite{scl} we also
discuss a certain property of the group action, called WWPD, which is
weaker than WPD. 

\medskip
\noindent
{\bf Section 5.} This section is focused on the mapping class group
and here we prove all the other theorems stated in the
introduction. Subsurface projections are defined only for subsurfaces
whose boundaries intersect. The main technical issue we have to
address is how to divide the collection of all subsurfaces into
finitely many families so that within each family subsurface
projections are well defined. This problem is quickly reduced to
showing that the curve graph (i.e. the 1-skeleton of the curve
complex) has finite coloring. We in fact show that there exist such a
coloring so that the mapping class group acts by permuting the
colors. The finite index subgroup that preserves all colors has the
property that for each of its elements $g$ and every curve $a$, either
$g(a)=a$ or $g(a)$ and $a$ intersect. See Lemmas \ref{colors} and
\ref{finite index}.

\section{The projection complex}\label{section.projection}

We start by introducing the projection complex. To define it we don't
really need the projections $\pi_A(B)$ as in axioms (P0)-(P2); we only
need the pseudo-distance $d_C^\pi(A,B)$. Accordingly the axioms are
weakened. 

\subsection{Projection complex axioms}

Let $\bY$ be a set, $\xi\ge 0$ a constant and assume that for each $Y \in \bY$ we have a
function $$\pd_Y : (\bY\setminus\{Y\}) \times (\bY\setminus\{Y\})
\longrightarrow [0,\infty).$$
The {\it projection complex axioms}
are  the following:
\begin{enumerate}[(PC 1)]

\item $\pd_Y(X,Z) = \pd_Y(Z,X)$;

\item \label{triangle} $\pd_Y(X,Z) + \pd_Y(Z,W) \ge \pd_Y(X,W)$ (triangle inequality);

\item \label{chris}   $\min\{\pd_Y(X,Z), \pd_Z(X,Y) \} \leq \lc$;

\item \label{finite}   for all $X,Z \in \bY$, $\# \{Y |
  \pd_Y(X,Z) > \xi\}$ is finite.

\end{enumerate}

As an analog of uniform boundedness of the projections $\pi_Y(Z)$ we
could require that $d^\pi_Y(Z,Z)\leq\xi$, but this will not be used in
the sequel.

\subsection{Monotonicity}

Given distance functions that satisfy the above axioms it is useful to
modify them by a bounded amount in order to achieve the Monotonicity
property (F) of Theorem \ref{main}. See Remark \ref{nonmonotonic} for
an example where (F) fails. The price we will have to pay is that
triangle inequality will hold only up to a bounded error.

The following definition is motivated by the Masur-Minsky hierarchy
theory.

\begin{definition} For $X,Z\in\bY$ with $X\neq Z$ let
$\cH(X,Z)$ to be the set of pairs $(X',Z')
\in \bY \times \bY$ with $X'\neq Z'$ such that one of the following
four holds:
\begin{itemize}
\item  both $d^\pi_X(X',Z'), d^\pi_Z(X',Z') >2\lc$;

\item $X = X'$ and $d^\pi_Z(X,Z')>2\lc$;

\item $Z = Z'$ and $d^\pi_X(X',Z)>2\lc$;

\item $(X',Z') =(X,Z).$
\end{itemize}
We can now define the modified distance functions
$$d_Y: (\bY\backslash \{Y\}) \times (\bY \backslash \{Y\}) \to [0,\infty)$$
by
$$d_Y(X,Z) =0$$
if $Y$ is contained in a pair in $\cH(X,Z)$ and
$$d_Y(X,Z) = \underset{(X',Z') \in \cH(X,Z)}{\inf} d^\pi_Y(X',Z')$$
otherwise. For example, if  $\pd_Y(W,Z) >2\xi$, then $(W,Z) \in \cH(Y,Z)$ and
$d_W(Y,Z)=0$.
\end{definition}

Note that it is clear from the definition that $d_Y(X,Z) \leq
\pd_Y(X,Z)$ and therefore (PC \ref{chris}) 
still holds for $d_Y$ with the
same constant. However we need to modify  (PC \ref{triangle})
to a
coarse triangle inequality.

\begin{prop}\label{quasiequal}
If $(X',Z') \in \cH(X,Z)$ then for every $Y\in\bY$,
$Y\not\in\{X,Z,X',Z'\}$ we have
$$d^\pi_Y(X,Z) - d^\pi_Y(X',Z') \leq 2\lc.$$
\end{prop}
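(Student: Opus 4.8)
The plan is to unwind the definition of $\cH(X,Z)$ and consider the four cases for the pair $(X',Z')$. In the fourth case, $(X',Z')=(X,Z)$, the inequality is trivial (the left side is $0$), so the content is in the first three cases, and by symmetry between $X$ and $Z$ (using (PC~1)) it suffices to handle the first two.

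First I would treat the case $X=X'$ and $d^\pi_Z(X,Z')>2\xi$. Here I want to compare $d^\pi_Y(X,Z)$ with $d^\pi_Y(X,Z')$, so the issue is to bound the ``defect'' $|d^\pi_Y(X,Z)-d^\pi_Y(X,Z')|$ in terms of the positions of $Z$ and $Z'$ relative to $Y$. The triangle inequality (PC~\ref{triangle}) for $\pd_Y$ gives $d^\pi_Y(X,Z)\le d^\pi_Y(X,Z')+d^\pi_Y(Z',Z)$ and $d^\pi_Y(X,Z')\le d^\pi_Y(X,Z)+d^\pi_Y(Z,Z')$, so the defect is at most $d^\pi_Y(Z,Z')$. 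It remains to show $d^\pi_Y(Z,Z')\le 2\xi$. Apply (PC~\ref{chris}) to the pair $Y,Z$ evaluated at the relevant arguments: $\min\{d^\pi_Y(Z',Z),\,d^\pi_Z(Z',Y)\}\le\xi$. Now I claim $d^\pi_Z(Z',Y)>\xi$: indeed by the triangle inequality on $\pd_Z$, $d^\pi_Z(Z',Y)\ge d^\pi_Z(X,Z')-d^\pi_Z(X,Y)$, and $d^\pi_Z(X,Z')>2\xi$ by hypothesis while $d^\pi_Z(X,Y)\le\xi$ — wait, that last bound is not automatic, so here is the actual obstacle, and I address it in the next paragraph.

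The genuine subtlety, and the step I expect to be the main obstacle, is controlling a term like $d^\pi_Z(X,Y)$ or the corresponding expression: nothing in the hypotheses directly bounds it. The way around this is to use (PC~\ref{chris}) on the pair $Y,Z$ in the form $\min\{d^\pi_Y(X,Z),\,d^\pi_Z(X,Y)\}\le\xi$, and split into two subcases. If $d^\pi_Z(X,Y)\le\xi$, the argument above goes through: $d^\pi_Z(Z',Y)\ge 2\xi-\xi=\xi$, in fact we get $>\xi$ once we are careful with strictness (or we simply absorb it, since the final constant is $2\xi$ and we have room), so $d^\pi_Y(Z,Z')\le\xi\le 2\xi$ by (PC~\ref{chris}), and we are done with the even better bound $\xi$. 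If instead $d^\pi_Y(X,Z)\le\xi$, then using $d^\pi_Y(X,Z')\le d^\pi_Y(X,Z)+d^\pi_Y(Z,Z')$ together with the trivial $d^\pi_Y(X,Z)\le\xi$ reduces everything again to bounding $d^\pi_Y(Z,Z')$; and now on the pair $Y,Z'$ we have $\min\{d^\pi_Y(Z,Z'),\,d^\pi_{Z'}(Z,Y)\}\le\xi$, so either $d^\pi_Y(Z,Z')\le\xi$ (done) or $d^\pi_{Z'}(Z,Y)\le\xi$, in which case $d^\pi_{Z'}(X,Z)\le d^\pi_{Z'}(X,Y)+d^\pi_{Z'}(Y,Z)$; but $d^\pi_{Z'}(X,Z)>2\xi$ would force $d^\pi_{Z'}(X,Y)>\xi$, and a final application of (PC~\ref{chris}) to the pair $X,Z'$ yields $d^\pi_X(Z',Z)\le\xi$ — running this all the way through one checks the accumulated defect never exceeds $2\xi$. (One should organize this as a short case analysis rather than the nested form sketched here; the point is that each time a distance fails to be $\le\xi$, (PC~\ref{chris}) forces its ``partner'' to be $\le\xi$, and the triangle inequalities propagate this to the quantity we want, losing at most an additive $\xi$ at each of at most two steps.)

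For the first bullet case, where both $d^\pi_X(X',Z')>2\xi$ and $d^\pi_Z(X',Z')>2\xi$, I would combine the two ``one-sided'' estimates: first compare $d^\pi_Y(X,Z)$ with $d^\pi_Y(X',Z)$ using $d^\pi_X(X',Z')>2\xi$ (together with the triangle inequality on $\pd_X$ to get $d^\pi_X(X',Z)>\xi$ or handle the complementary case via (PC~\ref{chris})), getting a defect $\le\xi$; then compare $d^\pi_Y(X',Z)$ with $d^\pi_Y(X',Z')$ using $d^\pi_Z(X',Z')>2\xi$, again a defect $\le\xi$; adding gives the total bound $2\xi$. The third bullet ($Z=Z'$, $d^\pi_X(X',Z)>2\xi$) is the mirror image of the second under the symmetry (PC~\ref{chris}) and (PC~\ref{triangle}) are both symmetric in the obvious way, so no new work is needed. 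Throughout, one only uses (PC~\ref{triangle}), (PC~\ref{chris}), and symmetry (PC~\ref{triangle}); the finiteness axiom (PC~\ref{finite}) plays no role here.
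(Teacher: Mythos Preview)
Your overall plan---case split on the four bullets defining $\cH(X,Z)$, then repeatedly use the triangle inequality (PC~2) together with (PC~3)---is exactly what the paper does. The second and third bullet cases are fine: once you split via $\min\{d^\pi_Y(X,Z),\,d^\pi_Z(X,Y)\}\le\xi$, the subcase $d^\pi_Y(X,Z)\le\xi$ is \emph{trivially} done (the left side of the target inequality is then $\le\xi$), so your further nested analysis there is unnecessary and only obscures things. The other subcase yields $d^\pi_Y(Z,Z')\le\xi$ as you outline.

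There is, however, a genuine gap in your sketch of the first bullet case. You propose to bound the defect $d^\pi_Y(X,X')$ by using $d^\pi_X(X',Z')>2\xi$ ``together with the triangle inequality on $d^\pi_X$ to get $d^\pi_X(X',Z)>\xi$.'' But inserting $Z$ is the wrong move: even if $d^\pi_X(X',Z)>\xi$, (PC~3) only gives $d^\pi_Z(X,X')\le\xi$, which says nothing about $d^\pi_Y(X,X')$. The correct intermediate is $Y$: from $d^\pi_X(X',Y)+d^\pi_X(Y,Z')\ge d^\pi_X(X',Z')>2\xi$ one of $d^\pi_X(X',Y)$, $d^\pi_X(Y,Z')$ exceeds $\xi$, and then (PC~3) yields either $d^\pi_Y(X,X')\le\xi$ or $d^\pi_Y(X,Z')\le\xi$. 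In the second alternative your ``two-step via $(X',Z)$'' decomposition does not directly give a $\xi$-defect at the first step; one needs a different route (or further case splitting) to finish.

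The paper avoids all of this proliferation by a single organizational move you are missing: it begins by assuming $d^\pi_Y(X,Z)>2\xi$, since otherwise the inequality is immediate. With that assumption in hand, in the first bullet case one may take $d^\pi_X(X',Y)>\xi$ without loss of generality (the other option is symmetric), deduce $d^\pi_Y(X,X')\le\xi$, then use $d^\pi_Y(X,Z)>2\xi$ to push through to $d^\pi_Y(X',Z)>\xi$, hence $d^\pi_Z(X',Y)\le\xi$; combined with $d^\pi_Z(X',Z')>2\xi$ this forces $d^\pi_Z(Z',Y)>\xi$ and thus $d^\pi_Y(Z,Z')\le\xi$. The argument becomes linear rather than a branching case analysis. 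I recommend you adopt this reduction at the outset.
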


\begin{proof} If $d^\pi_Y(X,Z) \leq 2\lc$ we are done since the
distances are always nonnegative. We note that if $Y$ is contained in a pair in $\cH(X,Z)$ then $\pd_Y(X,Z) \le 2\theta$ by an application of (PC 2) and (PC 3).
For the rest of the proof we now assume that $\pd_Y(X,Z) > 2\lc$ and in particular that $Y$ is not contained in a pair in $\cH(X,Z)$.

We first assume that $X$ and $Z$ are distinct form $X'$ and $Z'$. By the triangle inequality
$$d^\pi_X(X',Y) + d^\pi_X(Y,Z') \geq \pd_X(X',Z') > 2\lc$$
and therefore
$$\max\{\pd_X(X',Y), \pd_X(Y,Z')\} > \lc.$$
Without loss of generality we assume that $\pd_X(X',Y)> \lc$.

By (PC \ref{chris}) we have $\pd_Y(X,X') \leq \lc$ and again applying
the triangle inequality we have
$$\pd_Y(X,X') + \pd_Y(X',Z) \geq \pd_Y(X,Z) > 2\lc$$ and therefore
$$\pd_Y(X',Z)> 2\lc - \lc = \lc.$$
Another application of (PC \ref{chris}) gives us that $\pd_Z(X',Y) \leq \lc$.

We now apply the triangle inequality exactly as we did at the start of
the proof but replacing $X$ with $Z$. Again we get that
$$\max\{\pd_Z(X',Y), \pd_Z(Z',Y)\}> \lc$$ and since we have just seen
that $\pd_Z(X',Y) \leq \lc$ we must have $\pd_Z(Z',Y) > \lc$. Then by
(PC \ref{chris}), $\pd_Y(Z,Z') \leq \lc$.

To finish the proof  in this case we make one final application of the triangle
inequality to see that
$$\pd_Y(X,X') + \pd_Y(X',Z') + \pd_Y(Z',Z) \geq \pd_Y(X,Z)$$ and
therefore
$$\pd_Y(X,Z) - \pd_Y(X',Z') \leq 2\lc.$$

For pairs of the form $(X',Z)$ with $X' \neq X$ the proof is
easier. As before we have the inequality
$$d^\pi_X(X',Y) + d^\pi_X(Y,Z) \geq d^\pi_X(X',Z) > 2\lc.$$
Since $d^\pi_Y(X,Z)> 2\lc$ we must have $d^\pi_X(Y,Z)\leq \lc$ and therefore $d^\pi_X(X',Y)> \lc$ and $d^\pi_Y(X,X') \leq \lc$. We once again apply the triangle inequality to see that
$$d^\pi_Y(X,X') + d^\pi_Y(X',Z) \geq d^\pi_Y(X,Z)$$
and therefore
$$d_Y(X,Z) - d_Y(X',Z) \leq \lc \leq 2\lc.$$

The statement is trivial if $(X',Z') = (X,Z)$ so the proof is finished.
\end{proof}

This result has number of important consequences. Before stating them
we set notation that helps prevent a proliferation of constants. 
Given a constant $\theta \ge 0$, we say
that $x \succ y$ or $y \prec x$ if $y-x$ is bounded above by a
constant depending only on $\xi$. We also define $x \sim y$ if $x
\succ y$ and $y \succ x$. For example  (PC 3) implies
$$\min\{d_Y(X,Z), d_Z(X,Y)\} \sim 0.$$
Thus, for the purposes of this notation, we regard $\xi$ as a variable
that depends on the particular setting.
Note that transitivity holds, i.e. if $x\succ y$ and $y\succ z$ then
$x\succ z$, but the constant bounding $z-x$ is worse. Thus it is
important to ensure that transitivity is applied only to chains of
bounded length. 

Next for a constant $K>0$ we define $\bY_K(X,Z)$ to be the set of $Y \in \bY$ such that $d_Y(X,Z) >K$.

Here are the properties of the functions $d_Y$, gathered together in one theorem. One can think of them as axioms.

\begin{thm}\label{main}
There exists a $\xio>0$, depending only on $\xi$, such that the following properties hold:\\
\\
\begin{enumerate}[(A)]
\item {\bf Symmetry} $$d_Y(X,Z) = d_Y(Z,X)$$

\item {\bf Coarse equality} For all distinct $X$, $Y$ and $Z$
$$\pd_Y(X,Z) \prec  d_Y(X,Z) \leq \pd_Y(X,Z).$$

\item {\bf Coarse triangle inequality}
$$d_Y(X,Z) + d_Y(Z,W) \succ d_Y(X,W).$$

\item {\bf Inequality on triples}
$$\min\{d_Y(X,Z), d_Z(X,Y) \}\sim 0$$

\item {\bf Finiteness} $\#\{Y |
  d_Y(X,Z)\ge \xio\}$ is finite for all $X,Z \in \bY$.

\item {\bf Monotonicity} If $d_Y(X,Z) \geq \xio$ then both $d_W(X,Y),
  d_W(Z,Y) \leq d_W(X,Z).$

\item {\bf Order} The set $\bY_\xio(X,Z) \cup \{X,Z\}$ is totally
  ordered with least element $X$ and greatest element $Z$ such that given
  $Y_0, Y_1, Y_2 \in \bY_\xio(X,Z) \cup \{X,Z\}$, if
  $Y_0 <Y_1<Y_2$  then
$$d_{Y_1}(X,Z) \prec d_{Y_1}(Y_0,Y_2) \le d_{Y_1}(X,Z),$$
and 
$$d_{Y_0}(Y_1, Y_2) \sim 0 \mbox{ and } d_{Y_2}(Y_0,Y_1) \sim 0.$$

\item {\bf Barrier property} If $Y \in \bY_{\xio}(X_0,Z)$ and $Y \in
  \bY_{\xio}(X_1,Z)$ then $$d_Z(X_0,X_1) <\xio.$$
\end{enumerate}
\end{thm}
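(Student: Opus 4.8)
The strategy is to derive all eight properties from Proposition \ref{quasiequal}, treating the interplay between the original pseudo-distance $\pd_Y$ and the modified distance $d_Y$ as the organizing principle; the point of $\cH(X,Z)$ is exactly to force monotonicity, and Proposition \ref{quasiequal} says the price is only a bounded additive error. I would set the final constant $\xio$ at the end, taking it larger than all the intermediate thresholds (multiples of $\xi$) that appear. The properties split naturally into easy ones and the two real ones, (F) and (G).

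\textbf{The easy properties (A)--(E).} Symmetry (A) is immediate from the definition of $\cH(X,Z)$, since every condition defining membership is symmetric under swapping the two coordinates of the pair and simultaneously swapping $X \leftrightarrow Z$, combined with (PC 1). Coarse equality (B): the upper bound $d_Y(X,Z) \le \pd_Y(X,Z)$ holds because $(X,Z) \in \cH(X,Z)$; the lower bound $\pd_Y(X,Z) \prec d_Y(X,Z)$ is precisely Proposition \ref{quasiequal} together with the observation (made in its proof) that if $Y$ is contained in a pair of $\cH(X,Z)$ then $\pd_Y(X,Z) \le 2\xi$, so in the ``$d_Y(X,Z)=0$'' case the conclusion still holds. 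Coarse triangle inequality (C) then follows by combining (B) at $(X,Z)$, $(X,W)$, $(Z,W)$ with the genuine triangle inequality (PC 2) for $\pd_Y$. Inequality on triples (D) is (PC 3) pushed through (B). Finiteness (E) follows from (PC 4) and (B): if $d_Y(X,Z) \ge \xio$ with $\xio$ large then $\pd_Y(X,Z) > \xi$, so the set in (E) is contained in the finite set of (PC 4).

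\textbf{Monotonicity (F) --- the main obstacle.} This is the heart of the theorem and the reason $\cH(X,Z)$ was introduced. Suppose $d_Y(X,Z) \ge \xio$; I want $d_W(X,Y) \le d_W(X,Z)$ and $d_W(Z,Y) \le d_W(X,Z)$ for all admissible $W$. The key claim is a containment of index sets: $\cH(X,Y) \subseteq \cH(X,Z)$ (and symmetrically $\cH(Z,Y) \subseteq \cH(X,Z)$) whenever $d_Y(X,Z)$, equivalently $\pd_Y(X,Z)$, is large enough. Granting this, $d_W(X,Y) = \inf_{(X',Z') \in \cH(X,Y)} \pd_W(X',Z') \ge \inf_{(X',Z') \in \cH(X,Z)} \pd_W(X',Z') = d_W(X,Z)$ --- wait, the inequality goes the wrong way for an infimum over a smaller set, so the correct formulation is that I want each pair appearing in $\cH(X,Z)$ that could make $d_W(X,Z)$ small to also appear in $\cH(X,Y)$, i.e. I actually need $\cH(X,Z) \subseteq \cH(X,Y) \cup \{\text{pairs that are irrelevant for } W\}$, and the genuinely delicate part is handling the pair $(X,Z)$ itself: one shows that if $\pd_Y(X,Z)$ is large then $(X,Z) \in \cH(X,Y)$ holds in the relevant coarse sense, or that $Y$ is contained in a pair of $\cH(X,Z)$ forcing $d_Y(X,Z)=0$, a contradiction. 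I would organize this as a careful case analysis on which of the four bullets in the definition of $\cH$ a given pair $(X',Z') \in \cH(X,Z)$ satisfies, using (PC 3) repeatedly to propagate the hypothesis $\pd_Y(X,Z) > 2\xi$ (in fact $> \xio$) into statements like $\pd_Y(X,W') \le \xi$ or $\pd_{W'}(\cdot,\cdot) > 2\xi$. This is where transitivity of $\prec$ must be applied only finitely many times, as the remark after the constant notation warns.

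\textbf{Order (G) and Barrier (H).} For (G), define $Y_0 < Y_1$ on $\bY_\xio(X,Z) \cup \{X,Z\}$ by, say, $Y_0 < Y_1$ iff $d_{Y_1}(X,Y_0)$ is small (equivalently $Y_1$ is ``on the $Z$ side'' of $Y_0$); one must check this is a total order, which reduces to showing that for any two $Y_0, Y_1$ in the set exactly one of $d_{Y_1}(X,Y_0) \sim 0$ or $d_{Y_0}(X,Y_1) \sim 0$ holds --- this uses (D) applied to the triple $X, Y_0, Y_1$, plus (F) to rule out that both $d_{Y_0}(X,Z)$ and $d_{Y_1}(X,Z)$ can be large while the cross-distances are both large, and (F) again for transitivity of the order. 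The inequalities $d_{Y_1}(X,Z) \prec d_{Y_1}(Y_0,Y_2) \le d_{Y_1}(X,Z)$ follow from (C) and (F): monotonicity gives $d_{Y_1}(X,Y_0) \le d_{Y_1}(X,Z)$ and $d_{Y_1}(Z,Y_2) \le d_{Y_1}(X,Z)$, both of which are $\sim 0$ by the order, and then the coarse triangle inequality $d_{Y_1}(Y_0,Y_2) \succ d_{Y_1}(X,Z) - d_{Y_1}(X,Y_0) - d_{Y_1}(Z,Y_2)$ gives the lower bound, the upper bound being $\cH(Y_0,Y_2) \supseteq$-type reasoning or just monotonicity. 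Finally (H) is essentially a restatement: if $Y \in \bY_\xio(X_0,Z) \cap \bY_\xio(X_1,Z)$ then $Y$ is the greatest element below $Z$ in both orders $\bY_\xio(X_0,Z)$ and $\bY_\xio(X_1,Z)$ up to bounded ambiguity, and applying (F) with the roles arranged so that $d_Y(X_0,Z), d_Y(X_1,Z)$ large forces $d_Z(X_0,X_1)$ small --- concretely, (F) applied to $d_Y(X_0,Z) \ge \xio$ gives $d_Z(X_0,Y) \le d_Z(X_0,Z)$... more precisely one uses (D) on the triple $X_0, Y, Z$ and on $X_1, Y, Z$ to get $d_Z(X_0,Y) \sim 0$ and $d_Z(X_1,Y) \sim 0$, then the coarse triangle inequality (C) yields $d_Z(X_0,X_1) \prec d_Z(X_0,Y) + d_Z(Y,X_1) \sim 0$, hence $< \xio$ after enlarging $\xio$. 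The main obstacle throughout remains establishing the set-containment properties of $\cH$ underlying (F) cleanly, since everything else is bookkeeping with $\prec$ and $\sim$ on top of (F) and Proposition \ref{quasiequal}.
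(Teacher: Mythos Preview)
Your proposal is essentially correct and follows the paper's approach, but your treatment of (F) is unnecessarily tangled. After you self-correct to the right containment direction, the claim you want is exactly $\cH(X,Z) \subseteq \cH(X,Y) \cap \cH(Z,Y)$, with no need for ``pairs irrelevant for $W$'' or a laborious case split via (PC 3). The one-line argument: if $(X',Z') \in \cH(X,Z)$ and $d_Y(X,Z) > 4\xi$, then Proposition~\ref{quasiequal} gives $\pd_Y(X',Z') \ge \pd_Y(X,Z) - 2\xi > 2\xi$; this single inequality $\pd_Y(X',Z') > 2\xi$ is exactly what places $(X',Z')$ into $\cH(X,Y)$ (and $\cH(Z,Y)$) under whichever bullet of the definition applied for $\cH(X,Z)$, including the pair $(X,Z)$ itself via the second bullet. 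So the ``careful case analysis'' collapses, and this is why you named Proposition~\ref{quasiequal} as the organizing principle at the outset---you should invoke it here directly rather than reverting to raw (PC 3).

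Your argument for (H) is actually cleaner than the paper's. The paper argues by contradiction using monotonicity and the order on $\bY_\xio(X_0,X_1)$, whereas you go straight through (D) and (C): from $d_Y(X_i,Z) \ge \xio$ and (D) you get $d_Z(X_i,Y) \sim 0$, then (C) gives $d_Z(X_0,X_1) \sim 0 < \xio$. Both work; yours avoids invoking (G).
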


\begin{proof} 
For each property we will see that there is some constant $\xio$ so
that the property holds for any larger choice of constant. Therefore, 
 in the proof of each property, we will use the properties we have 
 already showed. Throughout
the proof one should think of $\xi$ as being fixed but $\xio$ as a
variable that won't be fixed until the end of the proof.

(A) - (E). The symmetry property follows from the symmetry property for $d^\pi_Y$
and the definition of $d_Y$. The coarse equality property is just a
restatement of Proposition \ref{quasiequal} with our new notation. The
coarse triangle inequality, the inequality on triples and the
finiteness property all follow from the corresponding properties for
$d^\pi_Y$ plus coarse equality. Note that the inequality on triples
and the finiteness property hold for any $\xio \ge \xi$.  This will be
important in the proof of the order property.

(F). The monotonicity property requires a bit of work. We show that  for any
$\xio > 4\xi$ if
$Y\in\bY_\xio(X,Z)$ then $$\cH(X,Z)\subseteq \cH(X,Y)\cap\cH(Z,Y).$$ If $(X',Z') \in \cH(X,Z)$ then by Proposition
\ref{quasiequal} we have
$$\pd_Y(X,Z) -\pd_Y(X',Z') \leq 2\lc$$ and since $\pd_Y(X,Z) \geq
d_Y(X,Z)\geq \xio > 4\lc$ we have $\pd_Y(X',Z') > 2\lc$. In
particular $(X',Z')$ is in both $\cH(X,Y)$ and $\cH(Z,Y)$ and the
inequalities follow.
We have showed that the monotonicity holds for any constant 
$> 4\xi$.

(G). The proof of the order property is more involved.  Let $W,Y\in\bY_\xio(X,Z)$. 
Using the inequality on triples we choose $\theta'$ with $4\theta< \theta' \sim 0$
such that (for any $X,Y,Z$) $\min\{d_Y(X,Z), d_Z(X,Y) \} \le \theta'$.

To define the order we first establish that if $\xio$ is sufficiently large then the following are equivalent.
\begin{enumerate}[(a)]
\item $d_W(X,Y) > \theta'$;

\item $d_Y(X,W) \le \theta'$;

\item $d_Y(W,Z) > \theta'$;

\item $d_W(Y,Z) \le \theta'$.
\end{enumerate}
Both (a)$\Rightarrow$(b) and (c)$\Rightarrow$(d) follow from the inequality on triples. For (b)$\Rightarrow$(c) we apply the coarse
triangle inequality to see that
$$d_Y(X,W)  + d_Y(W,Z) \succ d_Y(X,Z) > \xio,$$
so if $d_Y(X,W)\le \theta'$ then $d_Y(W,Z) \succ \xio$. In particular if $\xio$ is sufficiently large then $d_Y(W,Z)> \theta' >4\theta$. By swapping $W$ and $Y$ this also shows that (d)$\Rightarrow$(a).

We now define $W<Y$ if any, and hence all, of (a) - (d) hold. Since either $d_W(X,Y) > \theta'$ or $d_W(X,Y) \le \theta'$ (but not both) we must have either $W<Y$ or $Y<W$ (but not both).
To finish the definition of the order we define $X$ to be the least element and $Z$ the greatest
element. We have just shown that any two elements can be compared and
that if $Y<W$ then $W \not< Y$. 

To argue transitivity, assume that $Y_0<Y_1<Y_2$. We  assume $Y_0 \neq X$ and $Y_2 \neq Z$ since if either is held then the rest of the proof is easier and we omit it. 
As noted at the end of its proof, the monotonicity holds
for any constant $>4\theta$, instead of $\xio$, in particular for $\theta'$.
Since $Y_1 < Y_2$ we have $d_{Y_1}(X,Y_2) > \theta'> 4\xi$ and therefore monotonicity
(with respect to $\theta'$) implies that
$$\theta' < d_{Y_0}(X, Y_1) \le d_{Y_0}(X, Y_2),$$ so $Y_0 < Y_2$ and transitivity holds.

We now prove the two inequalities ($\prec$ and $\le$). Since $Y_0 < Y_2$ and therefore $d_{Y_0}(X, Y_2)> 4\theta$ monotonicity (for $\theta'$) also implies that
$$d_{Y_1}(Y_0,Y_2) \le d_{Y_1}(X, Y_2).$$
Since $Y_2 \in \bY_\xio(X,Z)$ we also have that $d_{Y_2}(X,Z)> 4\theta$ (if $\xio \ge 4\theta$). Therefore, again,  monotonicity implies that
$$d_{Y_1}(X, Y_2) \le d_{Y_1}(X,Z)$$
and together these two inequalities give
$$d_{Y_1}(Y_0,Y_2) \le d_{Y_1}(X, Z).$$
By the coarse
triangle inequality we have
$$d_{Y_1}(X,Y_0) + d_{Y_1}(Y_0,Y_2) + d_{Y_1}(Y_2,Z) \succ d_{Y_1}(X,Z).$$
Since $Y_0<Y_1$ and $Y_1<Y_2$, we have $d_{Y_1}(X,Y_0)\leq\xi'$ and $d_{Y_1}(Y_2,Z)\leq\xi'$. It follows that
$$d_{Y_1}(Y_0,Y_2) \succ d_{Y_1}(X,Z).$$

Finally, to see the two claims with $\sim$,  we note that if $\xio$ is sufficiently large than the last coarse inequality implies that $d_{Y_1}(Y_0,Y_2)>\theta$ so the inequality on triples implies that
$$d_{Y_0}(Y_1,Y_2) \le \theta \mbox{ and } d_{Y_2}(Y_0,Y_1) \le \theta$$
which implies both are $\sim 0$.

(H). Finally we prove the barrier property. If the conclusion fails,
i.e. if $d_Z(X_0,X_1)\geq\xio$ then
$Z\in \bY_\xio(X_0,X_1)$ and also, by monotonicity, $Y\in
\bY_\xio(X_0,X_1)$. If $Y<Z$ in $\bY_{\xio}(X_0,X_1)$
then $d_Y(X_1,Z)\leq\xi$ and if $Z<Y$
then $d_Y(X_0,Z)\leq\xi$. Either way, we have a contradiction.
\end{proof}

\begin{remark}\label{nonmonotonic}
The monotonicity property fails for the original distance
$d^\pi$. Below is an example in the setting of geodesics in $\H^2$
(see Example \ref{ex}(1)).

\centerline{\input{monotone.pstex_t}}

In the figure, $\pd_Y(X,Z)$ can be made arbitrarily large, while
$\pd_W(Y,Z)$ is slightly larger than $\pd_W(X,Z)$.
But if  $\pd_Y(X,Z) >2\xi$, then $(X,Z) \in \cH(Y,Z)$, 
therefore $d_W(Y,Z) \le d^{\pi}_W(X,Z)$.

One could define in the same way an order on
$\bY_K(X,Z)\cup\{X,Z\}$ for any $K\geq\xio$, but this order coincides
with the induced order from the larger set
$\bY_\xio(X,Z)\cup\{X,Z\}$. The order on $\bY_K(Z,X)\cup\{Z,X\}$ is
the reverse of the order on (the same set) $\bY_K(X,Z)\cup\{X,Z\}$.
\end{remark}

The following lemma is a consequence of the monotonicity property.
\begin{lemma}\label{chain}
There exists a $K>0$ with $K \prec \xio$ such that the following holds.
Let $\{Y_0, \dots, Y_n\}$ be vertices in $\bY$ such that $d_{Y_i}(Y_{i-1}, Y_{i+1}) > K$ for $i=1,\dots, n-1$. Then for each $i$, $d_{Y_i}(Y_{i-1}, Y_{i+1}) \le d_{Y_i}(Y_0,Y_n)$.
\end{lemma}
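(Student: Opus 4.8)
The plan is to induct on the "distance" from the endpoints, exploiting the Monotonicity property (F) and the Order property (G) of Theorem \ref{main} repeatedly along the chain. The key point is that for a chain $\{Y_0,\dots,Y_n\}$ with $d_{Y_i}(Y_{i-1},Y_{i+1})>K$ at each intermediate index, the intermediate vertices $Y_1,\dots,Y_{n-1}$ should all lie in $\bY_{\xio}(Y_0,Y_n)$ and be linearly ordered in the order of the index $i$. Once that is established, the inequality $d_{Y_i}(Y_{i-1},Y_{i+1})\le d_{Y_i}(Y_0,Y_n)$ is exactly the $\le$ half of the Order property applied to the ordered triple $Y_0 < Y_i < Y_n$ (with $Y_{i-1}$ squeezed below $Y_i$ and $Y_{i+1}$ squeezed above $Y_i$, so that by monotonicity $d_{Y_i}(Y_{i-1},Y_{i+1})\le d_{Y_i}(Y_0,Y_n)$).

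Concretely, first I would choose $K$ large enough (but still $K\prec\xio$, using the inequality-on-triples constant from the proof of (G)) that the following inductive claim goes through: for each $0\le a<b\le n$, if $d_{Y_i}(Y_{i-1},Y_{i+1})>K$ for all $a<i<b$, then $Y_i\in\bY_{\xio}(Y_a,Y_b)$ for all $a<i<b$ and moreover $Y_a<Y_{a+1}<\cdots<Y_b$ in the total order on $\bY_{\xio}(Y_a,Y_b)\cup\{Y_a,Y_b\}$. The base case $b=a+2$ (a single intermediate vertex) says: if $d_{Y_{a+1}}(Y_a,Y_{a+2})>K\ge\xio$ then $Y_{a+1}\in\bY_{\xio}(Y_a,Y_{a+2})$, which is immediate. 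For the inductive step, I would look at $Y_1$ (say $a=0$): knowing $d_{Y_1}(Y_0,Y_2)>K$ and, by the inductive hypothesis applied to the subchain $\{Y_1,\dots,Y_b\}$, that $d_{Y_1}(Y_0,Y_b)\succ$ something large, I would use Monotonicity to propagate membership in $\bY_{\xio}(Y_0,Y_b)$ and the characterization (a)--(d) of the order from the proof of (G) to glue the orderings of the two subchains $\{Y_0,\dots,Y_i\}$ and $\{Y_i,\dots,Y_n\}$ together consistently.

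The hard part will be the bookkeeping of constants in this inductive gluing: each application of the coarse triangle inequality and of transitivity of $\succ$ degrades the constant, and a priori the chain has unbounded length $n$, so I must be careful that the constant $K$ I need does not depend on $n$. The trick is that membership in $\bY_{\xio}(Y_0,Y_n)$ and the order comparison (a)--(d) are genuine (non-coarse) statements once $\xio$ is fixed, so once a vertex is certified to be in $\bY_{\xio}(Y_0,Y_n)$ it stays there, and transitivity of the order relation $<$ (proved in (G)) is exact, not coarse; hence the only coarse estimates are applied to chains of length at most three or four at a time. This is exactly the warning in the remark after Theorem \ref{main} about applying transitivity of $\succ$ only to bounded chains, so the argument must be arranged so that the length-$n$ induction passes exact order-theoretic data, not coarse inequalities, from one step to the next.

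Finally, with the claim in hand, fix $i$ with $1\le i\le n-1$. Apply the claim with $(a,b)=(0,n)$ to get $Y_1<\cdots<Y_{n-1}$ inside $\bY_{\xio}(Y_0,Y_n)\cup\{Y_0,Y_n\}$; in particular $Y_{i-1}<Y_i<Y_{i+1}$ there (interpreting $Y_0,Y_n$ as the endpoints). By Monotonicity (using that $Y_i\in\bY_{\xio}(Y_0,Y_n)$), $d_{Y_i}(Y_{i-1},Y_{i+1})\le d_{Y_i}(Y_0,Y_n)$, since $Y_{i-1}$ lies (weakly) between $Y_0$ and $Y_i$ and $Y_{i+1}$ lies between $Y_i$ and $Y_n$ — precisely the inequality in the Order property $d_{Y_i}(Y_{i-1},Y_{i+1})\le d_{Y_i}(Y_0,Y_n)$. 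This yields the lemma.
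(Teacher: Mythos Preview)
Your approach can be made to work, but it takes a longer route than the paper. You aim to first prove the stronger structural fact that $Y_1,\dots,Y_{n-1}$ all lie in $\bY_{\xio}(Y_0,Y_n)$ in the correct linear order, and then read off the inequality from the Order property (G). The paper instead proves a single local step directly: from $d_{Y_i}(Y_{i-1},Y_{i+1})>K$ it gets $d_{Y_{i+1}}(Y_{i-1},Y_i)\sim 0$ by the inequality on triples (D); combining with $d_{Y_{i+1}}(Y_i,Y_{i+2})>K$ via the coarse triangle inequality (C) gives $d_{Y_{i+1}}(Y_{i-1},Y_{i+2})\succ K>\xio$; then Monotonicity (F) gives $d_{Y_i}(Y_{i-1},Y_{i+1})\le d_{Y_i}(Y_{i-1},Y_{i+2})$. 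Iterating this single step (and its left-hand mirror) pushes the endpoints out to $Y_0$ and $Y_n$ with no degradation of constants, since each iteration uses only a bounded number of $\succ$-steps. This uses only (C), (D), (F) and avoids (G) entirely.

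Your inductive step as written has a slip: the hypothesis on the subchain $\{Y_1,\dots,Y_b\}$ says nothing about $Y_0$, so it cannot directly give ``$d_{Y_1}(Y_0,Y_b)\succ$ something large''. To repair it you would instead use the hypothesis on $\{Y_0,\dots,Y_{b-1}\}$, invoke the $\sim 0$ clause of (G) to get $d_{Y_{b-1}}(Y_0,Y_{b-2})\sim 0$, and then combine with $d_{Y_{b-1}}(Y_{b-2},Y_b)>K$ --- which is precisely the paper's three-line computation in disguise. So your route ends up reproducing the paper's key step, wrapped in the extra machinery of (G). The upside of your version is that you obtain the full linear ordering of the $Y_i$ inside $\bY_{\xio}(Y_0,Y_n)$ as a byproduct; the paper's version is leaner because it proves only the inequality actually asserted.
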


\begin{proof}
We will show that $d_{Y_i}(Y_{i-1}, Y_{i+1}) \le d_{Y_i}(Y_{i-1},
Y_{i+2})$. The lemma will then follow via an inductive argument. By
the inequality on triples $d_{Y_{i+1}}(Y_{i-1}, Y_i) \sim 0$. The
coarse triangle inequality implies $d_{Y_{i+1}}(Y_{i-1}, Y_{i+2})
\succ K$ so if $K$ is sufficiently large we have that
$d_{Y_{i+1}}(Y_{i-1}, Y_{i+2}) > \xio$. The monotonicity implies that
$d_{Y_i}(Y_{i-1}, Y_{i+1}) \le d_{Y_i}(Y_{i-1}, Y_{i+2})$.
\end{proof}

\subsection{The projection complex}
Unless otherwise said $\xio$ is the constant from Theorem \ref{main}.
For $K\geq \xio$  we now define the projection complex $\cS_K(\bY)$.
We always assume $K\geq \xio$.
\begin{definition}
The {\it projection complex} $\cS_K(\bY)$ is the following graph.  The
vertex set of $\cS_K(\bY)$ is $\bY$. Two distinct vertices $X$ and $Z$
are connected with an edge if $\bY_{K}(X,Z)$ is empty. Denote the
distance function for this graph by $d(,)$.
\end{definition}

In particular $d(X,Z) = 1$ if $\bY_{K}(X,Z) = \emptyset$. Note that
for different values of $K$ the spaces $\cS_K(\bY)$ are not
necessarily quasi-isometric to each other (the vertex sets are the
same, but for larger $K$ there are more edges, see Section \ref{tree
  example} for an explicit example). Our goal is to show that $\cS_K(\bY)$ is
quasi-isometric to a tree. We begin by showing that $\cS_K(\bY)$ is
connected and obtain an upper bound on the distance function.

\begin{prop}\label{connected}
If $X$ and $Z$ are vertices in $\bY$ then $d(X,Z) \le |\bY_K(X,Z)| +
1$. In particular, $\cS_K(\bY)$ is connected.
\end{prop}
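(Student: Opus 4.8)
The statement to prove is Proposition \ref{connected}: if $X,Z\in\bY$ then $d(X,Z)\le |\bY_K(X,Z)|+1$, and in particular $\cS_K(\bY)$ is connected. The natural approach is induction on $n=|\bY_K(X,Z)|$. If $n=0$ then by definition of the projection complex $X$ and $Z$ are joined by an edge, so $d(X,Z)\le 1$ and the base case holds. For the inductive step, the key is to produce a vertex $Y$ "adjacent to $X$'' (in the sense that $\bY_K(X,Y)=\emptyset$) such that $|\bY_K(Y,Z)|\le n-1$; then $d(X,Z)\le 1+d(Y,Z)\le 1+((n-1)+1)=n+1$ by induction.

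\textbf{Producing the vertex $Y$.} The right candidate is the element of $\bY_K(X,Z)$ that is \emph{smallest} in the total order of Theorem \ref{main}(G) on $\bY_K(X,Z)\cup\{X,Z\}$ (recall that since $K\ge\xio$ this ordered set makes sense and $X$ is its least element). Call this minimal element $Y$. I need two things:
\begin{itemize}
\item $\bY_K(X,Y)=\emptyset$, i.e. $X$ and $Y$ are joined by an edge. Suppose not; then there is $W$ with $d_W(X,Y)>K\ge\xio$, so $W\in\bY_{\xio}(X,Y)$. By Monotonicity (Theorem \ref{main}(F)) applied with the pair $(X,Y)$ and the fact that $d_Y(X,Z)>K\ge\xio$ (so $Y\in\bY_{\xio}(X,Z)$)\,--- more precisely, one shows $W\in\bY_K(X,Z)$ via the order/monotonicity machinery: from $d_W(X,Y)>K$ and the Barrier property / the order inequalities one deduces $d_W(X,Z)\ge d_W(X,Y)>K$ (using that $W$ would then lie strictly between $X$ and $Y$, hence strictly below $Y$ in the order, contradicting minimality of $Y$, or directly that $d_W(X,Z)\succ d_W(X,Y)$ is large). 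This contradicts $Y$ being the $<$-least element of $\bY_K(X,Z)$.
\item $Y\notin\bY_K(Y,Z)$ trivially (the function $d_Y(Y,\cdot)$ is not defined / $Y$ is excluded), and more importantly every $W\in\bY_K(Y,Z)$ already lies in $\bY_K(X,Z)$ and is $>Y$. Indeed if $d_W(Y,Z)>K$ then by Monotonicity $d_W(X,Z)\ge d_W(Y,Z)>K$ once $W\in\bY_{\xio}(\cdot)$ appropriately, so $W\in\bY_K(X,Z)$; and $W\ne Y$, $W>Y$ because $Y$ is least. Hence $|\bY_K(Y,Z)|\le |\bY_K(X,Z)\setminus\{Y\}|=n-1$.
\end{itemize}

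\textbf{Main obstacle.} The only real work is the bookkeeping in the previous paragraph: cleanly deducing "$d_W(X,Y)>K \Rightarrow d_W(X,Z)>K$'' and "$d_W(Y,Z)>K \Rightarrow d_W(X,Z)>K$'' from Monotonicity, being careful that Monotonicity is stated with threshold $\xio$ while edges are defined with threshold $K\ge\xio$ — so one must check the hypotheses $Y\in\bY_{\xio}(X,Z)$, $W\in\bY_{\xio}(X,Y)$ etc.\ are met, which they are since $K\ge\xio$. One must also make sure the chain of implications does not accumulate constants (it is a single application of Monotonicity each time, so this is fine). Alternatively, and perhaps more cleanly, one avoids picking the minimal element and instead argues directly: list $\bY_K(X,Z)=\{Y_1<Y_2<\cdots<Y_n\}$ in the Theorem \ref{main}(G) order, set $Y_0=X$, $Y_{n+1}=Z$, and show $\bY_K(Y_{i-1},Y_i)=\emptyset$ for each $i=1,\dots,n+1$ — each consecutive pair is joined by an edge — using Monotonicity to show any $W$ with $d_W(Y_{i-1},Y_i)>K$ would have to lie in $\bY_K(X,Z)$ strictly between $Y_{i-1}$ and $Y_i$, of which there are none. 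This gives a path $X=Y_0,Y_1,\dots,Y_n,Y_{n+1}=Z$ of length $n+1$ directly, yielding both the distance bound and connectedness at once. I would write it this second way, as it is the most transparent and matches the description of the path $\gamma=\bY_K(X,Y)$ used later in the paper.
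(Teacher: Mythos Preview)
Your proposal is correct, and the ``second way'' you outline at the end is exactly the paper's proof: one lists $\bY_K(X,Z)\cup\{X,Z\}=\{Y_0,\dots,Y_{k+1}\}$ in the Theorem~\ref{main}(G) order and shows each $\bY_K(Y_i,Y_{i+1})$ is empty by observing (via Monotonicity, since $d_{Y_i}(X,Z),\,d_{Y_{i+1}}(X,Z)\ge\xio$) that any $W\in\bY_K(Y_i,Y_{i+1})$ would have to lie in $\bY_K(X,Z)$ strictly between $Y_i$ and $Y_{i+1}$, which is impossible.

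Two small clean-ups for your write-up: (i) in your first bullet the Barrier property is a red herring---a single application of Monotonicity (F), using only that $d_Y(X,Z)\ge\xio$ because $Y\in\bY_K(X,Z)$, already gives $d_W(X,Z)\ge d_W(X,Y)>K$ and hence $W\in\bY_K(X,Z)$; (ii) the hypothesis of Monotonicity is a condition on $Y$ (namely $d_Y(X,Z)\ge\xio$), not on $W$, so your phrase ``once $W\in\bY_{\xio}(\cdot)$ appropriately'' should be replaced by ``since $d_Y(X,Z)\ge K\ge\xio$''.
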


\begin{proof}
Label the elements of $\bY_K(X,Z) \cup \{X,Z\}$ by $Y_0, Y_1, \dots,
Y_{k+1}$ where the indices respect the order and $k =
|\bY_K(X,Z)|$. We claim that $X=Y_0, Y_1, \dots, Y_{k+1} = Z$ is a
path from $X$ to $Z$. To see this we note that the monotonicity
property implies that if $Y \in \bY_K(Y_i, Y_{i+1})$ then $Y \in
\bY_K(X,Z)$ and $Y= Y_j$. However, since $Y_j$ cannot be between $Y_i$
and $Y_{i+1}$ we have $d_{Y_j}(Y_i, Y_{i+1}) < \xio$, a
contradiction. Therefore $\bY_K(Y_i,Y_{i+1}) = \emptyset$,
$d(Y_i,Y_{i+1}) = 1$ and we have our path from $X$ to $Z$.
\end{proof}

\subsection{Guards}
By contrast to Proposition \ref{connected}, the cardinality of
$\bY_{K}(X,Z)$ gives no lower bound on $d(X,Z)$. For example, it is
possible that $\bY_{K}(Y_1, Z) = \emptyset$ and therefore the distance from $X$
to $Z$ is two (even though $k$ is large). This highlights a key
difficulty in the paper. From the viewpoint of $X$, there appear to be
many projections larger than the $K$-threshold between $Y_1$ and
$Z$. However, from the viewpoint of $Y_1$ there are no large
projections between $Y_1$ and $Z$.
 
 A key concept in the paper is the notion of a {\em guard} and this
 notion is defined to deal with this problem. The notion 
 depends on the constant $K$. Roughly speaking, $W$ is
 a guard for $Y$ if from every viewpoint there are no large
 projections between $W$ and $Y$. 

\begin{definition}
$W\in\bY$ is a {\it guard} for $Y$ if for
 every vertex $X \in \bY$ with $W \in \bY_\xio(X,Y)$ and every $Z \in
 \bY_{K}(X,Y) \subset \bY_{\xio}(X,Y)$ then $Z \le W$. 
\end{definition}

Note that if
 $W$ is a guard for $Y$ then $d(W,Y)=1$.
 
\begin{lemma}\label{guard_constant}
For $K$ sufficiently large and vertices $X,Y,Z$ and $W$, if $W \in
\bY_{\xio}(X,Y)$, $Z \in \bY_{K}(X,Y)$ and $W< Z$ in $\bY_\xio(X,Y)$,
then $Z \in \bY_{K/2}(W,Y)$.

In particular, if  $\bY_{K/2}(W,Y) = \emptyset$ then $W$ is a guard for $Y$.
\end{lemma}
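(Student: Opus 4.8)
The plan is to unwind the hypothesis $W<Z$ in $\bY_\xio(X,Y)$ using the Order property (G) of Theorem~\ref{main}, which compares the projection $d_W(X,Y)$ with $d_W(X,Z)$-type quantities, and then apply the Coarse triangle inequality (C) to transfer the lower bound on $d_Z(X,Y)$ from the ``$X$ side'' to the ``$W$ side''. Concretely, since $W<Z$ in the ordered set $\bY_\xio(X,Y)\cup\{X,Y\}$, I would invoke the defining equivalences (a)--(d) in the proof of (G): $W<Z$ gives $d_Z(X,W)\le\theta'\sim 0$. Combined with the Coarse triangle inequality $d_Z(X,W)+d_Z(W,Y)\succ d_Z(X,Y)$ and the hypothesis $d_Z(X,Y)>K$, this forces $d_Z(W,Y)\succ K$, i.e. $d_Z(W,Y)> K - c$ for a constant $c$ depending only on $\xi$. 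Choosing $K$ large enough (larger than $\xio$ and large compared to $c$) we get $d_Z(W,Y)>\xio$, so $Z\in\bY_\xio(W,Y)$, which is the first thing we need.

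Next I would upgrade this to $Z\in\bY_{K/2}(W,Y)$. For this I would use Monotonicity (F) applied in the pair $(W,Y)$: we want to compare $d_Z(W,Y)$ with a larger projection. Actually the cleaner route is to observe that the Order property also gives $d_W(X,Z)\sim d_W(X,Y)$ (from (G) applied inside $\bY_\xio(X,Y)$ with $W$ the middle element, $X$ least, $Z$ to the right of $W$): more precisely $d_W(X,Y)\prec d_W(X,Z)\le d_W(X,Y)$, hence $d_W(X,Z)$ is within a $\xi$-bounded amount of $d_W(X,Y)$, which is $>\xio$, so $Z$ is also large from $W$'s viewpoint relative to $X$. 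Then I would run the coarse triangle inequality one more time, this time on the triple at $Z$: from $d_Z(X,Y)>K$ and $d_Z(X,W)\sim 0$ we already extracted $d_Z(W,Y)> K-c$; picking $K$ so that $K-c\ge K/2$ (equivalently $K\ge 2c$) finishes $Z\in\bY_{K/2}(W,Y)$. All constants here depend only on $\xi$, so the phrase ``for $K$ sufficiently large'' is justified; note $K\prec\xio$ is not needed in this direction, only $K\ge\xio$ together with $K\ge 2c$.

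For the ``In particular'' clause: suppose $\bY_{K/2}(W,Y)=\emptyset$ and let $X\in\bY$ with $W\in\bY_\xio(X,Y)$ and $Z\in\bY_K(X,Y)$. We must show $Z\le W$ in the order on $\bY_\xio(X,Y)\cup\{X,Y\}$. If instead $W<Z$, the main statement just proved gives $Z\in\bY_{K/2}(W,Y)$, contradicting emptiness; hence $Z\le W$, which is exactly the definition of $W$ being a guard for $Y$. (The case $Z=W$ is vacuous/allowed, and $Z=X$ or $Z=Y$ cannot occur since $Z\in\bY_K(X,Y)$ means $Z$ is a genuine vertex with $d_Z(X,Y)>K>0$.)

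The main obstacle I anticipate is bookkeeping the constants across the several applications of (C), (F), (G): each use of the coarse triangle inequality or monotonicity degrades the additive error, so I need to fix the order of the steps so that transitivity of $\prec$ is applied only along a chain of bounded length (as the paper warns right after Proposition~\ref{quasiequal}), and only then choose $K$ large relative to the finitely many accumulated constants. The key quantitative inequality to isolate and get right is $d_Z(W,Y)> K-c(\xi)$, derived from $d_Z(X,W)\sim 0$ and the coarse triangle inequality; everything else is a routine matter of choosing $K\ge\max\{\xio,\,2c(\xi)\}$.
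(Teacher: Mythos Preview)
Your proof is correct and follows essentially the same route as the paper. The paper's argument is a single line: by the Order property~(G), taking $Y_0=W$, $Y_1=Z$, $Y_2=Y$ in $\bY_\xio(X,Y)\cup\{X,Y\}$ gives $d_Z(W,Y)\succ d_Z(X,Y)>K$, hence $d_Z(W,Y)>K/2$ for $K$ large. You unpack this same inequality by hand (using $d_Z(X,W)\sim 0$ from the definition of the order plus the coarse triangle inequality), which is exactly how (G) is proved in Theorem~\ref{main}. Your second paragraph's detour through Monotonicity and another instance of (G) is unnecessary: the inequality $d_Z(W,Y)>K-c$ that you already obtained in the first paragraph immediately gives $Z\in\bY_{K/2}(W,Y)$ once $K\ge 2c$, as you yourself note at the end. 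Your treatment of the ``In particular'' clause is correct and matches the intended contrapositive.
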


\begin{proof}
Given $X,Y,Z$ and $W$ as above, by the order property we have
$$d_{Z}(W,Y) \succ d_Z(X,Y) > K$$
and therefore if $K$ is sufficiently large then
$$d_{Z}(W,Y) >K/2.$$
\end{proof}
Note that it follows from this lemma and the order property that the
least element of $\bY_{K/2}(X,Z)$ (if nonempty) is a guard for $X$ and
the greatest element is a guard for $Z$.

By definition, if $X_0, X_1$ are vertices adjacent in $\cS_K(\bY)$ 
then the projection, $d_W(X_0, X_1)$, to another vertex $W$ will be bounded above by $K$. However, if $W$ is distance two or more from one of $X_0, X_1$, we get a stronger bound.
 \begin{lemma}\label{movebound}
Let $X_0$ and $X_1$ be adjacent vertices in $\cS_K(\bY)$  and
assume $W$ is a vertex in $\bY$ with $d(X_0, W) \ge 2$. 
Then 
$$d_W(X_0, X_1) \sim 0$$
and
$$d_W(X_0,Z) \sim d_W(X_1,Z)$$
for all $Z \in \bY$.
\end{lemma}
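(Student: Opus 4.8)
The plan is to deduce both assertions of Lemma \ref{movebound} from the Finiteness property and the structure theory already established, reducing everything to the case analysis based on whether various $\bY_K$-sets are empty. First I would prove $d_W(X_0,X_1) \sim 0$. Since $X_0$ and $X_1$ are adjacent, $\bY_K(X_0,X_1)=\emptyset$, so in particular $W \notin \bY_K(X_0,X_1)$, i.e.\ $d_W(X_0,X_1) \le K$. This already gives a bound, but we want $\sim 0$, so the real content is upgrading $K$ to a constant depending only on $\xi$. Here I would use the hypothesis $d(X_0,W) \ge 2$: if $d_W(X_0,X_1)$ were larger than $\xio$, then $W \in \bY_\xio(X_0,X_1)$, and I want to derive a contradiction with the fact that $X_0$ and $X_1$ are a distance $\ge 2$ apart in a way that forces a vertex strictly between them. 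The cleanest route is via guards and Lemma \ref{guard_constant}: since $d(X_0,W)\ge 2$, $\bY_K(X_0,W) \ne \emptyset$; let $U$ be its greatest element, which is a guard for $W$ by the remark after Lemma \ref{guard_constant}. Then I would apply the order property to the triple $X_0, X_1, W$ together with $U$ to locate things and play the Behrstock-type inequality on triples off against the triangle inequality, exactly as in the proof of the order property, to bound $d_W(X_0,X_1)$ by a constant depending only on $\xi$.

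For the second assertion, once $d_W(X_0,X_1) \sim 0$ is in hand, the estimate $d_W(X_0,Z) \sim d_W(X_1,Z)$ for all $Z$ is a short consequence of the coarse triangle inequality (property (C) of Theorem \ref{main}): we have
$$d_W(X_0,Z) \prec d_W(X_0,X_1) + d_W(X_1,Z) \sim d_W(X_1,Z),$$
and symmetrically $d_W(X_1,Z) \prec d_W(X_0,Z)$, giving $d_W(X_0,Z) \sim d_W(X_1,Z)$. One caveat: $Z$ could coincide with $X_0$, $X_1$, or $W$, so I would dispose of those degenerate cases first (if $Z=W$ the statement is vacuous since $d_W$ is not defined there, and if $Z \in \{X_0,X_1\}$ it follows directly from the first part plus symmetry).

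The main obstacle I anticipate is the first assertion, specifically producing the contradiction when $d_W(X_0,X_1) > \xio$ while $d(X_0,W)\ge 2$. The subtlety is precisely the one flagged in the introduction and in Section 3.4: $\bY_K(X_0,W)$ being nonempty does not immediately interact with $\bY_\xio(X_0,X_1)$ in an obvious way, because ``from the viewpoint of $X_0$'' there may be many large projections that disappear from other viewpoints. This is exactly what guards are designed to handle, so I expect the proof to hinge on choosing the right guard and invoking the order property and monotonicity to transfer the hypothetical large projection $d_W(X_0,X_1)$ into a large projection $d_U(X_0,X_1)$ for the guard $U$, which then contradicts adjacency of $X_0$ and $X_1$ (since $U \ne X_0, X_1$ would lie in $\bY_K(X_0,X_1)$). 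Getting the constants to line up — ensuring $K$ is chosen large enough that the chain of $\succ$ inequalities stays above the relevant thresholds, while only applying transitivity a bounded number of times — will be the fiddly part, but it is routine given Theorem \ref{main} and Lemma \ref{guard_constant}.
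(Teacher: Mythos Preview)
Your second assertion is handled exactly as in the paper: once $d_W(X_0,X_1)\sim 0$ is known, the coarse triangle inequality gives both directions of $d_W(X_0,Z)\sim d_W(X_1,Z)$.

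For the first assertion, your instinct to take some $U\in\bY_K(X_0,W)$ (which exists since $d(X_0,W)\ge 2$) and then argue by contradiction that $d_U(X_0,X_1)>K$ is exactly right, and this is what the paper does. But you are significantly overcomplicating the mechanism. The paper's argument is a single line: suppose $d_W(X_0,X_1)>\xio$; then monotonicity (property (F) of Theorem~\ref{main}, applied with $Y=W$, $X=X_0$, $Z=X_1$) gives $d_U(X_0,W)\le d_U(X_0,X_1)$ for \emph{any} $U$, and in particular for your chosen $U\in\bY_K(X_0,W)$ this yields $d_U(X_0,X_1)>K$, contradicting adjacency. That's the whole proof.

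You do not need $U$ to be a guard, you do not need Lemma~\ref{guard_constant}, you do not need the order property, and you do not need to replay the Behrstock-style case analysis from the proof of Theorem~\ref{main}(G). Any element of $\bY_K(X_0,W)$ works, and monotonicity alone transfers the inequality. Indeed the paper remarks immediately after this lemma that this is \emph{the} key place where monotonicity is used; guards are introduced precisely to handle the subsequent Lemma~\ref{nested_guards} and Proposition~\ref{key}, not this one. Your proposal would likely work, but it obscures the fact that this lemma is where the payoff of the monotonicity modification $d^\pi\rightsquigarrow d$ is cashed in directly.
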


By our convention, the constants for the notation `` $\sim$'' in the statement
do not depend on $X_0, X_1, W, Z$, but only on $\theta$ (and $\xio$).

\begin{proof} Since $d(X_0,W) \ge 2$ there exists $Y \in \bY_K(X_0,W)$. If $d_W(X_0,X_1) > \xio$ then by monotonicity we have
$$d_Y(X_0,X_1) \ge d_Y(X_0,W) > K$$
which contradicts $d(X_0, X_1)  = 1$ and therefore $d_W(X_0, X_1) \le \xio$.

Applying the coarse triangle inequality we have
$$d_W(Z, X_0) + d_W(X_0, X_1) \succ d_W(Z,X_1)$$ which implies half of
the second inequality. The other half is proved by swapping $X_0$ and
$X_1$.
\end{proof}

\begin{remark}
The estimate $d_W(X_0, X_1) \sim 0$ in Lemma \ref{movebound} is the key place where we use monotonicity. In particular $d_W(X_0,X_1)$ is bounded by a constant that doesn't depend on $K$. Without monotonicity we would only have $d_W(X_0, X_1) \le K$ which is by definition true for any adjacent vertices $X_0$ and $X_1$ in $\cP_K(\bY)$. The other places where monotonicity is used, it is only for convenience to simplify the argument. The estimate in Lemma \ref{movebound} is essential for what follows. Remark \ref{non-tree} gives an example of what can go wrong.
\end{remark}

\begin{lemma}\label{nested_guards}
If $K$ is sufficiently large the following holds.  Let $X_0$ and $X_1$
be adjacent vertices with $d(X_i,Z) \ge 3$. Let $W$ be a guard for $Z$
such that $W \in \bY_{K/2}(X_0,Z)$. If $W \not\in \bY_{K/2}(X_1,Z)$
then there exists a guard $W'$ for $Z$ such that $W' \in \bY_{K/2}(X_1, Z)$
and $W \in \bY_{\xio}(W',Z)$.
\end{lemma}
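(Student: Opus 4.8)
~\textbf{Plan of proof.}
The statement has the flavor of a ``nested guards'' dichotomy: when one moves from $X_0$ to an adjacent $X_1$ and the guard $W$ for $Z$ stops working (i.e.\ $W$ is no longer seen with a large projection from $X_1$), a \emph{new} guard $W'$ appears that is still seen from $X_1$, and moreover $W'$ is further from $Z$ in the sense that $W$ lies between $W'$ and $Z$, i.e.\ $W\in\bY_{\xio}(W',Z)$. The natural strategy is to produce $W'$ as the \emph{greatest element} of the ordered set $\bY_{K/2}(X_1,Z)$ (assuming that set is nonempty), since by the remark after Lemma~\ref{guard_constant} the greatest element of $\bY_{K/2}(X_1,Z)$ is automatically a guard for $Z$. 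So the plan is: first show $\bY_{K/2}(X_1,Z)\neq\emptyset$, then let $W'$ be its greatest element, and finally verify the ordering relation $W\in\bY_{\xio}(W',Z)$.

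First I would establish nonemptiness of $\bY_{K/2}(X_1,Z)$. Since $d(X_1,Z)\ge 3$, in particular $d(X_1,Z)\ge 2$, so $\bY_K(X_1,Z)\neq\emptyset$, hence certainly $\bY_{K/2}(X_1,Z)\neq\emptyset$ (as $K/2<K$, for $K$ large these thresholds both exceed $\xio$). So $W'$, the greatest element of $\bY_{K/2}(X_1,Z)$, is well-defined and is a guard for $Z$ by the observation following Lemma~\ref{guard_constant}. The next step is to locate $W$ relative to $X_1$ and $Z$. We know $W\in\bY_{K/2}(X_0,Z)$ but $W\notin\bY_{K/2}(X_1,Z)$, so $d_W(X_1,Z)\le K/2$. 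Now apply Lemma~\ref{movebound} with the pair of adjacent vertices $X_0,X_1$ and the vertex $W$: since $d(X_1,W)\ge 2$ — this holds because $d(X_1,Z)\ge 3$ and $d(W,Z)=1$ ($W$ being a guard for $Z$), so $d(X_1,W)\ge 2$ by the triangle inequality — we get $d_W(X_0,X_1)\sim 0$ and hence $d_W(X_0,Z)\sim d_W(X_1,Z)$. The latter is $\le K/2$, so $d_W(X_0,Z)\preceq K/2$; combined with $d_W(X_0,Z)>K/2$ this is consistent only because of the ``coarse'' slack, but the point I actually want is cleaner: I want $d_Z(X_0,X_1)\sim 0$. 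That follows again from Lemma~\ref{movebound}, this time applied with the vertex $Z$ in place of $W$: since $d(X_0,Z)\ge 3\ge 2$, we get $d_Z(X_0,X_1)\sim 0$ and $d_Z(X_0,Y)\sim d_Z(X_1,Y)$ for all $Y$.

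The heart of the argument is the ordering claim $W\in\bY_{\xio}(W',Z)$, i.e.\ $d_W(W',Z)>\xio$. Here I would use the barrier property (H) of Theorem~\ref{main} together with the order property (G). Since $W\in\bY_{K/2}(X_0,Z)\subseteq\bY_{\xio}(X_0,Z)$ and $W'\in\bY_{K/2}(X_1,Z)\subseteq\bY_{\xio}(X_1,Z)$, and since $d_Z(X_0,X_1)\sim 0$ (just established) so in particular $X_0,X_1$ project close to each other as seen from $Z$ — this is exactly the setup dual to the barrier property. More directly: consider the ordered set $\bY_{\xio}(X_1,Z)\cup\{X_1,Z\}$. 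By monotonicity (since $d(X_0,Z)\ge 3$ gives $\bY_K(X_0,Z)\neq\emptyset$ and one checks $W\in\bY_{\xio}(X_1,Z)$ — this needs $d_W(X_1,Z)$ estimated, which is where I need to be careful, since we only know $d_W(X_1,Z)\le K/2$, possibly small). The cleaner route: apply monotonicity starting from $W'\in\bY_{\xio}(X_1,Z)$. Monotonicity gives $d_W(X_1,W')\le d_W(X_1,Z)$ and $d_W(Z,W')\le d_W(X_1,Z)$ — no, that's the wrong direction. Instead, I expect the right move is: since $W'$ is the greatest element of $\bY_{K/2}(X_1,Z)$ and is a guard for $Z$, and $W\in\bY_{K/2}(X_0,Z)$, use that from the viewpoint of $X_1$ (via $d_Z(X_0,X_1)\sim 0$, so $\bY_{\xio}(X_0,Z)$ and $\bY_{\xio}(X_1,Z)$ have the same ``top part'' near $Z$) $W$ must appear in the order between $W'$ and $Z$, whence $d_W(W',Z)\succ d_W(X_1,Z)$ is the wrong estimate and instead one wants to show $W$ sits strictly below $W'$ relative to the order on $\bY_{\xio}(X_0,Z)$-type sets is impossible, forcing $d_W(W',Z)$ large directly from the order property applied to the triple $W'<W<Z$. \textbf{The main obstacle} is precisely pinning down the right ambient ordered set: $W$ and $W'$ are defined relative to $X_0$ and $X_1$ respectively, and one must transfer between these two viewpoints using $d_Z(X_0,X_1)\sim 0$ and the barrier property to conclude that $W$ and $W'$ lie in a common chain in the correct order ($W'<W<Z$), at which point the order property~(G) immediately gives $d_W(W',Z)\succ d_W(\text{endpoints})$, and feeding back $W\in\bY_{K/2}(X_0,Z)$ makes that $\succ\xio$ for $K$ large. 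Once the ordering $W'<W$ is secured, the inequality $d_W(W',Z)>\xio$ and hence $W\in\bY_{\xio}(W',Z)$ is routine.
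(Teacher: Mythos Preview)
Your proposal has a genuine gap, and it is precisely the ``main obstacle'' you identify but do not resolve: your choice of $W'$ is wrong.

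You take $W'$ to be the greatest element of $\bY_{K/2}(X_1,Z)$, so that the remark after Lemma~\ref{guard_constant} makes $W'$ a guard for free. But with this choice you cannot establish $W\in\bY_{\xio}(W',Z)$. You correctly observe (via Lemma~\ref{movebound}, using $d(X_0,W)\ge 2$) that $d_W(X_1,Z)\succ K/2$ and hence $W\in\bY_{\xio}(X_1,Z)$; but nothing forces your $W'$ to be \emph{less than} $W$ in the order on $\bY_{\xio}(X_1,Z)$. The guard property of $W$ only controls elements of $\bY_K(X_1,Z)$, not of the larger set $\bY_{K/2}(X_1,Z)$, so an element of $\bY_{K/2}(X_1,Z)$ lying above $W$ is not excluded. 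If indeed $W<W'$ in $\bY_{\xio}(X_1,Z)$, then the order property (G) gives $d_W(W',Z)\sim 0$, so $W\notin\bY_{\xio}(W',Z)$ and the conclusion fails. Your attempt to argue $W'<W$ via $d_Z(X_0,X_1)\sim 0$ and the barrier property does not go through: that estimate tells you nothing about the relative position of $W$ and $W'$.

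The paper's proof resolves this by making the \emph{opposite} trade-off: it defines $W'$ as the greatest element of $\bY_{K/2}(X_1,Z)$ that is \emph{less than $W$} in $\bY_{\xio}(X_1,Z)$ (such elements exist because $d(X_1,W)\ge 2$). With this choice, the ordering $W'<W<Z$ is built in, and the order property immediately gives $d_W(W',Z)\succ d_W(X_1,Z)\succ K/2$, hence $W\in\bY_{\xio}(W',Z)$. The price is that $W'$ is no longer automatically a guard: one must argue this directly, and it is here that the hypothesis that $W$ is a guard is actually used. For any $X$ with $W'\in\bY_{\xio}(X,Z)$, monotonicity gives $W\in\bY_{\xio}(X,Z)$, so any $V\in\bY_K(X,Z)$ satisfies $V\le W$; if $W'<V$, then Lemma~\ref{guard_constant} and monotonicity put $V$ in $\bY_{K/2}(X_1,Z)$, and since $V\neq W$ (as $W\notin\bY_{K/2}(X_1,Z)$) this contradicts the maximality of $W'$ below $W$. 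Hence $V\le W'$ and $W'$ is a guard.
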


\begin{figure}
\centerline{\scalebox{0.7}{\input{guards.pstex_t}}}
\caption{Lemma \ref{nested_guards}}
\end{figure}

\begin{proof}
We assume that $W \not\in \bY_{K/2}(X_1,Z)$. Note that $d(W,Z) =1$ and since $d(X_0,Z) \ge 3$ we have $d(X_0,W) \ge 2$ and we can apply Lemma \ref{movebound}. From Lemma \ref{movebound} we see that
$$d_W(X_1,Z) \succ d_W(X_0, Z) > K/2$$
and if $K$ is sufficiently large $W \in \bY_{\xio}(X_1,Z)$.

Since $d(X_1,Z)\ge 3$ we also have $d(X_1,W) \ge 2$ so there must be elements in $\bY_{K/2}(X_1,Z)$ that are less than $W$ in $\bY_{\xio}(X_1,Z)$. We let $W'$ be the greatest such element. By the order property
$$d_{W}(W',Z) \succ d_W(X_1,Z) \succ K/2$$
and again, if $K$ is sufficient large then $W \in \bY_{\xio}(W',Z)$.

We now show that $W'$ is a guard for $Z$. Note that for any $X$ with
$d_{W'}(X,Z)>\xio$ we also have $d_{W}(X,Z) > \xio$ by
monotonicity. If $V \in \bY_K(X,Z)$ then $V \le W$ 
in $\bY_{\xio}(X,Z)$ since $W$ is
a guard. If $W' < V$ then $V \in \bY_{K/2}(W',Z) \subseteq
\bY_{K/2}(X_1,Z)$ by Lemma \ref{guard_constant} and monotonicity and
therefore $V \neq W$ since $W \not\in \bY_{K/2}(X_1,Z)$. However, this
contradicts our choice of $W'$ as the greatest element of
$\bY_{K/2}(X_1,Z)$ that is less than $W$. So, $V \le W'$.
\end{proof}

\subsection{Barriers}

\begin{definition}
A {\em barrier} between a path $\{X_0, \dots, X_k\}$ and a vertex $Z$
is a vertex $Y$ such that $Y \in \bY_\xio(X_i, Z)$ for all $i=0,\dots,
k$. 
\end{definition}

By Theorem \ref{main} if there is a barrier between $\{X_0, \dots,
X_k\}$ and $Z$ then $d_Z(X_i,X_j) < \xio$ for all $i,j$.

\begin{prop}\label{key}
If $K$ is sufficiently large the following holds. 
Assume that $\{X_0,X_1,\cdots,X_k\}$ is a path in $\cS_K (\bY)$ and $Z$ a
vertex of $\cS_K (\bY)$ such that $d(Z,X_i)\geq 3$ for all $i$. Then
there is a barrier $W$ between the path and $Z$. In particular,
$d_Z(X_0,X_i)\sim 0$ for all $i$.
\end{prop}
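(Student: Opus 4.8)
The idea is to build the barrier $W$ by an inductive ``nested guards'' argument that mimics the tree example in Section~\ref{tree example}, where the line $W$ containing the last long horizontal sub-segment before $Z$ is the barrier. The plan is to track, as we move along the path $X_0,X_1,\dots,X_k$, a guard for $Z$ that lies in $\bY_{K/2}(X_i,Z)$, and to show that although this guard may have to be replaced as we step from $X_i$ to $X_{i+1}$, the replacements only happen in a controlled, monotone way, so that one of them survives all the way along the path and becomes the desired barrier.

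First I would set up the base case. Since $d(X_0,Z)\ge 3 > 1$, the set $\bY_K(X_0,Z)$ is nonempty, and by the remark after Lemma~\ref{guard_constant} its least element is a guard for $Z$; moreover since $\bY_K(X_0,Z)\subseteq \bY_{K/2}(X_0,Z)$, this guard $W_0$ lies in $\bY_{K/2}(X_0,Z)$. Now I would proceed inductively: suppose we have a guard $W_i$ for $Z$ with $W_i\in\bY_{K/2}(X_i,Z)$. If also $W_i\in\bY_{K/2}(X_{i+1},Z)$, set $W_{i+1}=W_i$. Otherwise, since $X_i$ and $X_{i+1}$ are adjacent and $d(X_i,Z),d(X_{i+1},Z)\ge 3$, Lemma~\ref{nested_guards} applies and produces a new guard $W_{i+1}$ for $Z$ with $W_{i+1}\in\bY_{K/2}(X_{i+1},Z)$ and $W_i\in\bY_{\xio}(W_{i+1},Z)$. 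The key monotonicity feature to exploit is this last relation: each time the guard changes, the old guard sits ``between'' the new guard and $Z$ in the order on $\bY_{\xio}(W_{i+1},Z)$. I would then argue that the guards are nested in the following sense: for $i<j$, if $W_i\ne W_j$ then $W_i\in\bY_{\xio}(W_j,Z)$ and $W_i > W_j$ is impossible to reverse, so the sequence of distinct guards $W_0 > W_1 > \cdots$ (in the appropriate order, with those $\succ$-large projections to each other persisting) can be compared via the coarse triangle inequality and the order property. The crucial point is that once $W_i$ has been ``passed'' it can never reappear, and more importantly each $W_j$ with $j>i$ still satisfies $W_j\in\bY_{\xio}(X_i,Z)$ — this needs checking, using monotonicity from the relation $W_i\in\bY_{\xio}(W_j,Z)$ together with $W_i\in\bY_{K/2}(X_i,Z)$ to conclude $d_{W_j}(X_i,Z)$ is large.

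Granting that, the final guard $W:=W_k$ is the barrier: by construction $W\in\bY_{K/2}(X_k,Z)\subseteq\bY_{\xio}(X_k,Z)$, and by the nesting/monotonicity argument of the previous paragraph $W=W_k\in\bY_{\xio}(X_i,Z)$ for every $i=0,\dots,k$. This is exactly the definition of a barrier between $\{X_0,\dots,X_k\}$ and $Z$. The last sentence, $d_Z(X_0,X_i)\sim 0$, then follows immediately: by the barrier property (H) of Theorem~\ref{main}, $W\in\bY_{\xio}(X_0,Z)\cap\bY_{\xio}(X_i,Z)$ forces $d_Z(X_0,X_i)<\xio$, hence $\sim 0$ with constants depending only on $\xi$.

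The main obstacle I expect is the bookkeeping in the middle step: verifying that the replacement guard $W_{i+1}$ produced at stage $i+1$ still lies in $\bY_{\xio}(X_j,Z)$ for all earlier $j\le i$, not just for $X_{i+1}$. The relation $W_i\in\bY_{\xio}(W_{i+1},Z)$ from Lemma~\ref{nested_guards} only directly relates consecutive guards, so propagating membership backwards along the whole path requires either a careful induction on the chain $W_0,\dots,W_i$ (being mindful, as the paper warns, that transitivity of $\succ$ degrades constants and must be applied only to chains of bounded length) or, better, a uniform argument: show directly that every guard $W_j$ satisfies $W_j\in\bY_{\xio}(X_i,Z)$ for all $i$ by comparing $W_j$ with the guard $W_i$ at stage $i$ via the order and monotonicity, in a single step rather than an iterated one. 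Getting the quantifiers and the choice of $K$ right here — large enough that each invocation of ``if $K$ is sufficiently large'' in Lemmas~\ref{guard_constant}, \ref{movebound}, \ref{nested_guards} is valid simultaneously — is the delicate part; the rest is a direct application of Theorem~\ref{main}.
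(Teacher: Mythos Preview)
Your inductive construction of the guards $W_0,W_1,\dots,W_k$ via Lemma~\ref{nested_guards} is exactly the paper's argument, and the invariant you record --- that $W_i\in\bY_{\xio}(W_j,Z)$ whenever $i<j$ and $W_i\ne W_j$ --- is correct and is what the paper proves by induction using monotonicity. There are, however, two errors, one minor and one substantive.

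The minor one: the initial guard $W_0$ should be the \emph{greatest} element of $\bY_{K/2}(X_0,Z)$, not the least. The remark after Lemma~\ref{guard_constant} says the least element is a guard for $X$ and the greatest is a guard for $Z$; you need a guard for $Z$.

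The substantive one: your choice of barrier $W:=W_k$ is the wrong end of the chain. The invariant $W_i\in\bY_{\xio}(W_j,Z)$ for $i<j$ says $d_{W_i}(W_j,Z)>\xio$, i.e.\ $W_i$ sits between $W_j$ and $Z$; so $W_0$ is the guard closest to $Z$ and $W_k$ the farthest. Your sketched justification --- from $W_i\in\bY_{\xio}(W_j,Z)$ and $W_i\in\bY_{K/2}(X_i,Z)$ conclude $d_{W_j}(X_i,Z)$ is large --- does not go through: the first relation gives information about projections to $W_i$, not to $W_j$, and in fact the inequality on triples forces $d_{W_j}(W_i,Z)\sim 0$, so monotonicity gives you nothing toward a lower bound on $d_{W_j}(X_i,Z)$. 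In general $W_k$ need not lie in $\bY_{\xio}(X_0,Z)$ at all.

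The fix is to take $W:=W_0$. Then for each $i$, since $d_{W_i}(X_i,Z)>K/2\ge\xio$, monotonicity (F) gives $d_{W_0}(W_i,Z)\le d_{W_0}(X_i,Z)$; combined with $W_0\in\bY_{\xio}(W_i,Z)$ (your invariant) this yields $d_{W_0}(X_i,Z)>\xio$ in one step, with no iterated use of $\succ$. This is precisely the paper's proof.
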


\begin{proof}
We will inductively choose a family of guards $W_i$ for $Z$ such that
$W_i \in \bY_{K/2}(X_i, Z)$ and if $i>j$ then either $W_i = W_j$ or
$W_j \in \bY_{\xio}(W_i, Z)$.

We choose $W_0$ to be the greatest element of $\bY_{K/2}(X_0, Z)$, so
in particular $\bY_{K/2}(W_0,Z) = \emptyset$ by the order and
monotonicity properties. By Lemma \ref{guard_constant}, $W_0$ is a
guard for $Z$. Now assume that $W_0$ through $W_i$ have been
chosen. If $W_i \in \bY_{K/2}(X_{i+1},Z)$ then we let $W_{i+1} =
W_i$. If not, by Lemma \ref{nested_guards}, there exists a guard
$W_{i+1}$ in $\bY_{K/2}(X_{i+1}, Z)$ with $W_i \in \bY_{\xio}(W_{i+1},
Z)$. For any $j<i$, by the induction hypothesis, we have that $W_j \in
\bY_{\xio}(W_i, Z)$ and by monotonicity therefore $W_j \in
\bY_{\xio}(W_{i+1},Z)$.

Let $W= W_0$. Again applying monotonicity we have that 
$\bY_{\xio}(X_i, Z) \supseteq \bY_{\xio}(W_i,Z)$, 
therefore $W \in \bY_{\xio}(X_i, Z)$, so that $W$ is a
barrier between the path and $Z$ and that $d_Z(X_0,X_i) <
\xio$. \end{proof}

For geodesic paths we have the following corollary, analogous to
Masur-Minsky's Bounded Geodesic Image Theorem.

\begin{cor}\label{geodesic key}
If $K$ is sufficiently large the following holds. 
Assume that $\{X_0, X_1, \dots, X_k, Z\}$ is a geodesic path in
$\cP_K(\bY)$. Then $d_Z(X_0, X_i) \sim 0$ for all $i$.
\end{cor}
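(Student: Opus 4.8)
Corollary \ref{geodesic key} follows almost immediately from Proposition \ref{key}, once we handle the vertices of the geodesic that lie close to $Z$. The plan is as follows. Let $\{X_0, X_1, \dots, X_k, Z\}$ be a geodesic path in $\cP_K(\bY)$. If every $X_i$ satisfies $d(Z, X_i) \ge 3$, then Proposition \ref{key} applies directly and gives $d_Z(X_0, X_i) \sim 0$ for all $i$, so there is nothing more to do. The only issue is that some terminal segment of the geodesic, say $X_m, X_{m+1}, \dots, X_k$, may consist of vertices at distance $1$ or $2$ from $Z$. Because the path is a geodesic, these ``close'' vertices form an honest terminal interval: once $d(Z, X_i) \le 2$ we cannot have $d(Z, X_{i+1}) \ge 3$, so there is a well-defined index $m$ with $d(Z, X_i) \ge 3$ for $i < m$ and $d(Z, X_i) \le 2$ for $i \ge m$ (if no $X_i$ is close we set $m = k+1$ and we are in the easy case above).

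First I would apply Proposition \ref{key} to the sub-path $\{X_0, \dots, X_{m-1}\}$, which satisfies the hypothesis $d(Z, X_i) \ge 3$, to conclude $d_Z(X_0, X_i) \sim 0$ for $0 \le i \le m-1$. Second, I would control the remaining vertices $X_m, \dots, X_k$ by walking from $X_{m-1}$ to $Z$ in a bounded number of steps. Since the path is a geodesic and $d(Z, X_{m-1}) \ge 3$, we have $d(Z, X_m) = 2$ and $d(Z, X_i) \le 2$ for all $i \ge m$; in particular each $X_i$ with $i \ge m$ is within distance $2$ of $Z$. The cleanest way to finish is to note that for any vertex $V$ with $d(Z, V) \le 2$, we have $d_Z(X_0, V) \sim 0$: pick a geodesic $\{X_0, \dots, X_{m-1}, V'\!, V\}$ realizing a short path from $X_{m-1}$ to $V$ (possibly $V' = V$ or the path has length $\le 2$ from $X_{m-1}$), and apply Lemma \ref{movebound} repeatedly along the at-most-two edges from $X_{m-1}$ to $V$, each of which has an endpoint at distance $\ge 2$ from... — here one must be a little careful, since once we are within distance $2$ of $Z$ the edges need not have an endpoint far from $Z$.

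The robust way around this is to instead estimate $d_Z(X_0, X_i)$ for $i \ge m$ using the barrier $W$ produced by Proposition \ref{key} for the long sub-path. From that proof, $W \in \bY_\xio(X_j, Z)$ for all $j \le m-1$, and $W$ is a guard for $Z$, so $d(W, Z) = 1$. Now for $i \ge m$, the vertex $X_i$ is at distance $\le 2$ from $Z$ and at distance at most $3$ from $X_{m-1}$ (using the geodesic), hence at bounded distance from $W$; applying the coarse triangle inequality for $d_Z$ together with Lemma \ref{movebound} along this bounded-length path (whose edges away from $Z$ do have a far endpoint, and whose edges near $Z$ contribute $d_Z(\cdot,\cdot)$ terms that are each $\le K$ but only finitely many of them, in fact at most two) yields $d_Z(X_{m-1}, X_i) \prec 0$, and combined with $d_Z(X_0, X_{m-1}) \sim 0$ from Step 1 this gives $d_Z(X_0, X_i) \sim 0$. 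The main obstacle, and the one requiring care, is exactly this bookkeeping near $Z$: Lemma \ref{movebound} gives the strong estimate $d_W(X_0, X_1) \sim 0$ only when one endpoint is at distance $\ge 2$ from $W$, so for the last one or two edges of the geodesic abutting $Z$ we only get the weak bound $\le K$, and one must check that there are only boundedly many such edges (here: at most two, since the geodesic reaches $Z$ in at most two steps after entering the distance-$2$ ball) so that transitivity of $\succ$ is applied only finitely often, keeping all implied constants depending only on $\xi$.
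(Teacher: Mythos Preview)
Your overall plan matches the paper's: apply Proposition \ref{key} to the vertices at distance $\ge 3$ from $Z$, then handle the remaining ones with Lemma \ref{movebound} and the coarse triangle inequality. But you overcomplicate the endgame and in doing so create a genuine gap.

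The problem is your treatment of the ``edges near $Z$''. You explicitly say that for those edges you fall back on the trivial bound $d_Z(\cdot,\cdot) \le K$, yet you then conclude $d_Z(X_{m-1}, X_i) \prec 0$. That does not follow: $K$ is a large threshold parameter, not a constant depending only on $\xi$, so a bound of the form $\prec 2K$ is \emph{not} $\sim 0$. The detour through the barrier $W$ does nothing to repair this.

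The fix --- which is exactly what the paper does --- is to notice that Lemma \ref{movebound} actually applies to \emph{both} of the remaining edges of the original geodesic. Since the path is a geodesic terminating at $Z$, we have $d(X_i, Z) = k+1-i$; thus in your notation $m = k-1$, and the only vertices within distance $2$ of $Z$ are $X_{k-1}$ and $X_k$. For the edge $(X_{k-2}, X_{k-1})$ the endpoint $X_{k-2}$ satisfies $d(X_{k-2}, Z) = 3 \ge 2$; for the edge $(X_{k-1}, X_k)$ the endpoint $X_{k-1}$ satisfies $d(X_{k-1}, Z) = 2 \ge 2$. Hence Lemma \ref{movebound} gives $d_Z(X_{k-2}, X_{k-1}) \sim 0$ and $d_Z(X_{k-1}, X_k) \sim 0$ directly, and two applications of the coarse triangle inequality finish the proof. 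No barrier, no weak $\le K$ bound, no $K$-dependence.
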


\begin{proof}
If $i \le k-2$ then this follows directly from Proposition \ref{key}. By Lemma \ref{movebound}, $d_Z(X_{k-2}, X_{k-1}) \sim 0$ and $d_Z(X_{k-1},X_k) \sim 0$ so the general statement then follows from the coarse triangle inequality.
\end{proof}

\subsection{$\cS_{K}(\bY)$ is a quasi-tree}
Recall \cite{manning} that a geodesic metric space $\cX$ satisfies the
{\it bottleneck property} if there is a constant $\Delta\geq 0$ such
that for any two points $x,z\in \cX$ there is a midpoint $y$
(i.e. $d(x,y)=d(y,z)=\frac 12d(x,z)$) such that any path from $x$ to
$z$ intersects the $\Delta$-neighborhood of $y$. Manning proved in
\cite{manning} that
$\cX$ is quasi-isometric to a simplicial tree (i.e. it is a {\it
  quasi-tree}) if and only if it satisfies the bottleneck property.

There is a slight reformulation of the bottleneck property that is
easier to deal with: {\it $\cX$ has the bottleneck property if and
  only if there is a constant $\Delta'$ such that for any two points
  $x,y\in\cX$ there is a path $p$ from $x$ to $y$ such that the
  $\Delta'$-neighborhood of any other path from $x$ to $y$ contains
  $p$.}

We prove this property implies the original property.
Let $g$ be a geodesic from $x$ to $y$, and $m$ be the mid point.
We claim that there is a point $m'$ in $p$ which is in the
$(2\Delta'+1)$-neighborhood of $m$.
To see this, let $p(i)$ be points  on $p$ from $x$ to $y$
with $d(p(i),p(i+1)) \le 1$. By the property, for each $i$, there is a
point $g(j_i)$ on $g$ with 
$d(p(i),g((j_i)) \le \Delta'$.
By triangle inequality, $d(g(j_i),g(j_{i+1})) \le 2\Delta'+1$.
Since $g$ is a geodesic, there must be $i$ such that 
$d(m,g(j_i)) \le 2\Delta'+1$. Set $m'=p(i)$.
Then, $d(m,m') \le 3\Delta'+1$

Now for any path $q$ from $x$ to $y$, there must be a point $m''$ in
$q$ such that $d(m',m'')$ is at most $\Delta'$.
So, $d(m,m'')$ is at most $4\Delta'+1$.

If the space is a graph, we only need to consider vertices
rather than all points in the conditions and arguments. 

We can now prove:

\begin{thm}\label{quasitree}
For $K$ sufficiently large $\cS_K({\bf
Y})$ is a quasi-tree.
Moreover, the quasi-isometry constant to a tree is 
uniform. 
\end{thm}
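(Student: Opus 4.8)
The plan is to verify Manning's bottleneck property in the reformulated form stated just above: for any two vertices $X, Z$ we must produce a path $\gamma$ from $X$ to $Z$ in $\cS_K(\bY)$ such that every other path from $X$ to $Z$ stays uniformly close to every vertex of $\gamma$. The natural candidate for $\gamma$ is the path through $\bY_K(X,Z) \cup \{X,Z\}$ listed in the order given by Theorem \ref{main}(G); by Proposition \ref{connected} this is indeed an edge path from $X$ to $Z$. So the real content is the ``every other path stays close to $\gamma$'' part, and this is exactly what Proposition \ref{key} is designed to deliver.

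First I would fix $K$ large enough that all the earlier lemmas (Lemma \ref{guard_constant}, Lemma \ref{nested_guards}, Proposition \ref{key}, Corollary \ref{geodesic key}) apply. Write $\gamma = \{X = Y_0, Y_1, \dots, Y_{k+1} = Z\}$ for the ordered path through $\bY_K(X,Z)\cup\{X,Z\}$, and let $q = \{X = Q_0, Q_1, \dots, Q_m = Z\}$ be any other edge path from $X$ to $Z$. Pick an intermediate vertex $Y_j$ of $\gamma$ with $1 \le j \le k$; I want to show some $Q_\ell$ is within uniform distance of $Y_j$ in $\cS_K(\bY)$. Suppose not, so $d(Q_\ell, Y_j) \ge 3$ for all $\ell$. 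Then Proposition \ref{key} (applied with $Z$ replaced by $Y_j$ and the path $q$) gives a barrier, hence $d_{Y_j}(Q_0, Q_m) \sim 0$, i.e. $d_{Y_j}(X, Z) \sim 0$. But $Y_j \in \bY_K(X,Z)$ means $d_{Y_j}(X,Z) > K$, and since $K$ was chosen larger than the $\sim 0$ constant this is a contradiction. Hence every intermediate $Y_j$ is within bounded distance of $q$; the endpoints $Y_0 = X$ and $Y_{k+1} = Z$ lie on $q$ trivially. This establishes the reformulated bottleneck property with a constant depending only on $\theta$ (and $\xio$), and Manning's theorem then gives that $\cS_K(\bY)$ is a quasi-tree with quasi-isometry constants depending only on that bottleneck constant, proving the ``moreover'' clause as well.

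One technical point I would be careful about: I must check $\gamma$ is actually a \emph{geodesic}, or at least handle the distance-$1$ and distance-$2$ cases separately, since Proposition \ref{key} requires $d(Y_j, Q_\ell) \ge 3$ before it applies — the argument above only rules out the case where \emph{every} $Q_\ell$ is at distance $\ge 3$ from $Y_j$, which is precisely the negation of ``$q$ comes within distance $2$ of $Y_j$'', so the logic closes up, but I would state this cleanly. I would also note that $\gamma$ need not be a geodesic itself and that is fine — the reformulated bottleneck property does not require the distinguished path to be geodesic. The main obstacle is entirely upstream: it is Proposition \ref{key}, whose proof via the nested-guards machinery (Lemma \ref{nested_guards}, Lemma \ref{guard_constant}) is the technical heart of the section. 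Given Proposition \ref{key} as a black box, Theorem \ref{quasitree} is a short deduction; the work is in justifying that all the ``$K$ sufficiently large'' hypotheses can be met simultaneously and that the resulting bottleneck constant, and hence the quasi-tree constants, are uniform over $\bY$ (they depend only on $\theta$, by inspection of the constants produced in Theorem \ref{main} and the subsequent lemmas).
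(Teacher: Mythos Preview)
Your proposal is correct and follows essentially the same argument as the paper: take the ordered path through $\bY_K(X,Z)\cup\{X,Z\}$ as the distinguished path, and if some other path stayed at distance $\geq 3$ from an intermediate $Y_j$, apply Proposition~\ref{key} to force $d_{Y_j}(X,Z)<\xio$, contradicting $Y_j\in\bY_K(X,Z)$. The paper states this with bottleneck constant $\Delta'=2$ and the uniformity clause exactly as you describe.
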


\begin{proof}
We will verify the modified bottleneck property with $\Delta'=2$.
This also implies a uniform bound on the quasi-isometry constant
\cite[Section 4]{manning}.  Let
$X,Z$ be two vertices of $\cS_K(\bY)$. The ordered set $\bY_K(X,Z)$ is
a path from $X$ to $Z$ (see the proof of Proposition \ref{connected}).  We
now check that any path $X=X_0,X_1,\cdots,X_k=Z$ from $X$ to $Z$
passes within 2 of any vertex $Y$ in $\bY_K(X,Z)$. If not, then by
Proposition \ref{key} we have $d_Y(X,Z)<\xio$
contradicting the fact that $Y\in\bY_K(X,Z)$.
\end{proof}

We could also use the original distance $d^\pi$ to define a projection
complex $\cS^\pi_K(\bY)$. However it is not a quasi-tree in
general. We sketch the construction below.

\begin{example}\label{non-tree}
(1) For any large integer $K>0$, 
$\cP^\pi_K(\bY)$ can be an arbitrarily large loop and is hence not a quasi-tree 
with a  quasi-isometry constant bounded or even a $\delta$-hyperbolic space
with $\delta$ bounded.

Suppose $\bY = \{Y_0, \dots, Y_n\}$ is finite and each $Y_i$ is a copy
of $\R$.  Fix $K>10$. For $0<i<j<n$ we define $\pi_{Y_j}(Y_i) =
\{-1\}$ and $\pi_{Y_i}(Y_j) = \{K+2\}$. For $i>0$ we define
$\pi_{Y_i}(Y_0) = \{1\}$ and $\pi_{Y_0}(Y_i) = \{K\}$. For $i< n$ we
define $\pi_{Y_i}(Y_n) = \{K\}$ and $\pi_{Y_n}(Y_i)=\{1\}$. See Figure
\ref{figure.example}. We then define $d^\pi_{Y_j}(Y_i, Y_k)$ in the
usual way. It is then straightforward to check that the axioms hold
with $\xi =3$. Furthermore, $d_{Y_j}(Y_i, Y_{i+1}) \le 2 < K$ for all
$j$ and we also have $d_{Y_j}(Y_0, Y_n)=K-1 <K$ for all $j$.
Therefore, there are edges between $Y_i$ and $Y_{i+1}$ and between
$Y_0$ and $Y_n$ in $\cS^\pi_K(\bY)$.

On the other hand, if $i<j<k$ and $i \neq 0$ or $k \neq n$ then
$d^\pi_{Y_j}(Y_i, Y_k)\ge K+1$ so there can be no other edges and
$\cP^\pi_K(\bY)$ is a loop of length $n+1$. We leave it as an exercise
to show that $\cP_K(\bY)$ is a complete graph on $n+1$ vertices
($(Y_0,Y_n) \in {\cal H}(Y_i,Y_j)$ if $0<i<j<n$, so $d_{Y_k}(Y_i,Y_j)
\le d^\pi_{Y_k}(Y_0,Y_n) =K-1$ if $0<i<k<j<n$, hence there is an edge
between $Y_i,Y_j$ in $\cS_K(\bY)$).

\begin{figure}
\centerline{\scalebox{0.6}{\input{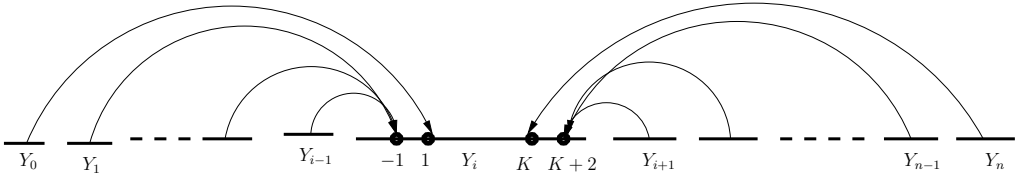}}}
\caption{Arrows indicate projections $\pi_{Y_i}(Y_j)$}
\label{figure.example}
\end{figure}

(2) Now we want to produce for a given $K>10$ an example with $\theta
=3$ such that $\cP^\pi_K(\bY)$ contains arbitrarily large
isometrically embedded loops, so it is not a quasi-tree or even a
hyperbolic space.  For simplicity, we only give an example such that
$\cP^\pi_K(\bY)$ contains a loop of length $n$ and a loop of length
$m$.  The idea is to use the examples from (1) for $n$ and $m$ and
arrange the projections such that $\cP^\pi_K(\bY)$ is a bouquet of
loops of length $n$ and $m$.

Suppose $\bY = \{Y_0, \dots, Y_n, \dots, Y_{n+m}\}$ and each $Y_i$ is $\R$.
Define 
\begin{itemize}
\item
For $i<j$ such that $\{i,j\}\cap \{0,n,n+m\}=\emptyset$
let $\pi_{Y_i}(Y_j)=\{K+2\}, \pi_{Y_j}(Y_i)=\{-1\}$.
\item
$\pi_{Y_0}(Y_i)=\{K\}$ and 
$\pi_{Y_i}(Y_0)=\{1\}$ for all $0<i$.
\item
$\pi_{Y_n}(Y_i)=\{1\}$ for all $i<n$ and
$\pi_{Y_n}(Y_i)=\{K\}$ for all $n<i$.
\item
$\pi_{Y_i}(Y_n)=\{K\}$ for all $i<n$ and
$\pi_{Y_n}(Y_i)=\{1\}$ for all $n<i$.
\item
$\pi_{Y_{n+m}}(Y_i)=\{1\}$ and 
$\pi_{Y_i}(Y_{n+m})=\{K\}$ for all $0<i$.
\end{itemize}

Again, the axioms holds for $\theta=3$. As in (1), 
 the vertices $Y_0, \cdots, Y_n$ form
a loop of length $n$ and the vertices $Y_{n}, \cdots, Y_{n+m}$
form a loop of length $m$ in $\cP^\pi_K(\bY)$, which consists 
of the two loops. On the other hand, $\cP_K(\bY)$ is a complete graph.
Note that $\cP_L(\bY)$ is a line of length $n+m$ if $ 10< L <K-1$.
See Figure \ref{figure.loop}.

\begin{figure}
\centerline{\scalebox{0.6}{\input{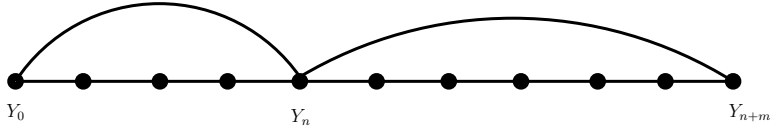}}}
\caption{$\cP^\pi_K(\bY)$ has two loops. $\cP_K(\bY)$ is a complete graph with the 
same vertex set. $\cP_L(\bY)$ and $\cP^{\pi}_L(\bY)$ are the lines of length $n+m$  without the two upper edges if $L<K-1$.}
\label{figure.loop}
\end{figure}

Similarly, we can produce an example such that 
$\cP^\pi_K(\bY)$ contains an isometrically embedded 
loop of length $n$ for all $n>0$, which is not a quasi-tree, 
while $\cP_K(\bY)$ is a complete graph. Moreover, 
$\cP_L(\bY)$ is an infinite line if $ 10< L < K-1$.

(3) Building on the examples in (2), we can produce an example such that 
$\cP^\pi_K(\bY)$ is not a quasi-tree for any $K$.
The idea is that for each large positive 
integer $K$ we first produce $\bY^K$ and projections as we did in (2) 
such that  $\cP^\pi_K(\bY^K)$ contains arbitrarily long loops. 
Next we put $\bY^K$ together for all $K$ and obtain $\bY$, then define 
projections between elements in $\bY^K$ with different $K$'s
as we did for $n$ and $m$ in (2).
 Then for each large positive
integer $L$, the resulting graph $\cP^\pi_L(\bY)$ contains
the graph $\cP^\pi_L(\bY^L)$ as a subgraph, therefore has 
arbitrarily large embedded loops.
On the other hand the quasi-tree $\cP_L(\bY)$ is  unbounded 
since it contains $\cP_L(\bY^K)$ for all $K$ but 
each of them is a geodesic line for $L<K-1$.
We leave the details to the reader.
\end{example}

\begin{lemma}\label{masur-minsky0}
There exists a $K'>0$ such that if $Y \in \bY_{K'}(X,Z)$ then every geodesic from $X$ to $Z$ in $\cS_K(\bY)$ contains $Y$.
In particular
$$d(X,Z) \ge |\bY_{K'}(X,Z)| + 1.$$
\end{lemma}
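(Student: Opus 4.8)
The plan is to mimic the argument already used to prove that $\cS_K(\bY)$ is connected with the sharp bound (Proposition \ref{connected}) and that $\cS_K(\bY)$ is a quasi-tree (Theorem \ref{quasitree}), but now run it in the other direction: instead of showing $\bY_K(X,Z)$ gives a path, show that any geodesic from $X$ to $Z$ must \emph{contain} every vertex $Y$ whose projection coordinate $d_Y(X,Z)$ exceeds a suitable (larger) threshold $K'$. The key external input is Proposition \ref{key}: if $\{X_0,\dots,X_k\}$ is a path in $\cS_K(\bY)$ with $d(Z,X_i)\ge 3$ for all $i$, then $d_Z(X_0,X_i)\sim 0$, and more precisely there is a barrier between the path and $Z$; combined with Lemma \ref{movebound} this is refined in Corollary \ref{geodesic key} to: if $\{X_0,\dots,X_k,Z\}$ is a geodesic path then $d_Z(X_0,X_i)\sim 0$ for all $i$.

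The main step is the following. Fix a geodesic $X=X_0,X_1,\dots,X_n=Z$ in $\cS_K(\bY)$ and suppose $Y\notin\{X_0,\dots,X_n\}$; I will produce an upper bound on $d_Y(X,Z)$ depending only on $\xi$. The vertex $Y$ partitions the geodesic: let $j$ be the largest index with $d(X_j,Y)\ge$ (something), or rather split the geodesic at the point where it ``passes closest'' to $Y$. Concretely, apply Corollary \ref{geodesic key} with $Y$ in the role of $Z$ to both sub-geodesics $\{X_0,\dots,X_i\}$ (oriented towards $Y$-side) — but since $Y$ need not be an endpoint, the cleaner route is: consider any vertex $X_i$ on the geodesic; the concatenation of the geodesic sub-path from $X_0$ to $X_i$ with a geodesic from $X_i$ to $Y$ (reversed), and likewise from $X_n=Z$ to $Y$. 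Using Corollary \ref{geodesic key} on a geodesic from $X$ to $Y$ (which has the form $\{X=X_0', \dots, X_m', Y\}$) we get $d_Y(X,X_i')\sim 0$ for all $i'$, hence $d_Y(X, X_m')\sim 0$ where $X_m'$ is the vertex adjacent to $Y$; since $X_m'$ is adjacent to $Y$ and $Y$ is at distance $\ge 1$ from it, Lemma \ref{movebound} gives $d_Y(X_m', Z)\sim d_Y(Y', Z)$-type control — more simply, $d_Y(X, \cdot)$ is coarsely constant along the whole geodesic from $X$ to $Y$ except near $Y$, and likewise $d_Y(Z,\cdot)$ along a geodesic from $Z$ to $Y$. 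The coarse triangle inequality then forces $d_Y(X,Z)\prec 0$, i.e.\ $d_Y(X,Z)\le K'$ for a constant $K'$ depending only on $\xi$ (and $\xio$). Contrapositively, if $d_Y(X,Z)>K'$ then $Y$ lies on every geodesic from $X$ to $Z$.

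Given that, the ``in particular'' is immediate: since every $Y\in\bY_{K'}(X,Z)$ lies on every geodesic from $X$ to $Z$, and distinct such $Y$ are distinct vertices of the geodesic, a geodesic from $X$ to $Z$ has at least $|\bY_{K'}(X,Z)|$ interior-or-endpoint vertices coming from $\bY_{K'}(X,Z)$; more carefully, the geodesic $X=X_0,\dots,X_n=Z$ has $n+1$ vertices, it contains all of $\bY_{K'}(X,Z)$, and it contains $X$ and $Z$, so unless $X,Z\in\bY_{K'}(X,Z)$ we get $n+1\ge |\bY_{K'}(X,Z)|+2$, and in all cases $n=d(X,Z)\ge |\bY_{K'}(X,Z)|+1$. (If one wants to be careful about whether $X$ or $Z$ themselves could be counted in $\bY_{K'}(X,Z)$, note $d_X(X,Z)$ and $d_Z(X,Z)$ are not even defined, so $X,Z\notin\bY_{K'}(X,Z)$, giving the bound cleanly.)

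The hard part will be extracting the \emph{uniform} constant $K'$ in the main step — i.e.\ making sure that the chain of applications of Corollary \ref{geodesic key}, Lemma \ref{movebound}, and the coarse triangle inequality is applied only finitely many times (indeed a bounded number of times, independent of the length of the geodesic), so that transitivity of $\sim$ does not degrade the constant. This is exactly the subtlety flagged in the paper after Theorem \ref{main} (``transitivity is applied only to chains of bounded length''). The point is that one only needs: coarse constancy of $d_Y(X,\cdot)$ along a geodesic from $X$ to $Y$ (one application of Corollary \ref{geodesic key}), coarse constancy of $d_Y(Z,\cdot)$ along a geodesic from $Z$ to $Y$ (one more), a single use of Lemma \ref{movebound} at the two vertices adjacent to $Y$, and one coarse triangle inequality — a bounded chain, so $K'$ depends only on $\xi$. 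Everything else is bookkeeping.
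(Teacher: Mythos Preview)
Your approach has a genuine gap. You propose to take auxiliary geodesics from $X$ to $Y$ and from $Z$ to $Y$, apply Corollary \ref{geodesic key} to each to get $d_Y(X,X_m')\sim 0$ and $d_Y(Z,X_l'')\sim 0$ where $X_m',X_l''$ are the penultimate vertices, and then conclude $d_Y(X,Z)\prec 0$ via the coarse triangle inequality. But that last step requires a bound on $d_Y(X_m',X_l'')$, and no such bound exists: two vertices both adjacent to $Y$ can have arbitrarily large $d_Y$-distance (in the Cayley tree example of Section \ref{tree example}, the lines $b\cdot Y$ and $a^nb\cdot Y$ are both adjacent to $Y$ yet $d_Y$ between them equals $n$). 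More tellingly, your argument never uses the hypothesis that $Y$ is absent from the given geodesic $X=X_0,\dots,X_n=Z$; if it worked it would show $d_Y(X,Z)\sim 0$ for \emph{every} triple, which is absurd.

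The paper's proof stays on the original geodesic throughout. It splits into two cases: if every $X_i$ is at distance $\ge 3$ from $Y$, Proposition \ref{key} gives $d_Y(X,Z)\sim 0$ directly. Otherwise, let $i^-,i^+$ be the first and last indices with $d(X_i,Y)<3$; since the path is a geodesic, $i^+-i^-\le 4$. Apply Proposition \ref{key} to the outer pieces to get $d_Y(X,X_{i^--1})\sim 0$ and $d_Y(X_{i^++1},Z)\sim 0$. For the middle piece, the crucial observation you are missing is that since $Y$ is not one of the $X_i$, each edge satisfies $d_Y(X_i,X_{i+1})\le K$ by the very definition of adjacency in $\cS_K(\bY)$; with at most six such edges, the coarse triangle inequality gives $d_Y(X_{i^--1},X_{i^++1})\prec 5K$. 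Combining, $d_Y(X,Z)\prec 5K$, so one may take $K'\sim 5K$.
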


\begin{proof}
Let $X=X_0, X_1, \dots, X_k = Z$ be a geodesic from $X$ to $Z$ that doesn't contain $Y$. We will show that $d_Y(X,Z) \prec 5K$.

If $d(X_i, Y) \ge 3$ for all $i$ then by Proposition \ref{key} we have
$d_Y(X,Z) \sim 0$. Now assume that $d(X_i, Y) < 3$ for some $i$. Let
$i^-$ be the first time that $d(X_{i^-}, Y)< 3$ and $i^+$ the last
time that $d(X_{i^+},Y) <3$. Then $i^+ - i^- \le 4$ since $d(X_{i^-},
X_{i^+}) \le 4$. For convenience we will assume $i^->0$ and $i^+<k$;
an obvious modification of the argument works when this is not the
case.  Again applying Proposition \ref{key} we have that $d_Y(X,X_{i^-
  - 1}) \sim 0$ and $d_Y(X_{i^+ + 1},Z) \sim 0$.

Since the path doesn't contain $Y$ then for all $X_i$ we have
$d_Y(X_i, X_{i+1}) \leq K$. Using this estimate and the coarse
triangle inequality six times we have $$d_Y(X_{i^- - 1}, X_{i^+ + 1})
\prec 5K.$$ Combining with our bounds on $d_Y(X,X_{i^- - 1})$ and
$d_Y(X, X_{i^+ + 1})$ and applying the coarse triangle inequality two
more times we have $d_Y(X,Z) \prec 5K$. Therefore there exists a $K'$
with $K' \sim 5K$ such that if $Y \in \bY_{K'}(X,Z)$ then every
geodesic from $X$ to $Z$ contains $Y$. This implies the lemma.
\end{proof}

The next corollary is in preparation for studying axes of isometries
on the projection complex.

Let $\alpha$ be a biinfinite geodesic in $\cP_K(\bY)$ and define for a constant $L>0$
$$\bY_L(\alpha) = \{Y \in \bY | \exists X,Z \in \alpha \mbox{ such
  that } d_Y(X,Z) > L\}$$ and the {\em stable part} of $\alpha$ by
$$\bY(\alpha) = \{Y \in \bY | \mbox{ if a biinfinite geodesic } \beta \mbox{ is parallel to
} \alpha \mbox{ then } Y \in \beta\} \subset \alpha.$$
In other words, $\bY(\alpha)$ is the intersection of all biinfinite geodesics
parallel to $\alpha$ (including $\alpha$).
 Clearly, if $\alpha$ is
parallel to $\beta$ then $\bY(\alpha)=\bY(\beta)$.

\begin{cor}\label{big projection geodesic}
Let $K'$ be the constant from Lemma
  \ref{masur-minsky0}.
\begin{enumerate}[(i)]
\item $\bY_{K'}(\alpha) \subset \alpha$.

\item There exists a $K''\ge K' >0$ such that the following holds. Let
$D>0$ be a constant, and 
  $X_0, X_1$  vertices in $\cP_K(\bY)$ and $Y_0, Y_1$ vertices in
  $\alpha$ such that $d(X_i, Y_i) < D$. If $Z \in \bY_{K''}(\alpha)$
  lies between $Y_0$ and $Y_1$ on $\alpha$ and $d(X_i,Z) > 2D+2$ then $Z \in
  \beta$ for any geodesic $\beta$ between $X_0$ and $X_1$.

\item $\bY_{K''}(\beta) \subset \bY(\alpha)$ for any geodesic $\beta$ that is parallel to $\alpha$.
\end{enumerate}
\end{cor}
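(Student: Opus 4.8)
The plan is to prove the three parts in order, using Lemma \ref{masur-minsky0} and Proposition \ref{key} as the main engines. For (i): suppose $Y \in \bY_{K'}(\alpha)$, so there are $X, Z \in \alpha$ with $d_Y(X,Z) > K'$. Since $\alpha$ is a geodesic, the segment of $\alpha$ from $X$ to $Z$ is a geodesic between them in $\cS_K(\bY)$, and Lemma \ref{masur-minsky0} forces any such geodesic to contain $Y$. Hence $Y \in \alpha$, which is exactly (i).

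For (ii): I would take $K''$ to be a constant, comparable to $K'$ (or a bounded multiple of it), to be pinned down at the end. Fix $D$, the points $X_0, X_1, Y_0, Y_1$ with $d(X_i, Y_i) < D$, and $Z \in \bY_{K''}(\alpha)$ between $Y_0$ and $Y_1$ with $d(X_i, Z) > 2D+2$. The key step is to transfer the large projection witnessed on $\alpha$ to a large projection witnessed between $X_0$ and $X_1$. By (i) applied at level $K''$, $Z \in \alpha$, so $Z$ lies on the $\alpha$-segment from $Y_0$ to $Y_1$; let $W_0, W_1$ be the endpoints of that segment, i.e.\ $W_0 = Y_0$, $W_1 = Y_1$, and $d_Z(W_0, W_1) > K''$ (here I use that $Z$ separates $Y_0$ from $Y_1$ on the geodesic $\alpha$ and a large projection along $\alpha$ is realized — this is where I need $K''$ chosen so that being in $\bY_{K''}(\alpha)$ and lying between $Y_0, Y_1$ genuinely gives $d_Z(Y_0,Y_1)$ large, which follows from Corollary \ref{geodesic key} / Proposition \ref{key} applied along $\alpha$). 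Now I want $d_Z(X_0, X_1)$ to still be $> K'$. Since $d(X_i, Z) > 2D + 2 \geq D + 2$, every vertex on a geodesic from $X_i$ to $Y_i$ (which has length $< D$) stays at distance $\geq 2$ from $Z$, so by repeated application of Lemma \ref{movebound} along that short geodesic we get $d_Z(X_i, Y_i) \sim 0$ — a bound independent of $D$ once $d(X_i,Z) \geq 2$, applied at most $D$ times gives a bound linear in $D$; but in fact Lemma \ref{movebound}'s estimate $d_W(X_0,X_1)\sim 0$ is $K$-independent, so iterating $D$ times gives $d_Z(X_i, Y_i) \prec D$. Then the coarse triangle inequality gives $d_Z(X_0, X_1) \succ d_Z(Y_0, Y_1) > K'' \succ$ (something comparable to $K'$ once $K''$ is large enough to absorb the $O(D)$ error). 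Wait — the $D$-dependence is the subtlety: I cannot absorb $D$ into a fixed $K'$. The fix is that I do \emph{not} need $d_Z(X_i,Y_i)\sim 0$; instead I should argue directly that $Z$ lies on \emph{every} geodesic between $X_0$ and $X_1$ by using that $Z$ is far from both $X_i$ (distance $> 2D+2$) together with $Z$ being a large-projection vertex on $\alpha$, and invoke Proposition \ref{key}: if some geodesic $\beta = \{X_0 = V_0, \dots, V_n = X_1\}$ missed $Z$, then since $d(X_i, Z) > 2D+2$ and $d(X_i, Y_i) < D$ we have $d(Y_i, Z) > D+2$, and one shows every $V_j$ has $d(V_j, Z) \geq 3$ — if some $V_j$ were within $2$ of $Z$, then since $\beta$ is a geodesic of length $\leq d(X_0,X_1) \leq d(X_0,Y_0)+d(Y_0,Y_1)+d(Y_1,X_1)$ and $Z$ is between $Y_0,Y_1$ on $\alpha$, a distance count contradicts $d(X_i,Z) > 2D+2$. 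Then Proposition \ref{key} gives $d_Z(X_0, X_1) \sim 0$, contradicting $d_Z(X_0, X_1) \succ d_Z(Y_0,Y_1) > K''$, provided $K''$ is chosen larger than the $\sim 0$ bound. So $Z \in \beta$.

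For (iii): let $\beta$ be a geodesic parallel to $\alpha$ and let $Z \in \bY_{K''}(\beta)$. I must show $Z$ lies on every geodesic parallel to $\alpha$. Since $\beta$ is parallel to $\alpha$, it lies in a bounded Hausdorff neighborhood, say of radius $D$. Apply part (ii) with $X_0, X_1$ ranging over vertices of $\beta$ far out on either side of $Z$ along $\beta$ and $Y_0, Y_1 \in \alpha$ chosen within $D$ of them: part (ii) then says $Z$ lies on every geodesic between $X_0$ and $X_1$; letting $X_0, X_1$ go to the two ends of $\beta$ and taking any biinfinite geodesic $\gamma$ parallel to $\alpha$ — which is then also parallel to $\beta$, hence fellow-travels it — we find $Z$ on $\gamma$ (the segment of $\gamma$ between the points of $\gamma$ closest to $X_0, X_1$ is a geodesic between points within bounded distance of $X_0, X_1$, to which (ii) applies once those points are far enough from $Z$). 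Hence $Z \in \gamma$ for every $\gamma$ parallel to $\alpha$, so $Z \in \bY(\alpha)$.

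The main obstacle is the bookkeeping in part (ii): making sure the threshold $K''$ can be chosen \emph{independently of $D$}. The route around it is exactly the one above — instead of estimating $d_Z(X_i, Y_i)$ (which incurs a $D$-dependent error and would spoil everything), one shows a putative geodesic $\beta$ from $X_0$ to $X_1$ missing $Z$ must stay distance $\geq 3$ from $Z$ (a pure distance-counting argument using $d(X_i,Z) > 2D+2$, the bound $d(X_i,Y_i)<D$, and $Z$ between $Y_0,Y_1$ on $\alpha$), and then applies Proposition \ref{key} to get the contradiction $d_Z(X_0,X_1)\sim 0$ versus $d_Z(X_0,X_1) > K''$. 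Once (ii) is set up this way, (i) and (iii) are short.
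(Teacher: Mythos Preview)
Your arguments for (i) and (iii) are fine and match the paper's approach. The gap is in (ii), specifically in your ``fix'' for the $D$-dependence problem.

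You correctly identify that iterating Lemma~\ref{movebound} along a geodesic from $X_i$ to $Y_i$ gives only $d_Z(X_i,Y_i)\prec D$, which is useless. But your proposed repair---showing that a geodesic $\beta$ from $X_0$ to $X_1$ that misses $Z$ must stay at distance $\geq 3$ from $Z$---is simply false. There is nothing preventing $\beta$ from passing through a neighbor of $Z$ without passing through $Z$ itself. Your ``distance count'' does not work: if some $V_j$ were within $2$ of $Z$, you would get $d(X_0,Z)+d(X_1,Z)\leq d(X_0,X_1)+4$, and $d(X_0,X_1)$ is bounded only by $2D+d(Y_0,Y_1)$, where $d(Y_0,Y_1)$ can be arbitrarily large. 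No contradiction arises. (Morally this had to fail: you are trying to prove $Z\in\beta$, so $\beta$ \emph{should} come close to $Z$.) Your step~4, $d_Z(X_0,X_1)\succ d_Z(Y_0,Y_1)$, is also left unjustified once you have abandoned the bound on $d_Z(X_i,Y_i)$.

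The actual fix is that your first instinct was right but you used the wrong tool. Apply Proposition~\ref{key}, not iterated Lemma~\ref{movebound}, to the \emph{short} geodesic from $X_i$ to $Y_i$. That geodesic has length $<D$, and since $d(X_i,Z)>2D+2$, every vertex on it is at distance $>D+2\geq 3$ from $Z$. Proposition~\ref{key} then gives $d_Z(X_i,Y_i)<\xio$, a bound that is \emph{independent of $D$}---this is exactly the point of Proposition~\ref{key} over Lemma~\ref{movebound}. Combine with $d_Z(Y_i,Y'_i)\sim 0$ (Corollary~\ref{geodesic key} along $\alpha$, as you already noted) to get $d_Z(X_0,X_1)\succ K''$, hence $>K'$ for $K''$ large enough, and conclude $Z\in\beta$ by Lemma~\ref{masur-minsky0}. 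This is precisely how the paper proceeds.
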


\begin{proof}
(i) follows directly from Lemma \ref{masur-minsky0}. 

For (ii) we note that there is a (geodesic) path from $Y_i$ to $X_i$ such that
every vertex in the path has distance at least 3 from $Z$ so by
Proposition \ref{key}, $d_Z(X_i, Y_i) \sim 0$. Since $Z \in
\bY_{K''}(\alpha)$ there exists $Y'_0, Y'_1 \in \alpha$ with
$d_Z(Y'_0, Y'_1) > K''$. Since $Z$ lies between $Y_0$ and $Y_1$ we can
assume that $Y_0$ and $Y'_0$ are on the same side of $Z$ (and
similarly for $Y_1$ and $Y'_1$). Therefore there is a geodesic path
from $Y_i$ to $Y'_i$ that is disjoint from $Z$ and so by Corollary
\ref{geodesic key}, $d_Z(Y_i,Y'_i) \sim 0$. Applying the coarse
triangle inequality we have that $d_Z(X_i, Y'_i) \sim 0$ and $d_Z(X_0,
X_1) \succ K''$. Therefore if $K''$ is sufficiently large $d_Z(X_0,
X_1)>K'$ and by Lemma \ref{masur-minsky0} we have that $Z$ lies in
every geodesic between $X_0$ and $X_1$.

Assume that $Z \in \bY_{K''}(\beta)$ and let $\gamma$ be parallel to
$\alpha$ (and hence $\beta$). The geodesic $\gamma$ is contained in
the $D$-Hausdorff neighborhood of $\beta$ for some $D>0$. Therefore we can find
vertices $X_0, X_1 \in \gamma$ and vertices $Y_0, Y_1 \in \beta$ such
that $d(X_i, Y_i) \le D$, $d(X_i, Z)> 2D+2$ and $Z$ lies between $Y_0$
and $Y_1$. Then by (ii), $Z \in \gamma$ and (iii) follows.
\end{proof}

Finally, we establish that the projection complex has infinite
diameter under mild conditions.

\begin{prop}[$\cS_K(\bY)$ is unbounded]
Suppose that for every $R>0$ and $A\in\bY$ there exist $B,C\in\bY$
such that $d_A^\pi(B,C)>R$. Then the diameter of $\cS_K(\bY)$ is
infinite.
\end{prop}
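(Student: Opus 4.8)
The plan is to show that under the hypothesis, for any fixed $A \in \bY$ and any prescribed distance $n$, we can find a vertex whose distance from $A$ in $\cS_K(\bY)$ is at least $n$; by Lemma~\ref{masur-minsky0} it suffices to produce $B,C$ with $|\bY_{K'}(B,C)|$ large, so the heart of the matter is building a long ``chain'' of subsurfaces with consecutive large projections. First I would invoke the hypothesis to extract, for a large parameter $R$ (taken well above $\max\{\xio, K, K'\}$ and the implied additive constants), elements $B_0, C_0 \in \bY$ with $d_A^\pi(B_0,C_0) > R$, hence $d_A(B_0,C_0) \succ R$ by Coarse equality (B) of Theorem~\ref{main}, so $A \in \bY_\xio(B_0,C_0)$ and $d(B_0,C_0) \ge 2$. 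This gives a chain of length two. The idea is then to bootstrap: having produced a chain $\{Y_0, Y_1, \dots, Y_m\}$ with $d_{Y_i}(Y_{i-1},Y_{i+1}) > K'$ for all interior $i$ (so that, by Lemma~\ref{masur-minsky0} applied to the endpoints together with Lemma~\ref{chain} and the Order property, $d(Y_0, Y_m) \ge m$), I would apply the hypothesis with center $A = Y_m$ to obtain new $B, C$ with $d_{Y_m}^\pi(B,C) > R$, and argue that one of $B$, $C$ extends the chain past $Y_m$.

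The key step — and the main obstacle — is the extension argument: given a chain ending at $Y_m$ with $d_{Y_{m-1}}(\cdot,\cdot)$-type control, and a new pair $B,C$ projecting far on $Y_m$, I must produce a genuinely new vertex $Y_{m+1}$ with $d_{Y_m}(Y_{m-1}, Y_{m+1})$ large while keeping $Y_{m+1}$ far from the earlier $Y_i$'s. By the Inequality on triples and the triangle inequality, at most one of $B,C$ can have a large $Y_m$-projection that ``agrees'' with $Y_{m-1}$; concretely, since $d_{Y_m}(B,C) \succ R$ is huge, at least one of $d_{Y_m}(Y_{m-1}, B)$, $d_{Y_m}(Y_{m-1}, C)$ exceeds, say, $R/2 - \xio$ — call that one $Y_{m+1}$. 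Then $d_{Y_m}(Y_{m-1}, Y_{m+1}) > K'$, so $Y_m \in \bY_{K'}(Y_{m-1}, Y_{m+1})$, which by Lemma~\ref{masur-minsky0} forces $Y_m$ onto every geodesic from $Y_{m-1}$ to $Y_{m+1}$; combined with the Order property and the fact that $Y_{m-1}$ already lies ``before'' $Y_m$ in $\bY_\xio(Y_{m-2}, Y_m)$, a Lemma~\ref{chain}-style induction shows $\{Y_0, \dots, Y_{m+1}\}$ is still a chain with consecutive large interior projections, and in particular all the $Y_i$ are distinct and the geodesic from $Y_0$ to $Y_{m+1}$ passes through each of them in order, so $d(Y_0, Y_{m+1}) \ge m+1$.

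Iterating this $n$ times starting from $\{B_0, A, C_0\}$ (reindexed), one obtains a chain of length $\ge n$, hence two vertices at distance $\ge n$ in $\cS_K(\bY)$; since $n$ was arbitrary, $\diam \cS_K(\bY) = \infty$. The delicate point to watch is that each application of transitivity and the coarse inequalities degrades constants, so one should fix the target length $n$ first, then choose $R$ large enough (depending on $n$) that all $n$ extension steps go through with the same effective threshold $K'$; this is exactly the ``chains of bounded length'' caveat flagged after Theorem~\ref{main}, and here it causes no trouble because, for a fixed $n$, the chain $\{Y_0,\dots,Y_n\}$ has bounded length and Lemma~\ref{chain} already packages the needed monotonicity into a clean statement with the single constant $K \prec \xio$.
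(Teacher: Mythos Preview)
Your proposal is correct and follows essentially the same strategy as the paper's proof: build a chain $Y_0,Y_1,\dots$ with $d_{Y_i}(Y_{i-1},Y_{i+1})>K'$, extend it one step at a time by invoking the hypothesis at the current endpoint and using the coarse triangle inequality to select one of the two new elements, then apply Lemma~\ref{chain} and Lemma~\ref{masur-minsky0} to read off the distance bound.

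One simplification worth noting: your worry that $R$ must depend on the target length $n$ is unnecessary. Each extension step uses the hypothesis once (to get $d_{Y_m}(B,C)\succ R$) and the coarse triangle inequality once (to get $d_{Y_m}(Y_{m-1},Y_{m+1})>K'$ for one of $B,C$), and the additive losses here are fixed constants depending only on $\xi$, not on $m$. So a single choice of $R$ large enough that $R/2$ minus these fixed constants exceeds $K'$ works for every step, and the chain can be extended indefinitely---exactly as the paper does. The ``chains of bounded length'' caveat is already absorbed into Lemma~\ref{chain}, which you correctly identify as the tool that prevents any accumulation.
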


\begin{proof}
Let $K'$ be the constant from Lemma
  \ref{masur-minsky0}.
Choose $A_0,A_1,A_2\in\bY$ such that
  $d_{A_1}(A_0,A_2)>K'$. Applying the assumption to $A_2$, find $B,C$
  so that $d_{A_2}(B,C) \succ 3K'$. It follows from the coarse triangle
  inequality that for either $A_3=B$
    or $A_3=C$ we have $d_{A_2}(A_1,A_3) > K'$. Continuing in the same
    fashion (by induction), we can extend the sequence $A_i$ forever with
    $d_{A_i}(A_{i-1},A_{i+1})>K'$. 
By Lemma \ref{chain}, for each $0<j<i$ we have $d_{A_j}(A_0,A_i) >K'$.
Thus by Lemma \ref{masur-minsky0}
    $d_{\cS_K(\bY)}(A_0,A_i)\geq i$.
\end{proof}

\subsection{Group action on the projection complex}\label{s:WPD}

Now assume that $G$ is a group that acts on the set $\bY$ in such a
way that
projection distances are $G$-equivariant, 
i.e. $d^\pi_{g(A)}(g(B),g(C))=d^\pi_A(B,C)$ for all $A,B,C\in\bY$ and
$g\in G$. Then $G$ acts naturally on the projection complex
$\cS_K(\bY)$ by automorphisms.

The following proposition is clear since $\cS_K(\bY)$ is connected. 

\begin{prop}
Suppose the action of $G$ on $\bY$ has finitely many orbits. Then the
action of $G$ on $\cS_K(\bY)$ is cobounded (i.e. a Hausdorff
neighborhood of an orbit is the whole space).
\end{prop}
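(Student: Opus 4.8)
The plan is to deduce coboundedness directly from connectedness of $\cS_K(\bY)$ (Proposition \ref{connected}) together with the fact that $G$ acts on $\cS_K(\bY)$ by graph automorphisms, hence by isometries of the path metric $d$. First I would fix a basepoint vertex $Y_0\in\bY$ and pick representatives $Y_1,\dots,Y_m\in\bY$ for the finitely many $G$-orbits on $\bY$. Since $\cS_K(\bY)$ is connected, each distance $d(Y_i,Y_0)$ is finite, so we may set $D=\max_{1\le i\le m} d(Y_i,Y_0)<\infty$.

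Next, given an arbitrary vertex $X\in\bY$, write $X=g(Y_i)$ for suitable $g\in G$ and index $i$. Because $g$ acts as an automorphism of the graph $\cS_K(\bY)$ it preserves $d$, so $d(X,g(Y_0))=d(g(Y_i),g(Y_0))=d(Y_i,Y_0)\le D$. Since $g(Y_0)$ lies in the orbit $GY_0$, the vertex $X$ lies in the $D$-neighborhood of $GY_0$. As $X$ was arbitrary, every vertex of $\cS_K(\bY)$ is within $D$ of $GY_0$, and since every point of the graph is within $\tfrac12$ of a vertex, the $(D+1)$-neighborhood of the orbit $GY_0$ is all of $\cS_K(\bY)$; this is precisely the asserted coboundedness.

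There is essentially no obstacle here: the only point requiring a remark is that the action is by isometries, and this is immediate because $G$ acts on $\bY$ preserving the projection distances, hence preserving the functions $d_Y$ and the sets $\bY_K(X,Z)$, hence preserving adjacency in $\cS_K(\bY)$. The genuine content is simply that finiteness of the orbit set upgrades the ``each vertex is at finite distance from $GY_0$'' statement (which is just connectedness) to a \emph{uniform} bound $D$ valid for all vertices simultaneously.
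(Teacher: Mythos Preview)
Your argument is correct and is exactly the approach the paper has in mind: the authors simply remark that the proposition is clear since $\cS_K(\bY)$ is connected, and your proof spells out precisely why (finitely many orbit representatives plus connectedness give a uniform bound $D$, and $G$ acts by isometries). There is nothing to add.
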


Next, we construct axes of (powers of) elements with unbounded orbits. Note that axes are geodesics by definition.  

Let $K'$ be the constant from Lemma
  \ref{masur-minsky0} and $K''$ the constant from Corollary \ref{big
    projection geodesic}.

\begin{lemma}[Axial isometry]\label{translation}
Suppose $g\in G$ and
$Y\in\bY$ such that
$$d_Y(g^{-N}(Y),g^N(Y))>K'$$ for some $N>0$. Then $g^N$ has an axis $\alpha$ that contains $g^{kN}(Y)$ for all $k \in \Z$. In particular, $g$ acts on
$\cS_K(\bY)$ with positive translation length. Furthermore if
$$d_Y(g^{-N}(Y),g^N(Y))>K''$$
then $g$ has an axis that contains all $g$-translates of $Y$.
\end{lemma}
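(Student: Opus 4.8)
The plan is to first build the axis for $g^N$ under the hypothesis $d_Y(g^{-N}(Y),g^N(Y))>K'$, then bootstrap to the stronger conclusion. Write $Y_k = g^{kN}(Y)$ for $k\in\Z$. By equivariance of the projection distances, applying $g^{kN}$ to the hypothesis gives $d_{Y_k}(Y_{k-1},Y_{k+1})>K'$ for every $k$. Provided $K'$ is chosen at least as large as the constant $K$ from Lemma \ref{chain} (which we may assume, enlarging $K'$ if necessary and noting $K'\prec\xio$ is compatible), Lemma \ref{chain} applies to any finite sub-chain $\{Y_{-m},\dots,Y_m\}$ and yields $d_{Y_k}(Y_{-m},Y_m)\le d_{Y_k}(Y_{-m'},Y_{m'})$ for $m\le m'$, and in particular $d_{Y_k}(Y_j,Y_l)>K'$ whenever $j<k<l$. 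Now I would invoke Lemma \ref{masur-minsky0}: since $Y_k\in\bY_{K'}(Y_j,Y_l)$ for $j<k<l$, every geodesic from $Y_j$ to $Y_l$ passes through each intermediate $Y_k$, and moreover $d(Y_j,Y_l)\ge |\{k : j<k<l\}|+1 = l-j+1 \ge l-j$ — so consecutive $Y_k$ are at distance $\ge 1$, hence exactly distance $1$ is not claimed, but the key point is that the $Y_k$ are linearly ordered along every geodesic joining far-apart ones. Concatenating geodesics $[Y_k,Y_{k+1}]$ then produces a biinfinite path, and because every finite sub-chain lies on a geodesic, this path is itself a biinfinite geodesic $\alpha$ containing all $Y_k$. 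Since $g^N(\alpha)$ is again a biinfinite geodesic through all the $Y_k=g^{kN}(Y)$ and $g^N$ shifts the indexing by one, $g^N(\alpha)=\alpha$ (up to the usual ambiguity of reparametrizing a geodesic, which is harmless) and $g^N$ translates along $\alpha$ by a positive amount $\ge 1$. This gives the axis $\alpha$ for $g^N$ and shows $\tau(g^N)>0$, hence $\tau(g)=\tau(g^N)/N>0$.

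For the furthermore statement, assume the stronger bound $d_Y(g^{-N}(Y),g^N(Y))>K''$ with $K''$ the constant from Corollary \ref{big projection geodesic}. By equivariance $d_{Y_k}(Y_{k-1},Y_{k+1})>K''$ for all $k$, so in the notation of that corollary every $Y_k$ lies in $\bY_{K''}(\alpha)$. Now I want to show the whole $g$-orbit of $Y$, not just the $g^N$-orbit, lies on a single axis for $g$. The point is that $g(\alpha)$ is a biinfinite geodesic: it contains the points $g(Y_k)=g^{kN+1}(Y)$, and by equivariance it satisfies the same chain inequalities, so by the argument above it is a geodesic, and it is parallel to $\alpha$ — indeed $g(\alpha)$ and $\alpha$ are both contained in bounded Hausdorff neighborhoods of each other since $g$ is an isometry and $\alpha$, $g(\alpha)$ both pass within bounded distance of the common progression of $Y_k$'s (more precisely, $g$ moves each $Y_k$ to $g(Y_k)$ which lies within bounded distance $d(Y,g(Y))$ of $\alpha$ by coboundedness of the $Y_k$ along $\alpha$). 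Then Corollary \ref{big projection geodesic}(ii)–(iii) applies: each $Y_k\in\bY_{K''}(\alpha)$ lies on every geodesic $\beta$ parallel to $\alpha$, so in particular every $g^j(Y)$ (which equals $Y_k$ or $g^r(Y_k)$ for appropriate $k$ and $0<r<N$) — here I would apply the corollary to the translate $g^r(\alpha)$ and conclude $g^r(Y_k)\in\bY_{K''}(g^r(\alpha))$ lies on a geodesic parallel to $\alpha$. Assembling: all $g$-translates of $Y$ lie in $\bigcap_{\beta\parallel\alpha}\beta = \bY(\alpha)$, which is itself a geodesic (it is a subset of $\alpha$, linearly ordered, and the chain inequalities show it is unbounded in both directions and that consecutive elements are joined by geodesic segments); this geodesic is $g$-invariant by construction, so it is an axis for $g$ containing every $g^j(Y)$.

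The main obstacle I anticipate is the careful verification that the biinfinite concatenation $\alpha=\bigcup_k [Y_k,Y_{k+1}]$ is genuinely a \emph{geodesic} and not merely a proper path, and then that $g(\alpha)$ (respectively $g^r(\alpha)$) is \emph{parallel} to $\alpha$ with a uniform Hausdorff bound so that Corollary \ref{big projection geodesic}(iii) is legitimately applicable. Both of these come down to the same mechanism: Lemma \ref{masur-minsky0} forces the $Y_k$'s to be "cut vertices" separating the axis into segments, so any biinfinite path through them all, and in particular any biinfinite geodesic parallel to $\alpha$, must hit them in order — this simultaneously proves $\alpha$ is geodesic (compare with any competitor path between $Y_j$ and $Y_l$) and that $\bY(\alpha)$ contains all the $Y_k$ and all their $g$-translates. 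The bookkeeping of which constant ($K'$ versus $K''$) is needed where, and ensuring $K'\ge$ (the $K$ of Lemma \ref{chain}) so that the chain lemma is available, is routine but must be stated. I would keep the exposition of the "concatenation is a geodesic" step brief, citing Lemma \ref{masur-minsky0} as the essential input, since this kind of argument is standard once that lemma is in hand.
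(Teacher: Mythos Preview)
Your proposal is correct and follows essentially the same route as the paper: equivariance plus Lemma~\ref{chain} plus Lemma~\ref{masur-minsky0} to build the $g^N$-axis, then Corollary~\ref{big projection geodesic}(iii) applied to the parallel translates $g^r(\alpha)$ for the furthermore. The paper resolves the two obstacles you flag by constructing $\alpha$ as the union of $g^{kN}$-translates of a single geodesic segment $[Y,g^N(Y)]$ (so $g^N$-invariance is built in rather than argued after the fact), and, once all $g^k(Y)$ are shown to lie on $\alpha$, by replacing the segment of $\alpha$ from $g^k(Y)$ to $g^{k+1}(Y)$ with the $g^k$-translate of the subsegment $[Y,g(Y)]\subset\alpha$ to obtain a genuinely $g$-invariant geodesic.
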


\begin{proof}
By the $G$-equivariance of the projection distance, 
Lemma \ref{chain} applies to $\{Y, g^N(Y), \cdots, g^{kN}(Y)\}$, so that 
$d_{g^{iN}(Y)}(Y,g^{kN}(Y))>K'$ for $0<i<k$.  Now, Lemma
\ref{masur-minsky0} gives
$$d(Y,g^{kN}(Y))= kd(Y, g^N(Y))$$
which implies that the translation length $$\tau(g)=\lim_{k\to\infty}
\frac{d(Y,g^{kN}(Y))}{kN}= \frac{kd(Y, g^N(Y))}{kN}\geq \frac{1}N>0.$$
To construct $\alpha$ take a geodesic segment between $Y$ and $g^N(Y)$ and translate it by the action of $g^N$ to get a bi-infinite path.

Now assume $d_Y(g^{-N}(Y),g^N(Y))>K''$. For all $k \in \Z$,
$g^k(\alpha)$ will be parallel to $\alpha$ and $g^k(Y) \in
\bY_{K''}(g^k(\alpha))$. By (iii) of Corollary \ref{big projection
  geodesic}, $g^k(Y) \in \bY(\alpha)$. In particular, $g^k(Y) \in
\alpha$. By replacing the geodesic segment in $\alpha$ from
$g^k(\alpha)$ to $g^{k+1}(\alpha)$ with the $g^k$-translate of the
geodesic segment in $\alpha$ from $Y$ to $g(Y)$ we obtain a
$g$-invariant geodesic.
\end{proof}

Using the same idea but a bit more work we can find a copy of $F_2$ in $G$ that acts on an embedded tree in $\cP_K(\bY)$ such that 
any non-trivial element in $F_2$ has an axis in the tree. 

\begin{prop}[Free subgroup of axial elements]\label{free group}
Fix $Z_1,Z_2 \in \bY$ and $g_1,g_2 \in G$ and then define $Z^{k}_j = g_j^k(Z_j)$ with $k = \pm 1$. Assume that for a constant $L$ and 
all permutations of $i,j \in \{1,2\}$ with $i\not =j$ and $k \in \{-1,1\}$ we have 
\begin{itemize}
\item $d_{Z_j}(Z^{-1}_j, Z^1_j) > L \ge K'$;

\item $d_{Z_j}(Z^k_j, Z_i) > L \ge K'$.
\end{itemize}
Then:
\begin{enumerate}
\item $F = \langle g_1, g_2 \rangle$ is a non-abelian free group.

\item There exists a trivalent $F$-invariant tree $S$ isometrically embedded in $\cP_K(\bY)$ and the $F$-action is proper and minimal.

\item For every non-trivial $\phi \in F$ there is vertex $W \in S$
  such that 
$$d_W(\phi^{-1}(W), \phi(W))>L.$$
Further, $\phi$ has an axis contained in $S$.
\end{enumerate}
\end{prop}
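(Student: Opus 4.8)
The plan is to build the tree $S$ as the orbit of a single well-chosen ``tripod'' under the free group $F$, mimicking the standard ping-pong construction but inside $\cP_K(\bY)$. First I would verify (1): the hypotheses say that $Z_j$ is a ``barrier'' separating $Z_j^{-1}$, $Z_j^{1}$ and $Z_i$ from one another in the sense that $d_{Z_j}$ of any two of these is large while, by (PC 3) / the inequality on triples, $d_{Z_i}(\cdot,\cdot)$ and $d_{Z_j^{\pm 1}}(\cdot,\cdot)$ at the respective endpoints are $\sim 0$. This is exactly the data needed to run a ping-pong argument: any nontrivial reduced word $\phi=g_{j_m}^{\epsilon_m}\cdots g_{j_1}^{\epsilon_1}$ moves a suitable basepoint a definite amount, because successive syllables cannot cancel their contributions at the relevant projection vertex. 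Concretely, I would fix a basepoint $p_0$ (say a vertex in $\cP_K(\bY)$ near all of $Z_1,Z_2$) and show by induction on word length, using Lemma \ref{chain} and Lemma \ref{masur-minsky0}, that the vertices $p_0, g_{j_1}^{\epsilon_1}(p_0), g_{j_2}^{\epsilon_2}g_{j_1}^{\epsilon_1}(p_0),\dots$ lie on a geodesic, so that $\phi(p_0)\neq p_0$; hence $F$ is free.

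Next I would construct $S$ itself. Take a geodesic segment $\sigma_j$ from $Z_j$ (or a point near it) to $g_j(Z_j)$ for $j=1,2$; the two hypotheses guarantee these segments, together with their $F$-translates, fit together along their endpoints without backtracking — precisely because whenever two such translated segments meet, the relevant $Z_j$ is a barrier that forces the projection of one segment onto the other to be bounded, so by Lemma \ref{masur-minsky0} there is no shortcut and the concatenation stays geodesic. Define $S$ to be the union of all $F$-translates of $\sigma_1\cup\sigma_2$. The barrier/guard machinery (Proposition \ref{key}, Corollary \ref{big projection geodesic}) shows that any path in $S$ between two of its vertices is a geodesic of $\cP_K(\bY)$, so $S$ is isometrically embedded; it is trivalent by construction (two edge-orbits emanating in each direction from the orbit of $Z_j$), the $F$-action is free hence proper on the vertex set, and minimality is automatic since $S$ is generated as the $F$-orbit of two segments.

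For (3), given nontrivial $\phi\in F$, write it cyclically reduced (conjugating changes the axis by an isometry of $S$, which is harmless) and let $W$ be the vertex of $S$ sitting at the ``center'' of the first syllable of $\phi$ — i.e.\ the $Z_j$ (or its appropriate $F$-translate) associated to the leading letter. The ping-pong estimate then gives that $\phi^{-1}(W)$ and $\phi(W)$ project to a set of diameter $>L$ in $W$: the syllables of $\phi$ on either side of $W$ push $\phi^{\pm}(W)$ past the barrier $W$ in opposite directions, and the inequality on triples kills all the intermediate contributions. Then Lemma \ref{translation} (applied with $N=1$, using $L\geq K'$, indeed taking $L\geq K''$ if one wants the axis to contain all $\phi$-translates of $W$) produces an axis for $\phi$; that this axis lies inside $S$ follows because $S$ is isometrically embedded and $\phi$-invariant, so one can assemble the axis from $\phi$-translates of the geodesic segment in $S$ from $W$ to $\phi(W)$, exactly as in the last paragraph of the proof of Lemma \ref{translation}.

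The main obstacle I expect is the bookkeeping in the ping-pong step: carefully identifying, for an arbitrary reduced word, which vertex plays the role of the barrier at each stage and checking that the ``$\sim 0$'' error terms from the inequality on triples do not accumulate beyond control. The safeguard the paper itself emphasizes — that transitivity of $\prec$/$\sim$ must only be applied to chains of bounded length — is relevant here: one wants each application of the coarse triangle inequality to involve only a bounded number of terms, which is why the estimate should be localized to a single syllable at a time rather than expanded over the whole word. Getting $L$ large enough once and for all (depending only on $\theta$, via $K'$ and $K''$) so that all these local estimates go through simultaneously is the crux.
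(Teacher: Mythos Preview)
Your outline has the right ingredients (Lemma~\ref{chain} and Lemma~\ref{masur-minsky0} are indeed the heart of the matter), but there is a structural gap in your construction of $S$ and the paper's argument dissolves the bookkeeping you worry about.

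\textbf{The gap.} You set $\sigma_j$ to be a geodesic from $Z_j$ to $g_j(Z_j)$ and let $S$ be the $F$-orbit of $\sigma_1\cup\sigma_2$. But $\sigma_1\cup\sigma_2$ is disconnected in general, and so is its $F$-orbit: nothing joins the $g_1$-axis material to the $g_2$-axis material. If instead you use a single basepoint $p_0$ and $\sigma_j$ from $p_0$ to $g_j(p_0)$, the resulting tree is the Cayley tree of $F_2$, which is $4$-valent, contradicting your trivalence claim. What is missing is a third segment from $Z_1$ to $Z_2$; once you add it, you are looking at the universal cover of the barbell graph (two loops joined by an edge), and that is exactly how the paper proceeds.

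\textbf{How the paper avoids the bookkeeping.} The paper builds an abstract trivalent tree $\tilde S$ (the Bass--Serre tree of the barbell) with an $\tilde F=\langle a,b\rangle$-action, vertices $\tilde Z_1,\tilde Z_2$ on the axes of $a,b$, and a single equivariant map $\psi:\tilde S\to\cP_K(\bY)$ with $\psi(\tilde Z_j)=Z_j$. The crucial observation is that there are exactly two vertex orbits, so for \emph{any} three consecutive vertices $\tilde Y_0,\tilde Y_1,\tilde Y_2$ in $\tilde S$ one can translate the middle one to $\tilde Z_1$ or $\tilde Z_2$; the two neighbours then land in the finite set $\{Z_j^{1},Z_j^{-1},Z_i\}$, and the hypothesis gives $d_{Y_1}(Y_0,Y_2)>L$ directly. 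A single application of Lemma~\ref{chain} then yields $d_{Y_j}(Y_0,Y_n)>L$ for any chain, and Lemma~\ref{masur-minsky0} forces $\psi$ to be a global isometry. Parts (1) and (2) follow at once; for (3), the axis of $\phi$ is simply $\psi$ applied to the axis of its preimage in $\tilde S$, and any vertex $W$ on that axis satisfies the inequality by the chain estimate just proved.

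\textbf{On your accumulation worry.} It is unfounded. Lemma~\ref{chain} is not a transitive statement: it takes the local hypothesis $d_{Y_i}(Y_{i-1},Y_{i+1})>K$ for all $i$ and produces the global conclusion $d_{Y_i}(Y_0,Y_n)>K$ in one stroke (its proof is an induction on monotonicity, not on the coarse triangle inequality). So once you have the consecutive-triple estimate there is no error build-up, and you do not need to localize to one syllable at a time.
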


\begin{proof}
Let $\tilde{F} = \langle a,b \rangle$ be the free group of words in
$a$ and $b$.  Then $\tilde F$ is the fundamental group of the barbell
group and therefore acts on its universal cover, a trivalent tree
$\tilde S$. We can assume that the axes $\alpha$ and $\beta$ of $a$
and $b$ in $\tilde S$ are disjoint but connected by a single edge
whose endpoints are vertices $\tilde Z_1 \in \alpha$ and $\tilde Z_2
\in \beta$. If each edge of $\tilde S$ has length one we can also
assume that the translations of $a$ and $b$ are both one.  Define a
homomorphism from $\tilde{F}$ to $F = \langle g_1, g_2 \rangle \subset
G$ by $a \mapsto g_1$ and $b \mapsto g_2$. We then choose a map $\psi:
\tilde S \to \cP_K(\bY)$ that is equivariant with respect to this
homomorphism and with $\psi(\tilde Z_i) = Z_i$. By scaling the length
of the edges of $\tilde S$ we can make this map an isometry on each
edge. We will show that it is in fact a global isometry and the
conclusions of the proposition will follow.

Note that there are exactly two $\tilde F$-orbits of vertices in
$\tilde S$ with one containing $\tilde Z_1$ and the other containing
$\tilde Z_2$. Therefore if $\tilde Y_0$, $\tilde Y_1$ and $\tilde Y_2$
are consecutive vertices in $\tilde S$ then there exists a (unique) $w
\in \tilde F$ such that $w(\tilde Y_1) = \tilde Z_1$ or $w(\tilde Y_1)
= \tilde Z_2$. Assume it is the former. Then $w(\tilde Y_0)$ and
$w(\tilde Y_2)$ will be distinct elements in the set $\{a(\tilde Z_1),
a^{-1}(\tilde Z_1), \tilde Z_2\}$. The $\psi$-image of this set is
$\{Z_1^1, Z^{-1}_1, Z_2\}$ so for all possibilities we have that
$$d_{Y_1}(Y_0, Y_2) = d_{\psi(w(\tilde Y_1))}(\psi(w(\tilde Y_0)),\psi(w(\tilde Y_2)))>L$$
where $Y_i = \psi(\tilde Y_i)$ by our assumption. 
The latter case is similar. 

By Lemma \ref{chain} it follows that for a chain of consecutive
vertices $\tilde Y_0, \dots, \tilde Y_n$ in $\tilde S$ we have
$d_{Y_j}(Y_0, Y_n) > L$ for all $j = 1,\dots,n-1$ again with $Y_i =
\psi(\tilde Y_i)$. By Lemma \ref{masur-minsky0}, $Y_j$ is contained in
every geodesic from $Y_0$ to $Y_n$ and it follows that $\psi$ is a
global isometry. This implies (1) and (2).

For (3) we note that every $\phi \in F$ is the image of some $\tilde
\phi$ in $\tilde F$. The $\psi$-image of the axis of $\tilde \phi$ in
$\tilde S$ will be an axis for $\phi$. If $\tilde W$ is contained in
this axis then it is contained in a consecutive chain of vertices from
$\tilde \phi^{-1}(\tilde W)$ to $\tilde \phi(\tilde W)$ and therefore,
by the previous argument, $d_W(\phi^{-1}(W), \phi(W))>L$ where $W =
\psi(\tilde W)$.\end{proof}

\begin{example}[Cayley tree]
In Section \ref{tree example} we discussed
the Cayley tree  of a free group. In that example, we can
take the axis of $a$ as $Y$, $g=a^nb$  in Lemma \ref{translation} where $n>>K$; 
and in Proposition \ref{free group} take $g_1=a^{2n}b$ with $Z_1$ the axis of $a$ and again $n>>K$. We then conjugate $g_1$ with $a^n b a^{-n}$ to get $g_2$  and let $Z_2 = a^nba^{-n}(Z_1)$. (For a more general strategy for choosing $g_1$ and $g_2$ see the proof of Corollary \ref{bushy}).
As we will see any non-trivial element in $F$  is WPD
by Proposition \ref{wpd} since the common 
stabilizer of a pair of distinct points in $\bY$ is trivial (see Remark \ref{WPD free}).
\end{example}

We state a corollary (of Theorem \ref{quasitree} and Proposition
\ref{free group}). We have to verify the axioms (P0), (P1) and (P2)
and in this general setting it will be done in \cite{bc} (i.e., when
we prove Theorem H).  In particular, it will apply to all groups that
are listed in Example \ref{ex}.  In this paper we have verified the
axioms for discrete subgroups of isometries of $\H^n$ and it is
straightforward to generalize this to hyperbolic groups. Even in these
cases the result is new.

Recall that an action on a quasi-tree is {\it non-elementary} if the
orbits are unbounded, and there is no fixed end, nor a pair of ends.

\begin{cor}\label{bushy}
Let $\Gamma$ be a group which acts on a geodesic metric space $X$ with a WPD element with respect to the action that has a $B$-contracting orbit.

 If $G$ is not virtually cyclic then it has a non-elementary cobounded action on a quasi-tree.
\end{cor}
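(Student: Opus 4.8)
The plan is to feed the WPD hypothesis into Theorem H to obtain a cobounded action on a quasi-tree, and then to promote it to a non-elementary action by exhibiting a free subgroup whose restricted action is already non-elementary. Write $\gamma$ for the hyperbolic WPD element and $\bY$ for the set of parallel classes of $G$-translates of a fixed $\gamma$-orbit. By Theorem H (proved in this generality in \cite{bc}; when $G$ is a hyperbolic group or a discrete subgroup of isometries of $\H^n$ the required verification of (P0)--(P2) is the one given here) the set $\bY$ with nearest point projections satisfies the projection complex axioms, $G$ acts on $\bY$ preserving projection distances, and hence $G$ acts by isometries on $\cS_K(\bY)$, which is a quasi-tree for $K$ large by Theorem \ref{quasitree}. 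Since $G$ permutes the translates of the $\gamma$-orbit transitively it acts on $\bY$ with a single orbit, so the $G$-action on $\cS_K(\bY)$ is cobounded, and it remains only to prove non-elementarity. Furthermore $\gamma$ acts on the companion quasi-tree $\cC(\bY)$ of Theorem H as a hyperbolic WPD element; as $\cC(\bY)$ is $\delta$-hyperbolic (being a quasi-tree), Remark \ref{WPD2} shows that the elementary closure $EC(\gamma)$ is virtually cyclic. Since $G$ is not virtually cyclic we may fix $h\in G\setminus EC(\gamma)$, so that the class $Z_0$ of the $\gamma$-orbit and the class $h(Z_0)$ are non-parallel, and all projections between them are uniformly bounded.

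I would then assemble a rank-two free subgroup via Proposition \ref{free group}. Because $\gamma$ fixes $Z_0$ it is elliptic on $\cS_K(\bY)$, so -- exactly as in the Cayley tree example, where one uses $a^{2n}b$ in place of $a^{n}$ -- I would take $g_1=\gamma^{2n}h$ with base class $Z_1=Z_0$, and $g_2=c_n g_1 c_n^{-1}$ with base class $Z_2=c_n(Z_0)$, where $c_n=\gamma^{n}h\gamma^{-n}$ and $n$ is large (note $c_n\notin EC(\gamma)$ because $h\notin EC(\gamma)$). The two families of inequalities required by Proposition \ref{free group}, namely $d_{Z_j}(g_j^{-1}(Z_j),g_j(Z_j))>L\ge K'$ and $d_{Z_j}(g_j^{\pm1}(Z_j),Z_i)>L\ge K'$ for $i\neq j$, then follow for $n$ large: projections of $\gamma$-translates to $Z_0$ move proportionally to the exponent (because $\gamma$ is hyperbolic with a $B$-contracting orbit, the mechanism behind Lemma \ref{translation}), while the competing projections stay bounded because $h$ and $c_n$ lie outside $EC(\gamma)$. (Alternatively one can run a classical Schottky ping-pong argument directly in the $\delta$-hyperbolic quasi-tree $\cC(\bY)$ with $\gamma^{n}$ and $h\gamma^{n}h^{-1}$, on which $\gamma^{n}$ is genuinely hyperbolic.)

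Proposition \ref{free group} then produces a free subgroup $F=\langle g_1,g_2\rangle\le G$, a trivalent $F$-invariant tree $S$ isometrically embedded in $\cS_K(\bY)$, and a proper minimal $F$-action on $S$; in particular $g_1$ has positive translation length, so $G$-orbits are unbounded. A proper minimal action of a rank-two free group on a trivalent tree has limit set a Cantor set, so $F$ fixes no end and no pair of ends of $S$; as $S$ is isometrically embedded in $\cS_K(\bY)$ its end space includes into the boundary of $\cS_K(\bY)$ and the limit set of $F$ there is this same Cantor set. Hence $F$, and therefore the overgroup $G$, fixes no end and no pair of ends of $\cS_K(\bY)$, so the cobounded $G$-action on the quasi-tree $\cS_K(\bY)$ is non-elementary, which is what we wanted.

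The step I expect to be the main obstacle is the construction of the two independent hyperbolic isometries and the simultaneous control of all the projection distances $d_{Z_j}(\cdot,\cdot)$ entering Proposition \ref{free group}. The phenomenon to work around -- already visible in the Cayley tree, where $a$ is elliptic on $\cS_K(\bY)$ -- is that powers of $\gamma$ fix the class $Z_0$, so a genuine Schottky pair must be built from $\gamma$ together with elements of $G\setminus EC(\gamma)$; it is precisely here that the $B$-contracting hypothesis (projection distances grow under powers) and the WPD hypothesis (through $EC(\gamma)$ virtually cyclic and the boundedness of cross-projections of non-parallel classes) do the real work.
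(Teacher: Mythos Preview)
Your proposal is correct and follows essentially the same route as the paper: build $\bY$ from parallel classes of $G$-translates of a $\gamma$-orbit, invoke the verification of (P0)--(P2) (Theorem H, \cite{bc}) to obtain a cobounded action on the quasi-tree $\cS_K(\bY)$, and then apply Proposition \ref{free group} to produce a free subgroup acting on an isometrically embedded trivalent tree, forcing non-elementarity. The only difference is cosmetic: the paper takes $g_1=\gamma^{n}h^{-1}\gamma^{n}$ and $g_2=hg_1h^{-1}$ with $Z_1=Z_0$, $Z_2=h(Z_0)$, whereas you take $g_1=\gamma^{2n}h$ and conjugate by $c_n=\gamma^{n}h\gamma^{-n}$ (the variant suggested in the Cayley tree example); both choices make all the required projection distances grow linearly in $n$ for the same reason, and your justification that $EC(\gamma)$ is a proper subgroup via Remark \ref{WPD2} applied on $\cC(\bY)$ is in fact slightly more careful than the paper's one-line appeal to WPD.
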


\begin{proof}
Let $a \in G$ be a (hyperbolic) element
that is WPD with an axis $\alpha$ (if there is no geodesic axis,
take an invariant quasi-geodesic, or the orbit of an point). 
Let $\bY$ be the collection of 
parallel classes of $G$-translates of $\alpha$.
As we said, under the assumption, the axioms (P0)-(P2) are 
satisfied, \cite{bc}.
We apply our construction to $\bY$ and obtain a $G$-quasi-tree
$\cP_K(\bY)$ by Theorem \ref{quasitree}.

 To see that the action 
 is non-elementary we need to find $g_1, g_2 \in \Gamma$ for which
we can apply Proposition \ref{free group}. 
Let $Y \in \bY$ be the equivalence class of $\alpha$.
$\bY$ contains an element $Z \not=Y$ (since otherwise,
every $G$-translate of $\alpha$ is parallel to $\alpha$, but 
then $G$ is virtually cyclic by WPD) such that  $h(Y)=Z$ for some 
$h \in G$. Set $Z_1 = Y$ and $Z_2 = Z$. 
For an $n$ to be determined shortly we also set $Z_1^{\pm 1} = a^{\pm n}h^{\mp 1}(Z_1)$ and $Z^{\pm 1}_2 = h(Z^{\pm 1}_1)$. 
Note that for any $X_0,X_1 \neq Y$ both $d_Y(X_0, a^{\pm n}(X_1))$ and $d_Y(a^{-n}(X_0), a^n(X_1))$ 
grow linearly in $n$ and it follows that given $L>0$, for $n$ sufficiently large 
$Z_1, Z_2$ and $Z^{\pm 1}_1, Z^{\pm 1}_2$ satisfy the assumption of Proposition \ref{free group}. 
For example $d_{Z_1}(Z_2, Z_1^1) = d_{Z_1}(Z_2, a^n(h^{-1}Z_1))$ is $>L$ for large $n$. 
We next set $g_1 = a^n h^{-1} a^n$ and $g_2 = hg_1 h^{-1}$ and check 
that $g_i^j(Z_i) =Z^j_i$ for $i=1,2$ and $j = \pm 1$ so that 
$g_1$ and $g_2$ also satisfy the assumption of Proposition \ref{free group}. 
By (1) and (2) of Proposition \ref{free group} we then have that 
$\langle g_1, g_2 \rangle$ is free and acts isometrically on a isometrically embedded tree
 in $\cP_K(\bY)$ so the action of $G$ is non-elementary. \end{proof}

In the rest of this section we study the WPD property (as defined in Section \ref{wpd-defn}) of the action on
the projection complex. We start by constructing a total order on the
combinatorial axis of an element.

If $g\in G$ is hyperbolic and has an axis $\alpha$ define the {\it
  combinatorial axis} as $\bY(g) = \bY(\alpha)$. This does
  not depend on the choice of $\alpha$.
$\bY(g)$ is  possibly empty. We recall 
 the {\em elementary closure}, $EC(g)$, of $g$ is the subgroup of elements $h \in G$ such
that $h(\alpha)$ is parallel to $\alpha$.

\begin{prop}[Combinatorial axis and elementary closure]\label{axis}
Assume $\bY(g)$ is not empty.  Then there is a total order on $\bY(g)$ such that
\begin{enumerate}
\item [(i)] $\bY(g)$ is $EC(g)$-invariant and the $EC(g)$-action preserves
  the order up to sign.
\item [(ii)] The order is unique if we require $g(Y)>Y$ for some (every) 
  $Y\in \bY(g)$.

\item [(iii)] $\bY(g)$ is order-isomorphic to $\Z$ and $EC(g)$
  acts as isometries of $\Z$ under this isomorphism.

\item [(iv)] Assume that $Y_0,Y_1, Y_2, Y'_0, Y'_2 \in \bY(g)$ with $Y_1$
  between both the pair $Y_0$ and $Y_2$ and the pair $Y'_0$ and
  $Y'_2$. Then $d_{Y_1}(Y_0,Y_2) \sim d_{Y_1}(Y'_0,Y'_2)$.

\end{enumerate}
\end{prop}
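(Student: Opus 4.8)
The plan is to construct the order on $\bY(g)$ by gluing together the local orders coming from Theorem \ref{main}(G) on the sets $\bY_{K''}(X,Z)\cup\{X,Z\}$ as $X,Z$ range over pairs of vertices on the axis $\alpha$. Concretely: since $\bY(g)\subset\alpha$ and $\alpha$ is a bi-infinite geodesic, enumerate the vertices of $\alpha$ as $\dots,V_{-1},V_0,V_1,\dots$; for any two indices $m<n$, Corollary \ref{big projection geodesic}(iii) (or just Lemma \ref{masur-minsky0}) tells us that the elements of $\bY(g)$ lying between $V_m$ and $V_n$ are exactly those in $\bY_{K''}(V_m,V_n)$ sitting on the geodesic, and Theorem \ref{main}(G) linearly orders $\bY_{\xio}(V_m,V_n)\cup\{V_m,V_n\}$. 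First I would check that these orders are consistent: if $Y,Y'$ both lie in $\bY_{K''}(V_m,V_n)$ and also in $\bY_{K''}(V_{m'},V_{n'})$ for nested or overlapping index intervals, the induced orders agree — this follows from the remark after Theorem \ref{main} (the order on $\bY_K(X,Z)\cup\{X,Z\}$ for $K\geq\xio$ is the restriction of the order on $\bY_\xio(X,Z)\cup\{X,Z\}$) together with the fact that a vertex between $V_m$ and $V_n$ on the geodesic stays between them when we enlarge the interval. Since every pair of vertices of $\bY(g)$ lies between $V_m$ and $V_n$ for suitable $m\ll n$, this gives a well-defined total order on all of $\bY(g)$.

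Next I would establish (i). If $h\in EC(g)$ then $h(\alpha)$ is parallel to $\alpha$, so $h(\bY(g))=\bY(g)$ since $\bY(g)$ is the intersection of all geodesics parallel to $\alpha$ and this set is manifestly invariant under any isometry taking $\alpha$ to a parallel geodesic. Because projection distances are $G$-equivariant, $h$ carries the local order on $\bY_{\xio}(X,Z)$ to the local order on $\bY_{\xio}(h(X),h(Z))$; hence $h$ either preserves the global order or reverses it (the Remark after Theorem \ref{main} notes the order on $\bY_K(Z,X)$ is the reverse of that on $\bY_K(X,Z)$, which is the source of the sign). That $h$ cannot both preserve and reverse a bi-infinite order forces a well-defined sign homomorphism $EC(g)\to\{\pm1\}$, proving (i). Part (ii) is then immediate: requiring $g(Y)>Y$ fixes the sign of the order (note $g\in EC(g)$, and $g$ must act order-preservingly for one of the two choices of orientation, giving uniqueness).

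For (iii) I would argue $\bY(g)$ is order-isomorphic to $\Z$. It is nonempty by hypothesis and $g$-invariant, so it is infinite in both directions (the $g$-orbit of any $Y\in\bY(g)$ is cofinal both ways since $g$ has positive translation length by Lemma \ref{translation} — one gets $d(Y,g^{k}(Y))\to\infty$). It is locally finite as an ordered set — between any $Y<Y'$ in $\bY(g)$ there are only finitely many elements of $\bY(g)$, since these all lie in some $\bY_{K''}(V_m,V_n)$ which is finite by Theorem \ref{main}(E). A countable, locally finite, bi-infinite totally ordered set is order-isomorphic to $\Z$. Under this isomorphism $EC(g)$ acts by order-automorphisms up to sign, i.e. by isometries of $\Z$ (affine maps $k\mapsto\pm k+c$), which is exactly the claim; equivariance of the $\bY(g)\cong\Z$ identification comes from (i). Finally (iv) is essentially a reformulation of the second inequality chain in Lemma \ref{translation}'s surrounding machinery: if $Y_1$ is between $Y_0,Y_2$ and between $Y_0',Y_2'$ (all in $\bY(g)$, hence all on a common geodesic subsegment of $\alpha$ after enlarging), apply Theorem \ref{main}(G) twice — $d_{Y_1}(Y_0,Y_2)\sim d_{Y_1}(V_m,V_n)\sim d_{Y_1}(Y_0',Y_2')$ for any large enough enclosing pair $V_m,V_n$ — and transitivity of $\sim$ across this bounded chain finishes it.

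The main obstacle I expect is the consistency check for the local orders — verifying that the orientation and the ordering induced from $\bY_{\xio}(V_m,V_n)$ do not depend on the choice of enclosing pair, and patching these into a single coherent order. The subtlety is that the ``betweenness'' notion used in Theorem \ref{main}(G) is relative to the pair $(V_m,V_n)$, and one must confirm that if $Y$ is between $V_m$ and $V_n$ on the geodesic then it remains between $V_{m'}$ and $V_{n'}$ for any $m'\leq m<n\leq n'$ and that its order position relative to other such $Y'$ is stable; this uses that $\bY(g)\subset\alpha$ and the geodesic $\alpha$ has no backtracking, so containment of index intervals translates cleanly into containment of ordered segments. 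Once this bookkeeping is in place the remaining parts are routine applications of Theorem \ref{main} and Lemma \ref{masur-minsky0}.
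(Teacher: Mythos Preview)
Your approach is considerably more elaborate than the paper's, and it has a genuine gap. The paper exploits a trivial observation you seem to overlook: since $\bY(g)\subset\alpha$ by definition (it is the intersection of all geodesics parallel to $\alpha$, and $\alpha$ is one of them), the vertices of $\bY(g)$ inherit a total order directly from the bi-infinite geodesic $\alpha$ once an orientation is chosen. No gluing of local orders from Theorem~\ref{main}(G) is needed; you even write down the enumeration $\dots,V_{-1},V_0,V_1,\dots$ of the vertices of $\alpha$ and then never use it as the order.

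The gap in your route is that you need every element of $\bY(g)$ to lie in $\bY_{\xio}(V_m,V_n)$ (or $\bY_{K''}(V_m,V_n)$) for suitable $m,n$ in order to invoke Theorem~\ref{main}(G). But $\bY(g)$ is defined purely as an intersection of parallel geodesics; Corollary~\ref{big projection geodesic}(iii) gives the inclusion $\bY_{K''}(\beta)\subset\bY(\alpha)$, not the reverse. There is no reason a priori why a vertex $Y\in\bY(g)$ must satisfy $d_Y(V_m,V_n)>\xio$ for any pair on $\alpha$. So both your construction of the order and your argument for (iv) rest on an unverified hypothesis.

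For (iv) the paper instead uses Corollary~\ref{geodesic key}: since $Y_0,Y'_0$ lie on the same side of $Y_1$ along $\alpha$, the sub-geodesic between them misses $Y_1$, hence $d_{Y_1}(Y_0,Y'_0)\sim 0$ (and similarly for $Y_2,Y'_2$); the coarse triangle inequality then gives $d_{Y_1}(Y_0,Y_2)\sim d_{Y_1}(Y'_0,Y'_2)$. This requires nothing about membership in any $\bY_K$ set. For (iii) the paper argues that $h\in EC(g)$ must preserve betweenness on $\bY(g)$: if $Y_k<Y_n<Y_m$ were sent to a non-monotone triple, one could splice a segment of $h(\alpha)$ into $\alpha$ to produce a parallel geodesic omitting $Y_n$, contradicting $Y_n\in\bY(g)$. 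This yields $|h_*(n)-h_*(m)|=|n-m|$ directly.
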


\begin{proof}
Let $\alpha$ be an oriented axis for $g$ so that $g$ is a positive
translation with respect to the orientation. The vertices of $\alpha$
have an order which induces a total order on $\bY(\alpha)$.

Let $\beta$ be parallel to $\alpha$ and $h \in EC(g)$. If $h(Y)
\not\in \beta$ then $Y \not\in h^{-1}(\beta)$. Since $h^{-1}(\beta)$
will also be parallel to $\alpha$ we have that if $Y \in \bY(g)$ then
$g(Y) \in \bY(g)$, proving (i).

For (ii) we note that by our choice of orientation for $Y \in
\alpha$, $g(Y)$ appears after $Y$.

The vertices in $\bY(g)$ are a discrete set in $\alpha$ so the order
coming from $\alpha$ will be order isomorphic to $\Z$ and we can
accordingly label them $Y_n$. In particular if $k<n<m$ then $Y_n$ is
between $Y_k$ and $Y_m$ on $\alpha$. If $h \in EC(g)$ then the same
must be be true on $h(\alpha)$ for otherwise we could build a geodesic
parallel to $\alpha$ that did not contain $Y_n$ by replacing the
geodesic segment from $Y_k$ to $Y_m$ on $\alpha$ with the segment with
the same endpoints on $h(\alpha)$. If $h_*$ is the induced map on $\Z$
then this implies that $|h_*(n) - h_*(m)| = |n-m|$ proving (iii).

For (iv) we can assume that both $Y_0$ and $Y'_0$ are less than $Y_1$
in the total order. Then $d_{Y_1}(Y_0, Y'_0) \sim 0$ by Corollary
\ref{geodesic key} and similarly $d_{Y_1}(Y_2, Y'_2) \sim 0$. The
coarse triangle inequality then implies (v).
\end{proof}

The following provides a sufficient condition for an element to be
hyperbolic and WPD. 

\begin{prop}[Axial and WPD]\label{wpd}
Assume that $g \in G$ satisfies
\begin{enumerate}[(i)]
\item there exists a vertex $Y$ and an $N>0$ such that $d_Y(g^{-N}(Y), g^N(Y))> K''$;
\item there exists an $m>0$ such that the subgroup of $G$ that fixes $$Y, g(Y), \dots, g^m(Y)$$ is finite.
\end{enumerate}
Then $g$ has an axis and the action of $g$ on $\cP_K(\bY)$ is WPD.
\end{prop}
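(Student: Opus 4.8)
The plan is to establish the two conclusions in turn, starting from the existence of an axis and then verifying the WPD condition. First I would note that hypothesis (i), $d_Y(g^{-N}(Y),g^N(Y))>K''>K'$, is exactly the hypothesis of Lemma~\ref{translation} (Axial isometry), so $g$ has an axis $\alpha$ that contains every $g$-translate of $Y$, and in particular $\bY(g)=\bY(\alpha)$ is nonempty and $g$ acts with positive translation length. By Proposition~\ref{axis} the combinatorial axis $\bY(g)$ is order-isomorphic to $\Z$ with $EC(g)$ acting by isometries, and $g$ itself acts as a nontrivial translation. After replacing $g$ by a power if convenient, I may arrange that all the translates $g^k(Y)$ are spaced out along $\bY(g)$, and that the quantity $d_{Y}(g^{-N}(Y),g^{N}(Y))$ is as large as we like.

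Next I would set up the WPD test. Fix the basepoint in $\cS_K(\bY)$ to be the vertex $Y$ (vertices suffice since we work in a graph), and fix $D>0$. We must produce $M>0$ such that
$$
\{h\in G\mid d(Y,h(Y))\le D,\ d(g^M(Y),h g^M(Y))\le D\}
$$
is finite. The key geometric input is that a large power $g^M$ pushes $Y$ far along the axis $\alpha$, and the combinatorial axis $\bY(g)$ between $Y$ and $g^M(Y)$ contains many vertices $Z$ with $d_Z(Y,g^M(Y))$ large — arbitrarily large as $M$ grows — by Lemma~\ref{chain} together with the order property of Theorem~\ref{main}. In particular, for $M$ large I can find a long block $g^{a}(Y), g^{a+1}(Y),\dots, g^{a+m}(Y)$ of consecutive vertices of $\bY(g)$, all lying strictly between $Y$ and $g^M(Y)$ and all at distance $>2D+2$ (say) from both endpoints.

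Now suppose $h$ lies in the set above. Then $h(Y)$ is within $D$ of $Y$ and $h g^M(Y)=h(g^M(Y))$ is within $D$ of $g^M(Y)$ in $\cS_K(\bY)$. Applying Corollary~\ref{big projection geodesic}(ii) — with $X_0=Y$, $X_1=g^M(Y)$ mapped by $h$ to points near $Y$ and $g^M(Y)$, or rather comparing the geodesic $\alpha|_{[Y,g^M(Y)]}$ with $h(\alpha|_{[Y,g^M(Y)]})$ — I conclude that every vertex $Z=g^{a+i}(Y)$ of the chosen block with $d_Z(Y,g^M(Y))>K''$ must lie on every geodesic between $h(Y)$ and $h(g^M(Y))$, hence on $h(\alpha)$; since $h(\alpha)$ is parallel to $\alpha$ and these $Z$ are in $\bY(g)$, we get $h(Z)$ back in $\bY(g)$ and, tracking the order via Proposition~\ref{axis}(i),(iii), $h$ must carry the block $\{g^{a}(Y),\dots,g^{a+m}(Y)\}$ into $\bY(g)$ respecting the order up to sign, and in fact — using that $h(Y)$ is near $Y$ and $h(g^M Y)$ near $g^M Y$, so $h$ does not reverse orientation and moves points only boundedly — $h$ must fix each $g^{a+i}(Y)$. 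Conjugating by $g^{-a}$, the subgroup of such $h$ (up to the bounded-index ambiguity coming from the $D$-slack) is contained in a bounded neighbourhood of the stabilizer of $Y,g(Y),\dots,g^m(Y)$, which is finite by hypothesis (ii). A standard argument (the set of $h$ with $d(Y,h(Y))\le D$ that induce a fixed behaviour on a fixed finite vertex set differ by elements fixing that set, and there are finitely many possible behaviours) then bounds the cardinality of the WPD set, so $g$ is WPD.

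The main obstacle I expect is the bookkeeping in the last paragraph: passing from ``$h$ preserves the long block of axis vertices setwise, orientation-respecting, and moves endpoints boundedly'' to ``$h$ fixes each vertex of a fixed finite sub-block,'' and hence to finiteness. One must choose $M$ (and the block length, in terms of $D$, $m$, $K''$, the translation length of $g$, and the constants in Theorem~\ref{main} and Corollary~\ref{big projection geodesic}) so that the $D$-slack cannot shift the block, and then invoke hypothesis (ii). This is exactly the place where both hypotheses (i) (to get an honest axis with large projections, so Corollary~\ref{big projection geodesic} applies) and (ii) (to kill the residual finite stabilizer) are used, and where the quasi-tree/order structure does the real work.
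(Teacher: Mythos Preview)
Your setup is right through the point where you apply Corollary~\ref{big projection geodesic}(ii): for large $M$ the block $g^{a}(Y),\dots,g^{a+m}(Y)$ must lie on every geodesic from $h(Y)$ to $h(g^M(Y))$, hence on $h(\alpha)$. But the next step fails. You write ``since $h(\alpha)$ is parallel to $\alpha$'' and ``we get $h(Z)$ back in $\bY(g)$'', and from this conclude that $h$ fixes each vertex of the block. Neither assertion is justified. Knowing $Z\in h(\alpha)$ tells you where $Z$ sits, not where $h(Z)$ sits; and $h(\alpha)$ is parallel to $\alpha$ only when $h\in EC(g)$, which is exactly what you are trying to constrain. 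An arbitrary $h$ in the WPD set need not send any $g^{a+i}(Y)$ back into $\bY(g)$, let alone fix it. So the passage from ``the block lies on $h(\alpha)$'' to ``$h$ fixes the block'' is a genuine gap, not a bookkeeping issue.

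The paper's proof avoids this by working with the commutator $[\phi,g]=\phi g\phi^{-1}g^{-1}$ rather than with $\phi$ directly. The point is that $h(\alpha)$ is an axis for $\phi g\phi^{-1}$, and the block $g^{a+i}(Y)$ lies on $h(\alpha)$ and also in $\bY(g)$; since $g$ and $\phi g\phi^{-1}$ have the same translation length and the block vertices are consecutive in $\bY(g)$, one gets $\phi g\phi^{-1}(g^i(Y))=g^{i\pm 1}(Y)$. After ruling out the orientation-reversing case by a length comparison, this gives $[\phi,g](g^{i+1}(Y))=g^{i+1}(Y)$ for a block of length $m+1$, so $[\phi,g]$ lies in a conjugate of the finite stabilizer from hypothesis~(ii). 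A separate argument then shows that only finitely many $\psi$ can share the same commutator $[\psi,g]$ with a given $\phi$: if $[\phi,g]=[\psi,g]$ then $\psi^{-1}\phi$ centralizes $g$, hence lies in $EC(g)$ and acts on $\bY(g)\cong\Z$ as a translation of bounded length, and the set of such elements is finite again by~(ii). The two-step structure (finitely many commutators, then finitely many elements per commutator) is what replaces your direct attempt to pin down $h$ on the block.
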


\begin{proof}
By Lemma \ref{translation} $g$ is hyperbolic and has an axis. 
Fix a $D>0$ which for simplicity we'll assume is an integer and let $M= 7D+7+m$. We need two claims:
\begin{enumerate}[(a)]
\item If $d(Y, \phi(Y)) \le D$ and $d(g^M(Y), \phi(g^M(Y))) \le D$ then the commutator $[\phi,g]$ lies in a finite set of elements.

\item There are only finitely many $\psi \in G$ with $$d(Y, \psi(Y))
  \le D\mbox{ and }d(g^M(Y), \psi(g^M(Y))) \le D$$ and with $[\phi,g]
  = [\psi,g]$.
\end{enumerate}
These two claims imply that the set
$$\{\phi \in G| d(Y, \phi(Y)) \le D \mbox{ and } d(g^M(Y), \phi(g^M(Y)) \le D\}$$
is finite and $g$ is WPD.

We now prove (a). By (ii) of Corollary \ref{big projection geodesic},
$g^{3D+3}(Y), \dots g^{4D+4+m}(Y)$ will be in every geodesic from
$\phi(Y)$ to $\phi(g^M(Y))$. We also note that $$\bY(\phi g \phi^{-1})
= \phi(\bY(g))$$ so $\phi(Y), \phi(g^M(Y)) \in \bY(\phi g \phi^{-1})$
and therefore $g^i(Y) \in \bY(\phi g \phi^{-1})$ for $i=3D+3, \dots,
4D+4+m$. Furthermore the order (up to sign) that the $g^i(Y)$ appear
in $\bY(g)$ must be the same as their order in $\bY(\phi g \phi^{-1})$
and in particular $\phi g \phi^{-1} (g^i(Y)) = g^{i \pm 1}(Y)$ for $i=
3D+4, \dots, 4D+3+m$ since $g$ and $\phi g \phi^{-1}$ have the same
translation length. We first need to show that $\phi g \phi^{-1}
(g^i(Y)) = g^{i + 1}(Y)$ instead of $g^{i-1}(Y)$.

Assume not and that $\phi g \phi^{-1} (g^i(Y)) = g^{i - 1}(Y)$. Then
$\phi$ reverses the order of the $g^i(Y)$ in $\bY(\phi g \phi^{-1})$
and in particular, $g^{4D+3+m}(Y)$ occurs before $g^{3D+4}(Y)$. Since
$d(\phi(Y), \phi(g^M(Y))) = M \tau(g)$ and
$d(g^{4D+4+m}(Y),g^{3D+3}(Y)) = (D+m+1) \tau(g)$ one of $d(\phi(Y),
g^{4D+3+m}(Y))$ or $d(g^{3D+4}(Y), \phi(g^M(Y)))$ must be no greater
than $(M-(D+m+1))\tau(g)/2 = (3D+3)\tau(g)$. Assume it is the
former. The proof is similar in the latter case. Since $\tau(g) \ge 1$
and $d(Y,\phi(Y))\le D$ the triangle inequality implies that $d(Y,
g^{4D+4+m}(Y)) \le D + (3D+3)\tau(g) \le (4D +3)\tau(g)$. On the other
hand $d(Y, g^{4D+4+m}(Y)) = (4D + 4 +m) \tau(g) > (4D +3)\tau(g)$,
contradiction.

Therefore $\phi g \phi^{-1} (g^i(Y)) = g^{i +1}(Y)$ and
$[\phi,g](g^{i+1}(Y)) = \phi g \phi^{-1} (g^i(Y)) = g^{i+1}(Y)$ for $i
= 3D+4, \dots, 4D+2+m$. Now notice that the subgroup that fixes $Y, g(Y), \dots,
g^m(Y)$ will be isomorphic to the subgroup that fixes $g^{3D+3}(Y),
\dots, g^{3D+3+m}(Y)$. Hence the finiteness of the former implies the
finiteness of the later. Therefore there are finitely many
possibilities for $[\phi,g]$.

For claim (b) we note that if $[\phi,g] = [\psi, g]$ then $\psi^{-1}
\phi$ conjugates $g$ to itself and therefore $\psi^{-1}\phi \in
EC(g)$. By Proposition \ref{axis}, $\bY(g)$ is order isomorphic to
$\Z$ and the induced map $(\psi^{-1}\phi)_*$ on $\Z$ is an
isometry. If $(\psi^{-1}\phi)_*$ was a reflection then it would
conjugate $g$ to $g^{-1}$ so we must have that $(\psi^{-1}\phi)_*$ is
a translation. Since the translation distance of $(\psi^{-1}\phi)_*$
on $\Z$ will be at most the translation distance of $\psi^{-1}\phi$ on
$\cP_K(\bY)$ and $\phi$ and $\psi$ translate $Y$ at most $D$ we have
that the translation length $(\psi^{-1}\phi)_*$ is at most $2D$. There
is a bijection from the subgroup that fixes $\bY(g)$ point-wisely to
the set of elements that translate $\bY(g)$ any fixed length. Since
the former is finite by (ii) so is the later. This implies that there
are finitely many possible elements that translate $\bY(g)$ with
translation length $\le 2D$ and hence finitely many possibilities for
$\psi^{-1}\phi$ and $\psi$ proving (b) and the proposition.
\end{proof}

\begin{remark}\label{WPD free}
In Corollary \ref{bushy} we produced 
a non-elementary cobounded action on a quasi-tree 
if $G$ is non elementary
by finding a free subgroup $F<G$ using Proposition \ref{free group}.
Furthermore, each non-trivial element in $F$ will be WPD
on the quasi-tree if the stabilizer of two vertices in $\cP_K(\bY)$  is finite
(for example, in many examples in Example \ref{ex}), where
$\bY$ is the set of translates of an axis.
This is because  we only need to verify  (i) of Proposition \ref{wpd}
since (ii) is a trivial consequence of (i) under the extra assumption. 
But when we apply Proposition \ref{free group} in the proof of
Corollary \ref{bushy}, putting $L=K''$, 
for all non-trivial elements $\phi \in F$ we have 
$d_W(\phi^{-1}(W), \phi(W)) > L$, which verifies 
the condition (i) for $N=1$, therefore $\phi$ is WPD.
\end{remark}

\section{A quasi-tree of metric spaces}\label{s3}

\subsection{Axioms and construction}\label{axiom}

In all examples in  Example \ref{ex} the set $\bY$ and the functions $d^\pi_Y$ all
arose from geometric settings. We now formalize this. For each $Y \in
\bY$ let $\cC(Y)$ be a geodesic metric space. In the introduction our
notation was such that $Y$ itself was a metric space and
$\cC(Y)=Y$. But now we will make a distinction, motivated by the
example where elements $Y\in\bY$ represent incompressible subsurfaces
of a surface $\Sigma$ and $\cC(Y)$ is the curve complex of $Y$.
Let $\pi_Y$ be a
function, called {\it projection}, from $\bY \backslash \{Y\}$ to
subsets of $\cC(Y)$. We then define $\pi_Y$ on $x \in \C(X)$
for $X\neq Y$ by $\pi_Y(x) = \pi_Y(X)$. On $\cC(Y)$ itself we define $\pi_Y$ to
be the identity map. (Strictly speaking $\pi_Y$ takes points in $\cC(Y)$ to
singleton subsets of $\C(Y)$.) We now assume there is a constant $\theta \ge 0$ such that 
\begin{enumerate}[(P0)]
\item for all $X \neq Y$, $\diam(\pi_Y(X)) \leq \lc$;
\end{enumerate}
We then define $$d^\pi_Y(X,Z)=\diam\{\pi_Y(X) \cup
  \pi_Y(Z)\}.$$
We assume that axioms (P1) and (P2) hold for $\theta$
(see the introduction).
Then, as we said, the projection complex axioms (PC 1) -- (PC 4) in Section \ref{section.projection}  immediately 
follow for $d^\pi_Y$ and $\theta$.

Note that the examples \ref{ex} that were discussed at the start of the paper
all arise in this way. We also define
$d^\pi_Y(x,z)=\diam\{\pi_Y(x)\cup\pi_Y(z)\}$, and similarly for
$d^\pi_Y(x,Z)$. Note that $d^\pi_Y(x,z)$ still makes sense if $x \in \cC(Y)$
and/or $z \in \cC(Y)$ as does $d^\pi_Y(x,Z)$ if $x \in \cC(Y)$.

We define $d_Y(X,Z)$ exactly as before.
Moreover,  (1) if neither $x \in \cC(Y)$ nor $z
\in \cC(Y)$ then we set $d_Y(x,z) = d_Y(X,Z)$; (2) if either $x \in \cC(Y)$ or $z
\in \cC(Y)$ then $d_Y(x,z) = d^\pi_Y(x,z)$; (3) if $Y\neq Z$, 
then $d_Y(x,Z) =
d^\pi_Y(x,Z)$.
 In these last two cases we don't have the monotonicity
lemma and in fact the lemma doesn't even make sense. Finally we define
$\bY_K(x,z)$ to be the set of $Y$ such that $d_Y(x,z) > K$. These sets
are almost the same as $\bY_K(X,Z)$ although they may possibly contain
$X$ or $Z$. We similarly define $\bY_K(x,Z)$.

The following definition depends not only on the choice of $K$ but
also on the choice of a constant $L$.

\begin{definition}
A {\it quasi-tree of metric spaces} is
the path metric space $\cC(\bY)=\cC_K(\bY)$ obtained by taking the disjoint
union of the metric spaces $\cC(Y)$ for $Y\in\bY$ and if $d(X,Z) = 1$
in $\cS_K(\bY)$ we attach an edge of length $L$ from every point in
$\pi_X(Z)$ to every point in $\pi_Z(X)$. 
\end{definition}

For any two choices of $L$
the corresponding complexes will be quasi-isometric; however, 
we will fix $L$ as a function of
$K$ in Lemma \ref{coarsedistanceestimate} below, and we regard the
construction of $\cC(\bY)$ as depending on $K$ only.
In this way, we can assure that the metric spaces $\cC(Y)$ will be totally geodesically
embedded in $\cC(\bY)$ but that $L$ will still be comparable to
$K$. This will streamline some of our proofs. 
Note that $|L-K|$ is bounded above
by a constant depending only on $\theta$, and that in particular, $L(K) < 2K$ 
and $K < 2L(K)$ if $K$ is sufficiently large (we could assume $K \le L$
then $K<2L$ is trivial).

\begin{lemma}\label{coarsedistanceestimate}
There exists an $L = L(K)$ with $L \sim K$ such that
$$d_{\cC(\bY)}(x,z) \ge d^\pi_Y(x,z)$$ for all $Y \in \cC(\bY)$ with
equality if and only if both $x$ and $z$ are in $Y$. In particular
each $\cC(Y)$ is totally geodesically embedded in $\cC(\bY)$.
\end{lemma}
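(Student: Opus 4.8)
The plan is to choose $L$ so that any path in $\cC(\bY)$ from $x$ to $z$ projects (under $d^\pi_Y$) to a set of diameter at least $d^\pi_Y(x,z)$, by showing that each ``step'' of the path — either a move inside some $\cC(X)$ along a geodesic, or a traversal of one of the attaching edges — changes the $Y$-projection by at most a controlled amount, and that these amounts telescope correctly. First I would set up the bookkeeping: a path $p$ from $x$ to $z$ decomposes as a concatenation of subpaths $p_i$, each either lying inside a single $\cC(X_i)$ or being one of the inserted edges of length $L$ joining a point of $\pi_{X_i}(X_{i+1})$ to a point of $\pi_{X_{i+1}}(X_i)$, where consecutive $X_i$ are adjacent in $\cS_K(\bY)$. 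I would then estimate $d^\pi_Y$ of the endpoints of each $p_i$.

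The key case analysis is for a subpath $p_i$ running inside $\cC(X_i)$ with endpoints $u,v$. If $X_i = Y$, then $d^\pi_Y(u,v) = d_{\cC(Y)}(u,v) = \mathrm{length}(p_i)$, which is exactly what we want (and is where equality can occur). If $X_i \neq Y$, then $\pi_Y(u) = \pi_Y(v) = \pi_Y(X_i)$, so the $Y$-projection does not move at all along $p_i$; this costs nothing. For a traversal of an attaching edge between $\pi_{X_i}(X_{i+1})$ and $\pi_{X_{i+1}}(X_i)$: if neither $X_i$ nor $X_{i+1}$ equals $Y$, the $Y$-projection again does not move; if, say, $X_i = Y$, then the edge goes from a point of $\pi_Y(X_{i+1}) \subset \cC(Y)$ to a point ``outside'', whose $Y$-projection is $\pi_Y(X_{i+1})$ — so again no net motion in the $Y$-coordinate beyond the diameter bound $\theta$ from (P0). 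Summing, the only genuine contribution to $d^\pi_Y(x,z)$ comes from the sub-pieces inside $\cC(Y)$ itself, whose total length is at most $d_{\cC(\bY)}(x,z)$; hence $d_{\cC(\bY)}(x,z) \ge d^\pi_Y(x,z)$, possibly after absorbing finitely many $\theta$-sized errors. The role of choosing $L = L(K)$ large (but still $L \sim K$, in fact $|L - K|$ bounded in terms of $\theta$) is exactly to dominate these $O(\theta)$ errors so that the inequality is clean rather than coarse, and to guarantee that a geodesic in $\cC(\bY)$ between two points of $\cC(Y)$ has no incentive to leave $\cC(Y)$ — which gives the ``equality iff both endpoints are in $Y$'' and the total geodesic embedding.

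For the equality and total-geodesic statements I would argue: if $x,z \in \cC(Y)$, the geodesic of $\cC(Y)$ between them is a path in $\cC(\bY)$ of length $d_{\cC(Y)}(x,z) = d^\pi_Y(x,z)$, so combined with the inequality we get $d_{\cC(\bY)}(x,z) = d^\pi_Y(x,z) = d_{\cC(Y)}(x,z)$, and moreover any $\cC(\bY)$-geodesic realizing this length must, by the step analysis, spend all its length inside $\cC(Y)$ (any excursion out of $\cC(Y)$ would contribute length without contributing to the $Y$-projection, unless it immediately returns; choosing $L$ large rules out short profitable excursions), hence stays in $\cC(Y)$; this is precisely total geodesic embedding. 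Conversely if at least one of $x,z$ is not in $\cC(Y)$ then the inserted edge(s) of length $L$ must be crossed or some subpath lies outside, forcing strict inequality once $L$ exceeds the accumulated $\theta$-errors.

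The main obstacle I expect is the careful choice of $L$ relative to $K$ and $\theta$: one must verify simultaneously that (a) $L$ is large enough that the finitely many additive $\theta$-errors in the telescoping estimate are absorbed and that geodesics between points of a common $\cC(Y)$ never find it cheaper to detour through an attaching edge, while (b) $L$ stays comparable to $K$, i.e. $|L-K|$ is bounded by a function of $\theta$ only, so that the later distance formula (Theorem \ref{masur-minsky1}) has the right form. The delicate point is bounding the \emph{number} of error terms that can accumulate along a geodesic — here one uses that a geodesic in $\cC(\bY)$ between $x$ and $z$ can only pass through finitely many $\cC(X)$ with $X \in \bY_K(x,z)$-type sets, invoking the finiteness/order structure from Theorem \ref{main} and Proposition \ref{connected}, so that transitivity of ``$\sim$'' is applied only along chains of bounded length — the recurring caveat flagged right after Theorem \ref{main}.
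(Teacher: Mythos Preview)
There is a genuine gap in your edge-crossing case. You write that when an attaching edge joins $\cC(X_i)$ to $\cC(X_{i+1})$ with neither equal to $Y$, ``the $Y$-projection again does not move''. This is false: for $u\in\cC(X_i)$ and $v\in\cC(X_{i+1})$ we have $\pi_Y(u)=\pi_Y(X_i)$ and $\pi_Y(v)=\pi_Y(X_{i+1})$, so $d^\pi_Y(u,v)=d^\pi_Y(X_i,X_{i+1})$. Adjacency in $\cS_K(\bY)$ only gives $d_Y(X_i,X_{i+1})\le K$, hence $d^\pi_Y(X_i,X_{i+1})\le K+O(\theta)$ by coarse equality --- not zero. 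This is precisely why $L$ must be chosen with $L\sim K$ (so that $L>K+O(\theta)$), not merely $L>O(\theta)$ as your ``absorbing finitely many $\theta$-sized errors'' suggests.

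Once you fix this, the argument becomes much simpler than what you outline: each edge of length $L$ contributes at most $L$ to the $Y$-projection distance (strictly less, in fact), each unit step inside $\cC(Y)$ contributes exactly $1$, and each step inside $\cC(X)$ with $X\neq Y$ contributes $0$. Since $d^\pi_Y$ satisfies the \emph{exact} triangle inequality (PC~2), you can sum over an arbitrary number of steps without any accumulated error --- there is no need to bound the number of $\cC(X)$'s the path visits, and no need to invoke the order structure of Theorem~\ref{main} or any ``chains of bounded length'' caveat. The paper makes exactly this observation (it explicitly works with $d^\pi_Y$ rather than $d_Y$ for this reason), collapsing all $\cC(Z)$ with $Z\neq Y$ to points and comparing distances step by step in the resulting quotient.
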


 Note that in this lemma we
use the unmodified projection functions, $d^\pi_Y$ as we will need to
apply the triangle inequality an indeterminate number of times. To
simplify notation we will restrict the discussion to the case when
each $\cC(Y)$ is a connected graph endowed with length metric with
each edge of length 1 and the projections $\pi_Y(X)\subset \cC(Y)$ are
sets of vertices. The general case is an easy modification, or indeed,
one may replace $\cC(Y)$ by the Vietoris-Rips complex whose vertices
are the points of $\cC(Y)$, and edges correspond to pairs of points at
distance $\leq 1$. Also in Lemma \ref{movebound2} and \ref{nested_guards2} we view all points as vertices.

\begin{proof} 
Let $\cC'(\bY)$ be the space obtained by collapsing $\cC(Z)$ for every
$Z \in \bY\backslash \{Y\}$. Let $x_0, x_1, \dots, x_k$ be a shortest
path of adjacent vertices between the images of $x$ and $z$ in
$\cC'(\bY)$. Thus each $x_i$ is either a vertex in $\cC(Y)$ or it is
some $Z\in\bY\setminus\{Y\}$.

We'll show that $d^\pi_Y(x_i, x_{i+1}) \leq d_{\cC'(\bY)}(x_i,
x_{i+1})$ with equality if and only if both $x_i$ and $x_{i+_1}$ are in $\cC(Y)$. There are three cases. If neither $x_i$ or $x_{i+1}$ are in
$\cC(Y)$ then by the coarse equality
$$d^\pi_Y(x_i,x_{i+1}) \prec d_Y(x_i, x_{i+1}) < K$$
and
$$d_{\cC'(\bY)}(x_i, x_{i+1}) =L.$$
Since $d^\pi_Y(x_i,x_{i+1})$ is bounded above by $K$ plus a constant depending
only on $\theta$, 
$$d^\pi_Y(x_i, x_{i+1}) < d_{\cC'(\bY)}(x_i, x_{i+1})$$
if $L$ is sufficiently large, but also we may assume $L \sim K$. If $x_i$ and
$x_{i+1}$ are both in $\cC(Y)$ then $d_{\cC'(\bY)}(x_i,
x_{i+1})=d^\pi_Y(x_i, x_{i+1}) = 1$. If exactly one of the two is in
$\cC(Y)$ we have $d^\pi_Y(x_i, x_{i+1}) \sim 0$ and $d_{\cC'(\bY)}(x_i,
x_{i+1}) = L$ so $d^\pi_Y(x_i, x_{i+1}) <
d_{\cC'(\bY)}(x_i, x_{i+1})$ for sufficiently large $L$. Again $L$ can be chosen such that $L \sim K$.

The triangle inequality then shows that
$$d_{\cC'(\bY)}(x_0, x_k) \geq d^\pi_Y(x_0, x_k) = d^\pi_Y(x,z)$$
with equality if and only if all of the $x_i$ are in $\cC(Y)$.
Since the projection to $\cC'(\bY)$ is 1-Lipschitz we have
$$d_{\cC(\bY)}(x,z) \geq d^\pi_Y(x,z)$$
with equality if and only if $x$ and $z$ are in $\cC(Y)$.

To see that $\cC(Y)$ is totally geodesically embedded in $\cC(\bY)$ we
observe that $d^\pi_Y$ is the metric on $\cC(Y)$ and we have just
shown that if $x$ and $z$ are in $\cC(Y)$, any path in $\cC(\bY)$ that
leaves $\cC(Y)$ has length strictly longer than
$d^\pi_Y(x,z)$. Therefore every geodesic from $x$ to $z$ is contained
in $\cC(Y)$.
\end{proof}

\subsection{Distance estimate in $\cC(\bY)$}

The main result of this section is Theorem
\ref{masur-minsky1}, which is a distance estimate in the style of Masur-Minsky.
We start by writing down a straightforward estimate for
an upper bound for the distance in $\cC(\bY)$. This is obtained by
constructing a ``standard path'' joining two points and computing its
length. 

\begin{definition}\label{sp}
A {\it
  standard path} from $x\in\cC(X)$ to $z\in\cC(Z)$ is any path that
passes through $\cC(W)$ if and only if $W\in \bY_K(X,Z)\cup\{X,Z\}$,
it passes through them in the natural order,
and within each $\cC(W)$ the path is a geodesic.
\end{definition}

\begin{lemma}\label{standard_path}
For $K$ sufficiently large
$$d_{\cC(\bY)}(x,z) \le 6K +4\sum_{Y \in \bY_K(x,z)} d_Y(x,z)$$
for all $x,z \in \cC(\bY)$, and moreover the length of any standard
path from $x$ to $z$ is bounded above by the same expression.
\end{lemma}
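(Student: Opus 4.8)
The plan is to estimate the length of a standard path from $x\in\cC(X)$ to $z\in\cC(Z)$ and invoke that $\cC(\bY)$ is a path metric space, so $d_{\cC(\bY)}(x,z)$ cannot exceed this length. List $\bY_K(X,Z)\cup\{X,Z\}$ in the natural order of Theorem~\ref{main}(G) as $X=W_0,W_1,\dots,W_{k+1}=Z$ with $k=|\bY_K(X,Z)|$; the case $k=0$ is an easy direct check, so assume $k\ge 1$. By the proof of Proposition~\ref{connected} consecutive $W_i$ are adjacent in $\cS_K(\bY)$, so $\cC(W_i)$ and $\cC(W_{i+1})$ are joined by edges of length $L$ running from $\pi_{W_i}(W_{i+1})$ to $\pi_{W_{i+1}}(W_i)$, and a standard path is a concatenation of $k+2$ geodesic segments, one in each $\cC(W_i)$, joined by $k+1$ such edges. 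For $0<i<k+1$ the segment in $\cC(W_i)$ runs between a point of $\pi_{W_i}(W_{i-1})$ and a point of $\pi_{W_i}(W_{i+1})$, so regardless of which two points it uses, its length is at most $\diam\bigl(\pi_{W_i}(W_{i-1})\cup\pi_{W_i}(W_{i+1})\bigr)=d^\pi_{W_i}(W_{i-1},W_{i+1})$; similarly the segment in $\cC(X)$ has length at most $d^\pi_X(x,W_1)$ and the one in $\cC(Z)$ at most $d^\pi_Z(W_k,z)$.

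The next step is to convert these $d^\pi$-terms into the modified functions up to an additive $\theta$-constant $c$. For $0<i<k+1$ the order property of Theorem~\ref{main}(G) gives $d_{W_i}(W_{i-1},W_{i+1})\le d_{W_i}(X,Z)$, and coarse equality (Theorem~\ref{main}(B)) upgrades this to $d^\pi_{W_i}(W_{i-1},W_{i+1})\le d_{W_i}(X,Z)+c=d_{W_i}(x,z)+c$, using $W_i\notin\{X,Z\}$. The order property also yields $d_X(W_1,Z)\sim 0$ and $d_Z(X,W_k)\sim 0$, hence $d^\pi_X(W_1,Z),\,d^\pi_Z(X,W_k)\le c$ by coarse equality, so the triangle inequality for diameters gives $d^\pi_X(x,W_1)\le d^\pi_X(x,Z)+c=d_X(x,z)+c$ and $d^\pi_Z(W_k,z)\le d_Z(x,z)+c$ (here $d^\pi_X(x,Z)=d_X(x,z)$ and $d^\pi_Z(X,z)=d_Z(x,z)$ by the definition of $d_Y$ when one argument lies in $\cC(Y)$). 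Writing $S:=\sum_{Y\in\bY_K(x,z)}d_Y(x,z)$, two observations feed the count: each $W_i$ with $0<i<k+1$ lies in $\bY_K(x,z)$ with $d_{W_i}(x,z)>K$, so $\sum_{i=1}^k d_{W_i}(x,z)>kK$; and since $\bY_K(x,z)$ differs from $\bY_K(X,Z)$ only in possibly containing $X$ and/or $Z$, in every case $d_X(x,z)+d_Z(x,z)+\sum_{i=1}^k d_{W_i}(x,z)\le S+2K$.

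Summing, the length of the standard path is at most $d_X(x,z)+d_Z(x,z)+\sum_{i=1}^k d_{W_i}(x,z)+(k+2)c+(k+1)L\le (S+2K)+(k+2)c+(k+1)L$. Since $L\sim K$ we may assume $L\le K+c$ once $K$ is large, so $(k+1)L\le kK+K+(k+1)c$; once moreover $K\ge c$ we have $(k+2)c\le kK+2K$ and $(k+1)c\le kK+K$, hence $(k+1)L\le 2kK+2K$, and, using $kK\le S$ three times, the total is at most $(S+2K)+(kK+2K)+(2kK+2K)=S+3kK+6K\le 4S+6K$, which is the claimed bound. There is no genuine geometric obstacle here — all the content is in the order property of Theorem~\ref{main}, which is precisely why this ``easy half'' of the distance formula (Theorem~\ref{masur-minsky1}) is separated out. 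The only point needing care is the bookkeeping: the additive $\theta$-constant picked up on each of the $k$ inner segments and each of the $k+1$ connecting edges must be reabsorbed against $\sum_{i=1}^k d_{W_i}(x,z)>kK$, which forces the hypothesis that $K$ be sufficiently large, and the cases according to whether $X$ or $Z$ lies in $\bY_K(x,z)$ must all be folded into the additive $6K$.
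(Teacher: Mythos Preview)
Your proof is correct and follows essentially the same approach as the paper: bound each geodesic segment of a standard path using (P0), coarse equality, and the order property of Theorem~\ref{main}(G), then absorb the resulting additive $\theta$-constants against the fact that each inner term satisfies $d_{W_i}(x,z)>K$. The only difference is bookkeeping---the paper folds $L$ and the additive constant into each inner term separately to get $L+d_{\cC(W_i)}(x_i^-,x_i^+)\le 4\,d_{W_i}(x,z)$, whereas you collect all additive error into a single $(k+2)c+(k+1)L$ and absorb it via $kK\le S$---but the argument is the same.
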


\begin{proof} 
Let $X$ and $Z$ be the vertices in $\bY$ with $x \in \cC(X)$
and $z \in \cC(Z)$. Let $\bY_K(X,Z) \cup \{X,Z\} = \{X=Y_0,Y_1, \dots,
Y_k =Z\}$ with labeling respecting the order (cf. Proposition \ref{connected} and its proof). Let $x^+_i$ be a point
in $\pi_{Y_i}(Y_{i+1})$ and $x^-_i$ a point in $\pi_{Y_i}(Y_{i-1})$,
where defined. At the endpoints let $x^-_0 = x$ and $x^+_{k} =
z$. Since the distance between $x^+_i$ and $x^-_{i+1}$ is $L$ we have
\begin{equation*}
d_{\cC(\bY)}(x,z) \le kL + \sum d_{\cC(Y)}(x^-_i, x^+_i).
\end{equation*}

Now we estimate $d_{\cC(\bY)}(x^-_i, x^+_i)$. For $i \in \{1, \dots, k-1\}$ 
we have
\begin{eqnarray*}
d_{\cC(\bY)}(x^-_i, x^+_i) & \le & d^\pi_{Y_i}(Y_{i-1}, Y_{i+1}) \\
& \prec & d_{Y_i}(Y_{i-1}, Y_{i+1}) \\
& \prec & d_{Y_i}(x,z)
\end{eqnarray*}
where the second line follows from the coarse equality property and the third follows from the order property. Since $d_{Y_i}(x,z) > K$ this implies that 
$$d_{\cC(\bY)}(x^-_i, x^+_i) < 2 d_{Y_i}(x,z)$$
for $K$ sufficiently large.

Since $L = L(K) \sim K$ we also have that $L< 2K$ if $K$ is sufficiently large and since $d_{Y_i}(x,z) > K$ we have $L < 2d_{Y_i}(x,z)$ and
$$L + d_{\cC(\bY)}(x^-_i, x^+_i) \leq 4d_{Y_i}(x,z).$$

We similarly have that $d_{C(\bY)}(x^-_i, x^+_i) \prec d_{Y_i}(x,z)$
when $i=0,k$. 

If $d_{Y_i}(x,z) > K$ we have
$d_{\cC(\bY)}(x^-_i, x^+_i) < 2d_{Y_i}(x,z)$ while if $d_{Y_i}(x,z)
\le K$ then $d_{\cC(\bY)}(x^-_i, x^+_i) < 2K$. We can write this as a
single inequality
$$d_{\cC(\bY)}(x^-_i, x^+_i) < 2\max\{K,d_{Y_i}(x,z)\}$$ that applies
to both cases. Now

\begin{eqnarray*}
d_{\cC(\bY)}(x,z) & \le & kL + \sum d_{\cC(Y)}(x^-_i, x^+_i) \\
& \le & L+ 4\sum_{i=1}^{k-1} d_{Y_i}(x,z) + 2\sum_{i=0,k} \max\{K, d_{Y_i}(x,z)\}\\
& \le &6K + 4\sum_{Y \in \bY_{K}(x,z)} d_Y(x,z)
\end{eqnarray*}

\end{proof}

We aim to find a lower bound in the spirit of Lemma
\ref{masur-minsky0} for the projection complex $\cS_K(\bY)$. See
Theorem \ref{masur-minsky1}.
We will need a version of Proposition \ref{key} for $\cC(\bY)$. The proof will be a word for word repeat of Proposition \ref{key} but first we need a new version of Lemma \ref{movebound}.

\begin{lemma}\label{movebound2}
Let $X_0$ and $X_1$ be vertices in $\cP_K(\bY)$ with $d(X_0, X_1) = 1$ and
let $x_0$ and $x_1$ be vertices in $\cC(X_0)$ and $\cC(X_1)$ such that
$x_0 \in \pi_{X_0}(X_1)$ and $x_1 \in \pi_{X_1}(X_0)$. Let $W$ be a
vertex in $\bY$ and $w$ a vertex in $\cC(W)$ with $d_{\cC(\bY)}(x_i, w)
\ge 2L$. Then either
$$d_W(x_0, x_1) \sim 0$$
or
$$d_W(x_i, w) \succ L\mbox{ for }i=0,1.$$
\end{lemma}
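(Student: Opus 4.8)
The statement is the $\cC(\bY)$-version of Lemma \ref{movebound}, and the plan is to mimic that proof while keeping track of the two new features: points now live inside the blown-up copies $\cC(X_i)$, and the hypothesis $d_{\cC(\bY)}(x_i,w)\ge 2L$ replaces the combinatorial hypothesis $d(X_0,W)\ge 2$. First I would dispose of the case where $W\in\{X_0,X_1\}$: since $x_i\in\pi_{X_i}(X_{1-i})$, in that case the distances $d_W(x_i,w)$ are controlled by (P0) and the diameter of the projection, and in fact one of the two conclusions holds trivially; so assume $W\neq X_0,X_1$, whence $d_W(x_0,x_1)=d_W(X_0,X_1)$ is the modified projection distance to which Theorem \ref{main} applies.

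Next, the key geometric point: from $d_{\cC(\bY)}(x_i,w)\ge 2L$ and the distance estimate (Lemma \ref{coarsedistanceestimate}, or more precisely the upper bound in Lemma \ref{standard_path}) I would deduce that $X_i$ is \emph{not} adjacent to $W$ in $\cP_K(\bY)$, i.e. $d(X_i,W)\ge 2$, so $\bY_K(X_i,W)\neq\emptyset$; pick $Y\in\bY_K(X_i,W)$. This is the step where the metric hypothesis $2L$ gets used: if $d(X_i,W)=1$ then any standard path from $x_i$ to $w$ has length $\le 2K + (\text{constant}) < 2L$ once $L\sim K$ is large enough (using $L<2K$ and the fact that with no intermediate vertices the path is essentially one edge of length $L$ plus two geodesic segments inside $\cC(X_i)$ and $\cC(W)$ of bounded length coming from (P0)), contradicting $d_{\cC(\bY)}(x_i,w)\ge 2L$. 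With such a $Y$ in hand, I run the monotonicity argument of Lemma \ref{movebound}: if $d_W(X_0,X_1)=d_W(x_0,x_1)>\xio$ then monotonicity (property (F) of Theorem \ref{main}) gives $d_Y(X_0,X_1)\ge d_Y(X_i,W)>K$, contradicting $d(X_0,X_1)=1$; hence $d_W(x_0,x_1)\le\xio\sim 0$, which is the first alternative.

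Finally, to get the second alternative in the remaining case, suppose $d_W(x_0,x_1)$ is \emph{not} $\sim 0$; then by the dichotomy just proved this cannot happen, so actually the first alternative always holds whenever $W\notin\{X_0,X_1\}$ — but the lemma is stated as an ``either/or'' precisely to absorb the case $W\in\{X_0,X_1\}$, where $d_W(x_i,\cdot)$ need not be coarsely constant but instead $d_W(x_i,w)$ is genuinely large (comparable to $L$) because $w$ is $2L$-far from the (bounded-diameter) projection set $\pi_{X_i}(X_{1-i})=\pi_W(\cdots)$ containing $x_i$; here $d_W(x_i,w)=d^\pi_W(x_i,w)\succ d_{\cC(\bY)}(x_i,w) - (\text{bdd}) \succ 2L - (\text{bdd}) \succ L$ using that $\cC(W)$ is totally geodesically embedded (Lemma \ref{coarsedistanceestimate}) so the distance inside $\cC(W)$ is at least $d_{\cC(\bY)}(x_i,w)$ minus a bounded error. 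The main obstacle I anticipate is the bookkeeping in this last case — making the passage from ``$x_i$ lies in a bounded set'' plus ``$w$ is $2L$-far in $\cC(\bY)$'' to ``$d_W(x_i,w)\succ L$'' fully rigorous, since it requires knowing that a geodesic in $\cC(\bY)$ from $x_i$ to $w$ cannot shortcut outside $\cC(W)$ by much, which is exactly the content of total geodesic embeddedness and must be invoked carefully to keep the additive error bounded independently of $L$ and $K$.
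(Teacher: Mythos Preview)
Your central reduction is wrong. You claim that $d_{\cC(\bY)}(x_i,w)\ge 2L$ forces $d(X_i,W)\ge 2$ in $\cP_K(\bY)$, arguing that if $d(X_i,W)=1$ then a standard path from $x_i$ to $w$ would have length at most $2K$ plus a constant. But such a path consists of a geodesic in $\cC(X_i)$ from $x_i$ to $\pi_{X_i}(W)$, an edge of length $L$, and then a geodesic in $\cC(W)$ from $\pi_W(X_i)$ to $w$. The last segment is \emph{not} bounded by (P0): the point $w$ is an arbitrary vertex of $\cC(W)$, not a point of any projection set, so $d_W(X_i,w)$ can be as large as you like. Likewise the first segment has length roughly $d_{X_i}(X_{1-i},W)$, which (P0) does not control. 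So the case $d(X_0,W)=d(X_1,W)=1$ genuinely survives and must be analyzed.

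The paper does exactly this. After disposing of $W\in\{X_0,X_1\}$ (where in fact the \emph{first} alternative $d_W(x_0,x_1)\sim 0$ holds, since $x_0\in\pi_{X_0}(X_1)$ implies $d^\pi_W(x_0,x_1)\le\diam\pi_W(X_1)\le\xi$; your last paragraph has this backwards) and of the case $d(X_i,W)\ge 2$ for some $i$ (where Lemma \ref{movebound} applies directly), the remaining case is $d(X_0,W)=d(X_1,W)=1$. Here one splits again: if either $d_{X_0}(X_1,W)>\xio$ or $d_{X_1}(X_0,W)>\xio$ then the inequality on triples gives $d_W(X_0,X_1)\sim 0$. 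In the final sub-case both are $\le\xio$, and now the hypothesis $2L\le d_{\cC(\bY)}(x_0,w)\le d_{X_0}(x_0,W)+L+d_W(x_0,w)$ combined with $d_{X_0}(x_0,W)\prec d_{X_0}(X_1,W)\le\xio$ yields $d_W(x_0,w)\succ L$, and symmetrically for $x_1$. So the second alternative arises precisely in this sub-case, not in the case $W\in\{X_0,X_1\}$ as you assert.
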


\begin{proof} 
First assume $X_0 = W$. Since $x_0 \in \pi_W(x_1) = \pi_{X_0}(x_1)$ we
have $d^\pi_W(x_0, x_1) \le \diam(\pi_W(X_1)) \sim 0$. Of course, we get
the same bound if $X_1 = W$.

If either $d(X_0, W) \ge 2$ or $d(X_1, W) \ge 2$ then 
$d^\pi_W(x_0, x_1) = d^\pi_W(X_0, X_1) \sim 0$ by Lemma \ref{movebound}.

This leaves us with the case where $d(X_0,W) = d(X_1,W) =1$. We first
observe that if $d_{X_0}(X_1,W) > \xio$ then $d_W(x_0, x_1) =
d_W(X_0, X_1) \sim 0$. The same estimate holds if
$d_{X_1}(X_0, W) > \xio$.

The final sub-case is when both $d_{X_0}(X_1,W) \le \xio$ and
$d^\pi_{X_1}(X_0,W) \le \xio$. It is here that we use the lower bound 
$d_{\cC(\bY)}(x_i, w) \ge 2L$. To do so we need the upper bound
$$d_{\cC(\bY)}(x_0, w)  \leq  d_{X_0}(x_0, w) + L + d_W(x_0,w)$$
which is obtained by taking the path made up of a path in 
$\cC(X_0)$ connecting $x_0$ to $\pi_{X_0}(w)$, an edge from  $\pi_{X_0}(W)$ to $\pi_W(X_0)$ and a path in $\cC(W)$ from $\pi_W(X_0)$ to $w$. Since $x_0 \in \pi_{X_0}(X_1)$ we have $d_{X_0}(x_0, w) \prec d_{X_0}(X_1, W)$. Combining the bounds gives $d_W(x_0,w) \succ L$ and the same bound holds for $d_W(x_1, w)$.
\end{proof}

\begin{lemma}\label{nested_guards2}
For $K$ sufficiently large the following holds. Let $x_0$ and $x_1$
be adjacent vertices in $\cC(\bY)$ and let $Y$
be a vertex in $\cS_K(\bY)$ such that $d_{\cC(\bY)}(x_i, \cC(Y)) \ge
3L$. If $W$ is a guard for $Y$ with $W \in \bY_{K/2}(x_0,Y)$ and $W
\not\in \bY_{K/2}(x_1,Y)$ then there exists a guard $W'$ for $Y$ with
$W' \in \bY_{K/2}(x_1, Y)$ and $W \in \bY_{\xio}(W', Y)$.
\end{lemma}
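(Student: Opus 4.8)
The statement of Lemma \ref{nested_guards2} is precisely the $\cC(\bY)$-version of Lemma \ref{nested_guards}, so the plan is to mimic that proof word for word, using Lemma \ref{movebound2} in place of Lemma \ref{movebound} and keeping careful track of the fact that the ``distance $2$ or more'' hypothesis there becomes ``distance $\ge 2L$'' here, and ``distance $3$'' becomes ``distance $\ge 3L$''. First I would record the setup: since $W$ is a guard for $Y$ we have $d(W,Y)=1$ in $\cS_K(\bY)$, hence $\cC(W)$ and $\cC(Y)$ are at bounded distance in $\cC(\bY)$, and combined with $d_{\cC(\bY)}(x_0,\cC(Y))\ge 3L$ we get $d_{\cC(\bY)}(x_0,\cC(W))\ge 2L$ (up to adjusting the multiplicative constant in the ``$3L$'' hypothesis, or simply noting $d_{\cC(\bY)}(x_0, w)\ge 2L$ for the relevant $w\in\pi_W(x_0)$), so that Lemma \ref{movebound2} applies to the triple $x_0,x_1,w$. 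Since we are assuming $W\notin\bY_{K/2}(x_1,Y)$, i.e. $d_W(x_1,Y)=d_W(x_1,x_1')\le K/2$ for the relevant basepoints, the first alternative $d_W(x_0,x_1)\sim 0$ of Lemma \ref{movebound2} would force $d_W(x_0,Y)\prec K/2$ via the coarse triangle inequality, contradicting $W\in\bY_{K/2}(x_0,Y)$ once $K$ is large. Hence the second alternative holds: $d_W(x_i,w)\succ L$, and in particular $d_W(x_1,Y)\succ d_W(x_0,Y)>K/2$, so for $K$ large $W\in\bY_{\xio}(x_1,Y)$.

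With $W\in\bY_{\xio}(x_1,Y)$ in hand, the rest is literally the argument of Lemma \ref{nested_guards}, now carried out in $\cC(\bY)$ but using only the already-established projection-complex machinery (Theorem \ref{main}, Lemma \ref{guard_constant}) applied to the vertices of $\cS_K(\bY)$ underneath $x_1, Y, W$. Since $d_{\cC(\bY)}(x_1,\cC(Y))\ge 3L$ we have $d(X_1, Y)\ge 3$ and $d(X_1,W)\ge 2$ where $X_1\in\bY$ is the vertex with $x_1\in\cC(X_1)$, so there must be elements of $\bY_{K/2}(x_1,Y)=\bY_{K/2}(X_1,Y)$ that are less than $W$ in the order on $\bY_{\xio}(X_1,Y)$ (otherwise $W$ would be a lower bound and, by the order property and Lemma \ref{guard_constant}, $d(X_1,Y)$ would be too small). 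Let $W'$ be the greatest such element. By the order property $d_W(W',Y)\succ d_W(X_1,Y)\succ K/2$, so for $K$ large $W\in\bY_{\xio}(W',Y)$, giving the second conclusion. Finally $W'$ is a guard for $Y$ by exactly the argument in Lemma \ref{nested_guards}: for any $X$ with $d_{W'}(X,Y)>\xio$ monotonicity gives $d_W(X,Y)>\xio$, so any $V\in\bY_K(X,Y)$ satisfies $V\le W$ since $W$ is a guard; if $W'<V\le W$ then $V\in\bY_{K/2}(W',Y)\subseteq\bY_{K/2}(X_1,Y)$ by Lemma \ref{guard_constant} and monotonicity, and $V\ne W$ since $W\notin\bY_{K/2}(x_1,Y)$, contradicting maximality of $W'$; hence $V\le W'$.

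The main obstacle — and the only real content beyond transcription — is the first paragraph: getting from the hypothesis ``$d_{\cC(\bY)}(x_i,\cC(Y))\ge 3L$'' to a clean application of Lemma \ref{movebound2}, which requires knowing $d_{\cC(\bY)}(x_i, w)\ge 2L$ for the basepoint $w\in\pi_W(\cdot)$ used there. This uses that $\cC(W)$ lies within distance $\le L + O_\theta(1)$ of $\cC(Y)$ (because $d(W,Y)=1$ and the connecting edge has length $L$), so the ``$3L$'' in the hypothesis is chosen generously enough to absorb this and leave $\ge 2L$; one should double-check that the constants propagate correctly, but this is routine. I expect no genuine difficulty beyond this bookkeeping, since all the heavy lifting (monotonicity, the order, the guard/barrier formalism) has already been done at the level of $\cS_K(\bY)$ and Lemma \ref{movebound2} is exactly the $\cC(\bY)$-analogue of the one place monotonicity entered the proof of Lemma \ref{nested_guards}.
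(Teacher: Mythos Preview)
Your overall strategy is right and matches the paper's: apply Lemma~\ref{movebound2} to deduce $W\in\bY_{\xio}(x_1,Y)$, then repeat the proof of Lemma~\ref{nested_guards} verbatim. But several steps in your execution are genuinely wrong.

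First, you never handle the case $X_0=X_1$ (where $x_0,x_1$ lie in the same $\cC(X)$, joined by an edge of length~$1$). Lemma~\ref{movebound2} requires $d(X_0,X_1)=1$ in $\cS_K(\bY)$, so it does not apply here. The paper disposes of this case separately, showing the hypothesis $W\notin\bY_{K/2}(x_1,Y)$ fails (so the lemma is vacuous): if $X_0=X_1\ne W$ then $d_W(x_0,Y)=d_W(x_1,Y)$ by definition; if $X_0=X_1=W$ one checks $d_W(x_i,Y)\ge 2L>K/2$ directly from $d_{\cC(\bY)}(x_i,\cC(Y))\ge 3L$.

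Second, your use of the dichotomy in Lemma~\ref{movebound2} is backwards. You argue that the first alternative $d_W(x_0,x_1)\sim 0$ would force $d_W(x_0,Y)\prec K/2$, contradicting $W\in\bY_{K/2}(x_0,Y)$. But ``$d_W(x_0,Y)>K/2$'' and ``$d_W(x_0,Y)\le K/2 + C$'' for a constant $C$ are not contradictory; enlarging $K$ does not help since both sides scale together. The correct logic (as in the paper) is the reverse: taking $w\in\pi_W(Y)$, the second alternative $d_W(x_1,w)\succ L\sim K$ gives $d_W(x_1,Y)\succ K$, which for $K$ large contradicts the hypothesis $W\notin\bY_{K/2}(x_1,Y)$. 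Hence the \emph{first} alternative holds, and then $d_W(x_1,Y)\succ d_W(x_0,Y)>K/2$ yields $W\in\bY_{\xio}(x_1,Y)$ for $K$ large.

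Third, your claim that $d_{\cC(\bY)}(x_1,\cC(Y))\ge 3L$ implies $d(X_1,Y)\ge 3$ in $\cS_K(\bY)$ is false: even with $d(X_1,Y)=1$, the point $x_1$ can be arbitrarily far inside $\cC(X_1)$ from $\pi_{X_1}(Y)$. The paper instead argues directly that $\bY_K(x_1,Y)\ne\emptyset$: if it were empty then in particular $d_{X_1}(x_1,Y)\le K$, so $d_{\cC(\bY)}(x_1,\cC(Y))\le d_{X_1}(x_1,Y)+L\le K+L<3L$, a contradiction. This also shows there is an element of $\bY_{K/2}(x_1,Y)$ less than $W$ (possibly $X_1$ itself), after which the choice of $W'$ and the guard verification proceed exactly as in Lemma~\ref{nested_guards}.
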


\begin{proof}
Let $X_0$ and $X_1$ be the vertices of $\cS_K(\bY)$ such that $x_i \in \cC(X_i)$. If $X_0 = X_1 \neq W$ then $W \in \bY_{K/2}(x_1, Y)$ and the lemma is vacuous. If $X_0 = X_1 = W$ then
\begin{eqnarray*}
3L & \le & d_{\cC(\bY)}(x_i, \cC(Y)) \\
& \le & d_{\cC(\bY)}(x_i, \pi_Y(W)) \\
& \le & d_{W}(x_i, Y) + L
\end{eqnarray*}
and therefore $d^\pi_W(x_i, \pi_W(Y)) \ge 2L$. Since $L \sim K$ if $K$ is sufficiently large then $2L > K$ and $W \in \bY_{K/2}(x_1, Y)$,
therefore the lemma is vacuous as well.

We now assume that $X_0 \neq X_1$. We can now apply Lemma \ref{movebound2} with $w$ a point in $\pi_W(Y)$. Note that $d_{\cC(\bY)}(w, \cC(Y)) = L$ so $d_{\cC(\bY)}(x_i, w) \ge 2L$.

Lemma \ref{movebound2} gives us two possibilities. First we may have $d_W(x_1, w) \succ L \succ K$ in which case $W \in \bY_{K/2}(x_1, Y)$ for $K$ sufficiently large.

Therefore if $W \not\in \bY_{K/2}(x_1, Y)$ then Lemma \ref{movebound2}
gives $d_W(x_0,x_1) \sim 0$. For $K$ sufficiently large the coarse
triangle inequality then implies that $W \in \bY_\xio(x_1, Y)$ as $W
\in \bY_{K/2}(x_0,Y)$. Since $W$ is a guard for $Y$ every vertex in
$\bY_K(x_1, Y)$ must be less than $W$ in
$\bY_\xio(x_1,Y)$. Furthermore $\bY_K(x_1,Y)$ can't be empty for if it
was then, as above, $d(x_1, \cC(Y)) \le d_{X_1}(x_1, Y) + L \le K+L <
3L$ if $K$ is sufficiently large.  Therefore there must be elements
($\not= W$, could be $=X_1$) of $\bY_{K}(x_1,Y)$ that are less than
$W$ in $\bY_{\xio}(x_1,Y)$. The rest of the proof now is a repeat of
the proof of Lemma \ref{nested_guards}. Namely, we take $W'$ to be the
greatest element of $\bY_{K/2}(x_1, Y)$ that is less than $W$ in
$\bY_\xio(x_1,Y)$. The proof that $W \in \bY_{\xio}(W',Y)$ and that
$W'$ is a guard is exactly as in the proof of Lemma
\ref{nested_guards}.
\end{proof}

We define the notion of a {\em barrier} for a path in $\cC(\bY)$ just
as we did for paths in $\cS_K(\bY)$. Namely, if $\{x_0, x_1, \dots,
x_k\}$ is a path in $\cC(\bY)$ and $Z$ a vertex in $\cS_K(\bY)$ then
$Y \in \bY$ is a barrier between them if $Y \in \bY_\xio(x_i, Z)$ for
$i=0, \dots, k$. Note that it is possible that $x_i \in \cC(Y)$. If
neither $x_i$ nor $x_j$ are in $\cC(Y)$ then Theorem \ref{main}
implies that $d_Z(x_i, x_j) < \xio$. If exactly one of the two is in
$\cC(Y)$ then $d_Z(x_i, x_j) < \xio$ from the inequality on
triples. If they are both in $\cC(Y)$ then $d_Z(x_i, x_j) = \pi_Z(Y) <
\xio$ by (P0).

\begin{prop}\label{key2}
Let $\{x_0, x_1, \dots, x_k\}$ be a path in $\cC(\bY)$ and $Z\in\bY$
such that $d_{\cC(\bY)}(x_i, \cC(Z)) \ge 3L$ for all $i$.  Then there
is a barrier $C$ in $\bY$ between the path and $Z$. In particular,
$d_Z(x_0,x_i) < \xio$.
\end{prop}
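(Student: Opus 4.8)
The plan is to imitate the proof of Proposition \ref{key} verbatim, building a nested family of guards $W_i$ for $Z$ as the path $\{x_0,\dots,x_k\}$ is traversed, and then observing that the initial guard $W_0$ is the desired barrier. All the technical work has already been done: Lemma \ref{movebound2} replaces Lemma \ref{movebound}, and Lemma \ref{nested_guards2} replaces Lemma \ref{nested_guards}, with the hypothesis ``$d(X_i,Z)\ge 3$'' in the projection complex now replaced by ``$d_{\cC(\bY)}(x_i,\cC(Z))\ge 3L$.'' Since $L\sim K$, the condition $3L$ plays exactly the role that distance $3$ played before: it guarantees enough room so that the $K/2$-guard machinery applies.

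\begin{proof}
We inductively choose guards $W_i$ for $Z$ so that $W_i\in\bY_{K/2}(x_i,Z)$ and so that for $i>j$ either $W_i=W_j$ or $W_j\in\bY_{\xio}(W_i,Z)$.

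First note that for each $i$ the set $\bY_{K/2}(x_i,Z)$ is nonempty: otherwise, writing $X_i$ for the vertex of $\cS_K(\bY)$ with $x_i\in\cC(X_i)$, a standard path from $x_i$ into $\cC(Z)$ (cf. Lemma \ref{standard_path}) shows
$$d_{\cC(\bY)}(x_i,\cC(Z))\le d_{X_i}(x_i,Z)+L\le \tfrac K2+L<3L$$
for $K$ sufficiently large (since $L\sim K$), contradicting the hypothesis. Let $W_0$ be the greatest element of $\bY_{K/2}(x_0,Z)$, so that $\bY_{K/2}(W_0,Z)=\emptyset$ by the order and monotonicity properties; by Lemma \ref{guard_constant}, $W_0$ is a guard for $Z$.

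Now suppose $W_0,\dots,W_i$ have been chosen. If $W_i\in\bY_{K/2}(x_{i+1},Z)$ set $W_{i+1}=W_i$. Otherwise, apply Lemma \ref{nested_guards2} to the adjacent vertices $x_i,x_{i+1}$ (using $d_{\cC(\bY)}(x_i,\cC(Z))\ge 3L$ and $d_{\cC(\bY)}(x_{i+1},\cC(Z))\ge 3L$): there is a guard $W_{i+1}$ for $Z$ with $W_{i+1}\in\bY_{K/2}(x_{i+1},Z)$ and $W_i\in\bY_{\xio}(W_{i+1},Z)$. For any $j<i$ the induction hypothesis gives $W_j\in\bY_{\xio}(W_i,Z)$, hence by monotonicity $W_j\in\bY_{\xio}(W_{i+1},Z)$, completing the induction.

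Set $C=W_0$. For each $i$, either $C=W_i$ or $C\in\bY_{\xio}(W_i,Z)$, and $W_i\in\bY_{K/2}(x_i,Z)\subseteq\bY_{\xio}(x_i,Z)$; in either case monotonicity gives $C\in\bY_{\xio}(x_i,Z)$. Thus $C$ is a barrier between the path and $Z$. The final assertion $d_Z(x_0,x_i)<\xio$ follows from the discussion preceding the proposition, since a barrier $C$ between the path and $Z$ forces $d_Z(x_i,x_j)<\xio$ for all $i,j$ (using Theorem \ref{main}, the inequality on triples, and (P0) according to whether $x_i,x_j$ lie in $\cC(C)$).
\end{proof}

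The only real point requiring care---and the main obstacle, such as it is---is verifying that the hypothesis $d_{\cC(\bY)}(x_i,\cC(Z))\ge 3L$ is exactly strong enough to feed Lemma \ref{nested_guards2}, and checking the nonemptiness of $\bY_{K/2}(x_i,Z)$; both reduce to the comparison $L\sim K$ together with the upper-bound estimate from the standard-path construction. Everything else is a mechanical transcription of the projection-complex argument.
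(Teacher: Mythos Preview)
Your proof is correct and follows the paper's own approach exactly: the paper's proof of Proposition~\ref{key2} simply states that it is a word-for-word repeat of the proof of Proposition~\ref{key}, replacing Lemma~\ref{nested_guards} by Lemma~\ref{nested_guards2} and $X_i$ by $x_i$. You have carried this out faithfully, and your explicit verification that $\bY_{K/2}(x_0,Z)\neq\emptyset$ (needed to start the induction) is a welcome addition that the paper leaves implicit.
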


\begin{proof} 
The proof is a word for word repeat of the proof of Proposition \ref{key} with Lemma \ref{nested_guards} replaced with Lemma \ref{nested_guards2} and the upper case $X_i$ replaced with the lower case $x_i$.
\end{proof}

\begin{remark}
It is not hard to derive Proposition \ref{key} from Proposition
\ref{key2}. In particular a path in $\cS_K(\bY)$ that is $3$ or more
away from a vertex $Z$ can be lifted to path in $\cC(\bY)$ that is 
$3L$ away from $\cC(Z)$.
\end{remark}

The next lemma establishes that the nearest point projection to
$\cC(Z)$ agrees, to within a bounded error, with the prescribed
projections. 

\begin{lemma}\label{nearest_point}
Let $x$ be a vertex in $\cC(\bY)$, $Z$ a vertex in $\cS_K(\bY)$ and
$z$ a nearest point in $\cC(Z)$ to $x$ in $\cC(\bY)$. Then
$$d_Z(x,z) \prec 2K.$$
\end{lemma}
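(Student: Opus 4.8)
The plan is to reduce the statement to a bound on $d^\pi_Z$, work with a geodesic from $x$ to $z$, use that the pieces $\cC(Y)$ are totally geodesically embedded, and split the geodesic into a long initial piece that stays far from $\cC(Z)$ — handled by Proposition \ref{key2} — and a short tail entering $\cC(Z)$, handled by counting attaching edges.

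First I would dispose of the case $x\in\cC(Z)$ (then $z=x$ and $d_Z(x,z)=\diam\pi_Z(x)=0$). So assume $x\in\cC(X)$ with $X\neq Z$; by the conventions set before the lemma, $d_Z(x,z)=d^\pi_Z(x,z)=\diam(\pi_Z(X)\cup\{z\})$, and it is this quantity I would bound. Fix a geodesic $\gamma=\{x=w_0,\dots,w_n=z\}$ in $\cC(\bY)$. By Lemma \ref{coarsedistanceestimate} every $\cC(Y)$ is totally geodesically embedded; together with $z$ being a nearest point of $\cC(Z)$ to $x$, this forces $\gamma\cap\cC(Z)=\{z\}$ (if $\gamma$ entered $\cC(Z)$ earlier then, by convexity, it would stay there, producing a point of $\cC(Z)$ strictly closer to $x$). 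Hence the last edge $w_{n-1}w_n$ is an attaching edge, so $w_{n-1}\in\cC(W)$ for some $W$ with $w_{n-1}\in\pi_W(Z)$ and $z\in\pi_Z(W)$. Then (P0) gives $d^\pi_Z(w_{n-1},z)=\diam(\pi_Z(W)\cup\{z\})\le\theta$, so by the (exact) triangle inequality for $d^\pi_Z$,
$$d^\pi_Z(x,z)\le d^\pi_Z(x,w_{n-1})+\theta=d^\pi_Z(X,W)+\theta .$$

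It remains to bound $d^\pi_Z(X,W)$. Running the nearest-point argument at each $w_i$ shows $z$ is a nearest point of $\cC(Z)$ to every $w_i$, so $i\mapsto d_{\cC(\bY)}(w_i,\cC(Z))=d_{\cC(\bY)}(w_i,z)$ is non-increasing along $\gamma$. If $d_{\cC(\bY)}(x,z)<3L$ then $\gamma$ uses at most two attaching edges, so $X$ and $W$ are at distance $\le 1$ in $\cS_K(\bY)$ and $d^\pi_Z(X,W)\prec K$, finishing this case. Otherwise let $m$ be the largest index with $d_{\cC(\bY)}(w_m,z)\ge 3L$; by monotonicity $w_0,\dots,w_m$ all have distance $\ge 3L$ from $\cC(Z)$, so Proposition \ref{key2} gives $d_Z(x,w_m)<\xio$, and since neither endpoint lies in $\cC(Z)$, coarse equality upgrades this to $d^\pi_Z(x,w_m)\prec 0$. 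For the tail I would note $d_{\cC(\bY)}(w_m,z)\le d_{\cC(\bY)}(w_m,w_{m+1})+d_{\cC(\bY)}(w_{m+1},z)< L+3L=4L$, so the subgeodesic $w_m\to z$ has at most three attaching edges, hence $w_m\to w_{n-1}$ has at most two; thus the distinct spaces between $\cC(X_m)\ni w_m$ and $\cC(W)$ form a chain $X_m=Y_0,Y_1,Y_2=W$ (or shorter) with consecutive terms adjacent in $\cS_K(\bY)$. Adjacency gives $d_Z(Y_i,Y_{i+1})\le K$, hence $d^\pi_Z(Y_i,Y_{i+1})\prec K$ by coarse equality, and the triangle inequality for $d^\pi_Z$ yields $d^\pi_Z(w_m,w_{n-1})=d^\pi_Z(X_m,W)\prec 2K$. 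Since $d^\pi_Z(x,w_{n-1})\le d^\pi_Z(x,w_m)+d^\pi_Z(w_m,w_{n-1})$, combining everything gives $d^\pi_Z(x,z)\prec 2K$.

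The step I expect to be the crux is getting the constant down to exactly $2K$ in the tail analysis: one must see that once $\gamma$ enters the $3L$-neighborhood of $\cC(Z)$ it can only accumulate ``two attaching edges' worth'' of $\pi_Z$-projection. This relies on (i) monotonicity of $d_{\cC(\bY)}(\cdot,\cC(Z))$ along a geodesic, so the entry moment $m$ is well defined; (ii) Lemma \ref{coarsedistanceestimate}, to bound the number of attaching edges by the metric length; and (iii) using the \emph{exact} triangle inequality for $d^\pi_Z$ together with coarse equality to move between $d^\pi_Z$ and $d_Z$ along the bounded-length chain $Y_0,\dots,Y_s$ (so that transitivity is only applied finitely often). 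I would also want to check that the degenerate configurations ($n\le 1$, $W=X$, $w_m=w_{n-1}$, $\gamma$ never leaving $\cC(X)$ before $\cC(Z)$) are all subsumed in the two cases above.
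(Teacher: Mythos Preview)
Your proof is correct and follows essentially the same route as the paper's: split a geodesic from $x$ to $z$ at the last point at distance $\ge 3L$ from $\cC(Z)$, apply Proposition \ref{key2} to the far piece, and bound the tail by counting that it crosses at most three attaching edges (so the chain in $\cS_K(\bY)$ before $Z$ has length $\le 2$, contributing $\prec 2K$). Your version is slightly more explicit in two places --- justifying that $z$ remains a nearest point of $\cC(Z)$ to every $w_i$ on the geodesic (hence the $3L$-threshold is crossed monotonically), and working with $d^\pi_Z$ so as to use the exact triangle inequality --- but these are presentational refinements of the same argument.
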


\begin{proof}
Let $y$ be the last point in a geodesic from $x$ to $z$ such that
$d_{\cC(\bY)}(z, y) = d_{\cC(\bY)}(y, \cC(Z)) \ge 3L$. Then by Proposition
\ref{key2}, $d_Z(x,y) \sim 0$.
The case that such $y$ does not exist, i.e., $d_{\cC(\bY)}(z, x) <3L$,
will be discussed at the end.

If a path in $\cC(\bY)$ of length at most $kL-1$
maps to a path in $\cS_K(\bY)$
then the image path will have length at most $k-1$. 
By the way we chose $y$, $d_{\cC(\bY)}(z, y) \le 4L-1$.
Therefore the
geodesic from $y$ to $z$ will map to a path of length at most $3$ 
(and at least $1$) in
$\cS_K(\bY)$. Let $Y$ and $Z'$ be the vertices of $\cS_K(\bY)$ such
that $y \in \cC(Y)$ and $Z'$ is the last vertex in the path before
$Z$. Since $d(Y,Z') \le 2$, the coarse triangle inequality implies
that $d_Z(Y,Z') \prec 2K$. (We are assuming 
$Z\neq Z'$ here, but the case $Z=Z'$ is similar and 
left to the reader.) Since $Z'$ is the last vertex before $Z$ we
also have that $z \in \pi_Z(Z')$ and therefore $d_Z(z,y) \prec
2K$. Since $d_Z(x,y) \sim 0$, 
another application of the coarse triangle inequality then gives
$d_Z(x,z) \prec 2K$ as claimed.

Now we are left with the case  $d_{\cC(\bY)}(z, x) <3L$.
If $x \in \cC(Z)$, then $z=x$ and there is nothing to prove.
Otherwise, letting $y=x$ in the above discussion, we have 
 $d_Z(z,x) \prec 2K$.
\end{proof}

The nearest point projection $\cC(\bY)\to\cC(Z)$ is not really a
function since the image of a point is not always a single
point. However, it is a {\it coarse map}, i.e. the diameter of the
image set is uniformly bounded by Lemma \ref{nearest_point}. Recall
that a coarse map $F$ between two metric spaces is {\it coarsely
  Lipschitz} if there exist constants $a,b>0$ such that $\diam
F(A)\leq a\diam(A)+b$. 

\begin{cor}\label{nearest_point2}
For every $Z\in\bY$
the nearest point projection $\cC(\bY)\to\cC(Z)$ is coarsely Lipschitz
and the image of $\cC(Y)$ for $Y\neq Z$ is in a uniform neighborhood
of the bounded set $\pi_Z(Y)$.
\end{cor}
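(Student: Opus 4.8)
The plan is to derive everything from Lemma~\ref{nearest_point} together with the fact that each $\cC(Z)$ is totally geodesically embedded in $\cC(\bY)$ (Lemma~\ref{coarsedistanceestimate}). Fix $Z\in\bY$ and, exactly as in the proof of Lemma~\ref{coarsedistanceestimate}, pass to the graph model of $\cC(\bY)$ so that it suffices to treat vertices. Write $F$ for the nearest point projection $\cC(\bY)\to\cC(Z)$, viewed as the coarse map sending a vertex $x$ to the non-empty set $F(x)$ of nearest points of $\cC(Z)$; recall from the discussion preceding the corollary that $\diam F(x)$ is uniformly bounded, as one sees from Lemma~\ref{nearest_point}, total geodesic embedding, and the triangle inequality for $d^\pi_Z$: if $z,z'\in F(x)$ then $d_{\cC(Z)}(z,z')=d^\pi_Z(z,z')\le d^\pi_Z(z,x)+d^\pi_Z(x,z')\prec 4K$.

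For the neighbourhood statement, let $Y\neq Z$, let $y\in\cC(Y)$ be a vertex, and let $z\in F(y)$. Since $z\in\cC(Z)$ we have $d^\pi_Z(y,z)=d_Z(y,z)\prec 2K$ by Lemma~\ref{nearest_point}, and because $\pi_Z(y)=\pi_Z(Y)$ while $\pi_Z(z)=\{z\}$, this reads $\diam\bigl(\pi_Z(Y)\cup\{z\}\bigr)\prec 2K$. In particular $z$ lies within $\prec 2K$ of $\pi_Z(Y)$ in $\cC(Z)$, and since $\diam\pi_Z(Y)\le\theta$ by (P0) this bounds the distance in $\cC(Z)$ from $F(\cC(Y))$ to $\pi_Z(Y)$ by a constant depending only on $K$ and $\theta$, hence uniform in $Y$.

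For coarse Lipschitzness, it is enough to bound $d_{\cC(Z)}(z_0,z_1)$ when $x_0,x_1$ are vertices joined by an edge and $z_i\in F(x_i)$; the required bound $\diam F(A)\le a\,\diam(A)+b$ then follows by telescoping along a shortest edge-path between two vertices of $A$ that realize its diameter, using that every edge has length at least $1$ (so that the number of edges is at most the distance) together with the bound on $\diam F(x)$ above. For the one-edge bound, Lemma~\ref{coarsedistanceestimate} gives $d^\pi_Z(x_0,x_1)\le d_{\cC(\bY)}(x_0,x_1)\le L\sim K$, and $d^\pi_Z(z_i,x_i)=d_Z(x_i,z_i)\prec 2K$ by Lemma~\ref{nearest_point}; since $d_{\cC(Z)}(z_0,z_1)=d^\pi_Z(z_0,z_1)$, two applications of the triangle inequality for $d^\pi_Z$ give $d_{\cC(Z)}(z_0,z_1)\prec 6K$.

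I do not expect a genuine obstacle here, since the content is entirely in Lemmas~\ref{nearest_point} and~\ref{coarsedistanceestimate}. The only points needing care are that ``coarsely Lipschitz'' allows the constants $a,b$ to depend on $K$ (and $\theta$), which is automatic from the bounds above; that, as in Lemma~\ref{coarsedistanceestimate}, one should phrase the argument for vertices in the graph model; and the routine bookkeeping converting the per-edge estimate into a bound on $\diam F(A)$ for an arbitrary bounded set $A$.
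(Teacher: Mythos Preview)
Your proof is correct and relies on the same two lemmas (Lemma~\ref{nearest_point} and Lemma~\ref{coarsedistanceestimate}) as the paper's argument. The only difference is organizational: the paper does an explicit case split on whether the edge joining $x_0,x_1$ has length $1$ or $L$ and whether one endpoint lies in $\cC(Z)$, while you handle all cases at once by invoking the inequality $d^\pi_Z(x_0,x_1)\le d_{\cC(\bY)}(x_0,x_1)$ from Lemma~\ref{coarsedistanceestimate} and the triangle inequality for $d^\pi_Z$---a mild streamlining, not a genuinely different route.
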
 

\begin{proof}
Let $x_1,x_2$ be two vertices of $\cC(\bY)$ that are joined by an edge
and say $x_i\in\cC(X_i)$ for $i=1,2$. We need to argue that the images
of $x_i$ are uniformly close. There are several cases.

{\it Case 1.} $x_1,x_2$ are joined by an edge of length 1. Then
$X_1=X_2$ and the images of $x_1,x_2$ are uniformly close, by Lemma
\ref{nearest_point}, to $\pi_Z(X_1)=\pi_Z(X_2)$.

{\it Case 2.} $x_1,x_2$ are joined by an edge of length $L$; thus
$d(X_1,X_2)=1$. If $X_1\neq Z\neq X_2$ then $d_Z(X_1,X_2)\leq K$ and
we again see from Lemma \ref{nearest_point} that the images of $x_1$
and $x_2$ are uniformly close. Finally, if $X_1=Z\neq X_2$, then
$x_1\in \cC(Z)$ is its own image, while the image of $x_2$ is at most
$2L$ away.
\end{proof}

\begin{proof}[Proof of Theorem A]
This now follows from Lemma \ref{coarsedistanceestimate} and Corollary
\ref{nearest_point2}.
\end{proof}

The next two statements say that $\cC(\bY)$ is a quasi-tree-like
union of spaces $\cC(Y)$.

\begin{prop}\label{standard_points}
Let $X,Z\in \bY$, $x\in\cC(X)$, $z\in\cC(Z)$. If $Y \in \bY_\xio(x,z)$ 
then any path from $x$ to $z$ in $\cC(\bY)$
contains a vertex $w$ such that
\begin{itemize}
\item $d_{\cC(\bY)}(w,\cC(Y))< 3L$,
\item $d_Y(x,w) \prec K$.
\end{itemize}
It follows that  $d_{\cC(\bY)}(w,\pi_Y(x))\prec  3L+3K.$
(A similar statement holds with $z$ in place of $x$.)
\end{prop}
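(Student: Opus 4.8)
The plan is to mimic the proof of Proposition \ref{key2}, using the notion of barriers, but now applied to an \emph{arbitrary} path from $x$ to $z$ rather than assuming that the whole path stays $3L$-far from $\cC(Y)$. The point is that if every vertex of the path were $3L$ or more from $\cC(Y)$, then Proposition \ref{key2} (applied with $Z$ replaced by $Y$) would give a barrier between the path and $Y$, hence $d_Y(x,z)<\xio$, contradicting $Y\in\bY_\xio(x,z)$. So the path must come within $3L$ of $\cC(Y)$, and the first such vertex is the $w$ we seek.

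\textbf{Step 1: Locate $w$.} Let $p=\{x=x_0,x_1,\dots,x_k=z\}$ be any path in $\cC(\bY)$ and let $w=x_j$ be the first vertex on $p$ with $d_{\cC(\bY)}(w,\cC(Y))<3L$. Such a vertex exists: if not, every $x_i$ satisfies $d_{\cC(\bY)}(x_i,\cC(Y))\ge 3L$, and then Proposition \ref{key2} (with $Y$ in the role of $Z$) produces a barrier between $p$ and $Y$, whence $d_Y(x_0,x_k)=d_Y(x,z)<\xio$. This contradicts the hypothesis $Y\in\bY_\xio(x,z)$. (If $x\in\cC(Y)$ then we may simply take $w=x$ and the bound on $d_Y(x,w)$ is trivial; so assume $x\notin\cC(Y)$, and also note $w\ne z$ is not needed.)

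\textbf{Step 2: Bound $d_Y(x,w)$.} Consider the initial sub-path $\{x_0,\dots,x_j=w\}$. By the minimality of $j$, every vertex $x_0,\dots,x_{j-1}$ is at distance $\ge 3L$ from $\cC(Y)$, but $w=x_j$ may not be. Apply Proposition \ref{key2} to the path $\{x_0,\dots,x_{j-1}\}$ (if $j\ge 2$) to get $d_Y(x_0,x_{j-1})<\xio$. It remains to control $d_Y(x_{j-1},w)$: the vertices $x_{j-1}$ and $w$ are joined by an edge of length $1$ or $L$, so they lie in $\cC(\bY)$-adjacent metric spaces $\cC(X_{j-1})$ and $\cC(X_j)$ with $d(X_{j-1},X_j)\le 1$; by coarse equality and the definition of $d_Y$ on points, $d_Y(x_{j-1},w)$ is within an additive constant of $d_Y(X_{j-1},X_j)$, which is $\le K$ (it is $\sim 0$ unless one of them equals $Y$, and if $w\in\cC(Y)$ we instead have $d_Y(x_{j-1},w)=d^\pi_Y(x_{j-1},w)\le\diam\pi_Y(X_{j-1})+\dots$, still $\prec K$). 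Hence $d_Y(x_{j-1},w)\prec K$. Combining via the coarse triangle inequality (a bounded number of times) gives $d_Y(x,w)\prec K$. The case $j=1$, i.e.\ $w=x_1$ adjacent to $x$, is the same estimate without the use of Proposition \ref{key2}.

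\textbf{Step 3: Deduce the distance bound.} We have $w$ with $d_{\cC(\bY)}(w,\cC(Y))<3L$; choose $y'\in\cC(Y)$ with $d_{\cC(\bY)}(w,y')<3L$. Then $\pi_Y(x)=\pi_Y(X)$ is a set in $\cC(Y)$, and by Lemma \ref{coarsedistanceestimate} distances in $\cC(Y)$ are computed by $d^\pi_Y$, so $d_{\cC(Y)}(y',\pi_Y(x))=d^\pi_Y(y',\pi_Y(x))$; but the triangle inequality inside $\cC(Y)$ together with the definition of $d^\pi_Y$ gives $d^\pi_Y(y',\pi_Y(x))\le d^\pi_Y(Y,X)+\dots$, and more directly, since $d_Y(x,w)\prec K$ and the nearest-point projection from $w$ to $\cC(Y)$ lands within a bounded neighborhood of $\pi_Y(w)=\pi_Y(x)$ (here use that $d^\pi_Y(x,w)$ and the diameter of $\pi_Y(X)$ are both $\prec K$), we get $d_{\cC(Y)}(y',\pi_Y(x))\prec 3L+K$ and hence $d_{\cC(\bY)}(w,\pi_Y(x))\le d_{\cC(\bY)}(w,y')+d_{\cC(Y)}(y',\pi_Y(x))\prec 3L+3K$.

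\textbf{Main obstacle.} The only real subtlety is Step 2: carefully tracking what $d_Y(\cdot,\cdot)$ means when one or both of the arguments is a point of $\cC(Y)$ itself (recall the three-case definition of $d_Y$ on points right after Lemma \ref{coarsedistanceestimate}), so that the coarse triangle inequality and Proposition \ref{key2} can be legitimately chained. One must be a bit careful that the chain of $\prec$'s has bounded length so the accumulated constant depends only on $\theta$ (and $K$), not on the length $k$ of the path — but since the chain here has fixed length (at most three applications), this is automatic.
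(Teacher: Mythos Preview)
Your proof is correct and follows essentially the same line as the paper's: locate the first vertex $w$ on the path within $3L$ of $\cC(Y)$ (existence via Proposition \ref{key2}), apply Proposition \ref{key2} to the initial segment to get $d_Y(x,x_{j-1})\sim 0$, then bound $d_Y(x_{j-1},w)\prec K$ since adjacent vertices project to the same or adjacent vertices in $\cS_K(\bY)$, and combine. The only place where the paper is a bit cleaner is your Step 3: rather than the somewhat tangled chain of estimates you give, the paper simply takes $\tilde w$ to be a nearest point of $\cC(Y)$ to $w$, invokes Lemma \ref{nearest_point} to get $d_Y(\tilde w,w)\prec 2K$, and then uses the coarse triangle inequality with the already-established $d_Y(x,w)\prec K$ to conclude $d_{\cC(\bY)}(w,\pi_Y(x))\prec 3L+2K+K$.
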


\begin{proof}
By Proposition \ref{key2} every path from $x$ to $z$ must intersect
the $3L$-neighborhood of $\cC(Y)$ if $Y \not=X,Z$.
This is trivially true if $Y=X$ or $Y=Z$. Let $w$ be the first vertex in the
path with $d_{\cC(\bY)}(w, \cC(Y)) < 3L$ and let $w'$ be the vertex
that precedes it. (If $w= x$ then the lemma holds trivially.) By
Proposition \ref{key2}, $d_Y(x,w') \sim 0$. Since $w$ and $w'$ are
adjacent in $\cC(\bY)$ they will map to either adjacent vertices in
$\cS(\bY)$ or the same vertex. In either case $d_Y(w,w') \prec K$ and
by the coarse triangle inequality $d_Y(x,w) \prec K$.

Now let $\tilde w \in \cC(Y)$ be a nearest point from $w$
to $\cC(Y)$. We have $d_{\cC(\bY)}(w,\tilde w)< 3L$.
By Lemma \ref{nearest_point},
$d_Y(\tilde w,w) \prec 2K$. Therefore by the coarse triangle inequality
 $d_{\cC(\bY)}(w,\pi_Y(x)) \prec 3L + 2K + K$.
\end{proof}

\begin{lemma}\label{forced}
There exists $K'>0$ so that the following holds. If $x\in\cC(X)$,
$z\in \cC(Z)$, and $Y\in \bY_{K'}(x,z)$, then every geodesic $V$ in
$\cC(\bY)$ from $x$ to $z$ intersects $\cC(Y)$ in a geodesic segment
$[v,w]$ and moreover $d_Y(x,v)\prec K'$, $d_Y(z,w)\prec K'$.
$Y$ is possibly $X$ or $Z$.
\end{lemma}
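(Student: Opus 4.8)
The plan is to extract this statement from the already-established machinery, in particular from Proposition \ref{key2}, Proposition \ref{standard_points}, and Lemma \ref{nearest_point}. Fix a constant $K'$, comparable to $K$, which will be enlarged finitely many times in the course of the argument. Let $Y\in\bY_{K'}(x,z)$ and let $V$ be a geodesic from $x$ to $z$ in $\cC(\bY)$. The first point to settle is that $V$ actually meets $\cC(Y)$. Suppose not. Then $V$ is a path from $x$ to $z$ that stays outside $\cC(Y)$; if moreover every vertex of $V$ had $d_{\cC(\bY)}(\cdot,\cC(Y))\ge 3L$, Proposition \ref{key2} would give $d_Y(x,z)<\xio<K'$, a contradiction. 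So some vertex of $V$ lies within $3L$ of $\cC(Y)$ but $V$ never enters $\cC(Y)$; I would rule this out by the same kind of bookkeeping used in the proof of Lemma \ref{masur-minsky0}. Namely, let $w^-$ be the first vertex of $V$ with $d_{\cC(\bY)}(w^-,\cC(Y))<3L$ and $w^+$ the last such vertex. By Proposition \ref{key2} applied to the sub-path before $w^-$ and to the sub-path after $w^+$ we get $d_Y(x,w^-)\sim 0$ and $d_Y(w^+,z)\sim 0$. The portion of $V$ between $w^-$ and $w^+$ has length at most $8L-1$ (since both endpoints are within $3L$ of $\cC(Y)$, which they cannot enter, and $V$ is a geodesic), so it maps to a path of bounded length in $\cS_K(\bY)$; chaining the coarse triangle inequality a bounded number of times, together with the bound $d_W(x_i,x_{i+1})\le K$ along edges and Lemma \ref{nearest_point} at the two endpoints to relate $w^\pm$ to $\cC(Y)$, gives $d_Y(w^-,w^+)\prec cK$ for an explicit constant $c$. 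Combining, $d_Y(x,z)\prec cK$, so choosing $K'\succ cK$ forces $V$ to enter $\cC(Y)$.

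Once $V$ enters $\cC(Y)$, it does so in a single geodesic sub-segment: by Lemma \ref{coarsedistanceestimate}, $\cC(Y)$ is totally geodesically embedded in $\cC(\bY)$, so a geodesic $V$ cannot leave $\cC(Y)$ and return — any such excursion could be shortcut inside $\cC(Y)$, contradicting that $V$ is a geodesic. (More carefully: if $V$ met $\cC(Y)$ in two vertices $u_1,u_2$ with a sub-path leaving $\cC(Y)$ in between, that sub-path has length $>d^\pi_Y(u_1,u_2)=d_{\cC(Y)}(u_1,u_2)$ by Lemma \ref{coarsedistanceestimate}, so replacing it by a geodesic in $\cC(Y)$ strictly shortens $V$.) Hence $V\cap\cC(Y)=[v,w]$ is a single geodesic segment, with $v$ the entry point and $w$ the exit point.

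It remains to bound $d_Y(x,v)$ and $d_Y(z,w)$. Apply Proposition \ref{standard_points} to the path $V$ (which runs from $x$ to $z$, both not in $\cC(Y)$ unless $Y\in\{X,Z\}$, but the proposition allows those cases): it produces a vertex $w_0$ on $V$ with $d_{\cC(\bY)}(w_0,\cC(Y))<3L$ and $d_Y(x,w_0)\prec K$. Since $V$ enters $\cC(Y)$ exactly once, $v$ is the first vertex of $V$ actually in $\cC(Y)$, and $w_0$ precedes or equals $v$; the sub-path of $V$ from $w_0$ to $v$ has length $<3L+1$ hence maps to a path of bounded length in $\cS_K(\bY)$, so $d_Y(w_0,v)\prec K$ (using Lemma \ref{movebound2} or just the edge-bound $d_W\le K$ plus Lemma \ref{nearest_point} a bounded number of times), and the coarse triangle inequality gives $d_Y(x,v)\prec K'$. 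The symmetric argument, applied from the $z$ end, gives $d_Y(z,w)\prec K'$. Collecting all the finitely many lower bounds imposed on $K'$ along the way yields the desired $K'$ with $K'\sim K$.

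The main obstacle is the middle step: ensuring that a geodesic $V$, which a priori might graze the $3L$-collar of $\cC(Y)$ without penetrating it, must in fact cross into $\cC(Y)$. Proposition \ref{key2} alone only controls paths that stay $3L$-far from $\cC(Y)$, so the subtlety is the transitional region near $\cC(Y)$; the fix is the length bound on the grazing portion of the geodesic (it cannot be long, being a geodesic with both endpoints pinned near the convex set $\cC(Y)$), which converts it into a bounded-length path in $\cS_K(\bY)$ and lets the coarse triangle inequality finish. Making the constant count in that estimate explicit (so as to know how large $K'$ must be) is the one genuinely fiddly computation, but it is entirely analogous to the proof of Lemma \ref{masur-minsky0}.
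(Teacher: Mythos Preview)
Your approach has a genuine gap at the central step. You claim that the portion of $V$ between $w^-$ and $w^+$ has length at most $8L-1$, ``since both endpoints are within $3L$ of $\cC(Y)$, which they cannot enter, and $V$ is a geodesic.'' But this does not follow: the nearest-point projections of $w^-$ and $w^+$ to $\cC(Y)$ may land far apart inside $\cC(Y)$ --- indeed, roughly $d_Y(x,z)$ apart, since you have already shown $d_Y(x,w^-)\sim 0$ and $d_Y(w^+,z)\sim 0$. So the geodesic segment from $w^-$ to $w^+$ can have length comparable to $d_Y(x,z)$, not bounded by $8L$. The same error recurs in your endpoint estimate: there is no reason the sub-path from $w_0$ to $v$ has length $<3L+1$, since $w_0$ may be near a point of $\cC(Y)$ that is far from $v$ inside $\cC(Y)$.

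The paper closes this gap with a \emph{length-versus-projection comparison} that exploits $L>K$. From Proposition \ref{standard_points} one obtains points $v',w'$ on $V$ with $v'$ uniformly close to $\pi_Y(x)$ and $w'$ uniformly close to $\pi_Y(z)$, so $d_{\cC(\bY)}(v',w')\prec 6L+6K+d_Y(x,z)$. If $[v',w']$ avoided $\cC(Y)$, then since any two consecutive $\cC(W)$'s along it have $d_Y$-projection diameter $\le K$, the segment must visit at least $d_Y(x,z)/K$ distinct $\cC(W)$'s and hence contain at least $d_Y(x,z)/K$ edges of length $L$. That forces
\[
\frac{L}{K}\,d_Y(x,z)\prec 6L+6K+d_Y(x,z),
\]
which is impossible for large $d_Y(x,z)$ because $L/K>1$. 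The same trick, applied to the sub-geodesic $[x,v'']$ where $v''$ is the vertex immediately preceding $v$, yields $d_Y(x,v)\prec K'$. Your use of Propositions \ref{key2} and \ref{standard_points} is the right starting point, but crossing the ``grazing'' region requires this counting argument, not a direct bound on the length of that region.
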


\begin{proof}
First note that by Lemma \ref{coarsedistanceestimate} the
intersection, if nonempty, is a geodesic segment (possibly a single
point). From Proposition \ref{standard_points} it follows that there are
points $v',w'$ along $V$ so that $d(v',\pi_Y(x))\prec 3L+3K$ and 
$d(w',\pi_Y(z))\prec 3L+3K$. In particular, $d(v',w')\prec
6L+6K+d_Y(x,z)$. 

Assuming the subsegment $[v',w']\subset V$ is disjoint from $\cC(Y)$, we
estimate the number of $\cC(W)$'s $[v',w']$ has to pass through as being
at least $\frac {d_Y(x,z)}K-1$ (the diameter of the projections
to $Y$ of the union of two consecutive $\cC(W)$'s is at most $K$). 
Thus the number of
edges of length $L$ the segment passes through is at least $\frac
{d_Y(x,z)}K$, and we have
$$\frac{L d_Y(x,z)}K\prec 6L+6K+d_Y(x,z)$$
Since $L/K>1$ we get a contradiction when $d_Y(x,z)$ is large enough. 
We have shown that if $K'$ is large enough then 
$[v',w'] \cap \cC(Y) \not= \emptyset$.

Thus $[v',w']\cap \cC(Y)$ is a geodesic segment $[v,w]$. We will
argue that $v$ is uniformly close to $\pi_Y(x)$; the argument that
$w$ is uniformly close to $\pi_Y(z)$ is symmetric. Let $v''$ be the
vertex on the segment $[x,v]\subset V$ immediately preceding $v$ (if
$x=v$ there is nothing to prove). If
$d(\pi_Y(x),\pi_Y(v''))>K'$ we may apply the argument of the preceding
paragraph to the geodesic $[x,v'']$ to deduce $[x,v'']\cap
\cC(Y)\neq\emptyset$, a contradiction. Thus
$d(\pi_Y(x),\pi_Y(v''))\leq K'$ and so $d_Y(x,v)\prec K'$.
\end{proof}

The following is the distance estimate analogous to the Masur-Minsky
formula. 

\begin{thm}\label{masur-minsky1}
There is $K'>K$ such that for $x\in \cC(X), z\in\cC(Z)$
$$\frac 12\sum_{W \in \bY_{K'}(x,z)} d_W(x,z)\leq
d_{\cC(\bY)}(x,z)\leq 
6K+4\sum_{W\in\bY_K(x,z)}d_Y(x,z)
$$
\end{thm}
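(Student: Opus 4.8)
The plan is to prove the two inequalities separately, the upper bound being essentially immediate from earlier work and the lower bound requiring the ``forcing'' machinery developed in Lemma \ref{forced}.

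\textbf{Upper bound.} This is exactly Lemma \ref{standard_path}, which for $K$ sufficiently large gives
$$d_{\cC(\bY)}(x,z)\le 6K+4\sum_{Y\in\bY_K(x,z)}d_Y(x,z),$$
so the right-hand inequality holds with the same $K$ with which $\cC(\bY)$ was constructed. Nothing further is needed here.

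\textbf{Lower bound.} Let $K'$ be the larger of the constant from Lemma \ref{forced} and $K$, so that for every $W\in\bY_{K'}(x,z)$ any geodesic $V$ from $x$ to $z$ meets $\cC(W)$ in a nonempty geodesic subsegment $[v_W,w_W]$, with $d_W(x,v_W)\prec K'$ and $d_W(z,w_W)\prec K'$. The coarse triangle inequality then yields
$$\ell([v_W,w_W])=d_{\cC(W)}(v_W,w_W)\ge d_W(v_W,w_W)\succ d_W(x,z),$$
so that each $W\in\bY_{K'}(x,z)$ forces $V$ to spend roughly $d_W(x,z)$ worth of its length inside $\cC(W)$. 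The key point to verify is that the subsegments $[v_W,w_W]$ for distinct $W\in\bY_{K'}(x,z)$ are pairwise disjoint, so that summing their lengths does not overcount. This follows because the $\cC(W)$ themselves are pairwise disjoint in $\cC(\bY)$ (Lemma \ref{coarsedistanceestimate}: each $\cC(W)$ is totally geodesically embedded, and two distinct ones meet at most in the attached length-$L$ edges, which do not lie in any $\cC(W)$). Hence
$$d_{\cC(\bY)}(x,z)=\ell(V)\ge\sum_{W\in\bY_{K'}(x,z)}\ell([v_W,w_W])\succ\sum_{W\in\bY_{K'}(x,z)}d_W(x,z).$$
Since each term $d_W(x,z)>K'$, absorbing the additive slack in the $\succ$ into a factor and enlarging $K'$ if necessary turns this into the clean bound $\frac12\sum_{W\in\bY_{K'}(x,z)}d_W(x,z)\le d_{\cC(\bY)}(x,z)$.

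\textbf{Main obstacle.} The delicate point is not the disjointness of the $\cC(W)$'s (that is essentially formal once Lemma \ref{coarsedistanceestimate} is in hand) but rather controlling the error terms in ``$d_W(x,v_W)\prec K'$'' uniformly as $W$ ranges over the potentially large set $\bY_{K'}(x,z)$. One must be careful that the implicit constants in Lemma \ref{forced} depend only on $\theta$ (and $K$), not on the pair $(x,z)$ or on $W$; this is exactly how that lemma is stated, so the bookkeeping goes through, but it is the place where one has to resist applying transitivity of $\prec$ along chains of unbounded length. The cleanest way to organize the argument is to fix a geodesic $V$, observe that $V$ passes through the spaces $\cC(W)$, $W\in\bY_{K'}(x,z)$, \emph{in the natural order} (again by Lemma \ref{coarsedistanceestimate}, a geodesic cannot re-enter a $\cC(W)$ it has left), and then estimate the length contributed inside each $\cC(W)$ from below by $d_W(x,z)$ minus a universal constant, finally summing. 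Choosing $K'$ large enough that this universal constant is less than, say, $\tfrac12 d_W(x,z)$ for every $W$ in the set gives the stated factor $\tfrac12$.
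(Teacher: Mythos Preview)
Your proposal is correct and follows essentially the same approach as the paper: the upper bound is just Lemma \ref{standard_path}, and the lower bound comes from Lemma \ref{forced} by observing that a geodesic must pass through each $\cC(W)$ for $W\in\bY_{K'}(x,z)$, that these subsegments are pairwise disjoint, and that each has length at least $d_W(x,z)$ minus a universal constant, which is at least $\tfrac12 d_W(x,z)$ once the threshold is enlarged. The paper's proof is simply terser---it absorbs the additive $2K'$ error by requiring $d_W(x,z)>6K'$ and then renaming $6K'$ to $K'$---but the logic is identical to yours.
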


\begin{proof}
The upper bound is Lemma \ref{standard_path}.
Let $K'$ be the constant from Lemma \ref{forced} and assume that
$d_Y(x,z)>6K'$. Then any geodesic from $x$ to $z$ intersects $\cC(Y)$
in a segment of length $\succ 4K'$, which is $>3K'$. The estimate
follows after renaming $6K'$ to $K'$.
\end{proof}

\subsection{Hyperbolicity of $\cC(\bY)$}\label{section.hyperbolic}

In this section we prove that if all $\cC(Y)$ uniformly satisfy the
bottleneck property, or hyperbolicity, or quasi-convexity,
then $\cC(\bY)$ satisfies the same property.

\begin{thm}\label{bottleneck}
Suppose that all $\cC(Y)$ for $Y\in\bY$ are quasi-trees in a uniform
way, so that
there is $\Delta$ such that all $\cC(Y)$ for $Y\in\bY$ satisfy
the bottleneck property with this $\Delta$. Then $\cC(\bY)$ satisfies
the bottleneck property so it is a quasi-tree.
\end{thm}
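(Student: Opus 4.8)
The plan is to verify the modified bottleneck property for $\cC(\bY)$: given $x \in \cC(X)$ and $z \in \cC(Z)$, produce a path $p$ from $x$ to $z$ such that every other path from $x$ to $z$ lies in a uniform neighborhood of $p$. The natural candidate for $p$ is a \emph{standard path} from $x$ to $z$ (Definition \ref{sp}), which passes in order through the spaces $\cC(W)$ for $W \in \bY_K(X,Z) \cup \{X,Z\}$, connected by edges of length $L$, and is geodesic inside each $\cC(W)$. So I would fix such a standard path $p$, fix an arbitrary path $q$ from $x$ to $z$, fix a vertex $u$ on $p$, and show that $q$ comes uniformly close to $u$.

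First I would split into cases according to where $u$ sits. If $u$ lies in one of the pieces $\cC(W)$ with $W \in \bY_{K}(X,Z)$ and $u$ is ``deep'' in $\cC(W)$, say $d_{\cC(\bY)}(u, \partial_W) $ is large where $\partial_W$ denotes the entry/exit points $\pi_W(\cdot)$ used by the standard path, then I argue $W$ is a barrier-type vertex: by the order property (Theorem \ref{main}(G)) and Lemma \ref{forced}, $W \in \bY_{K'}(x,z)$ once $u$ is deep enough, so \emph{every} geodesic — and by the same Proposition \ref{key2} / Lemma \ref{forced} machinery, every path $q$ — must pass through $\cC(W)$, entering near $\pi_W(x)$ and exiting near $\pi_W(z)$. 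Inside $\cC(W)$ both $p$ and the portion of $q$ in $\cC(W)$ run (coarsely) from $\pi_W(x)$ to $\pi_W(z)$; since $\cC(W)$ is a quasi-tree with uniform bottleneck constant $\Delta$, the segment of $q$ in $\cC(W)$ passes within $\Delta'$ (a uniform constant built from $\Delta$) of $u$. This is the heart of the argument and where uniformity of $\Delta$ is essential. If instead $u$ is within bounded distance of one of the ``transition'' edges of $p$ (an edge of length $L$, or a shallow point of some $\cC(W)$, including the end pieces $\cC(X),\cC(Z)$), I bound $u$'s distance to a deep point of an adjacent $\cC(W')$ or to $x$ or $z$ by a uniform constant, and reduce to the previous case or to the trivial observation that $q$ starts at $x$ and ends at $z$.

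The main obstacle is handling the ``small'' pieces: subsurfaces $W$ with $d_W(x,z)$ just above or below the threshold $K$ but not above $K'$, so that $q$ need not enter $\cC(W)$ at all. Here I would not try to force $q$ through $\cC(W)$; instead I observe that a standard path spends only a uniformly bounded length ($< 2\max\{K, d_W(x,z)\} + L$, cf. the proof of Lemma \ref{standard_path}) traversing such a piece and its two flanking edges, so any point $u$ in this part of $p$ is within a uniform distance of a point $u'$ on $p$ lying in a \emph{deep} part of some genuine barrier piece $\cC(W')$ with $W' \in \bY_{K'}(x,z)$ (or of $x$ or $z$). Since $q$ passes within $\Delta'$ of $u'$, the triangle inequality gives that $q$ passes within $\Delta' + (\text{uniform})$ of $u$. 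Collecting the constants, I get a single bottleneck constant depending only on $\Delta$ and $\theta$ (and $K$), so $\cC(\bY)$ has the bottleneck property and, by Manning's theorem \cite{manning}, is a quasi-tree. This also proves part (ii) of Theorem B, since $\R$ trivially satisfies the bottleneck property with $\Delta = 0$.
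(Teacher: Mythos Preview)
Your overall strategy---take a standard path and verify the modified bottleneck property---matches the paper. But there is a genuine gap in your ``deep'' case. You assert that by Proposition~\ref{key2} and Lemma~\ref{forced} every path $q$ must pass through $\cC(W)$, with the portion of $q$ inside $\cC(W)$ running from near $\pi_W(x)$ to near $\pi_W(z)$. This is not what those results say: Lemma~\ref{forced} is explicitly about \emph{geodesics}, and Proposition~\ref{standard_points} only guarantees that $q$ comes within $3L$ of $\cC(W)$, not that it enters $\cC(W)$ at all. An arbitrary path $q$ may bypass $\cC(W)$ entirely (hopping between other pieces via edges of length $L$), or may enter and leave $\cC(W)$ several times in disconnected segments, none of which need run from $\pi_W(x)$ to $\pi_W(z)$. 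So you cannot apply the bottleneck property of $\cC(W)$ to ``the portion of $q$ in $\cC(W)$''.

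The paper's fix is exactly the missing idea: rather than look at where $q$ meets $\cC(W)$, \emph{project all of $q$ to $\cC(W)$} via $\pi_W$. If $x_0,\dots,x_k$ are the vertices of $q$ and $y_j \in \pi_W(x_j)$, then $d_{\cC(W)}(y_j,y_{j+1}) \prec K$ (consecutive $x_j$ lie in the same piece or in adjacent pieces), so the $y_j$ form a genuine coarse path in $\cC(W)$ from $\pi_W(x)$ to $\pi_W(z)$. Now the bottleneck property of $\cC(W)$ applies and gives some $y_\ell$ close to $u$; Proposition~\ref{standard_points} and Lemma~\ref{nearest_point} then convert this into an actual vertex $x_{\ell'}$ of $q$ close to $u$ in $\cC(\bY)$. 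This argument works uniformly for every $W \in \bY_K(X,Z)\cup\{X,Z\}$, so your deep/shallow case split (and the awkward reduction for ``small'' pieces) becomes unnecessary.
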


\begin{proof}
Let $x\in\cC(X)$ and $z\in\cC(Z)$ be given and let
$Y_1,Y_2,\cdots,Y_s$ be the elements of $\bY_K(X,Z)$ with
indexing reflecting the order. There is a standard path
(see the proof of Lemma \ref{standard_path}) $V$ in $\cC(\bY)$ from
$x$ to $z$ that projects to $\{X,Y_1,Y_2,\cdots,Y_s,Z\}$ and within
each $\cC(Y_i)$ (we let $Y_0=X,Y_{s+1}=Z$) it is a geodesic. We will argue
that any path $U$ from $x$ to $z$ comes within a bounded distance from
any point on $V$. This verifies the modified bottleneck property
discussed just before
Theorem D.

Fix a point $v \in \cC(Y_i)$ on $V$ and let $\{x=x_0, x_1, \dots,
x_k=z\}$ be the vertices of an arbitrary path $U$ between $x$ and $z$. We
project the $x_j$ to $Y_i$ and let $y_j$ be points in
$\pi_{Y_i}(x_j)$. Note that $d_{\cC(Y_i)}(y_j,y_{j+1}) \prec K$ so the
$y_j$ form a coarse path in $\cC(Y_i)$ from $y_0 = \pi_{Y_i}(x)$ to
$y_k = \pi_{Y_i}(z)$. Since $\cC(Y_i)$ satisfies the bottleneck
property with constant $\Delta$,
$d(y_0,\pi_{Y_i}(Y_{i-1})) \sim 0$ and $d(y_k,\pi_{Y_i}(Y_{i+1})) \sim 0$ 
by the order property, 
 there will be some $y_\ell$ with
$d_{\cC(Y_i)}(y_\ell, v) \prec \Delta+ K$. Note that if $K$ is
sufficiently large then at least one of $d_{Y_i}(x, x_{\ell})$ and
$d_{Y_i}(z, x_{\ell})$ must be large enough to apply Proposition
\ref{standard_points}. Assume it is the former. Applying Proposition
\ref{standard_points} there exists a vertex $x_{\ell'}$ on the path
between $x$ and $x_{\ell}$ such that
$$d_{\cC(\bY)}(x_{\ell'}, \cC(Y_i)) < 3L$$
and
$$d_{Y_i}(y_\ell, x_{\ell'}) \prec K$$
since $y_\ell \in \pi_{Y_i}(x_\ell)$. Let $w \in \cC(Y_i)$ be the closest point in $\cC(\bY)$ to $x_{\ell'}$. Then by Lemma \ref{nearest_point} and the coarse triangle inequality we have
$$d_{Y_i}(w, v) \prec d_{Y_i}(w, x_{\ell'})
+d_{Y_i}(x_{\ell'}, y_\ell)+d_{Y_i}(y_\ell, v)
\prec \Delta + 4K$$
and, since $d_{\cC(\bY)}(w, x_{\ell'})<3L$,
$$d_{\cC(\bY)}(x_{\ell'}, v) \prec \Delta + 4K + 3L.$$
This proves that the bottleneck property holds
since $x_{\ell'} \in U$.
\end{proof}

A geodesic metric space is {\it quasi-convex} if there is $N>0$ such
that for any two geodesic segments $[u,v]$ and $[u',v']$, if
$d(u,u')\leq 1$ and $d(v,v')\leq 1$ then $[u',v']$ is contained in the
Hausdorff $N$-neighborhood of $[u,v]$. Note that this implies that if
$d(u,u')\leq C$, $d(v,v')\leq C$ then $[u',v']$ is contained in the
Hausdorff $(C+1)N$-neighborhood of $[u,v]$.

Also note that if each $\cC(Y)$ is quasi-convex with the same constant (then we say {\it uniformly quasi-convex}),
then there is a uniform bound on the Hausdorff distance 
of any two standard paths between any two points in $\cC(\bY)$.

\begin{lemma}\label{Hausdorff}
Suppose that each $\cC(Y)$ is quasi-convex with the same constant $N$.
There is $M>0$ so that for any $x$ and $z$, the Hausdorff distance
between any geodesic from $x$ to
$z$ and any standard
path (see Definition \ref{sp}) from $x$ to $z$ is at most $M$.
\end{lemma}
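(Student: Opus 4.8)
The plan is to compare an arbitrary geodesic $V$ from $x$ to $z$ with a standard path $P$ from $x$ to $z$ by showing each is contained in a bounded neighborhood of the other, using the two-sided distance estimate of Theorem~\ref{masur-minsky1} together with Lemma~\ref{forced} as the main tool. The key observation is that for $Y \in \bY_{K'}(x,z)$ with $K'$ large, both $V$ and $P$ must pass through $\cC(Y)$, and Lemma~\ref{forced} pins down \emph{where} on $V$ this happens (near $\pi_Y(x)$ and near $\pi_Y(z)$), while the definition of standard path does the same for $P$. So the two paths are forced to track each other at every ``large projection'' coordinate, and in the regions in between neither path can wander far because there are no large projections there.

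\textbf{Step 1: $V$ lies in a bounded neighborhood of $P$.} Fix a vertex $v$ on $V$. Pick the constant $K'$ from Lemma~\ref{forced} (enlarging it below as needed). First dispose of the case $d_{\cC(\bY)}(x,z)$ bounded: then trivially $v$ is close to $P$. Otherwise, let $Y_1 < \dots < Y_s$ be the elements of $\bY_{K'}(x,z)$ in their natural order, and set $Y_0 = X$, $Y_{s+1} = Z$. By Lemma~\ref{forced}, $V$ meets each $\cC(Y_j)$ in a geodesic subsegment $[v_j,w_j]$ with $d_{Y_j}(x,v_j) \prec K'$ and $d_{Y_j}(z,w_j) \prec K'$; moreover since projections to $Y_j$ of $x$ and of $z$ are what control these endpoints, $v_j$ and $w_j$ are within bounded (in terms of $K'$) distance of $\pi_{Y_j}(x)$ and $\pi_{Y_j}(z)$ respectively, hence within bounded distance of $\pi_{Y_j}(Y_{j-1})$ and $\pi_{Y_j}(Y_{j+1})$ by the order property of Theorem~\ref{main}. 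The same endpoints describe the portion of the standard path $P$ inside $\cC(Y_j)$ up to bounded error, so $[v_j,w_j]$ lies in a bounded neighborhood of $P$ (using quasi-convexity of $\cC(Y_j)$, which is \emph{not} needed for this direction but harmless; in fact all of $\cC(Y_j)$ is traversed by $P$ between these points). Now suppose $v$ lies on the portion of $V$ strictly between $w_j$ and $v_{j+1}$, i.e. outside all the $\cC(Y_i)$. We must show this portion of $V$ is short. Since there are no elements of $\bY_{K'}$ between $Y_j$ and $Y_{j+1}$ along the ordering, and $d_{Y_j}(x, v) \prec K'$, $d_{Y_{j+1}}(v,z)\prec K'$ for $v$ in this range (again by Lemma~\ref{forced} applied with $v$ as an endpoint — actually one argues via Proposition~\ref{standard_points} and the distance estimate), Theorem~\ref{masur-minsky1} applied to the pair $w_j, v_{j+1}$ gives $d_{\cC(\bY)}(w_j, v_{j+1})\leq 6K + 4\sum_{W\in\bY_K(w_j,v_{j+1})} d_W(w_j,v_{j+1})$, and each such $W$ has bounded projection, and there are boundedly many, so this distance is bounded; hence $v$ is within a bounded distance of $w_j$, which lies near $P$. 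This completes one inclusion.

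\textbf{Step 2: $P$ lies in a bounded neighborhood of $V$.} This is symmetric: a vertex $p$ on $P$ lies in some $\cC(Y_j)$; if $d_{Y_j}(x,z) > K'$ then $p$ is close to the segment $[v_j,w_j]$ of $V$ (via quasi-convexity of $\cC(Y_j)$, since $p$ lies on a geodesic in $\cC(Y_j)$ between points at bounded distance from $v_j$ and $w_j$ — here quasi-convexity \emph{is} used), and $[v_j,w_j]\subset V$; if $d_{Y_j}(x,z)\leq K'$, then the subpath of $P$ inside $\cC(Y_j)$ has bounded length (it is a geodesic between $\pi_{Y_j}(Y_{j-1})$ and $\pi_{Y_j}(Y_{j+1})$ and $d_{\cC(Y_j)}$ of these is $\prec d_{Y_j}(Y_{j-1},Y_{j+1}) \prec d_{Y_j}(x,z)\leq K'$ by the order property), and its endpoints are near $V$ by Step~1's analysis (they sit between consecutive large-projection coordinates), so $p$ is near $V$. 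Taking $M$ to be the maximum of the finitely many bounds produced finishes the proof.

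\textbf{Main obstacle.} The delicate point is handling the ``in between'' regions of both paths: one must be sure that between two consecutive coordinates in $\bY_{K'}$ neither path can make a long excursion. For $V$ this follows from Theorem~\ref{masur-minsky1} since the subsegment's endpoints have all projections bounded by $K'$; for $P$ it follows from the explicit construction of the standard path (its length inside each $\cC(W)$ with $d_W(x,z)\leq K$ is bounded, and it uses exactly $|\bY_K(x,z)|+1$ edges of length $L$, with the relevant subcount bounded). So the real work is bookkeeping with the ordered index set $\bY_{K'}(x,z) \subset \bY_K(x,z)$ and repeatedly invoking the order property and the coarse triangle inequality; enlarging $K'$ a controlled, bounded number of times to make each ``sufficiently large'' clause valid is what one has to keep straight.
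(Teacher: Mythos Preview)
Your Step~2 (the standard path $P$ lies in a bounded neighborhood of the geodesic $V$) is essentially correct and matches the paper's first direction: each segment $P\cap\cC(W)$ has endpoints uniformly close to $V$ by Proposition~\ref{standard_points}, and either its length is bounded (when $d_W(x,z)\leq K'$) or it is Hausdorff-close to the corresponding segment $V\cap\cC(W)$ via Lemma~\ref{forced} and quasi-convexity.

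Your Step~1, however, has a genuine gap. You claim that the portion of $V$ between $w_j$ and $v_{j+1}$ (where $Y_j,Y_{j+1}$ are consecutive in $\bY_{K'}(x,z)$) has bounded length, because ``each such $W$ has bounded projection, and there are boundedly many''. The first assertion is fine: $d_W(w_j,v_{j+1})\leq K'$ for every $W$. But the second is false. Between $Y_j$ and $Y_{j+1}$ in the ordering of $\bY_K(x,z)$ there may be arbitrarily many intermediate $Y_i$ with $K<d_{Y_i}(x,z)\leq K'$; none of these appears in $\bY_{K'}(x,z)$, yet all lie in $\bY_K(w_j,v_{j+1})$ (up to the usual additive error). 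The upper bound from Theorem~\ref{masur-minsky1} then gives only $d_{\cC(\bY)}(w_j,v_{j+1})\prec |\bY_K(w_j,v_{j+1})|\cdot K'$, which is not uniformly bounded. Correspondingly, the portion of the standard path between $\cC(Y_j)$ and $\cC(Y_{j+1})$ passes through all these intermediate $\cC(Y_i)$'s and can be arbitrarily long as well, so neither side of this ``in between'' region is short.

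The paper handles this direction differently. It does not try to bound the length of the complementary interval $J\subset V$. Instead it uses Proposition~\ref{standard_points} to produce, for each endpoint $r_m$ of each segment of the standard path inside some intermediate $\cC(Y_i)$, a nearby point $R_m$ on $J$. Since consecutive $r_m$'s are within $L+K'$ of each other, consecutive $R_m$'s are uniformly close; and the first and last $R_m$ are near the endpoints of $J$. Hence the $R_m$'s cut $J$ into uniformly bounded pieces, each close to a point of the standard path. This is the missing idea: you must track $J$ along its entire length via the intermediate $Y_i$'s rather than argue it is short.
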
 

\begin{proof}
If $[v,w]$ is a segment in a standard path $U$ obtained by intersecting
with some $\cC(W)$, then the endpoints are
within uniform distance of any geodesic $V$ from $x$ to $z$ by Proposition
\ref{standard_points} since  $W \in \bY_{\xio}(x,z)$ (the only case the lemma
does not apply is when $W=X,Z$ and $W \not\in \bY_{\xio}(x,z)$,
but then the claim is true with the bound $\xio$). 
We claim that $[v,w]$ is within uniform distance from $V$.
If  $d_W(x,z) \le K'$, then the length of the geodesic $[v,w]$
is bounded by a constant $\prec 3K'$, therefore 
$[v,w]$ is within uniform distance from $V$.
If $d_W(x,z) >K'$, then by Lemma \ref{forced} $V$
intersects $\cC(Y)$ in a geodesic segment $[v',w']$ whose
endpoints are uniform distance from the endpoints of $[v,w]$.
By the uniform quasi-convexity of $\cC(Y)$,
the claim follows. Thus the standard path $U$ is contained in a
uniform neighborhood of the geodesic $V$.

Now we show that the geodesic $V$ 
is contained in a uniform neighborhood
of the standard path $U$. Let $\bY_K(x,z)=\{Y_1,Y_2,\cdots,Y_k\}$ and let
$i_1<i_2<\cdots<i_s$ be the indices of those $Y_i$ with
$d_{Y_i}(x,z)>K'$, where $K'$ is large (at least as large as in Lemma
\ref{forced}, but in fact a bit larger, see below). Then $V\cap
\cC(Y_{i_j})$ is an interval $I_{i_j}$ and the intervals
$I_{i_1},I_{i_2},\cdots,I_{i_s}$ occur along $V$ in order of their
indices (if $I_{i_j}$ occurs after $I_{I_{j+1}}$ apply Lemma
\ref{forced} to the subsegment of $V$ that starts with $I_{i_j}$ to
get a contradiction -- this is where we need $K'$ to be larger by $\xio$
than in Lemma \ref{forced}).
Let $I_{i_j}'$ be the geodesic
segment $\cC(Y_{i_j}) \cap U$. 
Since $U$ is a standard path, the endpoints 
of  $I_{i_j}'$ are within distance $\prec \xio$ 
from $\pi_{Y_{i_j}}(x), \pi_{Y_{i_j}}(z)$, respectively.
Also, by Lemma \ref{forced}, the endpoints
of $I_{i_j}$ are within distance $\prec K'$
from $\pi_{Y_{i_j}}(x), \pi_{Y_{i_j}}(z)$, respectively. 
Therefore, $I_{i_j}$ and $I'_{i_j}$ 
are contained in a uniform neighborhood of each other by 
the uniform quasi-convexity of $\cC(Y)$.
It suffices to argue
that each complementary interval in $V$
and the corresponding (with respect to the order)
complementary interval 
in $U$ are contained in a uniform neighborhood of each other.

\begin{figure}
\centerline{\scalebox{0.7}{\input{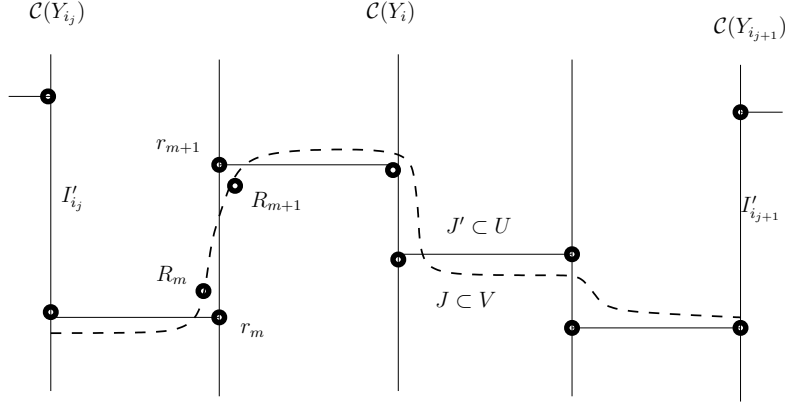}}}
\caption{Lemma \ref{Hausdorff}. $J$ is the dashed line}
\end{figure}
Let $J$ be one such complementary interval, say between $I_{i_j}$ and
$I_{i_{j+1}}$. The corresponding interval $J'$ in $U$ is 
between $I'_{i_j}$ and $I'_{i_{j+1}}$. We already know the endpoints
of $J$ and $J'$ are uniformly close. 
Note that
$Y_i\in\bY_\xio(Y_{i_j},Y_{i_{j+1}})$ for $i_j<i<i_{j+1}$, so applying
Proposition \ref{standard_points} again to $J$ we find that each endpoint $r_m$
of
each segment of $J'$ in the standard path within some $\cC(Y_i)$ is within
uniform distance
of some point $R_m$ on $J$. 
(The bound is perhaps worse than $3L+3K$
since the endpoints of $J$ and $J'$ do not exactly coincide,
but they are uniformly close, which is enough.)
Index the points $r_m$ in
order in which they occur along the standard path, and note that we do
not know that the corresponding points $R_m$ appear in linear order
along $J$. However, since $d(r_m,r_{m+1})$ is uniformly bounded
(by $L+K'$), it
follows that $d(R_m,R_{m+1})$ is uniformly bounded. Moreover, the
first point $R_1$ and the last point $R_n$ are within a uniform
distance of the corresponding endpoints of $J$. It follows that the
$R_m$'s cut $J$ into segments of bounded length and also
$r_m$'s cut $J'$ into segments of bounded length, 
therefore $J$ and $J'$ are contained in a uniform neighborhood of 
each other, and the lemma follows.

The extremal cases, when $J$ contains an endpoint of $V$, differs only
in notation and is left to the reader.
\end{proof}

\begin{remark}
A similar argument shows that $\cC(\bY)$ is quasi-convex.
\end{remark}

Recall that a geodesic metric space is $\delta$-{\it hyperbolic} if for any
three points $x,y,z$ any geodesic $[x,z]$ is contained in the
$\delta$-neighborhood of the union $[x,y]\cup [y,z]$ of any two
geodesics joining $x$ to $y$ and $y$ to $z$. A space is {\it hyperbolic} if
it is $\delta$-hyperbolic for some $\delta$.

\begin{thm}\label{hyp}
Assume that each $\cC(Y)$ is $\delta$-hyperbolic with the same
$\delta$. Then $\cC(\bY)$ is hyperbolic.
\end{thm}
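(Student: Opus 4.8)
The plan is to establish hyperbolicity of $\cC(\bY)$ via a thin-triangles argument that reduces the global problem to the already-controlled local pieces $\cC(Y)$ together with the tree-like combinatorial structure of $\cS_K(\bY)$. The key structural facts at our disposal are: (a) Lemma \ref{standard_path} and Theorem \ref{masur-minsky1}, giving the distance formula and showing a standard path is a quasi-geodesic; (b) Lemma \ref{forced}, which says that if $d_Y(x,z)$ is large then every geodesic from $x$ to $z$ crosses $\cC(Y)$ in a segment with controlled entry/exit projections; (c) Proposition \ref{key2} and Proposition \ref{standard_points}, giving the "barrier"/fellow-traveling control that was used in Lemma \ref{Hausdorff}; and (d) the fact that $\cS_K(\bY)$ is a quasi-tree (Theorem \ref{quasitree}), hence $0$-hyperbolic up to uniform error. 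Note that Lemma \ref{Hausdorff} was stated under a \emph{quasi-convexity} hypothesis; $\delta$-hyperbolic spaces are uniformly quasi-convex (with constant depending only on $\delta$), so that lemma applies here and tells us that geodesics and standard paths are at uniform Hausdorff distance. This will be the workhorse.

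First I would fix three points $x, z, w$ in $\cC(\bY)$ and geodesics $[x,z]$, $[x,w]$, $[w,z]$ between them; by Lemma \ref{Hausdorff} it suffices to prove the thin-triangles condition for \emph{standard} paths $U_{xz}, U_{xw}, U_{wz}$ instead, since replacing geodesics by standard paths changes everything by a uniformly bounded Hausdorff distance. Next I would take a point $p$ on $U_{xz}$; it lies in some $\cC(Y)$ with $Y \in \bY_K(x,z) \cup \{X,Z\}$, and within $\cC(Y)$ the standard path is a geodesic segment $[v_Y, w_Y]$ with endpoints uniformly close to $\pi_Y(x)$ and $\pi_Y(z)$ by the order property. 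Now project $w$ (the third vertex of the triangle) to $\cC(Y)$, obtaining a point $\hat w \in \pi_Y(w)$. By the inequality on triples applied to the triple $x, z, w$ at $Y$, at least one of $d_Y(x,w)$ and $d_Y(z,w)$ is $\sim d_Y(x,z)$, hence $\hat w$ is uniformly close to one of the endpoints of $[v_Y, w_Y]$; say it is close to $v_Y \sim \pi_Y(x)$, so $\pi_Y(w) \sim \pi_Y(x)$. Then inside the $\delta$-hyperbolic space $\cC(Y)$, the point $p \in [v_Y, w_Y]$ is $\delta$-close to the union of a geodesic from $v_Y$ to $\hat w$ and a geodesic from $\hat w$ to $w_Y$; since $\hat w \sim v_Y$, this forces $p$ to be uniformly close to the geodesic $[\hat w, w_Y]$ in $\cC(Y)$, which is uniformly close to the portion of the standard path $U_{wz}$ lying in $\cC(Y)$ (using Lemma \ref{forced} and the order property, exactly as in the proof of Lemma \ref{Hausdorff}, provided $d_Y(w,z)$ is large; if it is small then $[v_Y,w_Y]$ itself has uniformly bounded length and $p$ is close to the common endpoint region). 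This shows $p$ is within uniform distance of $U_{xw} \cup U_{wz}$.

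There is one gap in the above: it only handles points $p$ lying inside the "large-projection" pieces $\cC(Y)$, not points on the length-$L$ connecting edges or on pieces with $d_Y(x,z) \le K'$. For those pieces the standard-path segment has uniformly bounded length, so $p$ is uniformly close to the nearest entry vertex of a large piece (or to $x$ or $z$); alternatively one argues directly that the combinatorial skeleton — the image of the standard path in $\cS_K(\bY)$ — is a path in a quasi-tree, hence its triangle is uniformly thin, and a point on the connecting edges projects into $\cS_K(\bY)$ to a vertex that is within bounded distance in $\cS_K(\bY)$ of the other two sides, which lifts back to bounded distance in $\cC(\bY)$ via the distance formula. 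I expect \textbf{the main obstacle} to be bookkeeping the interaction between the two "scales": the coarse-tree scale of $\cS_K(\bY)$ and the internal hyperbolic geometry of each $\cC(Y)$, and in particular verifying that the case analysis (which endpoint $\pi_Y(w)$ is near, and which of the three pieces the projection $\pi_Y(w)$ identifies) is exhaustive and produces a single uniform constant $\delta'$ depending only on $\delta$, $K$, $L$, and $\theta$. Once the piece-by-piece estimate and the skeleton estimate are combined, $\delta$-hyperbolicity of $\cC(\bY)$ follows.
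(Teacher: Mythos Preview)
Your overall strategy matches the paper's: invoke Lemma \ref{Hausdorff} (valid since $\delta$-hyperbolic spaces are uniformly quasi-convex) to replace the geodesic $[x,z]$ by a standard path, and then argue piece by piece inside each $\cC(Y)$ that the segment $[\pi_Y(x),\pi_Y(z)]$ lies in a uniform neighborhood of the other two sides. The paper in fact replaces only the side $[x,z]$ by a standard path and keeps the other two sides as honest geodesics in $\cC(\bY)$; this is slightly cleaner than your plan of replacing all three, but the difference is cosmetic.

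There is, however, a genuine gap in your piece-by-piece step. You assert that, by the ``inequality on triples,'' the projection $\hat w=\pi_Y(w)$ must be uniformly close to one of the endpoints $\pi_Y(x)$ or $\pi_Y(z)$. This is not what that property gives you: property (D) of Theorem \ref{main} compares projections to two \emph{different} base spaces $Y$ and $Z$, not three points projected to a single $\cC(Y)$. Inside a fixed $\cC(Y)$ there is no reason for $\pi_Y(w)$ to be near an endpoint of $[\pi_Y(x),\pi_Y(z)]$; it can sit anywhere, for instance near the midpoint (take $w\in\cC(Y)$ itself). Your sketch therefore handles only the degenerate case and misses the one where the hypothesis actually does the work.

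The correct case split, and what the paper does, is: if $d_Y(x,w)\le\xio$ then indeed $\pi_Y(w)\sim\pi_Y(x)$ and your argument applies; symmetrically if $d_Y(z,w)\le\xio$. In the remaining case \emph{both} $d_Y(x,w)$ and $d_Y(z,w)$ exceed $\xio$. Then Proposition \ref{standard_points} and Lemma \ref{forced} show that a $\cC(Y)$-geodesic $[\pi_Y(x),\pi_Y(w)]$ lies in a uniform neighborhood of the ambient geodesic $[x,w]$, and likewise $[\pi_Y(w),\pi_Y(z)]$ lies near $[w,z]$; now the $\delta$-hyperbolicity of $\cC(Y)$ applied to the triangle $\pi_Y(x),\pi_Y(w),\pi_Y(z)$ puts $[\pi_Y(x),\pi_Y(z)]$, and hence $p$, in the $\delta$-neighborhood of the union of those two segments. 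Once you add this third case your argument and the paper's coincide.
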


\begin{proof}
Let $x,y,z$ be three vertices of $\cC(\bY)$.
Recall that $\delta$-hyperbolic spaces are quasi-convex, with the
constant depending only on $\delta$. Thus Lemma \ref{Hausdorff}
applies and it suffices to show that a standard path $U$ from $x$ to $z$
is contained in a uniform neighborhood of the union of two geodesics
$[x,y]$ and $[y,z]$.

Let $W\in\bY_K(x,z)$.
We claim that  $[\pi_W(x),\pi_W(z)]$ is contained in 
a uniform neighborhood of $[x,y]\cup [y,z]$.
First consider
the case when $d_W(x,y)>\xio$, $d_W(y,z)>\xio$. Then a geodesic
$[\pi_W(x),\pi_W(y)]\subset \cC(W)$ is contained in a uniform
neighborhood of $[x,y]$ by Proposition \ref{standard_points} and Lemma
\ref{forced} (see the first paragraph of the 
proof of Lemma \ref{Hausdorff}).
 Likewise, a geodesic $[\pi_W(y),\pi_W(z)]$ is contained
in a uniform neighborhood of $[y,z]$. Since $\cC(W)$ is
$\delta$-hyperbolic, $[\pi_W(x),\pi_W(z)]$ is contained in 
the $\delta$-neighborhood of $[\pi_W(x),\pi_W(y)]\cup [\pi_W(y),\pi_W(z)]$ and
consequently in a uniform neighborhood of $[x,y]\cup [y,z]$.

Now suppose that $d_W(x,y)\leq \xio$.
Since $W \in \bY_K(x,z)$, it follows 
$d_W(y,z)>\xio$. 
Again by Proposition \ref{standard_points} and Lemma
\ref{forced} we have that $[\pi_W(y),\pi_W(z)]$ is in a uniform
neighborhood of $[y,z]$. By quasi-convexity, 
it follows from $d_{\cC(\bY)}(\pi_W(x),\pi_W(y)) \le \xio$
 that $[\pi_W(x),\pi_W(z)]$ is
contained in a uniform neighborhood of $[\pi_W(y),\pi_W(z)]$ and hence
of $[y,z]$. The case when $d_W(y,z)\leq\xio$ is handled symmetrically.

By the definition of a standard path and the uniform quasi-convexity
of $\cC(Y)$, a standard path $U$ from $x$ to $z$ is contained
in a uniform neighborhood of the union of  $[\pi_W(x),\pi_W(z)]$
for all $W$ with  $W\in\bY_K(x,z)$ (see the proof of Lemma \ref{Hausdorff}).
Therefore it follows that $U$ is contained in a uniform 
neighborhood of $[x,y]\cup [y,z]$.
\end{proof}

\subsection{Group action and WWPD}\label{section.wwpd}
Now assume that $G$ is a group that acts on the set $\bY$, that for
each $Y\in\bY$ we have a geodesic metric space $\cC(Y)$ and
projections $\pi_Y$ satisfying the axioms (P0), (P1), (P2), and
that $G$ preserves this structure, i.e. there are isometries
$F_g^Y:\cC(Y)\to \cC(g(Y))$ so that
\begin{itemize}
\item $F_{g'}^{g(Y)}F_g^Y=F_{g'g}^Y$ for all $g,g'\in G$, $Y\in \bY$,
  and
\item $\pi_{Y}(X)=\pi_{g(Y)}(g(X))$ for all $g \in G$ and
$X,Y \in \bY$.
\end{itemize}
Then projection distances are preserved,
i.e. $d^\pi_{g(A)}(g(B),g(C))=d^\pi_A(B,C)$ for all $A,B,C\in\bY$ and
$g\in G$, and therefore $G$ acts naturally on $\cC(\bY)$. To simplify
notation, we will denote the isometry $F_g^Y$ simply by $g:\cC(Y)\to
\cC(g(Y))$.

We defined WPD for group actions in Section \ref{s:WPD}.
Here we define a weaker property, {\it WWPD}, to allow for elements with
large centralizers. We restrict ourselves to
actions on hyperbolic spaces. For a motivation, see Remark
\ref{WPD2}. 

Let $G$ act on a $\delta$-hyperbolic metric space $X$.  A hyperbolic
element $g \in G$ has a {\it quasi-axis}, which is a $g$-invariant
quasi-geodesic. 
As before  the {\it elementary closure} of $g$ (in $G$), $EC(g)$, 
is the subgroup in $G$ of elements $h$ such that $h(\gamma)$ is
parallel to $\gamma$. (We can define the elementary closure
of $g$ in a subgroup of $G$.) Equivalently, it
is the stabilizer of the set of $\gamma(\pm \infty)$, the points at infinity of
$\gamma$. The elementary closure does not depend on the choice of
$\gamma$.

\begin{definition}
Let $G$ act on a $\delta$-hyperbolic metric space $X$.  We say $g\in
G$ is a {\it WWPD element} if
\begin{enumerate}[(1)]
\item
$ g $ acts as a hyperbolic isometry on $X$,
\item
there is $x\in X$, a subgroup $N \subset G$ with $g\in N$ and a
constant $B>0$ such that
\begin{itemize}
\item for $h\in G-N$ the projection of $h(\langle g \rangle x)$ to $\langle g \rangle x$ has
  diameter $\leq B$,
\item $N$ is contained in  $EC(g)$,
and there is a homomorphism $N\to Q$ to a virtually cyclic
  group $Q$ whose kernel fixes every $g^k(x)$, $k\in\Z$.
\end{itemize}
\end{enumerate}
Moreover, if each element of $N$ fixes the points $\gamma(\pm \infty)$ pointwise,
then we say $g$ is a $WWPD^+$ element.
\end{definition}

\begin{remark}
This definition is not independent of the choice of $x$. The set of
translates of the $g$-orbit of $x$ is again ``discrete'' as in the
definition of WPD, but this time we allow a big group that fixes the
whole orbit pointwise.
Note that the image of $\langle g \rangle $ in $Q$ has finite index.
\end{remark}

\begin{prop}\label{wwpd}
Suppose each $\cC(Y)$ is $\delta$-hyperbolic so that $\cC(\bY)$ is
hyperbolic. Let $g\in G$ so that $g(Y)=Y$ and denote by $K_{\cC(Y)}$
the kernel of the action of $Stab_G(Y)$ on $\cC(Y)$.  Assume that
$g:\cC(Y)\to\cC(Y)$ is a hyperbolic WPD element for the action of
$Stab_G(Y)/K_{\cC(Y)}$ on $\cC(Y)$.  Then $g$ is a WWPD element for
the action of $G$ on $\cC(\bY)$. If moreover $Stab_G(Y)$ is virtually
cyclic then $g$ is a WPD element for the action of $G$ on $\cC(\bY)$.
\end{prop}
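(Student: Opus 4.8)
The plan is to verify the two defining conditions of a WWPD element for $g$ acting on $\cC(\bY)$, taking $x$ to be a basepoint in $\cC(Y)$, $N = Stab_G(Y)$, and $B$ a constant still to be determined. First I would establish condition (1): since $g:\cC(Y)\to\cC(Y)$ is hyperbolic (meaning positive translation length on the hyperbolic space $\cC(Y)$), and $\cC(Y)$ is totally geodesically embedded in $\cC(\bY)$ by Lemma \ref{coarsedistanceestimate}, the orbit $\langle g\rangle x$ is a quasi-geodesic in $\cC(\bY)$ as well, so $g$ acts hyperbolically on $\cC(\bY)$. I would also record here that the quasi-axis $\gamma$ of $g$ in $\cC(\bY)$ may be taken to lie in $\cC(Y)$, which makes the projection estimates below reduce to estimates inside $\cC(Y)$ together with the nearest-point projection statement Corollary \ref{nearest_point2}.

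Next, for condition (2), I would handle the two bullet points. For the first bullet, take $h\in G-N$, so $h(Y)\neq Y$. Then $h(\langle g\rangle x)\subset \cC(h(Y))$, and by Corollary \ref{nearest_point2} the nearest point projection of $\cC(h(Y))$ onto $\cC(Y)$ lies in a uniform neighborhood of the bounded set $\pi_Y(h(Y))$, whose diameter is $\leq \theta$ by (P0). The projection of $h(\langle g\rangle x)$ onto the quasi-axis $\gamma\subset\cC(Y)$ is then within bounded Hausdorff distance of this, hence has diameter $\leq B$ for a uniform $B$. This is where one must be slightly careful that ``projection to $\langle g\rangle x$'' and ``nearest point projection to $\cC(Y)$'' agree up to bounded error, which follows since $\gamma$ is a quasi-geodesic inside the totally geodesically embedded $\cC(Y)$ — this bookkeeping is the main technical nuisance, though not a deep obstacle. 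For the second bullet, $N=Stab_G(Y)$ acts on $\cC(Y)$ with kernel $K_{\cC(Y)}$, and $Stab_G(Y)/K_{\cC(Y)}$ acts with $g$ a WPD hyperbolic element; by the standard fact that the elementary closure of a hyperbolic WPD element in a group acting on a hyperbolic space is virtually cyclic (cf. Remark \ref{WPD2}, or the argument via Proposition \ref{wpd}), the image of $N$ in $Stab_G(Y)/K_{\cC(Y)}$ has a virtually cyclic elementary closure $Q_0$ containing the image of $g$; composing $N\to Stab_G(Y)/K_{\cC(Y)}$ with the projection to $Q_0$ gives the desired homomorphism $N\to Q$ whose kernel acts trivially on $\cC(Y)$ and in particular fixes every $g^k(x)$. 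One must also check $N\subset EC(g)$ for the action on $\cC(\bY)$: any $h\in N=Stab_G(Y)$ preserves $\cC(Y)$, hence sends $\gamma$ to a quasi-geodesic in $\cC(Y)$ at finite Hausdorff distance from $\gamma$ (since on $\cC(Y)$, $h$ either commensurates $\langle g\rangle$ up to the kernel — because it lies in the elementary closure downstairs — or one argues directly that its image in $Q_0$ is a bounded perturbation), so $h(\gamma)$ is parallel to $\gamma$.

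Finally, for the last sentence: if $Stab_G(Y)$ is itself virtually cyclic, then the kernel $K_{\cC(Y)}$ is finite (a virtually cyclic group has finite-index $\Z$, and the action of $g$ is hyperbolic so $\langle g\rangle\cap K_{\cC(Y)}$ is trivial, forcing $K_{\cC(Y)}$ finite). Then the second bullet's subgroup $N$ that fixes the orbit pointwise is finite, and re-examining the WPD condition for the action of $G$ on $\cC(\bY)$ — namely that for every $D>0$ there is $M$ with $\{h\in G\mid d(x,h(x))\leq D,\ d(g^M x, h g^M x)\leq D\}$ finite — the first bullet forces any such $h$ to lie in $N=Stab_G(Y)$ (for $D$ less than the relevant translation length along $\gamma$, which we can arrange by passing to a power of $g$, equivalently by choosing $M$ large so that $g^M x$ is far along $\gamma$ and a bounded-displacement $h$ with $h(\gamma)$ not parallel to $\gamma$ would contradict the diameter bound $B$), and $N$-elements with bounded displacement form a finite set because $N/K_{\cC(Y)}$ is WPD on $\cC(Y)$ and $K_{\cC(Y)}$ is finite. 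I expect the main obstacle to be the careful reduction of the ``projection to $\langle g\rangle x$ in $\cC(\bY)$'' to ``projection to $\cC(Y)$,'' i.e. making precise that quasi-geodesics inside a totally geodesically embedded hyperbolic subspace have their coarse projections controlled by the nearest-point projection to the subspace; once that lemma is in hand the rest is routine.
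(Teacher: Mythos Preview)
Your choice of $N = Stab_G(Y)$ is the source of a genuine gap. The WWPD definition requires $N \subset EC(g)$, and you attempt to verify this by claiming that any $h \in Stab_G(Y)$ sends the quasi-axis $\gamma \subset \cC(Y)$ to a parallel quasi-geodesic ``since on $\cC(Y)$, $h$ either commensurates $\langle g\rangle$ up to the kernel --- because it lies in the elementary closure downstairs.'' But this is circular: there is no reason an arbitrary $h \in Stab_G(Y)$ should lie in the elementary closure of $g$ for the action of $Stab_G(Y)/K_{\cC(Y)}$ on $\cC(Y)$. Indeed, in the motivating example where $Y$ is a subsurface and $Stab_G(Y)/K_{\cC(Y)}$ is essentially $MCG(Y)$ acting on the curve complex $\cC(Y)$, the elementary closure of a pseudo-Anosov $g$ is virtually cyclic while $Stab_G(Y)$ is enormous. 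For such $h$, $h(\gamma)$ is a quasi-geodesic in $\cC(Y)$ that is \emph{not} parallel to $\gamma$. Relatedly, your homomorphism $N \to Q$ is constructed by composing $Stab_G(Y) \to Stab_G(Y)/K_{\cC(Y)}$ with a ``projection to $Q_0$,'' but $Q_0$ is a subgroup of the quotient, not a quotient of it, so no such projection exists.

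The paper repairs both problems simultaneously by taking $N = EC(g)$ (the elementary closure for the action on $\cC(\bY)$). Then $N \subset EC(g)$ is trivial, and one checks $K_{\cC(Y)} \subset N \subset Stab_G(Y)$ (the latter via Corollary~\ref{nearest_point2}, since $\gamma \subset \cC(Y)$). The quotient $Q = N/K_{\cC(Y)}$ is then exactly the elementary closure of $g$ in $Stab_G(Y)/K_{\cC(Y)}$, which is virtually cyclic by the WPD hypothesis. The price is that the first bullet must now treat \emph{two} cases for $h \in G - N$: if $h \notin Stab_G(Y)$ your argument via Corollary~\ref{nearest_point2} applies, but if $h \in Stab_G(Y) \setminus EC(g)$ one instead uses the WPD assumption on $\cC(Y)$ (via the characterization in Remark~\ref{WPD2}) to bound the projection of $h(\langle g\rangle x)$ to $\langle g\rangle x$ inside $\cC(Y)$. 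Your treatment of the final WPD clause is essentially correct in spirit, though the phrase ``for $D$ less than the relevant translation length'' is misleading: one must establish the finiteness for \emph{every} $D>0$, and the point is rather that for each $D$ one chooses $M$ large enough that the projection bound forces $h \in Stab_G(Y)$.
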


\begin{proof}
We take $N$ to be $EC(g)$.
Then $K_{\cC(Y)}<  N < Stab_G(Y) $. The first inclusion is clear and 
the second one follows from Corollary \ref{nearest_point2} since 
a quasi-axis of $g$ is contained in $\cC(Y)$.

Define $Q:=N/K_{\cC(Y)}$ with the obvious quotient map $N\to
Q$, and we choose $x\in\cC(Y)$. Note that $N$ is also the 
elementary closure of $g$ in $Stab_G(Y)$ and since $g$ is
WPD in $Stab_G(Y)/K_{\cC(Y)}$, $Q$ is virtually cyclic. If $h \in
G-N$ then either $h \not\in Stab_G(Y)$ and it moves the orbit
$\langle g \rangle x$ to another $\cC(Y')$ and the projection to
$\cC(Y)$ is uniformly bounded by Corollary \ref{nearest_point2},
or $h \in Stab_G(Y)$ and the projection to $\langle g\rangle x$ is
bounded by the WPD assumption by Theorem H (as we said after Theorem
H, WPD implies that since $h \not\in N$ the projection of
$h(\langle g \rangle x)$ to $\langle g \rangle x$ satisfies (P0),
namely, it is uniformly bounded).  Therefore $g$ is WWPD.

For the moreover part, note that under the assumption on $Stab_G(Y)$,
$\langle g \rangle $ has finite index in this group.  On the other
hand the set of elements in $G$ in the definition of WPD (elements
that almost fix two points at a large distance on a quasi-axis of $g$)
is contained in $Stab_G(Y)$ by Corollary \ref{nearest_point2},
therefore the concerned set is finite, hence $g$ is WPD.
\end{proof}

\begin{examples}[WPD and WWPD]
Let $\Gamma$ be a discrete group of isometries of $\H^n$ and $\bY$ the
collection of translates of the axes of a hyperbolic element $\gamma$ of
$\Gamma$, as in Example \ref{ex}(1). Then $\gamma$ is a WPD
element of $\cC(\bY)$, where $\cC(Y)\cong\R$ is the axis $Y$. Similar
conclusions hold in the other  examples. 

We will see examples of WWPD elements in Section
\ref{section.embedding}.  Elements $g \in MCG$ that are pseudo-Anosov
when restricted to a subsurface $Y$ (or Dehn twists when $Y$ is an
annulus) will be WWPD elements, but not WPD in general, for the action
of the mapping class group on $\cC(\bY^i)$, where $Y \in \bY^i$. To be
precise we may only have the action on the color preserving subgroup
in Lemma \ref{finite index}, and assume $g$ is contained in the
subgroup otherwise take a finite power to satisfy this.  To verify
that $g$ is WWPD is immediate from Proposition \ref{wwpd} since $g$ is
WPD for the action of $MCG(Y)$ on the curve complex of $Y$.
In fact, $g$ is $WWPD^+$ (see \cite{scl}).

\end{examples}

\subsection{Asymptotic dimension}

In this section we will show that if the collection of spaces $\cC(Y)$
has asymptotic dimension $\leq n$ uniformly, then $\asdim\cC(\bY)\leq
n+1$. 

Asymptotic dimension is invariant under quasi-isometries (or even a {\it
  coarse invariant}). In particular, asymptotic dimension of a
finitely generated group is well-defined. A general reference for
asymptotic dimension is \cite{bell-dranishnikov}; in connection to the
coarse setting see \cite{roe}; for the original definition and an
interesting discussion see Gromov's article \cite{gromov}.

We now review some basic facts.
We will need the following theorem.

\medskip
\noindent{\bf Bell-Dranishnikov's Hurewicz Theorem} \cite{hurewicz}. 
{\it Let
  $f:\cX\to \cY$ be a Lipschitz map with $\cX$ a geodesic space. Suppose
  that there exists $n$ such that for every $R$ the family $\{F_y=f^{-1}(B(y,R))\mid y\in \cY\}$ has
  $\asdim(F_y)\leq n$ uniformly. Then $\asdim(\cX)\leq \asdim(\cY)+n$.}

\medskip
This should be thought of as a generalization of the {\bf Product Formula}, $$\asdim(\cX\times \cY)\leq
\asdim(\cX)+\asdim(\cY).$$ For example, if $1\to A\to B\to C\to 1$ is a short exact sequence of
finitely generated groups then $\asdim(B)\leq
\asdim(A)+\asdim(C)$. Likewise, asymptotic dimension of the hyperbolic
plane is $\leq 2$ by considering the projection to a line whose fibers
are horocycles tangent to a fixed point at infinity (e.g. the
projection to the $y$-coordinate in the upper half-space model). More
generally one can apply this argument to a semi-simple Lie group and
its associated symmetric space (see \cite{bell-dranishnikov} for precise 
statements).

We will also use the following theorem.
  
\medskip
\noindent{\bf Union Theorem.} {\it Let $n \ge0$ be an integer, $\cX=\cup \cX_\alpha$ and assume
  that $\asdim(\cX_\alpha)\leq n$ uniformly. Also assume that for every
  $R>0$ there is a subset $\cY_R\subset X$ such that $\asdim(\cY_R)\leq n$
  and the sets $\cX_\alpha\setminus \cY_R$ and $\cX_\beta\setminus \cY_R$ are
  $R$-separated for $\alpha\neq\beta$ (i.e. $d(x,y)>R$ for any $x\in
  \cX_\alpha\setminus \cY_R$ and $y\in \cX_\beta\setminus \cY_R$). Then
  $\asdim(\cX)\leq n$. Furthermore the uniformity constants for
  $\asdim(\cX)$ only depend on the uniformity constants for $\cX_\alpha$
  and $\cY_R$.}

\begin{remark} 
The uniformity statement is not in \cite{bell-dranishnikov} but is
easily seen from the proof. 
\end{remark}

We noted above that asymptotic dimension is not only a quasi-isometric
invariant but is also a {\it coarse invariant},
in particular  $\asdim(\cX)\leq\asdim(\cY)$ if 
there exists a coarse embedding $f:\cX\to \cY$ (\cite{roe}).

 Using this fact will simplify our proof that the asymptotic dimension
 of the mapping class group is finite.

\subsection{$\cC(\bY)$ has finite asymptotic dimension}
We would like to show that $\cC(\bY)$ has finite asymptotic dimension
under the assumption that the asymptotic dimensions of the spaces
$\cC(Y)$ are uniformly bounded. To do so we will apply the
Bell-Dranishnikov Hurewicz Theorem to the map from $\cC(\bY)$ to
$\cS_K(\bY)$. The theorem is most natural to apply when the pre-images
of balls are Hausdorff neighborhoods of pre-images of
points. This is not the case in our situation and we need the following
technical lemma to deal with this issue.

\begin{lemma}\label{ball_pre-images}
Fix a vertex $Y$ in $\cS_K(\bY)$. Given $R>0$ and distinct vertices $X$ and $Z$ with $d(X,Y) = d(Z,Y) = m$ and $x \in \cC(X)$, $z \in \cC(Z)$ with $d_{\cC(\bY)}(x,z) < R$,
then  there exist a vertex $X_1$ with $d(X_1,Y) =m-1$ and 
$d_{\cC(\bY)}(x,X_1) < R + 2mL+\xi$.
\end{lemma}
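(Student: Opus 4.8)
The statement asserts that if two points $x \in \cC(X)$ and $z \in \cC(Z)$ are within distance $R$ in $\cC(\bY)$, and $X, Z$ are at the same distance $m$ from a fixed vertex $Y$ in $\cS_K(\bY)$, then one can find a vertex $X_1$ one step closer to $Y$ (so $d(X_1,Y) = m-1$) which is close to $x$ in $\cC(\bY)$, with the explicit bound $R + 2mL + \xi$. The plan is to connect $x$ to $z$ by a short path in $\cC(\bY)$, pass to the projection $\cS_K(\bY)$ where distances to $Y$ are controlled by the combinatorial structure, and then lift back. The key point is that any path from $X$ to $Z$ in $\cS_K(\bY)$ which stays at distance $\ge m$ from $Y$ except possibly at the endpoints must in fact pass through $Y$ or through a common vertex at distance $m-1$: this is because $\cS_K(\bY)$ is a quasi-tree (Theorem \ref{quasitree}), so a ball around $Y$ disconnects it, and the "sphere of radius $m$" components attached at $X$ and $Z$ must merge through a vertex at distance $m-1$.

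\textbf{Main steps.} First I would take a geodesic $V$ in $\cC(\bY)$ from $x$ to $z$; it has length $< R$. Projecting $V$ to $\cS_K(\bY)$ (collapsing each $\cC(W)$ to a vertex and edges of length $L$ to edges of length $1$) gives a path $\bar V$ from $X$ to $Z$ of combinatorial length at most $R/L$ roughly, but more usefully: each subsegment of $V$ inside some $\cC(W)$ contributes its $\cC(W)$-length, and each edge of length $L$ contributes $1$ to the combinatorial path. Second, since $\cS_K(\bY)$ is a quasi-tree and $d(X,Y) = d(Z,Y) = m$, consider the first vertex $X_1$ along $\bar V$ (reading from $X$) with $d(X_1, Y) = m-1$; such a vertex exists because $\bar V$ must eventually reach distance $< m$ from $Y$ in order to get from the "$X$-side" of the sphere of radius $m-1$ around $Y$ to the "$Z$-side", unless $\bar V$ never dips below $m$, but in a quasi-tree that would force $X$ and $Z$ to be in the same branch at $Y$, which one handles by noting then that $\bar V$ passes through a vertex at distance $m-1$ anyway (the "root" of that branch). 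Third, I would estimate $d_{\cC(\bY)}(x, X_1)$: the portion of $V$ from $x$ to the point where $\bar V$ first hits $X_1$ passes through a bounded number of $\cC(W)$-pieces and $L$-edges. The combinatorial length of the initial segment of $\bar V$ up to $X_1$ is at most $2m$ (since both endpoints of this segment are within $m$ of $Y$, actually the segment lies in the ball of radius $m$ and, being part of a geodesic-like path in a quasi-tree, has length $\le 2m$), contributing at most $2mL$ to the $\cC(\bY)$-length from the $L$-edges; the lengths of the $\cC(W)$-subsegments along the way sum to at most (length of $V$) $< R$; and a final correction of $\xi$ accounts for replacing "the point where $\bar V$ hits the collapsed vertex $X_1$" by a genuine point of $\cC(X_1)$ lying in the appropriate projection set, whose diameter is $\le \xi$ by (P0). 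Adding these gives $d_{\cC(\bY)}(x, X_1) < R + 2mL + \xi$ as required.

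\textbf{Expected obstacle.} The main subtlety is the claim that a path in $\cS_K(\bY)$ from $X$ to $Z$, with $X, Z$ equidistant from $Y$, must reach a vertex at distance exactly $m-1$ within a combinatorially short initial segment — and producing the correct constant $2m$ for this segment's length. This relies on the tree-like structure: in an actual tree, the geodesic from $X$ to $Z$ goes up to the branch point, which is at distance $\le m$ from $Y$, and the vertex adjacent to it on the way up is at distance $m-1$; in a quasi-tree one uses the bottleneck property, but here I suspect the cleanest route is to argue directly using the order structure on $\bY_K(X,Z)$ from Theorem \ref{main}(G) together with Proposition \ref{key}/Corollary \ref{geodesic key} rather than Manning's criterion, since $\bar V$ can be replaced by a geodesic in $\cS_K(\bY)$ at the cost of only enlarging the initial segment by a bounded amount (a geodesic from $X$ to $Z$ has length $\le R$ too, in fact $\le d_{\cC(\bY)}(x,z) \le R$ after the projection is seen to be $1$-Lipschitz up to the edge-length normalization). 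The bookkeeping to pin down exactly where the factor $2m$ versus, say, $2m+2$ enters — and whether one needs geodesics in $\cC(\bY)$ or merely short paths — is the part requiring care, but it is routine once the quasi-tree disconnection picture is set up.
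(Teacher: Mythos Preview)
Your approach has a genuine gap, and the paper's argument is both different and much simpler.

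\textbf{The gap.} Your plan hinges on the claim that the projection $\bar V$ to $\cS_K(\bY)$ of a $\cC(\bY)$-geodesic from $x$ to $z$ must visit a vertex at distance $m-1$ from $Y$. This is false in general: if $X$ and $Z$ happen to be adjacent in $\cS_K(\bY)$ (nothing in the hypotheses prevents this), a geodesic from $x$ to $z$ can be a single $L$-edge, and $\bar V$ is just the edge $X,Z$, which never leaves the sphere of radius $m$. Your ``same branch'' remark does not repair this: being in the same branch means exactly that a short path between $X$ and $Z$ need not approach $Y$. Quasi-tree/bottleneck arguments control how paths between $X$ and $Z$ relate to a \emph{geodesic} between them, not to the geodesic from $X$ to $Y$.

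\textbf{What the paper does instead.} The paper never looks at the projected path $\bar V$. It simply \emph{declares} $X_1$ to be the neighbor of $X$ on a geodesic from $X$ to $Y$ in $\cS_K(\bY)$, so $d(X_1,Y)=m-1$ is automatic. To bound $d_{\cC(\bY)}(x,\cC(X_1))$ it works entirely inside $\cC(X)$ via the projection $d^\pi_X$: Lemma~\ref{coarsedistanceestimate} gives $d^\pi_X(x,z)<R$; since $z\in\cC(Z)$ one has $d^\pi_X(z,Z)\le\xi$; and concatenating geodesics $X\!-\!Y\!-\!Z$ in $\cS_K(\bY)$ gives a path $X=X_0,X_1,\dots,X_N=Z$ of length $N\le 2m$, with each step contributing $d^\pi_X(X_i,X_{i+1})<L$ (again by Lemma~\ref{coarsedistanceestimate}), so $d^\pi_X(X_1,Z)<(2m-1)L$. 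The triangle inequality for $d^\pi_X$ (the honest one, not the coarse one) then yields $d^\pi_X(x,X_1)<R+(2m-1)L+\xi$, and one more $L$-edge from $\pi_X(X_1)$ into $\cC(X_1)$ gives the bound. No quasi-tree geometry is used at all; the only input is Lemma~\ref{coarsedistanceestimate} and the existence of geodesics in the graph $\cS_K(\bY)$.
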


\begin{proof}
By Lemma \ref{coarsedistanceestimate} we have $d^\pi_Y(x,z)\leq R$.  Since
$d(X,Y) = d(Z,Y) = m$ there is a path $X=X_0,X_1,\cdots, X_N=Z$ in
$\cS_K(\bY)$ of length $N\leq 2m$ with $d(X_1,Y) = m-1$. By Lemma \ref{coarsedistanceestimate}, for adjacent vertices in $\cS_K(\bY)$ we have $d^\pi_X(X_i,
X_{i+1}) < d_{\cC(\bY)}(X_i, X_{i+1}) =L$ so the triangle inequality implies that
$d^\pi_X(X_1,Z) < (2m-1)L$ and
\begin{eqnarray*}
d_X^\pi(x, X_1) & < &d^\pi_X(x, Z) + d_X^\pi(Z, X_1) \le
d^\pi_X(x,z) +\xi + d^\pi_X(Z,X_1)\\ &< & R + (2m-1)L + \xi.
\end{eqnarray*}
By the definition of $d^\pi_X(x,X_1)$ the distance in $\cC(X)$ from
$x$ to any point $\pi_X(X_1)$ is not more than
$d^\pi_X(x,X_1)$. Furthermore there is an edge in $\cC(\bY)$ from any
point in $\pi_X(X_1)$ to $\cC(X_1)$ of length $L$
and therefore the distance from $x$ to $\cC(X_1)$ in $\cC(\bY)$ is
less than $R+2mL + \xi$. The lemma is proved.
\end{proof}

\begin{thm}\label{SKY}
If the metric spaces $\cC(Y)$ for $Y\in\bY$ have asymptotic dimension
uniformly bounded by $n$ then $\cC(\bY)$ has asymptotic dimension
$\leq n+1$.
\end{thm}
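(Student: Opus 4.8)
The plan is to apply the Bell--Dranishnikov Hurewicz Theorem to the projection map
$$f:\cC(\bY)\to \cS_K(\bY),$$
which collapses each $\cC(Y)$ to its corresponding vertex. This map is Lipschitz (it is $L$-Lipschitz on edges of length $L$ and $1$-Lipschitz inside each $\cC(Y)$, after rescaling; in any case coarsely Lipschitz suffices for the Hurewicz theorem). Since $\cS_K(\bY)$ is a quasi-tree by Theorem \ref{quasitree}, it has $\asdim(\cS_K(\bY))\le 1$. Thus it remains to show that for every $R>0$, the preimages $F_Y=f^{-1}(B(Y,R))$ of balls of radius $R$ in $\cS_K(\bY)$ have $\asdim(F_Y)\le n$ uniformly. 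Granting this, the Hurewicz theorem gives $\asdim(\cC(\bY))\le \asdim(\cS_K(\bY))+n\le n+1$.

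The core of the argument is therefore the uniform bound on $\asdim(F_Y)$. Fix $Y$ and $R$; then $F_Y$ is the union of the $\cC(X)$ with $d(X,Y)\le R$, together with the connecting edges. First I would stratify $F_Y$ by distance to $Y$ in $\cS_K(\bY)$: let $L_m$ be the union of all $\cC(X)$ with $d(X,Y)=m$, for $m=0,1,\dots,R$. Each $L_m$ is a disjoint union of the spaces $\cC(X)$ (which each have $\asdim\le n$), but the pieces $\cC(X)$ within a fixed $L_m$ are \emph{not} far apart in $\cC(\bY)$ in general, so I cannot directly apply the Union Theorem to the family $\{\cC(X)\}$ inside $L_m$. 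This is exactly the issue Lemma \ref{ball_pre-images} is designed to handle. The point of that lemma is: if $x\in\cC(X)$, $z\in\cC(Z)$ with $d(X,Y)=d(Z,Y)=m$, $X\ne Z$, and $d_{\cC(\bY)}(x,z)<R$, then $x$ (and symmetrically $z$) lies within $R+2mL+\xi$ of some $\cC(X_1)$ with $d(X_1,Y)=m-1$. In other words, any two points of $L_m$ in different pieces that are within $R$ of each other are both within a bounded distance of the previous stratum $L_{m-1}$.

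This suggests the following inductive scheme on $m$, from $m=0$ up to $m=R$: let $\cX^{(m)}=\bigcup_{j\le m}(\text{neighborhood of }L_j)$, more precisely the union of $L_0,\dots,L_m$ together with appropriate bounded neighborhoods. One shows $\asdim(\cX^{(m)})\le n$ uniformly by the Union Theorem: write $\cX^{(m)}$ as the union over $X$ with $d(X,Y)=m$ of the pieces $\cC(X)$ suitably thickened, plus the set $\cY_R:=$ an $(R+2mL+\xi)$-neighborhood of $\cX^{(m-1)}$; by Lemma \ref{ball_pre-images} the pieces $\cC(X)\setminus\cY_R$ for distinct $X$ are $R$-separated, each thickened piece has $\asdim\le n$ uniformly (a bounded neighborhood of a space with $\asdim\le n$ still has $\asdim\le n$ with controlled constants), and $\cY_R$ has $\asdim\le n$ by the inductive hypothesis. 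The Union Theorem then yields $\asdim(\cX^{(m)})\le n$, with uniformity constants depending only on $n$, $R$, $L$, $\xi$. Taking $m=R$ gives $\asdim(F_Y)\le n$ uniformly in $Y$, completing the proof.

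The main obstacle is making the Union Theorem bookkeeping precise: one must verify that a bounded-radius neighborhood of each $\cC(X)$ still has asymptotic dimension $\le n$ with constants independent of $X$ (this follows from coarse invariance of $\asdim$ since the inclusion $\cC(X)\hookrightarrow$ its neighborhood is a coarse equivalence with uniform parameters), and one must carefully track that the separation constant demanded by the Union Theorem at stage $m$ is achieved after removing the neighborhood $\cY_R$ of the previous stages — this is exactly the content of Lemma \ref{ball_pre-images}, but one has to apply it with a radius large enough to absorb the accumulated thickenings at each step, which is fine since there are only $R+1$ steps. There are no deep new ideas here beyond organizing the stratification correctly; the hyperbolicity or quasi-tree structure of $\cS_K(\bY)$ enters only through the cheap bound $\asdim(\cS_K(\bY))\le 1$.
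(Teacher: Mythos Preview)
Your proposal is correct and follows essentially the same approach as the paper: apply the Bell--Dranishnikov Hurewicz theorem to the collapsing map $\cC(\bY)\to\cS_K(\bY)$, use that the target is a quasi-tree, and bound $\asdim$ of preimages of balls by induction on the radius via the Union Theorem, with Lemma~\ref{ball_pre-images} supplying the separation. The paper's write-up is slightly leaner---it takes the pieces in the union to be the bare $p^{-1}(Y)=\cC(Y)$ rather than thickenings, and takes $\cY_R$ to be the $(R+(2m+2)L+\xi)$-neighborhood of $p^{-1}(B_m)$---so you can drop the thickening of individual pieces and the associated bookkeeping you flagged as an obstacle.
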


\begin{proof}
Consider the projection map $p:\cC(\bY)\to \cS_K(\bY)$. The target is
a quasi-tree so its asymptotic dimension is $\leq 1$. We will verify the
conditions of Bell-Dranishnikov's Hurewicz Theorem for $p$. 
Let $B_m$ denote the ball of radius $m$ in $\cS_K(\bY)$ (centered at
some vertex). We will prove by induction on $m$ that
$\asdim(p^{-1}(B_m))\leq n$. Uniformity is not an issue since all of
our choices of constants will be independent of the vertex in
$\cS_K(\bY)$. When $m=0$ this is true by definition of $n$.

Now suppose $\asdim(p^{-1}(B_m))\leq n$ and we will argue
$\asdim(p^{-1}(B_{m+1}))\leq n$. To that end, we write
$$p^{-1}(B_{m+1})=\bigcup_{Y\in B_{m+1}}p^{-1}(Y)$$
and check that the hypotheses of the Union Theorem hold. Each
$p^{-1}(Y)$ has $\asdim\leq n$ by definition of $n$.

Let $R$ be given and set
$$\cY_R=N_{\tilde R}(p^{-1}(B_m))$$ the Hausdorff $\tilde
R$-neighborhood of $p^{-1}(B_m)$, where 
$$\tilde R=R+(2m+2)L +\xi.$$ 
By induction, $p^{-1}(B_m)$, and hence $\cY_R$, have $\asdim\leq n$. If
$X$ and $Z$ are distinct vertices at distance $m+1$ from the center of
$B_{m+1}$ then by Lemma \ref{ball_pre-images}, $p^{-1}(X)-\cY_R$ and
$p^{-1}(Z)-\cY_R$ are $R$-separated. It now follows from
Bell-Dranishnikov's Hurewicz Theorem that
$$\asdim(\cC(\bY))\leq n+1.$$
\end{proof}

\begin{question} Is $\asdim(\cC(\bY))\leq n$? 
\end{question}

\section{Mapping class group}\label{section.mcg}
We now apply our tools to the study of the mapping class group. In
this final section we will prove Theorems C, D, E, F and G mentioned
in the introduction.

\subsection{Curve complexes}
We will apply our previous work to a collection of curve graphs of a
subsurface of a fixed surface $\Sigma$, as in the work of Masur and
Minsky \cite{MM}, \cite{mm2}. We begin by recalling the
definition of the curve graph and projections. We
follow an approach that is not standard but is convenient.

Let $\Sigma$ be a compact orientable surface with boundary such that
$\chi(\Sigma)<0$,  possibly with finitely
many punctures (to be precise we mean compact after we fill in the punctures). 
Let $\cC_0(\Sigma)$ be the set of homotopy classes
of simple closed curves and properly embedded simple arcs that are not
peripheral or boundary compressible. We then define the {\em curve
  graph}, $\cC(\Sigma)$, to be the 1-complex obtained by attaching an
edge to disjoint closed curves or arcs in $\cC_0(\Sigma)$. We could
also attach higher dimensional simplices but the resulting complex is
quasi-isometric to its 1-skeleton so we stop at the curve graph.

\begin{remark} 
The graph we have constructed is often called the {\em curve and arc
  graph}, \cite{mm2}.
  The usual curve graph is quasi-isometric to the curve and
arc graph and so we will use the less cumbersome name of curve
graph. We also note that in the usual definition of the curve graph
there are exceptional cases, the punctured torus and the
sphere with 3 or 4 punctures, where the graph needs to be defined
differently. One advantage of the curve-arc graph is that one
definition works for all cases.

We also note that if $\Sigma$ is a 3-punctured sphere then
$\cC(\Sigma)$ is bounded and we could ignore such subsurfaces. However
there is also no harm in including them.
\end{remark}

We now define projections between curve graphs of essential
(i.e. connected, boundary components essential and nonperipheral)
subsurfaces of $\Sigma$. If $Y$ and $Z$ are essential subsurfaces, we
can only define the projection of
 $\cC(Z)$ to $\cC(Y)$ if $\partial Z$
intersects $Y$ essentially. We then define 
the {\it subsurface projection} $\pi_Y(Z)\subset \cC(Y)$
by taking the
intersection of $\partial Z$ with $Y$ and identifying homotopic curves
and arcs. If $z$ is vertex in $\cC(Z)$ then we define $\pi_Y(z) =
\pi_Y(Z)$.

We will also need the curve graph for a simple closed
curve. The definition here has a somewhat different flavor although
once we make the definition we can use it just as we do for the other
curve complexes. The simplest way to define the curve graph is to fix
a complete hyperbolic metric on the interior of $\Sigma$. If $\gamma$
is an essential non-peripheral simple closed curve let $X_\gamma$ be
the annular cover of $\Sigma$ to which $\gamma$ lifts. Let
$\cC_0(\gamma)$ be the set of complete geodesics in $X_\gamma$ that
cross the core curve and we form $\cC(\gamma)$ by attaching an edge to
vertices that represent disjoint geodesics. It is easy to check that
the distance in $\cC(\gamma)$ is the intersection number plus one and that
$\cC(\gamma)$ is quasi-isometric to ${\mathbb Z}$.

We now define projections to and from $\cC(\gamma)$. If $Y$ is an
essential subsurface such that $\partial Y$ intersects $\gamma$ let
$\pi_\gamma(Y)$ be those components of the pre-image of the geodesic
representatives of $\partial Y$ in $X_\gamma$ that intersect the core
curve. If $\beta$ is a simple closed curve that intersects $\gamma$ we
similarly define $\pi_\gamma(\beta)$ where we replace the $\partial Y$
with $\beta$. Finally if $\gamma$ intersects $Y$ essentially then
define $\pi_Y(\gamma)$ by restricting $\gamma$ to $Y$.

With these definitions in hand we will not
distinguish between essential subsurfaces and simple closed curves.

Since by definition $\pi_X(Y)$ is a collection of disjoint curves and arcs  we have 
$\diam \pi_X(Y) \le 1$, which verifies Axiom (P0).

The following lemma (without the explicit bound) was proved by
Behrstock \cite{jason} using the Masur-Minsky theory of hierarchies
\cite{mm2}. For a simple proof due to Leininger that produces the
explicit bound below see \cite{johanna,johanna2}.

We say that subsurfaces $X$ and $Y$ {\it overlap} if $\partial X\cap\partial
Y\neq\emptyset$ (this means that $\partial X$ and $\partial Y$ cannot
be made disjoint by a homotopy). Note that in that case $\pi_X(Y)$ and
$\pi_Y(X)$ are defined.
\begin{lemma}[Axiom P1]\label{triples}
Let $X$, $Y$ and $Z$ be overlapping subsurfaces. If
$$d^\pi_X(Y,Z)>10$$
then
$$d^\pi_Y(X,Z)<10.$$
\end{lemma}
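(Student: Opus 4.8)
The plan is to prove Lemma~\ref{triples} (Behrstock's inequality, Axiom P1) by a direct hyperbolic-geometry argument in the style of Leininger, avoiding the Masur--Minsky hierarchy machinery. First I would reduce to the case where $X$, $Y$, $Z$ are all \emph{subsurfaces} (not annuli); the annular cases, where one or more of $X,Y,Z$ is the cover $X_\gamma$ associated to a curve, are handled by essentially the same argument once one remembers that the distance in $\cC(\gamma)$ is the geometric intersection number plus one. Fix a complete finite-area hyperbolic structure on $\Sigma$ and realize $\partial X$, $\partial Y$, $\partial Z$ (and also the relevant arcs cut out inside each subsurface) by geodesics. The key dichotomy is: either $\partial Z$ meets $Y$ in an arc/curve that stays close (in $\cC(Y)$) to $\partial X \cap Y$, or it does not. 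The whole content of the inequality is that these two bad behaviors — $d^\pi_X(Y,Z)$ large \emph{and} $d^\pi_Y(X,Z)$ large — cannot happen simultaneously.

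The core step is the following geometric estimate. Suppose $d^\pi_X(Y,Z) > 10$; this means that inside $X$ the curve/arc systems $\partial Y \cap X$ and $\partial Z \cap X$ are far apart in $\cC(X)$, in particular they intersect a great deal, so they fill a large part of $X$. I would argue that this forces $\partial Z$ to ``wrap around'' $\partial Y$: concretely, take a component $a$ of $\partial Y \cap X$ and a component $b$ of $\partial Z \cap X$; since $d_{\cC(X)}(a,b)$ is large, $b$ must cross $a$ many times, and more importantly $b$ must follow $a$ closely (within the hyperbolic thin part / within a bounded neighborhood of $a$) for a long combinatorial distance, because in a hyperbolic surface an arc or curve that has enormous intersection number with $a$ but small diameter in the curve graph is forced to spiral alongside $a$. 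Therefore $\partial Z$, restricted to $Y$, lies in a bounded neighborhood of $\partial Y$ itself (measured in $\cC(Y)$ via the standard surgery/closest-point argument), which pins $\pi_Y(Z)$ to within distance a small constant of $\pi_Y(\partial Y)=$ the natural ``base'' curve, hence within a small constant of $\pi_Y(X)$. This yields $d^\pi_Y(X,Z) < 10$. The constant $10$ is then a matter of bookkeeping: each ``closest point / surgery'' move costs at most $1$ in the curve graph, and one needs only finitely many such moves, so a generous constant like $10$ comfortably absorbs the errors; I would cite \cite{johanna,johanna2} for the precise count.

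I expect the main obstacle to be making the phrase ``$b$ follows $a$ closely for a long time'' rigorous and quantitative without invoking hierarchies. The clean way is the following: if $d_{\cC(Y)}(\pi_Y(X), \pi_Y(Z))$ were also $>10$, then inside $Y$ the arcs $\partial X \cap Y$ and $\partial Z \cap Y$ would also fill a large subsurface; but $\partial X \cap Y$ and the part of $\partial Z$ that lives inside $X$ are constrained to be nearly disjoint (they are boundary components of, or live in, the essential subsurfaces $X$ and $Z$, and $\partial X \cap \partial Z$ being small is what drives everything). One then derives a contradiction by a Euler-characteristic / intersection-number inequality: two curve systems that are both ``far'' in the respective curve graphs $\cC(X)$ and $\cC(Y)$ would have to intersect $\partial X$ and $\partial Y$ in incompatibly many points given the topological type. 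Leininger's argument packages exactly this as an explicit estimate, so in the write-up I would state the geometric claim, give the surgery-bound lemma for closest-point projections, and then refer to \cite{johanna,johanna2} for the verification that $10$ suffices; the annular cases I would dispatch in a final short paragraph by noting that the annular curve graph distance is intersection number $+1$ and the same spiraling picture applies verbatim.
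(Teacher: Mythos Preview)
The paper does not actually prove this lemma: it simply attributes the result to Behrstock and, for the explicit constant $10$, refers the reader to Leininger's argument as written up in \cite{johanna,johanna2}. Your final plan---to cite those same references for the precise bookkeeping---therefore matches the paper exactly, and that part is fine.

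However, the heuristic sketch you give for the mechanism is wrong and should not appear in the write-up. You say that since $d_{\cC(X)}(a,b)$ is large, ``$b$ must follow $a$ closely \ldots\ for a long combinatorial distance,'' and then infer that $\partial Z$ restricted to $Y$ lies near $\partial Y$ in $\cC(Y)$. This is backwards: two arcs that fellow-travel are \emph{close} in the curve graph, not far. Large curve-graph distance means $a$ and $b$ \emph{fill} $X$, i.e.\ intersect in a complicated transverse pattern, which is the opposite of spiraling together. Likewise ``$\pi_Y(\partial Y)$'' is not meaningful, and the Euler-characteristic/intersection-number contradiction you sketch in the second paragraph is not how the argument runs.

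Leininger's actual proof is a short, purely combinatorial innermost-arc argument (no hyperbolic thin parts, no spiraling): put $\partial X,\partial Y,\partial Z$ in pairwise minimal position with no triple points; if $d^\pi_X(Y,Z)$ is large then every arc of $\partial Y\cap X$ meets every arc of $\partial Z\cap X$, and by looking at a subarc of $\partial Z$ cut off by $\partial X$ and $\partial Y$ one produces a component of $\partial Z\cap Y$ disjoint from a component of $\partial X\cap Y$, forcing $d^\pi_Y(X,Z)$ to be small. (The paper invokes exactly this mechanism in its proof of Lemma~\ref{finiteness}.) If you want to include a sketch rather than just the citation, replace the spiraling picture with this disjoint-arc observation.
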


We also have a finiteness statement for the number of large
projections between two overlapping subsurfaces. The statement we
require was proved in \cite{mm2} using their hierarchy technology. For
completeness we give a more direct proof here. While not necessary for
our applications we note that the proof below, unlike in \cite{mm2},
gives an explicit constant that is independent of the complexity of
the surface.

\begin{lemma}[Axiom P2]\label{finiteness}
Given subsurfaces $X$ and $Y$ there are
only finitely many subsurfaces $Z$ with $d_Z^\pi(X,Y)>3$.
\end{lemma}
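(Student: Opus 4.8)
The plan is to exhibit a fixed pair of curves (or arcs) $\alpha \in \cC_0(X)$ and $\beta \in \cC_0(Y)$ realizing the projections, i.e. with $\alpha \subset \partial X$ meeting $Y$ and $\beta \subset \partial Y$ meeting $X$, fix hyperbolic geodesic representatives, and observe that any subsurface $Z$ with $d_Z^\pi(X,Y) > 3$ must be ``seen'' by the geodesic arc of the curve $\beta$ inside $Z$: the restriction $\pi_Z(X) \cup \pi_Z(Y)$ is computed from $\partial X \cap Z$ and $\partial Y \cap Z$, and if their diameter in $\cC(Z)$ exceeds $3$ then $\partial X$ and $\partial Y$ must intersect each other inside $Z$ with definite intersection number. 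First I would make this precise: if $d_Z^\pi(X,Y) > 3$ then $\beta$ (a component of $\partial Y$ hitting $X$) and a component of $\partial X$ have geometric intersection $\geq 2$ inside $Z$, so in particular $i(\partial X, \partial Y) \geq 1$ counted inside $Z$, and distinct such $Z$ (which overlap both $X$ and $Y$, hence are not nested in a way that hides the crossing) contribute disjoint bits of the intersection pattern of $\partial X$ with $\partial Y$.

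The key step is a counting argument. Fix the hyperbolic surface structure and take $\partial X$ and $\partial Y$ in minimal position, so $i(\partial X, \partial Y)$ is a finite number $N$. I would argue that each $Z$ with $d_Z^\pi(X,Y) > 3$ ``uses up'' at least one point of $\partial X \cap \partial Y$, and that a given intersection point can be used by only a bounded number of such $Z$ — in fact I expect each intersection point lies in at most one such $Z$ once one is careful, because two distinct essential subsurfaces both containing the same crossing of $\partial X$ with $\partial Y$ in their interior and both overlapping $X$ and $Y$ would have to be related, and one can rule out infinitely many. More carefully: the subsurfaces $Z$ with large projection are ``filled'' by the relevant sub-arcs, so their topological type and isotopy class is determined by which intersection points of $\partial X \cap \partial Y$ (together with the ambient arcs) they contain; since there are only $N$ such points there are only finitely many possibilities, giving the bound. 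This is the Leininger/Masur–Minsky style of argument; I would follow the hierarchy-free route promised in the introduction, using only the hyperbolic geometry of $\Sigma$ and the definitions of subsurface projection, rather than \cite{mm2}.

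The main obstacle will be the case analysis forced by arcs and by the annular curve complexes $\cC(\gamma)$: when $Z$ is an annulus around a curve $\gamma$, ``$d_Z^\pi(X,Y) > 3$'' means the lifts of $\partial X$ and $\partial Y$ to the annular cover $X_\gamma$ cross the core more than $3$ times relative to each other, which translates to $\gamma$ having large intersection number contribution with both $\partial X$ and $\partial Y$; I would need to bound the number of such annuli separately, again by a counting argument against $i(\partial X, \partial Y)$, noting that each such $\gamma$ forces a definite number of intersection points of $\partial X$ with $\partial Y$ to ``link'' around $\gamma$. A secondary technical point is to handle the degenerate situations (three-punctured sphere subsurfaces, where $\cC(Z)$ is bounded and the hypothesis $d_Z^\pi(X,Y) > 3$ is vacuous, so those $Z$ do not occur) and to make sure the bound is uniform in the complexity of $\Sigma$, which the combinatorial counting against $N = i(\partial X, \partial Y)$ delivers automatically since the estimate $d_Z^\pi(X,Y) > 3 \Rightarrow (\text{at least one point of } \partial X \cap \partial Y \text{ inside } Z)$ has no dependence on genus.
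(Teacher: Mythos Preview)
Your instinct to run a combinatorial count against $i(\partial X,\partial Y)$ is the right flavor, and the observation that $d_Z^\pi(X,Y)>3$ forces $\partial X$ and $\partial Y$ to intersect inside $Z$ is correct. But the finiteness mechanism you propose --- that each point of $\partial X\cap\partial Y$ is ``used'' by only boundedly many such $Z$, or that $Z$ is determined by which intersection points it contains --- does not work as stated. Knowing which points of $\partial X\cap\partial Y$ lie in $Z$ does not pin down $Z$: infinitely many essential subsurfaces can contain the same finite set of points, and your parenthetical ``together with the ambient arcs'' is circular, since the arcs $\partial X\cap Z$ and $\partial Y\cap Z$ themselves depend on $Z$. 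The claim that these arcs fill $Z$ (which does follow from $d_Z^\pi>2$) tells you $Z$ is a regular neighborhood of that arc system, but you still have no bound on how many arcs there are, i.e.\ on $i(\partial Z,\partial X\cup\partial Y)$.

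The paper closes exactly this gap, and the mechanism is different from yours: instead of counting points of $\partial X\cap\partial Y$ inside $Z$, it bounds $i(\partial Z,\partial X)$ and $i(\partial Z,\partial Y)$ directly. The key combinatorial step (in the spirit of Leininger) is that if some arc of $(\partial X\cup\partial Y)\smallsetminus(\partial X\cap\partial Y)$ meets $\partial Z$ in at least $3$ points, then an innermost argument produces a component of $\partial X\cap Z$ disjoint from a component of $\partial Y\cap Z$, forcing $d_Z^\pi(X,Y)\le 3$. Contrapositively, $d_Z^\pi(X,Y)>3$ bounds $i(\partial Z,\partial X\cup\partial Y)$ by twice the number of complementary arcs of $\partial X\cup\partial Y$, and there are only finitely many isotopy classes of multicurves with a given intersection bound against a filling system. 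The non-filling case is handled by first passing to the subsurface filled by $\partial X\cup\partial Y$. This argument is uniform in the complexity of $\Sigma$ and covers the annular case without a separate analysis.
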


\begin{proof}
More generally, we will prove that if $x,y$ are two arcs or curves
then there are only finitely many subsurfaces $Z$ with
$d_Z^\pi(x,y)>3$. The proof is in the spirit of Leininger's proof of
(P1). 

First assume that $x,y$ fill the surface. Suppose $Z$ is a subsurface
such that $\partial Z,x,y$ are all in minimal position and without
triple intersections. Further assume that some arc component of $x\cup
y-(x\cap y)$ intersects $\partial Z$ in at least 3 points. Then, as in
Leininger's argument, a component of $x\cap Z$ is disjoint from a
component of $y\cap Z$, so $d^\pi_Z(x,y)\leq 3$. In particular, the
condition $d^\pi_Z(x,y)> 3$ forces the intersection numbers
$i(x,\partial Z)$ and $i(y,\partial Z)$ to be bounded by twice the
number of components of $x\cup
y-(x\cap y)$, and there are only finitely many such subsurfaces $Z$.

For the general case, consider the smallest subsurface $\Sigma'$ that
contains $x\cup y$. Note that if $Z$ is a subsurface and
$Z\not\subset\Sigma'$, then there is a curve $w$ in $Z$ disjoint from
$x\cap Z$ and from $y\cap Z$, and this implies $d^\pi_Z(x,y)\leq
2$. If $Z\subset \Sigma'$ the proof concludes as in the filling case.
\end{proof}

Let $\bY$ be a collection of subsurfaces in $\Sigma$ that pairwise
overlap. Since $\{\mathcal C (Y)\}_{Y \in \bY}$ satisfies (P0)-(P2),
we obtain $\cC(\bY)$ by Theorem A.

In view of Theorem \ref{SKY}, we recall the following theorem
\cite{bell-fujiwara}.
\begin{thm}\label{bell-fujiwara}
Every curve graph has finite asymptotic dimension.
\end{thm}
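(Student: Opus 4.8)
The statement to prove is Theorem \ref{bell-fujiwara}: every curve graph $\cC(Y)$ of a subsurface $Y$ of $\Sigma$ has finite asymptotic dimension. This is cited to Bell--Fujiwara \cite{bell-fujiwara}, so the plan is to recall the structure of that argument rather than reinvent it. The key point is that the curve graph is $\delta$-hyperbolic (Masur--Minsky), and one exploits hyperbolicity together with an inductive/covering argument on the complexity of the surface.

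The plan is as follows. First I would recall that $\cC(Y)$ is $\delta$-hyperbolic with $\delta$ depending only on the topological type of $Y$, by Masur--Minsky. Second, I would use the characterization (or a sufficient criterion) for finite asymptotic dimension of hyperbolic spaces: a $\delta$-hyperbolic graph has finite asymptotic dimension provided one can produce, for each scale $R$, a uniformly bounded cover with bounded multiplicity along $R$-balls. The standard route here is to use that the curve complex, although not locally finite or proper, admits a sufficiently controlled combinatorial structure: one can stratify $\cC(Y)$ using Masur--Minsky tight geodesics and the finiteness properties of markings/hierarchies. Third, I would set up the induction on $\xi(Y)$, the complexity of $Y$: the annular and low-complexity cases (where $\cC(Y)$ is quasi-isometric to $\Z$ or bounded) are immediate with $\asdim \leq 1$, and for the inductive step one uses that "large projections" to proper subsurfaces are governed by the curve complexes of those subsurfaces, whose asymptotic dimension is bounded by induction. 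Concretely, one covers a ball in $\cC(Y)$ by sets indexed by the subsurface projection data, invoking the Union Theorem and Hurewicz-type theorem (exactly the tools recalled in Section 4.6 of the present paper) applied to the relevant projection maps.

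The main obstacle I expect is precisely the non-properness of $\cC(Y)$: one cannot simply quote the classical fact that quasi-isometrically embedded subspaces of finite-$\asdim$ spaces inherit finite $\asdim$, nor can one use local finiteness. The real work in \cite{bell-fujiwara} is to replace properness with the combinatorial finiteness coming from hierarchies --- showing that, at any fixed scale, the "directions" in which the curve graph can branch are controlled by finitely many subsurface projections at that scale, each contributing a bounded amount. Organizing this into a valid application of the Union Theorem (so that the pieces at a given scale are $R$-separated outside a lower-$\asdim$ core) is the technical heart, and it is where the induction on complexity is genuinely used.

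Given that the result is already in the literature, for the purposes of this paper I would simply cite \cite{bell-fujiwara} for Theorem \ref{bell-fujiwara} and move on, since the only use made of it is as an input to Theorem \ref{SKY} (to conclude $\asdim\cC(\bY^i)<\infty$) and hence to Theorem D. Thus no independent proof is required here; the statement is recorded for reference and its proof is the content of \cite{bell-fujiwara}, with an explicit bound also available via \cite{webb} and \cite{BB}.
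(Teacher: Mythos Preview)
You are right that the paper does not prove this theorem and simply cites \cite{bell-fujiwara}; your last paragraph matches the paper exactly. However, your sketch of what the Bell--Fujiwara argument \emph{is} does not match the account the paper gives immediately after the statement.

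The paper describes the Bell--Fujiwara proof as a direct adaptation of Gromov's argument that $\delta$-hyperbolic groups have finite asymptotic dimension: one covers the space by ``shells'' $U_v$ indexed by vertices $v$ at distance $5kR$ from a basepoint, where $U_v$ consists of points farther out for which $v$ lies on some geodesic to the basepoint; thin triangles force any two such sets meeting a common $R$-ball to have $d(v,w)\le 2\delta$. For a locally finite graph this bounds the multiplicity and one is done. The curve graph is not locally finite, and the fix in \cite{bell-fujiwara} is to use \emph{tight} geodesics in place of arbitrary geodesics, together with Bowditch's finiteness properties of tight geodesics \cite{bhb:tight}, which restore the needed bound on the number of relevant $v$'s. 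There is no induction on the complexity $\xi(Y)$, no subsurface projection, and no use of the Union Theorem or the Hurewicz theorem in this argument.

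Your proposed inductive scheme via subsurface projections is a different idea, closer in spirit to how the present paper handles $\cC(\bY)$ and $MCG(\Sigma)$ than to how \cite{bell-fujiwara} handles a single $\cC(Y)$. It is not obvious it can be made to work for $\cC(Y)$ itself: the natural maps from $\cC(Y)$ to curve complexes of proper subsurfaces are coarse projections, not Lipschitz fibrations, so setting up a Hurewicz-type step is delicate. In any case, since the paper only needs the result as a black box, the safest correction is simply to align your description with the paper's: hyperbolicity (Masur--Minsky) plus Gromov's shell argument, with tight geodesics and Bowditch's finiteness replacing local finiteness.
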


It now yields:
\begin{thm}
Let $\bY$ be a collection of subsurfaces that pairwise overlap. Then
$\cC(\bY)$ has finite asymptotic dimension.
\end{thm}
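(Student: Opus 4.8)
The plan is to apply Theorem~\ref{SKY} to the collection $\bY$ of subsurfaces, using Theorem~\ref{bell-fujiwara} as the key geometric input. First I would recall that since the subsurfaces in $\bY$ pairwise overlap, axioms (P0)--(P2) hold for the curve graphs $\{\cC(Y)\}_{Y\in\bY}$ with subsurface projections --- (P0) is $\diam\pi_Y(Z)\le 1$, (P1) is Lemma~\ref{triples} (Behrstock's inequality), and (P2) is Lemma~\ref{finiteness}. Hence the quasi-tree of metric spaces $\cC(\bY)$ is defined (Theorem~A), and Theorem~\ref{SKY} applies: if the asymptotic dimensions of the $\cC(Y)$ are \emph{uniformly} bounded by some $n$, then $\asdim\cC(\bY)\le n+1<\infty$.

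The one point that needs genuine attention is the word \emph{uniform}. Theorem~\ref{bell-fujiwara} asserts only that each individual curve graph has finite asymptotic dimension, and a priori the bound could grow with the complexity of $Y$. However, every subsurface $Y$ of $\Sigma$ has complexity bounded by that of $\Sigma$ itself (the genus and number of boundary components/punctures of $Y$ are at most those of $\Sigma$), so there are only finitely many homeomorphism types of subsurfaces occurring in $\bY$. Since $\asdim$ is a quasi-isometry invariant and $\cC(Y)$ depends only on the homeomorphism type of $Y$, the values $\asdim\cC(Y)$ for $Y\in\bY$ range over a finite set; moreover for each fixed homeomorphism type the curve graphs are all isometric, so the covers realizing the asymptotic dimension bound can be chosen with a common uniformity function. (The annular case is covered too, since $\cC(\gamma)$ is quasi-isometric to $\Z$, hence of asymptotic dimension $1$, with a uniform constant.) I would therefore set $n=\max_{Y\in\bY}\asdim\cC(Y)$, which is finite and realized uniformly, and conclude $\asdim\cC(\bY)\le n+1$.

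I expect the main ``obstacle'' --- really just the only thing to check carefully --- to be precisely this reduction of the uniformity hypothesis of Theorem~\ref{SKY} to the non-uniform statement of Theorem~\ref{bell-fujiwara}, via the finiteness of subsurface types. Everything else is a direct invocation of results already established: Theorem~A for the existence of $\cC(\bY)$ and Theorem~\ref{SKY} for the asymptotic dimension bound.
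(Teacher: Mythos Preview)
Your proposal is correct and is exactly the paper's argument: apply Theorem~\ref{SKY} together with Theorem~\ref{bell-fujiwara}, observing that the uniformity hypothesis is satisfied because $\bY$ contains only finitely many subsurfaces up to homeomorphism. You have in fact spelled out the uniformity point in more detail than the paper does.
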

This is because $\bY$ contains only finitely 
many subsurfaces up to homeomorphism, therefore
there is a uniform upper bound on their asymptotic dimension. 

We now say a word about the proof of Theorem \ref{bell-fujiwara} as
this is the only place were the dimension bound is not
computable. Gromov proved that $\delta$-hyperbolic groups have finite
asymptotic dimension. Here is a proof. Assume that $R\gg\delta$ is an
integer. For every vertex $v$ in the Cayley graph of the group at
distance $5kR$ from 1, $k=1,2,3,\cdots$, consider the
set $$U_v=\{x\in\Gamma\mid d(1,x)\in [5(k+1)R,5(k+2)R]\mbox{ and }
v\mbox{ lies on some geodesic }[1,x]\}$$ An easy thin triangle
argument shows that if $v,w$ are two vertices at distance $5kR$ from 1
such that both $U_v$ and $U_w$ intersect the same $R$-ball, then
$d(v,w)\leq 2\delta$. This gives a bound on the number of $U_v$'s that
can intersect the same $R$-ball, and this bound is independent of $R$;
thus $\asdim(\Gamma)<\infty$. We can also apply this argument to a
tree $T$ to show that $\asdim(T) \le 1$.

Bell-Fujiwara \cite{bell-fujiwara} modified this argument to show that
curve complexes have finite asymptotic dimension. They are hyperbolic
by the celebrated work of Masur-Minsky \cite{MM}, but not locally
finite, resulting in an infinite bound. The trick is to use {\it
  tight} geodesics in place of arbitrary geodesics. Finiteness
properties of tight geodesics proved by Bowditch \cite{bhb:tight}
imply that asymptotic dimension is finite. Note that Bowditch's
finiteness statement is proved via a geometric limit argument with
hyperbolic 3-manifolds and does not give a computable bound. It would
be interesting to give a new proof of Bowditch's result that gives a
computable bound. One could then obtain a computable bound for the
asymptotic dimension of the mapping class group.

\subsection{Partitioning subsurfaces into finitely many collections}

We would like to apply our construction of the projection complex to
subsurfaces and their associated curve complexes. To do so we need to
partition the set of all subsurfaces into finitely many collections
where any two subsurfaces in the same collection overlap.

\begin{lemma}\label{colors}
There is a coloring $\phi:\cC(\Sigma)^{(0)}\to F$ of the set of simple
closed curves on $\Sigma$ with a finite set $F$ of colors so that if
$a,b$ span an edge then $\phi(a)\neq\phi(b)$.
\end{lemma}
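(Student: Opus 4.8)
The plan is to bound the chromatic number of the curve graph $\cC(\Sigma)^{(0)}$ — where two simple closed curves are adjacent when they are disjoint (and distinct) — by finding a uniform bound on the ``local complexity'' of this graph and invoking a greedy/inductive colouring argument. Concretely, I would show that there is a constant $D = D(\Sigma)$ such that $\cC(\Sigma)^{(0)}$ can be written as a countable nested union of finite subgraphs $G_1 \subset G_2 \subset \cdots$ with the property that every vertex of $G_{n+1}$ has at most $D$ neighbours in $G_n$; equivalently, $\cC(\Sigma)^{(0)}$ has finite \emph{degeneracy} (every finite subgraph has a vertex of degree $\le D$). A graph of degeneracy $\le D$ is $(D+1)$-colourable by the standard greedy argument applied along the degeneracy ordering, and for a countable graph this is carried out inductively over the exhaustion.

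The heart of the matter is therefore the combinatorial-topological bound: given a fixed simple closed curve $a$, only boundedly many of a set of pairwise disjoint curves can be disjoint from $a$. First I would fix a complete hyperbolic metric on $\Sigma$ and take geodesic representatives. A collection of pairwise disjoint simple closed geodesics, each disjoint from $a$, lives in the (possibly disconnected) subsurface $\Sigma \setminus a$. Since a collection of pairwise disjoint, pairwise non-isotopic essential simple closed curves in a surface $\Sigma'$ has cardinality at most $3g'-3+b'+p'$ (the size of a pants decomposition, via an Euler characteristic count), and $\Sigma \setminus a$ has complexity bounded in terms of that of $\Sigma$, we get the desired constant $D$ depending only on $\Sigma$ — \emph{but} one must be careful: the curves need not be pairwise non-isotopic in $\Sigma\setminus a$ even though they are distinct in $\Sigma$ (isotopies across $a$ are not allowed). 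To handle this I would either pass to the arc-and-curve formulation consistent with the rest of the paper, or observe directly that curves isotopic in $\Sigma\setminus a$ but not in $\Sigma$ form bounded ``parallel families'' (at most two of them, bounding annuli on either side of $a$), so the count is still finite and uniform.

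The second, easier point is the passage from the bound to a colouring that is moreover preserved by $MCG(\Sigma)$ acting by permutations — this is exactly what the subsequent Lemma (finite index) needs. For this I would not demand that $MCG$ preserves the colouring on the nose; rather I would note that the mapping class group acts on the set of colours by permutations once we build the colouring equivariantly, which can be arranged: choose the exhaustion $G_n$ to be $MCG$-invariant (e.g. by intersection-number balls with a fixed marking), and at each stage colour a full orbit of new vertices compatibly. A cleaner route, which I expect the paper takes, is to observe that a degeneracy-$D$ colouring can be chosen so that the induced homomorphism $MCG(\Sigma) \to \mathrm{Sym}(F)$ exists — the key input being that $MCG$ acts on $\cC(\Sigma)$ with finitely many orbits of curves and cocompactly on pairs, so there are only finitely many ``local configurations'' to colour.

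\textbf{Main obstacle.} The genuine difficulty is the non-isotopy subtlety in the disjointness bound: distinct vertices of $\cC(\Sigma)$ can become isotopic after cutting along $a$, so a naive pants-decomposition count overshoots or undershoots. Getting a clean uniform constant $D$ — ideally one that makes the later explicit bounds on $\mathrm{asdim}(MCG(\Sigma))$ work — is where care is required; everything after that (greedy colouring, inductive extension to the countable graph, equivariance) is routine.
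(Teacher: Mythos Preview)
Your approach via degeneracy cannot work: the curve graph has \emph{infinite} degeneracy once $\Sigma$ is complex enough to contain a separating curve with both sides nontrivial. Take such a curve $c$ cutting $\Sigma$ into $\Sigma_1$ and $\Sigma_2$, and pick any $m$ essential simple closed curves $a_1,\ldots,a_m\subset\Sigma_1$ and $m$ essential simple closed curves $b_1,\ldots,b_m\subset\Sigma_2$. Every $a_i$ is disjoint from every $b_j$, so these $2m$ vertices span a complete bipartite subgraph $K_{m,m}$, whose minimum degree is $m$; since $m$ is arbitrary, no finite degeneracy bound exists. Your ``heart of the matter'' computation is correct but it establishes only that the \emph{clique number} is bounded (a pairwise-disjoint family of curves has at most $3g-3+p$ members). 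The exhaustion you describe would require each new curve $a$ to have boundedly many neighbours among \emph{all} previously listed curves, not merely among a pairwise-disjoint subfamily, and that is false. Your equivariance sketch has the same problem: $MCG(\Sigma)$-orbits of curves are infinite, so no finite $G_n$ can be invariant.

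The paper's argument is of a completely different nature. One assigns to each curve $a$ a function $f_a$ on the finite set of connected double covers $\tilde\Sigma\to\Sigma$: set $f_a(\tilde\Sigma)=0$ if $a$ does not lift, and otherwise let $f_a(\tilde\Sigma)$ be the unordered pair of $\Z_2$-homology classes in $H_1(\tilde\Sigma;\Z_2)$ represented by the two lifts of $a$. The colour set $F$ is the (finite) set of all such functions. The work is then to show that disjoint nonparallel curves $a,b$ satisfy $f_a\neq f_b$; this is done by explicitly building a double cover --- cutting and cross-gluing along a nonseparating curve chosen in each component of $\Sigma\setminus(a\cup b)$ bounded by $a\cup b$ --- in which the lifts of $a$ and $b$ are homologically distinguished. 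This colouring is visibly natural under $MCG(\Sigma)$, which permutes double covers and acts on their $\Z_2$-homology, and that is precisely what the next lemma exploits to produce the finite-index colour-preserving subgroup.
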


\begin{proof}
Let $T$ be the set of all connected double covers of $\Sigma$. If $a$
is a simple closed curve in $\Sigma$ define a function $f_a$ on the
set $T$ as follows. For a double cover $\tilde \Sigma\to\Sigma$
define $f_a(\tilde\Sigma)$ as 0 if $a$ does not lift to
$\tilde\Sigma$, and otherwise as the set $\{\alpha,\beta\}$ of
homology classes in $H_1(\tilde\Sigma;\Z_2)$ determined by the two
lifts of $a$. 

The set $F$ of colors is the set of all such functions
-- it is clearly finite.

We now show that if $a,b$ are disjoint nonparallel simple closed curves,
then $f_a\neq f_b$.

We will use the following construction of double covers. Let $\cC$ be
a nonseparating collection of disjoint simple closed curves and
properly embedded arcs in $\Sigma$. Then $\cC$ determines a double
cover $\tilde\Sigma\to\Sigma$ by cutting along $\cC$ and gluing
cross-wise two copies of the resulting surface (equivalently, the
associated index two subgroup is given by curves that intersect $\cC$
in an even number of points). In particular for any $a$ we can find a
cover $\tilde\Sigma\to\Sigma$ where $a$ lifts by applying the above
construction to a non-separating curve or properly embedded arc that
is disjoint from $a$. If $a$ represents a non-trivial homology class
and $b$ represents a differently class then $f_a(\tilde\Sigma) \neq
f_b(\tilde\Sigma)$. Therefore we can assume that $a$ and $b$ are
homologous.

For each component $S$ of $\Sigma\backslash (a\cup b)$ whose boundary
is contained in $a \cup b$ choose a simple curve $c$ such that
$S\backslash c$ is connected and let $\cC$ be the union of such
  curves. There is at least one and at most three such components so
  $\cC$ contains between one and three curves. Note that the curve $c$
  exists since $S$ will have one or two boundary components and can't
  be a disk or annulus. Therefore $S$ must have positive genus and
  hence contain a simple curve that doesn't separate $S$. Let
  $\tilde\Sigma$ be the double cover associated to $\cC$ by the
  construction above. If $f_a(\tilde\Sigma)= f_b(\tilde\Sigma)$ then
  there will be lifts $\tilde a$ and $\tilde b$ of $a$ and $b$ that
  bound a surface $\tilde S \subset \tilde\Sigma$ such that $\tilde S$
  doesn't contain either of the other lifts of $a$ and $b$. Then the
  restriction of the covering map to $\tilde{S}$ will be a
  homeomorphism and its image will contain a component of $\cC$. This
  is a contradiction so we must have $f_a(\tilde\Sigma)\neq
  f_b(\tilde\Sigma)$.
\end{proof}

\begin{lemma}[Color preserving subgroup]\label{finite index}
There is a finite index subgroup $G$ of the mapping class group
$MCG(\Sigma)$ (where $\Sigma$ is closed) such that every element of
$G$ preserves the colors from the proof of Lemma \ref{colors}.
\end{lemma}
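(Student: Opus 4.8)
The plan is to exhibit $G$ as the kernel of a homomorphism from $MCG(\Sigma)$ to the (finite) symmetric group on the color set $F$, so that finite index is automatic. The point is that the construction of the coloring $\phi$ in Lemma \ref{colors} is canonical: to each simple closed curve $a$ we attached the function $f_a$ on the set $T$ of connected double covers of $\Sigma$, where $f_a(\tilde\Sigma)$ records whether $a$ lifts and, if so, the unordered pair of $\Z_2$-homology classes of the two lifts. I would first observe that $MCG(\Sigma)$ acts naturally on the set $T$ of (isomorphism classes of pointed, or rather based at the index-two subgroup level) connected double covers: a mapping class $g$ sends the index-two subgroup $\Lambda<\pi_1(\Sigma)$ to $g_*(\Lambda)$, and this action is by permutations of the finite set $T$. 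Moreover $g$ induces an isomorphism $H_1(\tilde\Sigma;\Z_2)\to H_1(\widetilde{g\cdot\Sigma};\Z_2)$ compatible with the covering maps and with lifts of curves.

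Next I would check the key equivariance: for every $g\in MCG(\Sigma)$ and every simple closed curve $a$, the function $f_{g(a)}$ is obtained from $f_a$ by precomposing with the permutation of $T$ induced by $g^{-1}$ and postcomposing on values with the induced homology isomorphisms. Concretely, $f_{g(a)}(\tilde\Sigma)=\emptyset$ iff $g(a)$ does not lift to $\tilde\Sigma$, which happens iff $a$ does not lift to $g^{-1}\cdot\tilde\Sigma$, i.e. iff $f_a(g^{-1}\cdot\tilde\Sigma)=\emptyset$; and when it is nonempty, the two lifts of $g(a)$ in $\tilde\Sigma$ are the images under the lift of $g$ of the two lifts of $a$ in $g^{-1}\cdot\tilde\Sigma$, so their $\Z_2$-homology classes are the images of the classes in $f_a(g^{-1}\cdot\tilde\Sigma)$. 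Since $F$ is the set of all such functions $T\to(\text{values})$, the group $MCG(\Sigma)$ permutes $F$ via this action (the action on $T$ together with the homology isomorphisms), and $a\mapsto f_a$ is equivariant: $f_{g(a)}=g\cdot f_a$ in $F$. This gives a homomorphism $\rho:MCG(\Sigma)\to\mathrm{Sym}(F)$, and I would set $G=\ker\rho$. Then $G$ has finite index (as $\mathrm{Sym}(F)$ is finite), and for $g\in G$ we have $f_{g(a)}=f_a$, i.e. $\phi(g(a))=\phi(a)$ for every simple closed curve $a$, which is exactly the assertion that $G$ preserves the colors.

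The one technical point I would be careful about is making the action of $MCG(\Sigma)$ on the labeling set genuinely well-defined on \emph{isomorphism classes} rather than on literal subgroups: a mapping class is only well-defined up to isotopy, and double covers are only well-defined up to covering isomorphism, so I would phrase $T$ as the set of conjugacy classes of index-two subgroups of $\pi_1(\Sigma)$ (equivalently, epimorphisms $\pi_1(\Sigma)\to\Z_2$ up to nothing, since $\Z_2$ has no automorphisms — these are just nonzero elements of $H^1(\Sigma;\Z_2)$), and check that $f_a$ as a function on this set is invariant under the ambiguities. This is where the unordered pair $\{\alpha,\beta\}$ in the definition of $f_a$ does its job: the deck transformation swaps the two lifts, so recording the unordered pair rather than an ordered one makes $f_a$ well-defined independently of any choice of basepoint or identification. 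I expect this bookkeeping — verifying that everything descends correctly to the appropriate quotients and that the induced homology isomorphisms are canonical enough for the values in $F$ to be permuted rather than merely moved to an isomorphic set — to be the only real obstacle; once it is set up, the finite-index conclusion is immediate. Note that here $\Sigma$ is closed, so there are no arcs to worry about and $\cC(\Sigma)^{(0)}$ consists only of simple closed curves, which slightly simplifies the argument.
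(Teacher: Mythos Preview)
Your proposal is correct and follows essentially the same idea as the paper: the coloring $\phi(a)=f_a$ is built from finitely many double covers and their $\Z_2$-homology, data on which $MCG(\Sigma)$ acts through a finite group, so the kernel has finite index and preserves the colors.

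The only difference is packaging. You track the induced action of $MCG(\Sigma)$ on the color set $F$ itself and take $G=\ker\big(MCG(\Sigma)\to\mathrm{Sym}(F)\big)$, which requires the equivariance bookkeeping you flag (making the action on ``values'' well-defined modulo deck transformations and basepoint choices). The paper sidesteps this by working one level up: it lets $Aut(\pi_1(\Sigma))$ act on the union of connected double covers and takes the subgroup $\Gamma$ that fixes the $\Z_2$-homology of this union pointwise; since that homology is a finite $\Z_2$-vector space, $\Gamma$ has finite index, and its image $G$ in $Out(\pi_1(\Sigma))\cong MCG(\Sigma)$ then automatically fixes every $f_a$. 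Your route makes the permutation action on colors explicit; the paper's route avoids having to define that action carefully at the cost of possibly producing a smaller subgroup than necessary.
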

We call this subgroup {\it the color preserving subgroup}.

\begin{proof}
The group $Aut(\pi_1(\Sigma))$ lifts to an action (up to homotopy) on
the union of connected double covers of $\Sigma$. Let $\Gamma$ be the
subgroup of $Aut(\pi_1(\Sigma))$ that fixes the $\Z_2$-homology of
this union. This will be a finite index subgroup of
$Aut(\pi_1(\Sigma))$ so its image $G$ in $Out(\pi_1\Sigma)\cong
MCG(\Sigma)$ will have finite index in $MCG(\Sigma)$ and will fix the
colors form Lemma \ref{colors}.
\end{proof}

\begin{prop}\label{collections}
Let $\Sigma$ be a compact surface with (possibly empty) boundary.  Let
$\bY$ be the collection of connected incompressible subsurfaces of
$\Sigma$ that are not the sphere with 3 boundary components. Then
$\bY$ can be written as a finite disjoint union
$$\bY^1\sqcup\bY^2\sqcup\cdots\sqcup\bY^k$$ so that
\begin{itemize}
\item the boundaries of any two surfaces in any $\bY^i$ intersect, and
\item there is a subgroup $G<MCG(\Sigma)$ of finite index that
  preserves each $\bY^i$: if $W\in \bY^i$ and $g\in G$ then $g(W)\in
  \bY^i$. 
\end{itemize}
\end{prop}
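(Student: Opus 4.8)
The plan is to reduce the statement to the coloring result of Lemma \ref{colors} together with the color-preserving subgroup of Lemma \ref{finite index}. First I would recall that each subsurface $W\in\bY$ (other than the thrice-punctured sphere) is determined by its boundary multicurve $\partial W$, so that the combinatorics of subsurfaces is governed by the combinatorics of multicurves in $\cC(\Sigma)$. The key point is that two subsurfaces $W$ and $W'$ \emph{fail} to overlap precisely when $\partial W$ and $\partial W'$ can be made disjoint, i.e. when every component of $\partial W$ is disjoint from (or equal to) every component of $\partial W'$. So the obstruction to ``overlapping'' is exactly captured by disjointness of boundary curves, which is exactly an edge in $\cC(\Sigma)$ (or equality of curves).

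The main construction is then to encode each subsurface $W$ by a finite combinatorial datum built from the colors of its boundary curves. Concretely, fix the coloring $\phi:\cC(\Sigma)^{(0)}\to F$ of Lemma \ref{colors}. To a subsurface $W$ with boundary components $c_1,\dots,c_m$ I would assign the pair consisting of the set of colors $\{\phi(c_1),\dots,\phi(c_m)\}\subseteq F$ together with the homeomorphism type of $W$ (genus, number of boundary components, number of punctures); there are finitely many such data, say indexed by $i=1,\dots,k$, giving the partition $\bY=\bY^1\sqcup\cdots\sqcup\bY^k$. The first bullet follows: if $W,W'$ lie in the same $\bY^i$ then their boundary multicurves use disjoint-from-each-other \emph{color classes}... but this is not quite enough, since two distinct curves of the same color need not be disjoint. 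So the correct invariant to use is more delicate: rather than the \emph{set} of colors, one should record, for each color $f\in F$, the homology/intersection information, or better, argue directly. The cleanest route is: if $W$ and $W'$ are in $\bY$ and do \emph{not} overlap, then some boundary curve $c$ of $W$ is disjoint from or equal to some boundary curve $c'$ of $W'$. I would instead build the partition so that within a block this is impossible.

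Here is the fix I expect to use. Since we have the color-preserving finite-index subgroup $G$ from Lemma \ref{finite index}, and since $\cC(\Sigma)$ has only finitely many $G$-orbits of edges (indeed $MCG(\Sigma)$ acts with finitely many orbits on pairs of disjoint curves by the change-of-coordinates principle), I would first handle curves: the curves of a fixed color $f$ have the property that two of them spanning an edge would violate the coloring, hence curves of the same color are pairwise non-disjoint — no wait, the coloring only guarantees \emph{adjacent} curves have different colors, so two same-colored curves are indeed pairwise \emph{non-adjacent}, i.e. pairwise \emph{intersecting} or equal. That is exactly what we want for single curves (annular subsurfaces). For a general subsurface $W$, pick one boundary component and record its color; but $W$ may have several boundary components. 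So I would record the \emph{multiset} of colors of $\partial W$; two subsurfaces $W,W'$ with the same multiset of boundary colors and the same topological type: if they failed to overlap, some $c\subset\partial W$ is disjoint from or equal to some $c'\subset\partial W'$. Equality forces $\phi(c)=\phi(c')$, fine; disjointness forces $\phi(c)\neq\phi(c')$. This still does not immediately yield a contradiction, so the genuine content — and the step I expect to be the main obstacle — is organizing the colors of the several boundary components so that within a block, boundary curves of $W$ and $W'$ are forced to coincide or intersect. I would resolve this by refining: assign to $W$ the function $F\to\{$multisets of topological types$\}$ recording for each color how many boundary components of that color there are, \emph{and} choosing the partition fine enough (finitely many choices) that two subsurfaces in a block have identical color-labeled boundary data; then a careful case analysis using that same-colored distinct curves must intersect shows $W,W'$ overlap. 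The $G$-invariance (second bullet) is then immediate: $G$ preserves colors, hence preserves the color-labeled topological data defining each $\bY^i$, so $g\in G$ maps $\bY^i$ to itself. The hard part is purely the combinatorial bookkeeping ensuring the blocks are both finite in number and genuinely ``pairwise overlapping.''
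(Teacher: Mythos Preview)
Your proposal has a genuine gap that you partly sense but do not resolve. The very first claim---that a subsurface $W\in\bY$ is determined by its boundary multicurve---is false: a separating multicurve bounds two complementary subsurfaces, and these may be homeomorphic (e.g.\ the two genus-$g/2$ pieces cut off by a separating curve on a genus-$g$ surface). Your proposed invariant, namely the multiset of boundary colors together with the topological type of $W$, assigns the \emph{same} value to such a pair $W_0,W_1$, yet $\partial W_0$ and $\partial W_1$ are equal and hence do not intersect. So two distinct subsurfaces in the same block of your partition can fail to overlap, and no amount of refining the invariant by data read off $\partial W$ alone will fix this: the boundary simply does not see which side you are on. Your suggestion to record ``for each color how many boundary components of that color there are'' does not help, since $W_0$ and $W_1$ have literally the same boundary.

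The paper's proof takes a different route that sidesteps this issue. Rather than assigning a finite \emph{invariant} to each subsurface, it takes the $\bY^i$ to be the $G$-\emph{orbits} of subsurfaces under the color-preserving subgroup $G$ of Lemma \ref{finite index}. Finiteness and $G$-invariance are then automatic. For the overlap condition, if $W_0\neq W_1$ lie in the same orbit then there is a specific $g\in G$ with $g(W_1)=W_0$; if $\partial W_0$ and $\partial W_1$ are disjoint, the coloring forces $\partial W_0=\partial W_1$ and hence $W_0,W_1$ are complementary. Now one uses an \emph{interior} curve: pick a nonperipheral $\gamma\subset W_0$; then $g^{-1}(\gamma)\subset W_1$ is disjoint from $\gamma$, hence has a different color, contradicting that $g$ preserves colors. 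The crucial point you are missing is this passage from a $G$-invariant to an actual element $g\in G$, which lets one compare a curve inside $W_0$ with its image inside $W_1$.
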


\begin{proof}
The mapping class group acts on $\bY$ and there are finitely many
orbits under the action. Let $G$ be the subgroup given by Lemma
\ref{finite index}. Since $G$ has finite index in $MCG(\Sigma)$, the
action of $G$ on $\bY$ also has finitely many orbits. These orbits are
our $\bY^i$ and by definition are invariant under the $G$-action.

We now show that if $W_0 \neq W_1$ are in $\bY^i$ then they have
intersecting boundary. There is a $g \in G$ such that $W_0 =
g(W_1)$. Since $g$ preserves the colors if the $W_0$ and $W_1$ don't
have intersecting boundary then $g$ must fix $\partial W_0 = \partial
W_1$ and $W_0$ must be the complement of $W_1$. By assumption the
$W_i$ are not spheres with three boundary components. They are also
not annuli for if they were then we would have $W_0 = W_1$. In
particular $W_0$ must contain a non-peripheral simple closed curve
$\gamma$. Since $g(\gamma)$ will be disjoint from $\gamma$ it will
have a different color. As $G$ fixes the colors this is a
contradiction.
\end{proof} 

Here is a perhaps unexpected application of our construction. This is an expansion of Theorem F in the introduction.

\begin{thm}\label{dehn hyperbolic}
\begin{enumerate}[(i)]
\item
Let $f$ be a Dehn twist in the curve $\gamma$ on $\Sigma$. There is a
finite index subgroup $\Gamma\subset MCG(\Sigma)$ and an action of
$\Gamma$ on a quasi-tree such that any power $f^k$ of $f$, $k\neq 0$,
that belongs to $\Gamma$ is a hyperbolic isometry.
\item
If $\Sigma$ has even genus $g$ and $\gamma$ separates into two subsurfaces
of genus $g/2$ then we may take $\Gamma=MCG(\Sigma)$.
\item In these actions, there is a bound to the diameter of the
  projection of a fixed quasi-axis of $f^k$ to any non-parallel
  translate. 
\end{enumerate}
\end{thm}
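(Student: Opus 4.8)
# Proof Proposal for Theorem 5.9 (Theorem F)

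The plan is to produce, for a given Dehn twist $f$ along a curve $\gamma$, a family $\bY$ of subsurfaces built from $\gamma$ on which the mapping class group (or a finite-index subgroup) acts preserving subsurface projections, and whose associated quasi-tree $\cC(\bY)$ carries $f$ as a hyperbolic isometry. The natural choice is to take $\bY$ to be the $\Gamma$-orbit of the annular neighborhood $A_\gamma$ of $\gamma$, where $\Gamma$ is the color-preserving subgroup from Lemma~\ref{finite index} (restricted further if needed so that the annuli in the orbit pairwise overlap). For the key special case (ii), where $\Sigma$ has even genus and $\gamma$ separates into two pieces of equal genus, I would first observe that \emph{any} two distinct curves in the $MCG(\Sigma)$-orbit of such a $\gamma$ must intersect: if $\gamma'$ is disjoint from and not isotopic to $\gamma$, then $\gamma'$ lies in one of the two genus-$g/2$ components and cannot separate $\Sigma$ into two pieces of genus $g/2$ (one side has strictly smaller genus), contradicting $\gamma'$ being in the orbit. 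Hence the full mapping class group preserves $\bY$ and all pairs in $\bY$ overlap, so $\Gamma = MCG(\Sigma)$ works. For the general case (i), one passes to the color-preserving finite-index subgroup $G$ of Lemma~\ref{finite index}: within a single $G$-orbit of the annulus $A_\gamma$, any two distinct members have intersecting cores by the argument in Proposition~\ref{collections} (if they were disjoint, $g$ would map $\gamma$ to a disjoint non-isotopic curve of a different color).

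With $\bY$ in hand, axioms (P0)--(P2) for annular subsurface projections hold by Lemma~\ref{triples} (Behrstock, P1) and Lemma~\ref{finiteness} (P2), with (P0) immediate since $\diam\pi_X(Y)\le 1$. Since each $\cC(Y)\cong\cC(\gamma_Y)$ is quasi-isometric to $\Z$ with a uniform bottleneck constant, Theorem~B(ii) (equivalently Theorem~\ref{bottleneck}) gives that $\cC(\bY)$ is a quasi-tree, and Theorem~B(i) (equivariance) ensures $\Gamma$ acts on it. The heart of the matter is to show $f$ acts hyperbolically, i.e., with positive translation length. The key point is that $f$ (or a suitable power $f^k\in\Gamma$) fixes $A_\gamma\in\bY$ and acts on $\cC(A_\gamma)\cong\Z$ as translation by the twisting parameter, which is unbounded. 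So I would invoke Lemma~\ref{translation} (Axial isometry): it suffices to exhibit $Y\in\bY$ and $N>0$ with $d_Y\big((f^k)^{-N}(Y),(f^k)^N(Y)\big)>K''$. Take $Y=A_\gamma$; then $(f^k)^{\pm N}(Y)=Y$, which is not directly what Lemma~\ref{translation} wants. Instead, the right move is to pick $Y' = h(A_\gamma)$ for some $h\in\Gamma$ with $h(A_\gamma)\ne A_\gamma$ (which exists as in Proposition~\ref{collections}, since $\Gamma$ is not virtually cyclic), and compute $d_{A_\gamma}\big(f^{-kN}(Y'),f^{kN}(Y')\big)$: the projections $\pi_{A_\gamma}(f^{\pm kN}(Y'))$ are obtained from $\pi_{A_\gamma}(Y')$ by applying the $\pm kN$-th power of the Dehn twist to the annular cover, which moves the geodesic across the core by $|kN|\cdot i(\gamma,\partial Y')$ intersections; hence $d_{A_\gamma}\big(f^{-kN}(Y'),f^{kN}(Y')\big)$ grows linearly in $N$ and exceeds $K''$ for $N$ large. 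By Lemma~\ref{translation}, $f^k$ (hence $f$) has positive translation length and an axis containing all $f^k$-translates of $Y'$.

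For part (iii), I would use Corollary~\ref{nearest_point2}: the nearest-point projection of $\cC(Y')$ to $\cC(A_\gamma)$ lies within a uniform neighborhood of the bounded set $\pi_{A_\gamma}(Y')$. If $\beta$ is a quasi-axis of $f^k$ contained in $\cC(A_\gamma)$ and $\phi(\beta)$ is a non-parallel translate, then $\phi(\beta)\subset\cC(\phi(A_\gamma))$ with $\phi(A_\gamma)\ne A_\gamma$ (non-parallelism forces this, since within a single $\cC(Y)$ all axes of powers of the generating twist are parallel), and the projection of $\cC(\phi(A_\gamma))$ to $\cC(A_\gamma)$ is uniformly bounded; hence the projection of $\phi(\beta)$ to $\beta$ is uniformly bounded. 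The main obstacle I anticipate is bookkeeping around part (i): verifying that within the chosen $G$-orbit the annuli genuinely pairwise overlap requires care (the complement-swapping case in Proposition~\ref{collections}), and one must also check that the power $f^k$ landing in $\Gamma$ still translates $\cC(A_\gamma)$ by a nonzero amount — which it does, since a nonzero power of a Dehn twist still acts on the annular curve complex with unbounded translation. The even-genus case (ii) is cleanest precisely because the ``pairwise intersecting'' property is automatic from the genus count, removing the need to pass to a finite-index subgroup.
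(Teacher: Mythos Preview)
Your overall setup matches the paper: take $\bY$ to be the $\Gamma$-orbit of (the annulus around) $\gamma$, verify pairwise overlap, apply (P0)--(P2) and Theorem~\ref{bottleneck} to get a quasi-tree $\cC(\bY)$, and deduce (iii) from Corollary~\ref{nearest_point2}. Your argument for (ii) and your handling of (iii) are fine and essentially identical to the paper's.

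The gap is in your hyperbolicity argument. You correctly state the hypothesis of Lemma~\ref{translation} as $d_Y\big(g^{-N}(Y),g^N(Y)\big)>K''$, with the \emph{same} $Y$ in the subscript and in the arguments. But the quantity you actually compute is $d_{A_\gamma}\big(f^{-kN}(Y'),f^{kN}(Y')\big)$, which is not of that form for any single choice of $Y$: with $Y=A_\gamma$ the arguments collapse since $f$ fixes $A_\gamma$, and with $Y=Y'$ the subscript should be $Y'$, not $A_\gamma$. So Lemma~\ref{translation} does not apply to what you have written. (There is a further mismatch: Lemma~\ref{translation} produces an axis in the projection complex $\cP_K(\bY)$, whereas you want hyperbolicity on $\cC(\bY)$.)

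The fix is much simpler than the detour through Lemma~\ref{translation}, and is exactly what the paper does. By Lemma~\ref{coarsedistanceestimate}, $\cC(A_\gamma)$ is totally geodesically embedded in $\cC(\bY)$. Any nontrivial power $f^k\in\Gamma$ fixes $A_\gamma$ and acts on $\cC(A_\gamma)\cong\Z$ as a nontrivial translation, hence hyperbolically. For $x\in\cC(A_\gamma)$ one therefore has
\[
d_{\cC(\bY)}\big(x,f^{kN}(x)\big)=d_{\cC(A_\gamma)}\big(x,f^{kN}(x)\big)\asymp |kN|,
\]
so the translation length of $f^k$ on $\cC(\bY)$ is positive and $\cC(A_\gamma)$ itself serves as the (quasi-)axis. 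This also immediately gives the axis needed for (iii) without any further work.
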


By
contrast, semisimple actions of mapping class groups on $CAT(0)$
spaces always have the property that Dehn twists are elliptic (see
\cite{bridson}).
From (i) it follows that a Dehn twist has linear growth 
in the word length of $\Gamma$, therefore in $MCG(\Sigma)$
(known by \cite{FLM}).

\begin{proof}
If $\Gamma$ is the subgroup of Proposition \ref{collections} or if
$\gamma$ is as in (ii) and $\Gamma=MCG(\Sigma)$ then the
$\Gamma$-orbit of $\gamma$ consists of pairwise intersecting
curves. Let $\bY$ be this orbit and consider the action of $G$ on the
quasi-tree of curve complexes $\cC(\bY)$. Since each curve complex
$\cC(g\gamma)$ is quasi-isometric to a line (and they are all
isometric to each other), it follows from Theorem \ref{bottleneck}
that $\cC(\bY)$ is a quasi-tree. Since a nontrivial power of $f$ acts
as a hyperbolic isometry on $\cC(\gamma)$ the claim follows. The
quasi-axis of $f^k$ is the curve complex $\cC(\gamma)$ and the
non-parallel translates are $\cC(g\gamma)$ where $g$ doesn't fix
$\gamma$. Since the projection of $\cC(g\gamma)$ to $\cC(\gamma)$ has
diameter 1, the last statement is a consequence of Theorem A.
\end{proof}

Here is another related application to the
Rips complex, $P_d(\cG)$, of a graph $\cG$.
 Rips has shown that if
$\cG$ is $\delta$-hyperbolic then for $d$ sufficiently large,
$P_d(\cG)$ is contractible \cite{gromov:hyp}. It has been hoped that with the same
assumptions, for $d$ sufficiently large $P_d(\cG)$ is $CAT(0)$. The
quasi-tree given by (ii) gives a counter-example to this conjecture,
at least for infinite valence graphs.

\begin{cor}\label{Rips counter}
There exist infinite diameter, infinite valence graphs that are
quasi-isometric to trees but whose Rips complex is never $CAT(0)$.
\end{cor}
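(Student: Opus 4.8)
The plan is to extract the required graph from Theorem \ref{dehn hyperbolic}(ii) together with Theorem G (Corollary \ref{Rips counter}'s companion statement on equivariant thickenings). First I would fix a closed orientable surface $\Sigma$ of even genus $g$ and a separating curve $\gamma$ splitting $\Sigma$ into two subsurfaces of genus $g/2$. By Theorem \ref{dehn hyperbolic}(ii), the whole mapping class group $\Gamma=MCG(\Sigma)$ acts on the quasi-tree of curve complexes $\cC(\bY)$, where $\bY$ is the $\Gamma$-orbit of $\gamma$; moreover a nontrivial power of the Dehn twist $f$ in $\gamma$ acts hyperbolically. Let $X$ be the graph underlying $\cC(\bY)$ (or a graph quasi-isometric to it, obtained as the $1$-skeleton of a Rips complex of an orbit if one wants genuinely a graph with unit edges). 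Then $X$ is a quasi-tree by Theorem \ref{bottleneck}, it has infinite diameter since $f^k$ has positive translation length, and it is of infinite valence because the curve complexes $\cC(g\gamma)\cong\mathbb Z$ glued in along the projection structure produce vertices of infinite degree (this is the same phenomenon as in the tree example of Section \ref{tree example}, where $\cP_K(\bY)$ is not locally finite).

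Next I would invoke Theorem G: there is an isometric group action on a quasi-tree $X$ such that no equivariant thickening admits an equivariant $CAT(0)$ metric, and in particular $P_d(X)$ is never equivariantly $CAT(0)$. But to get the bare (non-equivariant) statement of Corollary \ref{Rips counter} — that $P_d(X)$ is never $CAT(0)$ at all — one needs the extra input that the Dehn twist $f$ (or rather its power $f^k$ acting hyperbolically on $X$) is central, or at least has a large centralizer forcing any $CAT(0)$ structure to be automatically equivariant on the relevant subgroup. This is where the discussion just after Theorem F is used: in genus $>2$ the centralizer of a Dehn twist admits no nontrivial homomorphism to $\mathbb R$, so if $P_d(X)$ carried \emph{any} $CAT(0)$ metric, the induced semisimple action of this centralizer on the purported axis of $f^k$ (identifying all parallel axes) would produce such a homomorphism — contradicting the quasi-tree setting where only a quasi-morphism, not a homomorphism, can exist. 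Thus no $CAT(0)$ metric on $P_d(X)$ is possible. Combining, $X$ is a graph that is quasi-isometric to a tree, of infinite diameter and infinite valence, whose Rips complex $P_d(X)$ is never $CAT(0)$.

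I expect the main obstacle to be the step promoting ``no \emph{equivariant} $CAT(0)$ metric'' to ``no $CAT(0)$ metric whatsoever'': one must argue that an arbitrary $CAT(0)$ metric on $P_d(X)$ — not a priori invariant under any group — still yields the homomorphism-to-$\mathbb R$ contradiction. The cleanest route is probably to observe that if $P_d(X)$ were $CAT(0)$, then the action of $MCG(\Sigma)$ (or the finite-index subgroup) on $P_d(X)$ would be a semisimple action of a mapping class group on a $CAT(0)$ space, and apply Bridson's theorem \cite{bridson} directly: in such actions Dehn twists are always elliptic, whereas $f^k$ acts hyperbolically on $X$ hence with positive translation length on any thickening $P_d(X)$, a contradiction. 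This bypasses the equivariance issue entirely and is the argument I would write out in detail, leaving the verification that $P_d(X)$ is genuinely infinite-valence and infinite-diameter (both immediate from the construction of $\cC(\bY)$) as routine.
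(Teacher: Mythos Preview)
Your final paragraph lands on essentially the paper's argument, but the route there has two real problems.

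First, invoking Theorem G is circular: Theorem G \emph{is} Corollary \ref{Rips counter} (the introduction labels it as such). You cannot use it as input.

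Second, the ``equivariance obstacle'' you worry about is a phantom. The statement ``the Rips complex is $CAT(0)$'' refers to the canonical simplicial metric on $P_d(X)$, and since $MCG(\Sigma)$ acts simplicially on $X$, it automatically acts by simplicial automorphisms --- hence isometries --- on $P_d(X)$ with that metric. There is nothing to promote; the action is equivariant for free. Your centralizer/homomorphism-to-$\mathbb R$ detour is therefore unnecessary (and in any case would not work for an \emph{arbitrary} $CAT(0)$ metric on $P_d(X)$, since then there is no reason the centralizer acts by isometries at all).

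The paper's proof is exactly your last paragraph, done cleanly: $MCG(\Sigma)$ acts on the quasi-tree $X=\cC(\bY)$ from Theorem \ref{dehn hyperbolic}(ii) with the Dehn twist $f$ hyperbolic; this action extends to $P_d(X)$ with $f$ still hyperbolic. One point you omit is the justification of semi-simplicity, which Bridson's theorem requires: the paper invokes Manning's result \cite{manning:qfa} that any isometric action on a quasi-tree is semi-simple (an element with unbounded orbits has positive translation length), and since $P_d(X)$ is itself a quasi-tree, the action on it is semi-simple. Then Bridson's theorem forces $f$ to be elliptic if $P_d(X)$ were $CAT(0)$, a contradiction.
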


\begin{proof}
Let $\cG$ be the quasi-tree given by (ii) of Theorem \ref{dehn
  hyperbolic}. Then $MCG(\Sigma)$ acts on $\cG$ with the Dehn twist
about the curve $\gamma$ acting hyperbolically.
 Then $MCG(\Sigma)$
will act on $P_d(\cG)$ for all $d$ and the Dehn twist will still act
hyperbolically. 
Moreover, since the action on $\cG$ is always semi-simple,
\cite{manning:qfa}, so is the action on $P_d(\cG)$.
Therefore, by Bridson's theorem \cite{bridson},
$P_d(\cG)$ is not $CAT(0)$.
\end{proof}

\subsection{Embedding $MCG$ into a finite product of $\cC(\bY)$'s}
\label{section.embedding}

Fix a set of finite generators for $MCG(\Sigma)$ and for all $g \in
MCG(\Sigma)$ let $|g|$ be the word length norm. We need the following
proposition. Recall that a finite collection of simple closed curves
is {\it binding} if every nonperipheral curve intersects at least one
curve in $\alpha$. If $W$ is any subsurface and $g\in MCG(\Sigma)$,
the restrictions $\pi_W(\alpha)$ and $\pi_W(g(\alpha))$ are nonempty and we
denote by $d^\pi_W(\alpha,g(\alpha))$ the diameter of their union in
the curve complex of $W$.

\begin{prop}\label{coarsegroupbound}
Let $\alpha$ be a finite binding collection of simple closed curves on
$\Sigma$. Given any $B>0$ there exists a $C>0$ such that if $|g|>C$
then there is a subsurface $W$ such that $d^\pi_W(\alpha, g(\alpha)) >
B$.
\end{prop}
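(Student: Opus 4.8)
The plan is to prove the contrapositive via a compactness/pigeonhole argument combined with the Masur–Minsky distance formula (or rather, our own distance estimate, Theorem~\ref{masur-minsky1}, applied to a suitable $\cC(\bY^i)$). Suppose the conclusion fails: then there is a $B>0$ and a sequence $g_n\in MCG(\Sigma)$ with $|g_n|\to\infty$ such that $d^\pi_W(\alpha,g_n(\alpha))\le B$ for \emph{every} subsurface $W$ (including annuli). We must derive a contradiction, i.e.\ show that this bound on all subsurface projections forces $|g_n|$ to be bounded.

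The cleanest route is to invoke the Masur–Minsky distance formula directly: $|g_n|$ is coarsely equal to $\sum_W \{\{d^\pi_W(\alpha,g_n(\alpha))\}\}_M$ for $M$ large. Choosing $M>B$ kills every term, so the sum is $0$, giving $|g_n|\le$ (the additive constant), contradicting $|g_n|\to\infty$. This is essentially immediate, but the paper is at pains to give hierarchy-free arguments, so the more self-contained approach is: group the subsurfaces into the finitely many families $\bY^1,\dots,\bY^k$ of Proposition~\ref{collections} (so that within each family projections satisfy (P0)--(P2)), form the quasi-trees of curve complexes $\cC(\bY^i)$, and use the orbit map $\Psi:MCG(\Sigma)\to\prod_i\cC(\bY^i)$ together with the upper bound half of Theorem~\ref{masur-minsky1}. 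Concretely, first I would fix a binding $\alpha$ and note $\Psi(g)=(\pi_{\cC(\bY^i)}(g(\alpha)))_i$ is a quasi-isometric embedding (Theorem~C); then the hypothesis $d^\pi_W(\alpha,g_n(\alpha))\le B$ for all $W$ means that in each $\cC(\bY^i)$, taking $K>B$, the set $\bY_K(\alpha,g_n(\alpha))$ is empty, so by Lemma~\ref{standard_path} the distance $d_{\cC(\bY^i)}(\alpha,g_n(\alpha))\le 6K$; since $\Psi$ is a quasi-isometric embedding, $|g_n|$ is bounded, a contradiction.

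One subtlety to handle carefully: the hypothesis must control projections to \emph{annular} subsurfaces as well as the non-annular ones, since a Dehn twist has large projection only to its own annulus. Our $\bY^i$ include annuli, and $\cC(\gamma)$ for an annulus is quasi-isometric to $\Z$, so there is no issue — but I would remark explicitly that the families $\bY^i$ cover all incompressible subsurfaces including annuli (except the thrice-punctured sphere, whose curve complex is bounded and hence irrelevant for large projections). A second subtlety is that $\Psi$ uses a specific binding collection; but the statement of the proposition already fixes $\alpha$, and any binding collection works equally well since changing the binding collection changes all $d^\pi_W(\alpha,\cdot)$ by a bounded amount (projections of two binding collections to any $W$ are uniformly close, by a standard intersection-number argument, or one simply cites that the choice of orbit for the map into the product of quasi-trees does not matter).

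The main obstacle — really the only nontrivial input — is the lower bound on word length in terms of subsurface projections, i.e.\ the nontrivial direction of the Masur–Minsky distance formula, or equivalently the fact that $\Psi$ is a quasi-isometric \emph{embedding} (not merely coarsely Lipschitz). If we want to stay hierarchy-free we should cite the lower bound as established elsewhere in the paper (the discussion preceding Theorem~D promises a hierarchy-free proof of the lower bound via Lemma~\ref{finiteness} and Theorem~\ref{masur-minsky1}); otherwise we simply cite Theorem~6.12 of \cite{mm2}. Everything else is a short pigeonhole: finitely many families, $K>B$ forces empty $\bY_K$ sets, hence bounded distance in each quasi-tree factor, hence bounded $|g|$. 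I would write the argument in the compactness-free, direct form: given $B$, set $K>B$, let $C$ be the quasi-isometry constant of $\Psi$ times $6kK$ plus the additive error, and observe that $|g|>C$ is then impossible unless some $d^\pi_W(\alpha,g(\alpha))>B$.
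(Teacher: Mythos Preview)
Your argument via the Masur--Minsky distance formula is mathematically valid, but it misses the role this proposition plays in the paper and your ``self-contained'' alternative is circular. The paper proves Proposition~\ref{coarsegroupbound} precisely in order to establish, without any results from \cite{mm2}, that the orbit map $\Psi$ is a coarse embedding (Proposition~\ref{embedding}); that proposition in turn is what the paper means by a ``hierarchy-free proof of the lower bound''. So when you write ``cite the lower bound as established elsewhere in the paper'', you are citing Proposition~\ref{embedding}, whose proof begins ``By Proposition~\ref{coarsegroupbound} there exists a subsurface $W$\ldots''. Likewise Theorem~C itself invokes the Masur--Minsky formula from \cite{mm2}, so appealing to it here reintroduces exactly the hierarchy machinery the proposition is designed to bypass.

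The paper's proof is entirely different in character: it is a geometric limit argument using lamination theory. Assuming a sequence $g_i$ with $|g_i|\to\infty$ and all projections bounded, one passes to a subsequence so that $g_i(c)$ Hausdorff-converges for each $c\in\alpha$ and splits into three cases: (i) all limits are simple closed curves, forcing $g_i(c)$ eventually constant and hence $|g_i|$ bounded; (ii) some limit has a minimal component $\lambda_Y$ filling a non-annular $Y$, and a Luo-type argument shows $d^\pi_Y(g_i(c),c')\to\infty$ for any $c'\in\alpha$ meeting $Y$; (iii) some limit leaf spirals onto a curve $\beta$, and then annular projections to $\beta$ blow up. This argument uses only elementary facts about laminations and Hausdorff limits, not the distance formula, and that independence is the point.

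So: if you are willing to quote Theorem~6.12 of \cite{mm2} as a black box, your one-line proof is fine and even gives an effective $C$. But within the logic of this paper it is either circular or imports the very machinery the authors are avoiding; the lamination-limit argument is the intended hierarchy-free proof.
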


\begin{proof} 
Fix a hyperbolic metric on $\Sigma$. When we discuss the Hausdorff
limit of a sequence of curves we assume that they have been realized
by hyperbolic geodesics in this metric.

Assume that the lemma is false. Then there exists a sequence of $g_i$
such that $|g_i| \rightarrow \infty$ but $d^\pi_W(\alpha,
g_i(\alpha))\leq B$ for all subsurfaces $W$. We pass to a subsequence
(which we don't relabel) such that $g_i(c)$ has a Hausdorff limit for
each curve $c$ in $\alpha$ (see e.g. \cite{casson-bleiler} for basic
facts about Hausdorff convergence in the lamination space). There are
then three possibilities:
\begin{itemize}
\item If the Hausdorff limits are all simple closed curves then the
  sequences $g_i(c)$ must become constant. However there are only
  finitely many elements of $MCG(\Sigma)$ that have the same image on a
  set of binding curves. This contradicts $|g_i| \rightarrow \infty$.

\item Fix a $c$ in $\alpha$ and let $\lambda$ be the Hausdorff limit
  of $g_i(c)$. Also assume that there is a minimal component
  $\lambda_Y$ of $\lambda$ that fills a non-annular subsurface
  $Y$. Let $c'$ be a curve in $\alpha$ that intersects $Y$. We will
  modify an argument of F. Luo (see \cite[Section 4.3]{MM}) to show that $d^\pi_Y(g_i(c), c')
  \rightarrow \infty$. If $d_{\cC(Y)}(\pi_Y(c'), \pi_Y(g_i(c)))$ is
  bounded we can pass to a subsequence where the distance is
  constant. For each $i$ let $x_i \in \cC(Y)$ be adjacent to
  $\pi_Y(g_i(c))$ but closer to $\pi_Y(c')$. We can pass to another
  subsequence such that $x_i$ converges in the Hausdorff topology to a
  lamination $\lambda'$. As the $x_i$ and $\pi_Y(g_i(c))$ are disjoint
  $\lambda'$ and $\lambda_Y$ can't intersect and since $\lambda_Y$
  fills $Y$ this implies that $\lambda' = \lambda_Y$, perhaps with
  some isolated leaves added. We can repeat this until we have a
  sequence in $\cC(Y)$ disjoint from $\pi_Y(c')$ that converges to the
  filling lamination $\lambda_Y$ (plus isolated leaves). This is a
  contradiction so we must have $d_Y(g_i(c), c') \rightarrow \infty$.

\item 
The final case is when the Hausdorff limit $\lambda$ isn't a
collection of simple curves but doesn't have a component that fills
a non-annular subsurface. In this case there must be a leaf of
$\lambda$ that spirals around a simple closed curve
$\beta$. Let $c'$ be a curve in $\alpha$ that
intersects $\beta$. Again fix a hyperbolic metric on $\Sigma$. We also
fix an annular neighborhood $X$ of $\beta$. 
Then $d^\pi_X(g_i(c), c')=i_X(g_i(c),c')$.
Since $\lambda$ spirals around $\beta$ we have $i_X(g_i(c),
c') \rightarrow \infty$ and therefore
$d^\pi_X(g_i(c), c') \rightarrow \infty$.
\end{itemize}
\end{proof}

Let $\Gamma$ be the subgroup of $MCG(\Sigma)$ from Proposition
\ref{collections} and let $$\bY^1,\cdots,\bY^k$$ be the orbits of
subsurfaces under $\Gamma$. Note that by construction one of the
collections consists of the single surface $\Sigma$. Let
$$\Pi = \cC(\bY^1)\times\cC(\bY^2)\times\cdots\times\cC(\bY^k)$$ be
the product of quasi-trees of curve complexes. Then $MCG(\Sigma)$ acts
on $\Pi$. For elements in $\Gamma$ the coordinates are fixed while
other elements will permute them.

Define $\Psi: MCG(\Sigma) \to 
\Pi$
by choosing a base vertex as the image of 1 and extending the map
equivariantly. Note that one of the factors in the target is just the
curve complex $\cC(\Sigma)$. We put the $l_1$-metric on the product
space $\Pi$. By construction $\Psi$ is Lipschitz. 

\begin{prop}\label{embedding}
$\Psi$ is a coarse embedding.
\end{prop}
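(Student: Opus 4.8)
The plan is to show that $\Psi$ is a coarse embedding by combining the upper bound coming from the fact that $\Psi$ is Lipschitz (already noted just before the statement) with a lower bound on $d_\Pi(\Psi(1),\Psi(g))$ in terms of the word length $|g|$. Since $\Psi$ is equivariant, it suffices to bound $d_\Pi(\Psi(1),\Psi(g))$ below by a function of $|g|$ that tends to infinity. Fix the finite binding collection $\alpha$ of simple closed curves (coming from Section 5.4) that is used to define the orbit map, so that each coordinate of $\Psi(g)$ is (coarsely) $\pi_{Y}(g(\alpha))$ for $Y$ ranging over the subsurfaces in $\bY^i$, and $\Psi(1)$ has coordinates $\pi_Y(\alpha)$.

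First I would use Proposition \ref{coarsegroupbound}: given any $B>0$ there is $C>0$ so that $|g|>C$ forces the existence of a subsurface $W$ with $d^\pi_W(\alpha,g(\alpha))>B$. That subsurface $W$ lies in one of the finitely many collections $\bY^i$. Now within $\cC(\bY^i)$, the distance formula of Theorem \ref{masur-minsky1} (the lower bound half) gives
$$d_{\cC(\bY^i)}\bigl(\Psi_i(1),\Psi_i(g)\bigr)\ \geq\ \tfrac12\sum_{W'\in\bY^i_{K'}(\alpha,g(\alpha))} d_{W'}(\alpha,g(\alpha)),$$
where $\Psi_i$ is the $i$-th coordinate of $\Psi$. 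If $B>K'$ (we are free to choose $B$ as large as we like), then $W$ is one of the terms in the sum and contributes at least $\tfrac12 d^\pi_W(\alpha,g(\alpha))\succ \tfrac12 B$, using the coarse equality $d_W\sim d^\pi_W$ from Theorem \ref{main}(B). Hence, since the $\ell^1$-metric on $\Pi$ dominates each factor, $d_\Pi(\Psi(1),\Psi(g))\succ \tfrac12 B$ whenever $|g|>C(B)$.

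To turn this into the function $\Phi$ required by the definition of coarse embedding, I would define $\Phi$ by: $\Phi(t)=\tfrac12\sup\{B : C(B)\le t\}$ (or any nondecreasing function below this), which tends to infinity as $t\to\infty$ precisely because $C(B)<\infty$ for every $B$. Concretely, for each integer $n$ pick $C_n=C(2n+2K')$ from Proposition \ref{coarsegroupbound} (with the constant $K'$ of Theorem \ref{masur-minsky1} absorbed), and set $\Phi(t)=n$ for $C_n\le t< C_{n+1}$; then $|g|\ge C_n$ implies some $W$ with $d^\pi_W(\alpha,g(\alpha))> 2n+2K'> K'$, so that $W$ appears in the distance-formula sum and $d_\Pi(\Psi(1),\Psi(g))\ge \tfrac12(2n) - (\text{const}) \succ n$. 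Combined with the Lipschitz upper bound $d_\Pi(\Psi(g),\Psi(h))\le A|g^{-1}h|$, this is exactly the two-sided estimate defining a coarse embedding.

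The main obstacle is really just bookkeeping the coarse ($\sim$, $\prec$, $\succ$) constants: one must make sure that the additive and multiplicative errors in Theorem \ref{masur-minsky1} and in the coarse equality $d_W\sim d^\pi_W$ are absorbed correctly, and that the threshold $K'$ in the distance formula is smaller than the $B$ we demand of Proposition \ref{coarsegroupbound}, which is legitimate since $K'$ is fixed while $B$ is an arbitrary input. A secondary point is that Proposition \ref{coarsegroupbound} is stated for $\Gamma$ (or $MCG$) acting, and the orbit map $\Psi$ was defined via $\Gamma$, but since $\Gamma$ has finite index in $MCG(\Sigma)$ and word metrics on finite-index subgroups are quasi-isometric to the ambient group, one loses only a bounded amount; alternatively one states the conclusion for $\Gamma$ first and then notes it passes to $MCG(\Sigma)$. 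No hyperbolicity of the individual curve complexes is needed for this argument — only the distance estimate of Theorem \ref{masur-minsky1}.
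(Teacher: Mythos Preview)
Your argument is correct, but the paper takes a slightly cleaner route. Instead of invoking the full distance formula of Theorem~\ref{masur-minsky1} (with its threshold $K'$, factor $\tfrac12$, and passage through the modified distance $d_W$), the paper uses only Lemma~\ref{coarsedistanceestimate}, which gives directly $d_{\cC(\bY^i)}(x,z)\ge d^\pi_W(x,z)$ for \emph{every} $W\in\bY^i$. Concretely: the basepoint is chosen so that the $i$-th coordinate is a curve $\gamma_i$ in a fixed $W_i\in\bY^i$, and $\alpha$ is taken to contain all the $\gamma_i$ and all $\partial W_i$; then $\pi_W(\gamma_i)\subset\pi_W(\alpha)$, so a uniform diameter bound $D$ on $\pi_W(\alpha)$ gives $d^\pi_W(\gamma_i,g(\gamma_i))\ge d^\pi_W(\alpha,g(\alpha))-2D$. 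Applying Proposition~\ref{coarsegroupbound} with threshold $B+2D$ and then Lemma~\ref{coarsedistanceestimate} for the single subsurface $W$ yields $d_\Pi(\Psi(1),\Psi(g))\ge B$ immediately, with no bookkeeping of $K'$ or coarse-equality errors.

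One minor imprecision in your write-up: the $i$-th coordinate of $\Psi(g)$ is the single point $g(\gamma_i)\in\cC(g(W_i))$, not ``$\pi_Y(g(\alpha))$ for $Y$ ranging over $\bY^i$''. You therefore need the step comparing $d^\pi_W(\gamma_i,g(\gamma_i))$ to $d^\pi_W(\alpha,g(\alpha))$; the paper does this explicitly via the bound $D$, and your argument should too before feeding $W$ into the distance formula. Both approaches then handle the passage from $\Gamma$ to $MCG(\Sigma)$ in the same way (finite index).
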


\begin{proof} We will show that the restriction of $\Psi$ to $\Gamma$ is a coarse embedding. This will imply the proposition
since $\Gamma$ has finite index in $MCG$.
Note that if $\Psi$ is a coarse embedding or not
does not depend on the choice of the base point.

Say the basepoint has $\cC(\bY^i)$-coordinate equal to a curve
$\gamma_i$ in a surface $W_i$, and in the special factor $\cC(\Sigma)$
the coordinate is a curve $\gamma$. We may choose the binding set
$\alpha$ to contain $\gamma$, the $\gamma_i$ and the boundary
components of the $W_i$'s.

Note that for all subsurfaces $W$ the diameter of $\pi_W(\alpha)$ in
$\cC(W)$ is bounded by a fixed constant $D>0$. For example we could
choose $D$ to be one plus the number of intersection points.

Fix some $B>0$ and let $C$ be the constant given by Proposition
\ref{coarsegroupbound} with respect to $\alpha$ and $B+2D$. We'll show
that if $|g|>C$ then $d_{\Pi}(\Psi(id), \Psi(g))>B$ which implies that
$\Psi$ is a coarse embedding.

By Proposition \ref{coarsegroupbound} there exists a subsurface $W$
such that $d^\pi_W(\alpha, g(\alpha))>C$. The subsurface $W$ is in one
of the collections $\bY^i$. Since $\pi_W(\gamma_i)$ and
$\pi_W(g(\gamma_i))$ are contained in $\pi_W(\alpha)$ and
$\pi_W(g(\alpha))$ and the latter have diameter bounded by $D$ we have
$d^\pi_W(\gamma_i, g(\gamma_i)) \ge d(\alpha, g(\alpha))-2D$. By
Proposition \ref{coarsedistanceestimate} we then have
\begin{eqnarray*}
d_\Pi(\Psi(id), \Psi(g)) & \ge &d_{\cC(\bY^i)}(\gamma_i, g(\gamma_i))\\ &\ge& d^\pi_W(\gamma_i, g(\gamma_i))\\
& \ge & \pi_W(\alpha, g(\alpha)) - 2D\\
& \ge & B
\end{eqnarray*}
and the proposition is proved.
\end{proof}

It is also true that $\Psi$ is a quasi-isometric embedding. We will
not need this stronger result to prove Theorem D, but we include the proof since it may
be of independent interest.

\medskip
\noindent
{\bf Theorem C.} {\it $MCG(\Sigma)$ equivariantly quasi-isometrically
  embeds in a finite product of hyperbolic spaces.}
\medskip

\begin{proof}
The proof uses the remarkable Masur-Minsky formula \cite{mm2}, which
asserts that
$$|g|\simeq \sum_W\{\{d_W(\alpha,g(\alpha)\}\}_M$$
where $g\in MCG(\Sigma)$, $|g|$ is the word-norm of $g$ with respect
to any fixed finite generating set for $MCG(\Sigma)$, $\simeq$ is {\it coarse
equivalence}, i.e. each side is bounded by a linear function of the
other, $\{\{x\}\}_M=x$ if $x>M$ and otherwise it is 0, the sum
is taken over all subsurfaces of $\Sigma$, $\alpha$ is a fixed finite
binding set of curves in $\Sigma$, and $d_W(\alpha,g(\alpha))$ is the
distance in the curve complex of $W$ between the projections of a
curve in $\alpha$ and a curve in $g(\alpha)$ (we must choose a curve
that has a projection; choosing a different such curve changes the
distance by a bounded amount), and $M$ is a sufficiently large
constant. By enlarging $M$ or $K'$ from Theorem \ref{masur-minsky1} we
may assume that $M=K'$. The two estimates combine to give that
$|g| \leq Ad(\Psi(1),\Psi(g))+B$ for universal constants $A,B$. The
reverse bound follows from the fact that $\Psi$ is Lipschitz.
\end{proof}

\medskip
\noindent
{\bf Theorem D.} {\it Let $\Sigma$ be a compact orientable surface
  with (possibly empty) boundary. Then $\asdim(MCG(\Sigma))<\infty$.
}
\medskip

\begin{proof}
If $\chi(\Sigma)>0$ then $MCG(\Sigma)$ is finite and
$\asdim(MCG(\Sigma))=0$.  If the $\Sigma$ is a torus,
$MCG(\Sigma)$ is virtually free and hence
$\asdim(MCG(\Sigma))=1$. Assume $\chi(\Sigma)<0$.
By the Product Formula and Theorem \ref{SKY} it follows that
$\asdim(\Pi)<\infty$. 
Note that the Product formula applies to the $\ell_1$-product.
Proposition \ref{embedding} then implies that
$\asdim(MCG(\Sigma))<\infty$.
\end{proof}

Let $\Sigma$ be a possibly punctured closed surface and $\mathcal
T(\Sigma)$ its Teichm\"uller space equipped with the Teichm\"uller
metric. 

\begin{thm} \label{teichmuller}
$\asdim (MCG(\Sigma)) \le \asdim(\mathcal T(\Sigma))<\infty$.
\end{thm}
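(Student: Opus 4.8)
The plan is to prove the two assertions $\asdim(MCG(\Sigma)) \le \asdim(\mathcal T(\Sigma))$ and $\asdim(\mathcal T(\Sigma)) < \infty$ separately. For the first, I would use the orbit map. Fix a point $x_0 \in \mathcal T(\Sigma)$ and consider $\Phi : MCG(\Sigma) \to \mathcal T(\Sigma)$, $g \mapsto g\cdot x_0$, where $MCG(\Sigma)$ carries the word metric for a fixed finite generating set. Since each generator moves $x_0$ a bounded distance, $\Phi$ is Lipschitz. Since $MCG(\Sigma)$ acts isometrically and properly discontinuously on $\mathcal T(\Sigma)$ and the Teichm\"uller metric is proper, for every $R>0$ the set $\{g \in MCG(\Sigma) : d_{\mathcal T}(x_0, g x_0) \le R\}$ is finite, hence has bounded word length; this produces a proper function $\Phi_0 : [0,\infty) \to [0,\infty)$ with $\Phi_0\bigl(d_{MCG}(g,h)\bigr) \le d_{\mathcal T}(\Phi(g),\Phi(h))$, so $\Phi$ is a coarse embedding. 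As recalled above, $\asdim$ is monotone under coarse embeddings \cite{roe}, and the first inequality follows.

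For $\asdim(\mathcal T(\Sigma)) < \infty$ I would imitate the embedding $\Psi$ of Proposition \ref{embedding} and Theorem C, using the combinatorial distance formula for the Teichm\"uller metric \cite{emr} in place of the Masur--Minsky word-metric formula. That formula expresses $d_{\mathcal T}(X,Y)$, up to coarse equivalence, as a sum over thresholded subsurface projections to non-annular subsurfaces together with contributions coming from annuli, each annular contribution combining a twist coordinate with a logarithmic hyperbolic-length coordinate and being naturally modelled on a copy of the hyperbolic plane $\H^2$. One then (i) partitions the non-annular essential subsurfaces into finitely many families $\bY^1,\dots,\bY^p$ with pairwise overlapping boundaries (Proposition \ref{collections}), giving quasi-trees of curve complexes $\cC(\bY^i)$ that are hyperbolic by Theorem B and, since the curve complexes have finite asymptotic dimension uniformly by Theorem \ref{bell-fujiwara} (only finitely many topological types occur), satisfy $\asdim \cC(\bY^i) < \infty$ by Theorem \ref{SKY}; and (ii) partitions the annuli into finitely many families with pairwise intersecting cores, verifies axioms (P0)--(P2) for the associated twist-and-length projection data with each block a copy of $\H^2$, and obtains quasi-trees of copies of $\H^2$, which are hyperbolic by Theorem B and, since $\asdim \H^2 = 2$, have finite asymptotic dimension by Theorem \ref{SKY}. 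The combinatorial formula then says the resulting orbit map of $\mathcal T(\Sigma)$ into the finite $\ell_1$-product of the $\cC(\bY^i)$ and these quasi-trees of $\H^2$'s is a coarse embedding; since a finite product of spaces of finite asymptotic dimension has finite asymptotic dimension by the Product Formula, monotonicity under the coarse embedding gives $\asdim(\mathcal T(\Sigma)) < \infty$.

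The first inequality is routine once one observes that the Teichm\"uller metric is proper, so that proper discontinuity forces the orbit map to be proper. The main obstacle is the finiteness part: one must set up the finitely many annular families correctly and check that the combinatorial Teichm\"uller distance formula can genuinely be read as a distance estimate for an orbit map into a \emph{finite} product --- in particular verifying (P0)--(P2) for the twist-plus-length projection functions on annuli, and confirming that each resulting factor is $\delta$-hyperbolic with uniformly bounded asymptotic dimension, so that Theorem B and Theorem \ref{SKY} apply.
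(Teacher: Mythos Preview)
Your treatment of the first inequality matches the paper's: the orbit map is a coarse embedding by proper discontinuity, and monotonicity of $\asdim$ under coarse embeddings finishes it.

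For the finiteness of $\asdim(\mathcal T(\Sigma))$ you take a genuinely different route. The paper does \emph{not} embed $\mathcal T(\Sigma)$ into a product of quasi-trees. Instead it argues by induction on the complexity of $\Sigma$, using a thick--thin decomposition: Minsky's Product Theorem identifies each $Thin_\epsilon(\Sigma,\gamma)$, up to quasi-isometry, with $\mathcal T(\Sigma/\gamma)\times(\text{horoball})$, so thin parts have finite $\asdim$ by induction and the Product Formula; the thick part is quasi-isometric to $MCG(\Sigma)$, whose $\asdim$ is already finite by Theorem~D. One then partitions the curves into finitely many pairwise-intersecting color classes (Lemma~\ref{colors}) and builds a filtration $Thick=X_0\subset X_1\subset\cdots\subset X_k=\mathcal T(\Sigma)$, adding one color class of thin parts at a time; the Union Theorem (with the $R$-separation coming from the fact that intersecting curves cannot be simultaneously short) propagates the bound $\asdim(X_i)\le N$ up the filtration.

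Your approach, by contrast, is essentially the Eskin--Masur--Rafi strategy: read the combinatorial Teichm\"uller distance formula as an orbit map into a finite product of quasi-trees, with the annular factors blown up to copies of $\H^2$ to record both twist and log-length. This is a legitimate and in some ways more informative argument (it yields an actual quasi-isometric embedding of $\mathcal T(\Sigma)$ into a finite product of hyperbolic spaces, parallel to Theorem~C), but it is heavier: you must import the full distance formula from \cite{emr} and, as you correctly flag, set up and verify (P0)--(P2) for the augmented annular projections, none of which is done in this paper. The paper's inductive thick--thin argument avoids all of this by leaning only on Minsky's classical product theorem and the already-established finiteness for $MCG(\Sigma)$.
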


Since $MCG(\Sigma)$ acts on $\mathcal T(\Sigma)$
properly discontinuously, an orbit map $MCG(\Sigma) \to \mathcal T(\Sigma)$ is a coarse embedding. Thus we have 
$\asdim (MCG(\Sigma)) \le \asdim(\mathcal T(\Sigma))$.
The proof of the second inequality 
will use the following facts. When $\gamma$ is a curve in
$\Sigma$ and $\epsilon>0$ denote by $Thin_\epsilon(\Sigma,\gamma)$ the
subset of $\mathcal T(\Sigma)$ where $\gamma$ has hyperbolic length
$<\epsilon$. 

\begin{enumerate}[(A)]
\item {\bf Minsky's Product Theorem.} \cite{minsky-product}
If $\epsilon$ is small enough,
  the subspace $Thin_\epsilon(\Sigma,\gamma)$ is quasi-isometric to the product 
  $\mathcal T(\Sigma/\gamma) \times Z$ where $Z$ is a horoball in
  hyperbolic plane and $\Sigma/\gamma$ denotes the surface obtained
  from $\Sigma$ by cutting open along $\gamma$ and crushing the
  boundary components to punctures (if $\gamma$ is separating this
  Teichm\"uller space is the product of Teichm\"uller spaces of the
  components).
\item For every $R>0$ there is $\epsilon_0>0$ such that whenever
  $\gamma$ and $\gamma'$ intersect then
  $Thin_{\epsilon_0}(\Sigma,\gamma)$ and
  $Thin_{\epsilon_0}(\Sigma,\gamma')$ are $R$-separated.
\end{enumerate}

Statement (B) follows easily from Kerckhoff's Theorem \cite{kerckhoff}, or
indeed from (A).

\begin{proof}[Proof of Theorem \ref{teichmuller}]
The proof is by induction on the complexity of the surface, which is
the dimension of $\mathcal T(\Sigma)$. Induction starts with the case
of 2-dimensional Teichm\"uller space (hyperbolic plane) when
asymptotic dimension is 2.

For the inductive step, note that (A) and the Product Formula for
asymptotic dimension
immediately imply that thin parts have finite asymptotic
dimension. Write the collection of all curves on $\Sigma$ as a finite
disjoint union $C_1\sqcup C_2\sqcup \cdots \sqcup C_k$ so that curves
in the same collection intersect. It was shown that this is possible
for closed $\Sigma$ in Lemma \ref{colors}, but the punctured case
follows quickly from the closed case (e.g. blow up the punctures to
boundary components and double).

Consider the subsets $$Thick=X_0\subset X_1\subset
X_2\subset\cdots\subset X_k=\mathcal T(\Sigma)$$
where $X_i$ is the subset of $\mathcal T(\Sigma)$ consisting of
hyperbolic surfaces with the property that if $\gamma$ is a curve with
length $<\epsilon$ then $\gamma\in C_1\cup\cdots\cup C_i$, 
and Thick consists of hyperbolic surfaces with no essential 
curves of length $< \epsilon$.
Let $N$ be chosen so that $\asdim(MCG(\Sigma))\leq N$ and so that 
$\asdim(Thin_\epsilon(\Sigma,\gamma))\leq N$ for every curve
$\gamma$. We will argue by induction on $i$ that $\asdim(X_i)\leq N$.

When $i=0$ this follows from the fact that $X_0$ (the thick part) is
quasi-isometric to $MCG(\Sigma)$. Suppose $\asdim(X_{i})\leq N$.

Now write $$X_{i+1}=X_i\cup\bigcup_{\gamma\in C_{i+1}}Y^i_\gamma$$
where $Y^i_\gamma$ is the set of hyperbolic structures in
$Thin_\epsilon(\Sigma,\gamma)$ where every
curve shorter than $\epsilon$ is either equal to $\gamma$ or belongs
to $C_1\cup\cdots\cup C_i$. We will check the conditions of the Union
Theorem. 

Let $R>0$ be given, let $\epsilon_0$ be as in (B) (we may assume 
that $\epsilon_0 < \epsilon$).
Define 
$$Y_R=X_i\cup \bigcup_{\gamma\in C_{i+1}}Z^i_\gamma$$
where $Z^i_\gamma$ is the set of hyperbolic structures where $\gamma$
has length in the interval $[\epsilon_0,\epsilon)$ and any curve of
  length $<\epsilon$ is either $\gamma$ or belongs to
  $C_1\cup\cdots\cup C_i$. By (B) the sets $Y^i_\gamma\setminus Y_R$
  are $R$-separated and 
each set is  contained in $Thin_\epsilon(\Sigma,\gamma)$ and
the latter sets have $\asdim\leq N$ uniformly, since there are only
finitely many isometry types of such sets.
Therefore we only need to argue that $\asdim(Y_R)\leq
  N$. But $Y_R$ is contained in a Hausdorff neighborhood of
  $X_i$, as follows easily from Minsky's Product Theorem.
That $\asdim(X_i)\leq N$ is
the inductive hypothesis.
\end{proof}

A variation of the argument also shows that Teichm\"uller space
equipped with Weil-Petersson metric has finite asymptotic
dimension. Denote this space by $\mathcal T_{WP}(\Sigma)$. Let
$\mathcal P(\Sigma)$ be the {\it pants complex} for $\Sigma$, where a
vertex is represented by a pants decomposition of $\Sigma$ and an edge
corresponds to a pair of pants decompositions that differ in only one
curve in each, and the two curves intersect minimally (one or two
points, depending on whether their removal produces a complementary
component which is a punctured torus or a 4-punctured sphere). There
is a natural coarse map $\Upsilon:\mathcal P(\Sigma)\to \mathcal
T_{WP}(\Sigma)$ that sends a pants decomposition to the (bounded) set
consisting of hyperbolic metrics where the curves in the decomposition
have length bounded by a Bers constant. Brock \cite{brock1,brock2}
proved that $\Upsilon$ is an equivariant quasi-isometry.

\begin{thm}\label{weil}
$\asdim(\mathcal T_{WP}(\Sigma))=\asdim(\mathcal P(\Sigma))<\infty$.
\end{thm}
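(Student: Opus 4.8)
The plan is to reduce the statement to Theorem \ref{teichmuller} (finiteness of $\asdim(\mathcal T(\Sigma))$ for the Teichm\"uller metric) combined with Theorem \ref{SKY} and the structure of the pants complex as built from curve complexes of subsurfaces, following the same inductive scheme used in the proof of Theorem \ref{teichmuller}. First, by Brock's theorem \cite{brock1,brock2}, $\Upsilon:\mathcal P(\Sigma)\to\mathcal T_{WP}(\Sigma)$ is an equivariant quasi-isometry, so it suffices to bound $\asdim(\mathcal P(\Sigma))$; since asymptotic dimension is a quasi-isometry invariant this gives the stated equality once finiteness is established. So the real task is to show $\asdim(\mathcal P(\Sigma))<\infty$.

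The approach I would take mirrors the proof of Theorem \ref{teichmuller}, with the Weil--Petersson/pants-complex analog of Minsky's Product Theorem playing the role of the Teichm\"uller-metric Product Theorem. Concretely, there is a well-known product decomposition of the thin parts of $\mathcal P(\Sigma)$: the subcomplex of $\mathcal P(\Sigma)$ spanned by pants decompositions containing a fixed curve $\gamma$ is quasi-isometric to $\mathcal P(\Sigma/\gamma)\times\mathbb Z$ (or to a product of pants complexes of the components if $\gamma$ separates), where the $\mathbb Z$ factor records the twisting about $\gamma$. This is the combinatorial shadow of Minsky's product region theorem and can be extracted directly from Brock's quasi-isometry together with statement (A) preceding the proof of Theorem \ref{teichmuller}. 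By the Product Formula for asymptotic dimension and induction on the complexity of $\Sigma$ (the base case being a low-complexity surface where $\mathcal P(\Sigma)$ is a point or quasi-isometric to a line), these "thin'' subcomplexes have uniformly bounded asymptotic dimension.

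The remaining step is to glue: write the collection of all curves on $\Sigma$ as a finite disjoint union $C_1\sqcup\cdots\sqcup C_k$ of families of pairwise-intersecting curves, exactly as in Lemma \ref{colors} (with the punctured case reduced to the closed one by doubling), and build a filtration $\mathcal P_{\mathrm{thick}}=X_0\subset X_1\subset\cdots\subset X_k=\mathcal P(\Sigma)$ where $X_i$ consists of pants decompositions all of whose "short'' curves lie in $C_1\cup\cdots\cup C_i$; here "thick'' should be interpreted combinatorially, e.g. via the systole or via the image of the thick part under $\Upsilon^{-1}$. The base case $X_0$ is quasi-isometric to $MCG(\Sigma)$, which has finite asymptotic dimension by Theorem \ref{SKY} (equivalently Theorem D). For the inductive step from $X_i$ to $X_{i+1}$ I would apply the Union Theorem with $Y_R$ a bounded-size neighborhood of $X_i$ together with the "intermediate annulus'' regions, using the separation statement (B) (which likewise transfers to the pants complex via $\Upsilon$): if $\gamma,\gamma'$ intersect then the corresponding thin subcomplexes, away from a controlled neighborhood of $X_i$, are $R$-separated. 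Since each thin piece has $\asdim\le N$ uniformly and $Y_R$ reduces to $X_i$, the Union Theorem yields $\asdim(X_{i+1})\le N$, completing the induction.

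The main obstacle, and the only genuinely nontrivial point, is establishing the combinatorial product decomposition and the $R$-separation of thin regions \emph{inside the pants complex} rather than inside Teichm\"uller space with the Teichm\"uller metric. The Teichm\"uller-metric statements (A) and (B) are quoted as known; transferring them across Brock's quasi-isometry to $\mathcal T_{WP}(\Sigma)$ and hence to $\mathcal P(\Sigma)$ requires care because quasi-isometries distort the geometry of thin parts, so one must check that the relevant subsets correspond (up to bounded Hausdorff distance) under $\Upsilon$ and that "$\gamma$ has WP-length, or pants-complex-distance, in a prescribed range'' is a quasi-isometrically meaningful condition. Once this dictionary is in place the rest of the argument is a direct repetition of the proof of Theorem \ref{teichmuller}.
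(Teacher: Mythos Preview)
Your approach is genuinely different from the paper's, and while it is plausibly workable, it is considerably more roundabout and contains one factual slip.

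The paper does \emph{not} rerun the thin/thick induction from Theorem~\ref{teichmuller}. Instead it argues in one stroke: pull back the pants-complex metric to $MCG(\Sigma)$ via an orbit map, invoke the Masur--Minsky distance formula for $\mathcal P(\Sigma)$ (the version that sums only over \emph{non-annular} subsurfaces $W$), and conclude that the orbit map $MCG(\Sigma)\to\Pi=\cC(\bY^1)\times\cdots\times\cC(\bY^k)$ (with all annuli deleted from the $\bY^i$) is a quasi-isometric embedding with respect to this pulled-back metric. Theorem~\ref{SKY} and the Product Formula then give $\asdim(\mathcal P(\Sigma))<\infty$ immediately, with no induction on complexity and no geometric analysis of thin parts. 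What this buys is that you never have to prove Weil--Petersson analogs of Minsky's product theorem or of the separation statement~(B); the combinatorics of subsurface projections do all the work.

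Your route can probably be made to work, but note two points. First, the product structure you state is wrong: the subcomplex of $\mathcal P(\Sigma)$ spanned by pants decompositions containing $\gamma$ is (coarsely) $\mathcal P(\Sigma/\gamma)$, with \emph{no} $\mathbb Z$ factor, since the Dehn twist about $\gamma$ fixes every such pants decomposition; this is exactly why annuli are excluded from the pants-complex distance formula. This error is harmless for the dimension bound but reflects a misreading of the WP geometry. Second, and more seriously, the separation statement (B) does not transfer to the Weil--Petersson metric in the form you want: in $\mathcal T_{WP}$ the thin part for $\gamma$ has \emph{bounded} depth (pinching a curve takes finite WP length), so taking $\epsilon_0\to 0$ does not push the thin parts for intersecting curves arbitrarily far apart in the way it does for the Teichm\"uller metric. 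You would need a different mechanism to achieve the $R$-separation required by the Union Theorem, and you correctly flag this as the main obstacle; it is a real one.
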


\begin{proof}
Consider an orbit map $MCG(\Sigma)\to \mathcal P(\Sigma)$ and
define a (pseudo) metric on $MCG(\Sigma)$ by restricting the one from $\mathcal P(\Sigma)$ (some pairs of points may have distance 0). Since the
action of $MCG(\Sigma)$ on the pants complex has finitely many orbits
of simplices, $MCG(\Sigma)$ with this metric, $d$, is quasi-isometric to the
pants complex.
There is a
Masur-Minsky estimate for the distance between 1 and $g\in
MCG(\Sigma)$ (see the discussion in
\cite[Section 8]{mm2}):
$$d(1,g)\simeq \sum_W\{\{d_W(\alpha,g(\alpha)\}\}_M$$
where $W$ runs over subsurfaces which are {\it not} annuli. We have an
action of $MCG(\Sigma)$ on $$\Pi =
\cC(\bY^1)\times\cC(\bY^2)\times\cdots\times\cC(\bY^k)$$ as before,
where we delete all annuli from the $\bY^i$'s. The orbit map is a
quasi-isometric embedding (with respect to the new metric on
$MCG(\Sigma)$) by exactly the same argument as before. The theorem
follows.
\end{proof}

\bibliography{./ref2}

\end{document}